\documentclass[12pt]{article}
\usepackage{verbatim}
\usepackage{amssymb}
\usepackage{amsthm}
\usepackage{amsmath}
\usepackage{graphicx}
\usepackage[usenames,dvipsnames] {color}
\usepackage{tikz}
\usepackage{tikz-cd}  % for commutative diagrams
\usepackage{yfonts}
\usepackage{makeidx}
\usepackage{enumerate}
\numberwithin{equation}{section}

\title{Non-commutative manifolds, the free square root and symmetric functions in two non-commuting variables}
\author{Jim Agler
\thanks{Partially supported by National Science Foundation Grant
 DMS 1665260}
\and
John E. M\raise.5ex\hbox{c}Carthy
\thanks{Partially supported by National Science Foundation Grant  
DMS 1565243
}
\and
N. J. Young\thanks{Partially supported
by UK Engineering and Physical Sciences Research Council grants EP/K50340X/1 and  EP/N03242X/1, and 
London Mathematical Society grants 41219 and 41730.
}}

% Definitions
%\begin{comment}

\def\gdel{B_\delta}
\def\ggam{B_\gamma}

\def\d{\mathbb{D}}
\def\D{\mathbb{D}}

\def\a{\mathcal{A}}

\DeclareMathOperator{\ran}{ran}
%\end{comment}

%%%%%%%%%%%%%%%%%%%%%%%%%%%%%%%%%%%%%%%%%%%%%%%%%%%%%%%%%%%%%%%%%
%%%%%%%%%%%%%%%%%%%%%%%%%%%%%%%%%%%%%%%%%%%%%%%%%%%%%%%%%%%%%%%%%
\def\be{\begin{equation}}
\def\ee{\end{equation}}
\def\norm#1{\| #1 \|}
\def\m{\mathbb{M}}
\def\t{\mathcal{T}}
\def\a{\mathcal{A}}
\def\mn{\mathbb{M}_n}

\def\invn{\mathcal{I}_n}
\def\inv{\mathcal{I}}
\def\c{\mathbb{C}}
\def\d{\mathbb{D}}

\def\calb{\mathcal{B}}

\def\r{\mathbb{R}}
\def\rplus{\mathbb{R}^+}
\def\n{\mathbb{N}}

\def\g{\mathcal{G}}
\def\s{\mathcal{S}}
\def\q{\mathcal{Q}}

\def\calu{\mathcal{U}}
\def\calw{\mathcal{W}}
\def\calr{\mathcal{R}}

\def\ugam{\mathcal{U}_\gamma}
\def\ugamsub#1{\mathcal{U}_{\gamma_{#1}}}
\def\wgam{\mathcal{W}_\gamma}

\def\set#1#2{\{ #1 \, | \, #2\}}

\def\id#1{{\rm id}_#1}

\def\subgt{_{\gamma\tau}}

\def\subgtsub#1{_{\gamma_{#1}\tau_{#1}}}

\DeclareMathOperator{\sep}{sep}
\DeclareMathOperator{\gra}{graph}
\DeclareMathOperator{\diag}{diag}
\newcommand\df{\stackrel{\rm def}{=}}
\newcommand{\C}{\mathbb{C}}
\newcommand\N{\mathbb N}
\newcommand\black{\color{black}}
\definecolor{green}{rgb}{0., 0.6, 0.}

\newcommand\blue{\color{blue}}
\newcommand\red{\color{red}}
%%%%Remove colors
%\newcommand\red{\color{black}}
%\newcommand\blue{\color{black}}
%\newcommand\green{\color{black}}
\renewcommand\epsilon{\varepsilon}
\newcommand\ga{\gamma}
\newcommand\Ga{\Gamma}
\newcommand\si{\sigma}
\newcommand\ph{\varphi}
\newcommand\half{\tfrac 12}
\newcommand\bbm{\begin{bmatrix}}
\newcommand\ebm{\end{bmatrix}}

\newcommand\nonc{non-commutative }

\newcommand\zf{Zariski-free }
\newcommand\zfly{Zariski-freely }

\newcommand\dut{\rm{f.o.}}

\newcommand{\threepartdef}[6]
{
	\left\{
		\begin{array}{lll}
			#1 & \mbox{if } #2 \\
			  \\
			#3 & \mbox{if } #4 \\
			  \\
			#5 & \mbox{if } #6
		\end{array}
	\right.
}
\newcommand\al{\alpha}
\newcommand\la{\lambda}
\newcommand\de{\delta}

\newcommand\uone{U_1}
\newcommand\vone{V_1}
\newcommand\fone{f_1}
\newcommand\gone{g_1}

\newcommand\hp{\hat \phi}
\newcommand\bda{{ B}_{\delta_\alpha}}
\newcommand\bdb{{B}_{\delta_\beta}}

\newcommand\ta{T_\alpha}
\newcommand\ua{U_\alpha}
\newcommand\ub{U_\beta}

\newcommand\A{{\mathcal A}}
\renewcommand\phi{\varphi}

\def\gg{\textswab{G}}

\def\vare{\varepsilon}

\def\woe{w_1(\vare)}

\def\zoe{z_1(\vare)}
\def\zte{z_2(\vare)}
\newcommand\beq{\begin{equation}}
\newcommand\eeq{\end{equation}}
\newcommand\anc{conditionally\ nc}
\newcommand\Anc{Conditionally\ nc}
\newcommand\1{\mathbf 1}
\newcommand\sd{{\widetilde{D}}}
\newcommand\simf{{\widetilde{f}}}
\newcommand\sym{\mathrm{Sym}}
\newcommand\odd{\mathrm{odd}}
\newcommand\even{\mathrm{even}}
\newcommand\bpm{\begin{pmatrix}}
\newcommand\epm{\end{pmatrix}}
\newcommand\inverse{^{-1}}
\newcommand\fa{\mbox{ for all }}
\makeindex
\date{28th May 2018}
\begin{document}

\bibliographystyle{plain}
\newtheorem{defin}[equation]{Definition}
\newtheorem{lem}[equation]{Lemma}
\newtheorem{prop}[equation]{Proposition}
\newtheorem{thm}[equation]{Theorem}
\newtheorem{claim}[equation]{Claim}
\newtheorem{ques}[equation]{Question}
\newtheorem{fact}[equation]{Fact}
\newtheorem{axiom}[equation]{Technical Axiom}
\newtheorem{newaxiom}[equation]{New Technical Axiom}
\newtheorem{cor}[equation]{Corollary}
\newtheorem{exam}[equation]{Example}
\newtheorem{remark}[equation]{Remark}
\newtheorem{convention}[equation]{Convention}
\maketitle
\begin{abstract}
The richly developed theory of complex manifolds plays important roles in our understanding of  holomorphic functions in several complex variables. It is natural to consider manifolds that will play similar roles in the theory of holomorphic functions in several non-commuting variables. In this paper we introduce the class of \emph{nc-manifolds}, the mathematical objects that at each point possess a neighborhood that has the structure of an \emph{nc-domain} in the \emph{$d$-dimensional nc-universe $\m^d$}. We illustrate the use of such manifolds in free analysis through the construction of the non-commutative Riemann surface for the matricial square root function.
A second illustration is the construction of a non-commutative analog of the elementary symmetric functions in two variables.
 For any symmetric domain in $\m^2$ we construct a  2-dimensional
non-commutative manifold such that the symmetric holomorphic functions on the domain are in bijective correspondence with the holomorphic functions on the manifold.   We also derive a version of the classical Newton-Girard formulae for power sums of two non-commuting variables.
\end{abstract}
\newpage
\tableofcontents
\section{Introduction} \label{introduction}
Free analysis, the study of holomorphic functions in several non-commuting variables, dates back to 1973 and the seminal paper \cite{tay73} by  J. L. Taylor.  The theory has picked up ever greater momentum in the past decade.  
% \black Some recent highlights are \cite{voi4} and \cite{voi10}, in which D. Voiculescu studies analytic functions in non-commuting variables defined by formal power series, and a program due to J. W. Helton and collaborators that applies free analysis to develop a descriptive theory of the domains on which the methods of linear matrix inequalities and semi-definite programming apply (see, for example, \cite{hm12}). 
The monograph \cite{kvv14}, by D. S. Kaliuzhnyi-Verbovetskyi and V. Vinnikov contains a panoramic survey of the field up to the time of its writing. Since then, there have been further breakthroughs in both geometry 
(see {\em e.g.} \cite{ddss17,hkm16a,hkm17a,hm12,hm14}
and function theory
(see {\em e.g.} \cite{bmv16a, CPTD17, hptv16}).

Taylor's founding idea for non-commutative analysis was that analytic functions in several non-commuting variables should have the same basic algebraic properties as free polynomials have when viewed as functions on tuples of matrices (of indeterminate size). Here a {\em free polynomial} means a polynomial in non-commuting variables over the field $\c$ of complex numbers.

Traditionally, free analysis has dealt with functions defined on subsets of the {\em $d$-dimensional non-commutative universe} $\m^d$, which comprises the space of $d$-tuples of square matrices, for $d\geq 1$.  Thus
\be\label{2.10}
\m^d = \bigcup_{n=1}^\infty \mn^d
\ee
where $\mn$  denotes the algebra of $n \times n$ matrices over $\c$ and
\[
\mn^d =\set{(x^1,x^2,\ldots,x^d)}{x^r \in \mn \text{ for } r=1,2,\ldots,d}.
\]

In this paper we introduce the notion of an nc- or {\em non-commutative manifold}, which bears the same relation to $\m^d$ as complex manifolds bear to $\C^d$.  This natural extension is needed even for such a basic notion as the ``free Riemann surface'' of the matricial square root function, an object that we construct in Section \ref{SquareRoot}.

We were led to introduce topological nc-manifolds while seeking a non-commutative version of the anciently-known fact that any symmetric holomorphic function of $d$ variables can be expressed as a holomorphic function of the elementary symmetric functions in $d$ variables on a suitable domain.  In particular, if $f(z,w)$ is a holomorphic function on $\c^2$ that is symmetric, in the sense that $f(z,w)=f(w,z)$ for all $z,w\in\c$, then there exists a holomorphic function $g$ on $\c^2$ such that $f(z,w)=g(z+w,zw)$ for all $z,w\in\c$.   We asked whether there is a
 \nonc version of this result.  

Any \nonc  analog must be consistent with results of M. Wolf \cite{wo36}, who studied an algebraic version of this question in 1936.  She proved that the algebra of symmetric free polynomials in $d$ non-commuting variables is not finitely generated when $d>1$, and is in fact isomorphic to the algebra of free polynomials in countably many variables.  It follows that there is no polynomial map $\pi:\m^2\to\m^d$, for any $d\in\n$, with the property that, for every symmetric free polynomial $\ph$ in $2$  variables, there exists a free polynomial $\Phi$ such that $\ph=\Phi\circ\pi$; otherwise, the components of $\pi$ would constitute a finite basis for the algebra of symmetric free polynomials.  Nevertheless, we show that there {\em is} a close parallel to the classical result in the context of free analysis on nc-manifolds.
The main result of the paper is Theorem~\ref{thm9.1}.

Before stating a somewhat special case of the theorem, let us informally describe some key notions
(precise definitions are in Sections~\ref{ncUniverse} and \ref{manifolds}). A subset of $\m^d$ is called an {\em nc set} if it is closed with respect to direct sums. A function $f$  defined on some subset of $\m^d$ is called {\em graded} if $f(x) \in \mn$ whenever $x \in \mn^d$.
A graded function on an nc set is called an {\em nc function} if it preserves direct sums and joint similarities. A graded function $f$ on an arbitrary set $D \subseteq \m^d$ is called {\em \anc}\ if it preserves joint similarities, and, in addition, there is a graded function $\hat{f}$ such that whenever $x$ and $x \oplus y$ are in $D$, then $f(x \oplus y) = f(x) \oplus \hat{f}(y)$.

\begin{thm}
\label{thmaa1}
Let $\s$ be a symmetric nc set in $\m^2$ that is open in the free topology.
There is subset $\s_{oo}$ of $\s$ and a polynomial map $\pi : \m^2 \to \m^3$
such that $\gg_{oo} = \pi(\s_{oo})$ is a topological nc-manifold in the Zariski-free topology, and
such that there is a canonical isomorphism between bounded symmetric nc functions defined on $\s$ and
bounded holomorphic functions on the manifold $\gg_{oo}$ that are \anc.
\be
\label{bigpicaa}
\begin{tikzcd}
  &
 \arrow[swap]{ld}{f_{oo}}  \s_{oo} 
\arrow{r}{\pi} \arrow{d}{\subseteq} & \gg_{oo}   \arrow[r, "F_{oo}"]& \m^1
\\
\m^1 \arrow[r, leftarrow, "f"]& \s &&
%\arrow{r}{\pi} \arrow{d}{\subseteq} & \gg_{o} 
%\arrow{d}{\subseteq} 
%& \arrow{lu}{f} \s %\arrow{r}{\pi}& \gg  
\end{tikzcd}
\ee
\end{thm}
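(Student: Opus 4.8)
\emph{Strategy.} The plan is to realise the orbit space of $\s$ under the coordinate--swap involution $\si(x,y)=(y,x)$ concretely inside $\m^{3}$, after deleting the locus where $\si$ fails to act freely. I would begin by fixing a symmetric polynomial map $\pi\colon\m^{2}\to\m^{3}$ of low degree; the natural candidate, and the one that dovetails with the Newton--Girard theme of the abstract, is the triple of first power sums
\[
\pi(x,y)=\bigl(\,x+y,\ x^{2}+y^{2},\ x^{3}+y^{3}\,\bigr).
\]
Since its components are free polynomials, $\pi$ is graded, commutes with direct sums, is equivariant under joint similarities, and satisfies $\pi\circ\si=\pi$.

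\emph{The good locus and the manifold.} Next I would let $\sdz$ be the interior, in the free topology, of the $\si$--invariant set of $(x,y)\in\s$ at which (i) the equation $\pi(x',y')=\pi(x,y)$ has, among such $(x',y')$, only the solutions $(x,y)$ and $(y,x)$, and (ii) the directional derivative $D\pi_{(x,y)}$ is injective at every matrix level. One checks that (ii) holds as soon as $x$ and $y$ have disjoint spectra, i.e.\ as soon as the Sylvester map $h\mapsto xh-hy$ is invertible, which is the non--vanishing of a single polynomial, and that (i) on that locus is an ``nc Newton--Girard'' statement: the first three power sums recover the unordered pair $\{x,y\}$ there. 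Thus $\sdz$ is free--open, $\si$--invariant, invariant under joint similarities, non--empty whenever $\s$ is, free--dense in $\s$ (its complement lies in the zero set of a polynomial not identically zero on $\s$), and $\si$ acts freely on it. Note that $\sdz$ need \emph{not} be closed under direct sums --- two disjoint--spectrum pairs may add to a pair with colliding spectra --- which is exactly why ``\anc'' rather than ``nc'' is the right notion downstairs. Now put $\gg_{oo}=\pi(\sdz)\subseteq\m^{3}$ with its relative Zariski--free topology, cover it by images $\pi(U)$ of nc--domains $U\subseteq\sdz$ small enough that $\pi|_{U}$ is injective (possible by (i)), and invoke an nc inverse--function theorem (applicable by (ii)) to see that each branch $(\pi|_{U})^{-1}\colon\pi(U)\to U\hookrightarrow\m^{2}$ is a holomorphic chart onto an nc--domain, with $\pi(U)$ open in $\gg_{oo}$. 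On an overlap the transition map is, by (i), locally either the identity or $\si$ --- both invertible holomorphic nc maps --- so $\pi|_{\sdz}$ is a $2$--to--$1$ covering and this atlas exhibits $\gg_{oo}$ as a topological (indeed analytic) nc--manifold in the Zariski--free topology, the charts being homeomorphisms by the corresponding properties of $\pi$ and $\si$.

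\emph{The correspondence.} Given a bounded symmetric nc function $f$ on $\s$, its restriction to $\sdz$ is constant on $\si$--orbits, hence by (i) descends through $\pi|_{\sdz}$ to a function $F_{oo}$ on $\gg_{oo}$; in the charts above $F_{oo}$ equals $f\circ(\pi|_{U})^{-1}$, so it is holomorphic and bounded by $\|f\|$, while similarity--equivariance and the \anc\ property of $F_{oo}$ follow from $f$ being nc on all of $\s$ together with a decomposition lemma saying that direct--sum splittings of $\pi(w)$, $w\in\sdz$, are matched, up to $\si$, by splittings of $w$. Conversely, from a bounded \anc\ holomorphic $F_{oo}$ on $\gg_{oo}$ one forms $f_{oo}:=F_{oo}\circ\pi$ on $\sdz$, which is bounded, holomorphic, and symmetric. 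The theorem then reduces to the assertion that these two assignments are mutually inverse --- equivalently, that restriction to $\sdz$ is a bijection from bounded symmetric nc functions on $\s$ onto bounded symmetric holomorphic functions on $\sdz$.

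\emph{The main obstacle.} Almost everything above is bookkeeping; the weight of the proof rests on that last reduction, and I expect it to be the hard part. Injectivity is free--density of $\sdz$ in $\s$ together with an identity theorem for nc functions. Surjectivity is a \emph{non--commutative Riemann removable--singularity theorem}: a bounded (symmetric, holomorphic) nc function on $\sdz$ must extend to a bounded nc function on $\s$. I would attack this level by level --- at matrix level $n$ the part of $\s\setminus\sdz$ is the zero set of a non--trivial polynomial, hence a hypersurface, and the classical Riemann extension theorem provides a holomorphic extension of each slice --- and then check, using uniqueness of the extension together with the equivariance already available on $\sdz$, that the assembled family is graded, respects direct sums, is similarity--equivariant, and (by uniqueness and symmetry of $\s$) symmetric. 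The genuine difficulties are making the nc structure survive the extension, and, upstream of that, choosing $\sdz$ and $\pi$ so that $\sdz$ is at once thin enough to carry a clean manifold structure and generic enough in $\s$ for the extension to go through; reconciling those two requirements is the crux of the argument.
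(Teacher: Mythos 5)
Your choice of $\pi$ is cosmetically different from the paper's (power sums rather than $(u,v^2,vuv)$), but the two are polynomially intertranslatable, so this is not where the trouble lies.  The first real issue is the definition of $\sdz$: you take the \emph{free} interior of the ``good locus,'' but the paper's corresponding set $\s_{oo}$ is constructed to be \emph{Zariski-free} open and is emphatically \emph{not} freely open --- the commutation varieties $V_{\ga\tau}$ that must be deleted are not freely closed (Proposition~\ref{prop2.50}), and Remark~\ref{whyzf} explains that this is precisely why the Zariski-free topology had to be introduced.  Indeed the free closure of $V_{\ga\tau}$ at level $n$ is essentially the locus where $[u,I_{\ga\tau}(x)]$ is \emph{singular}, and since $[u,I]$ for an involution $I$ with eigenspaces of unequal dimensions is automatically singular, the free interior of the good locus is empty, or nearly so, on large parts of each level $n\geq 3$.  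So your $\sdz$ is drastically smaller than the paper's $\s_{oo}$, its complement in $\s$ is not thin level-by-level, and the atlas you build would sit over the wrong base.  (Also, the injectivity condition on $D\pi$ that you identify as ``disjoint spectra of $x$ and $y$'' or invertibility of $h\mapsto xh-hy$ is not quite right: eliminating $k=-h$ from $D\pi(h,k)=0$ gives $(x-y)h+h(x-y)=0$, whose only solution is $h=0$ iff $x-y\in\q$, i.e.\ $\si(x-y)\cap\si(-(x-y))=\emptyset$ --- the condition defining the paper's $\s_o$ in Proposition~\ref{prop6.30}.)

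The deeper gap is the reduction you identify as ``the crux'': that restriction to $\sdz$ is a bijection onto bounded symmetric holomorphic functions there, to be proved by a level-by-level Riemann removable-singularity argument.  That argument does exactly one step of the paper's chain, namely the passage from $\s_o$ to $\s$, where $\s\setminus\s_o$ is at each level the zero set of a resultant polynomial, hence thin.  But it cannot handle the passage from $\s_{oo}$ to $\s_o$, which the paper does \emph{not} treat as a removable-singularity problem: it uses the \anc\ (``conditionally nc'') structure, extending $f_{oo}$ to $\widehat{f_{oo}}$ on $\widehat{\s_{oo}}$ via the direct-sum relation \eqref{eqpu83}, after showing $\s_o\subseteq\widehat{\s_{oo}}$ using Proposition~\ref{propUgam} (given $w\in\s_o$, pick $\la\in\ugam$ and note $\la\oplus w\in\ugam$).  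This is a global algebraic device, not a local analytic one, and it is precisely what makes the missing commutation varieties harmless without any thinness.  Without that idea, ``assembling the family so that the nc structure survives the extension,'' which you flag as the hard part, will indeed not go through --- the obstruction is not technical but structural, and the paper's two-stage decomposition $\s\supset\s_o\supset\s_{oo}$ with two different extension mechanisms is the key insight your outline is missing.  Finally, note that the nc inverse-function theorem you invoke applies to maps between nc universes of equal dimension, whereas $\pi\colon\m^2\to\m^3$ is not such a map; the paper sidesteps this by building the charts explicitly from the genuinely two-dimensional biholomorphisms $\omega_{\ga\tau}$ (Proposition~\ref{prop6.10}) and the bijections $\Phi_{\ga\tau}$.
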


The map $\pi$ is given by
\be
\label{eqjma1}
\pi(x^1,x^2) =
 (\tfrac 12(x^1+x^2), \tfrac 14 (x^1-x^2)^2, \tfrac 18 (x^1-x^2)(x^1+x^2)(x^1-x^2)).
 \ee
It is
generically 2-to-1 on $\m^2$, but there is a singular set on which it is many to one. This set is excluded in $\s_{oo}$,
and both $\s_{oo}$ and $\gg_{oo}$ can be given the structure of topological nc-manifolds with respect to a topology
called the Zariski-free topology, defined in Section~\ref{zariski}.
There is also an isomorphism between Zariski-freely holomorphic symmetric functions $f_{oo}$ on $\s_{oo}$ and
Zariski-freely holomorphic functions on $\gg_{oo}$ (with no assumption of boundedness needed)---this is 
Theorem~\ref{thm6.30}.

The bulk of the paper comprises the establishment of a suitable notion of topological nc-manifold and  construction of the manifolds $\gg_{oo}$.
%and its generalization $\g(\mathcal {S})$, associated with a symmetric domain $\mathcal{S} \subset \m^2$.  
 Topological nc-manifolds are defined in Section \ref{manifolds}. 
The nc-universe $\m^d$ defined by equation \eqref{2.10} is unlike $\r^d$ and $\c^d$ in that it admits numerous natural topologies -- a fact which adds an extra richness to the theory of topological nc-manifolds. 
 
 Section \ref{OneVariable} describes the basic theory of free holomorphic functions of a single variable.  In one variable such functions are determined by their action on scalars, and are given by the holomorphic functional calculus for matrices (Proposition \ref{prop3.40}).

In Section \ref{SquareRoot} we present a simple example of a one-dimensional free manifold -- a non-commutative version of the Riemann surface of the square root function. This theory is an essential component of the construction of the manifold $\g$. 

 In Section \ref{zariski} we define the Zariski-free topology, which is rather subtle, but seems necessary in order to avoid 
 certain singularities in the map $\pi$.
 In Section~\ref{UniversalDomain} we construct a Zariski-free nc-manifold $\g$, which is  the manifold which would
 be $\gg_{oo}$ in Theorem~\ref{thmaa1} if $\s$ were all of $\m^2$.
 Of course bounded symmetric  functions are not of interest when $\s=\m^2$, but there is a version of the theorem
 that does not require boundedness. This is Theorem~\ref{thment}.
\begin{thm}
 There is a canonical bijection between 

 {\rm (i)}
 symmetric nc functions $f$ that are freely holomorphic
 on $\m^2$, and
  
 {\rm (ii)} holomorphic functions $F_{oo}$ defined on the Zariski-free manifold $\g$ that are \anc\ and 
 have the property that for every $w \in \m^2$, there is a free neighborhood $U$ of $w$ such that
 $F_{oo}$ is bounded on $\pi(U) \cap \g$.
 \end{thm}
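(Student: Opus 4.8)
The plan is to make the bijection explicit. To a symmetric nc function $f$ that is freely holomorphic on $\m^2$ we associate the unique graded function $F_{oo}$ on $\g$ determined by $f = F_{oo}\circ\pi$, and to an $F_{oo}$ satisfying the conditions of (ii) we associate $f := F_{oo}\circ\pi$; we then check that each assignment lands where claimed and that the two are mutually inverse. Here $\pi : \m^2 \to \g$ is the lift of the polynomial map of \eqref{eqjma1} supplied by the construction of Section~\ref{UniversalDomain}. Everything is driven by two structural features of $\pi$. First, the components of \eqref{eqjma1} are free polynomials, so $\pi$ intertwines direct sums, $\pi(x \oplus y) = \pi(x) \oplus \pi(y)$, and intertwines joint similarities. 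Second, $\pi$ is invariant under the flip $\sigma(x^1,x^2) = (x^2,x^1)$, and the point of passing to the manifold $\g$ -- rather than to the naive image in $\m^3$, which is genuinely many-to-one on a singular set -- is that the fibers of $\pi : \m^2 \to \g$ are exactly the $\sigma$-orbits. Hence a graded function on $\m^2$ is constant on the fibers of $\pi$ if and only if it is symmetric, so a symmetric $f$ descends to a well-defined graded $F_{oo}$ on $\g$, while conversely $F_{oo}\circ\pi$ is automatically symmetric. The content of the theorem is thus that analytic and algebraic regularity is transported faithfully in both directions.

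For the implication (i) $\Rightarrow$ (ii), start from a symmetric $f$ freely holomorphic on $\m^2$ and verify that $F_{oo}$ is holomorphic on $\g$ chart by chart. On a chart arising from the non-singular locus this is immediate: shrinking the chart, $\pi$ restricts to a biholomorphism from a free-open subset $V$ of $\m^2$ onto the chart, and $F_{oo}$ is $f|_V$ composed with the inverse of $\pi|_V$. At a chart covering part of the singular locus one invokes the nc Riemann surface of the square root (Section~\ref{SquareRoot}) out of which $\g$ is built there, together with the single-variable holomorphic functional calculus (Proposition~\ref{prop3.40}) and the fact that a symmetric $f$, when written in the variables $\tfrac12(x^1+x^2)$ and $\tfrac12(x^1-x^2)$, is invariant under negating the second of these; one then checks that $F_{oo}$ extends holomorphically across the chart. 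That $F_{oo}$ is \anc\ follows from $\pi(x\oplus y) = \pi(x)\oplus\pi(y)$ together with the fact that $f$ preserves direct sums, the associated function $\hat F_{oo}$ agreeing with $F_{oo}$ on $\pi(\m^2)$. Finally, the local boundedness required in (ii) is inherited from local boundedness of $f$ in the free topology: given $w$, choose a free neighborhood $U$ of $w$ on which $f$ is bounded, and then $F_{oo}$ is bounded on $\pi(U)\cap\g$ since $F_{oo}\circ\pi = f$ and $\pi$ carries $U$ onto a subset of $\g$.

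For (ii) $\Rightarrow$ (i), set $f := F_{oo}\circ\pi$; symmetry is automatic from $\pi\circ\sigma = \pi$. That $f$ respects joint similarities follows because $\pi$ is a polynomial map and $F_{oo}$ respects similarities. That $f$ in fact respects direct sums -- so is a genuine nc function and not merely \anc{} -- follows from $\pi(x\oplus y) = \pi(x)\oplus\pi(y)$ and the \anc\ property of $F_{oo}$, once one knows that the associated function $\hat F_{oo}$ coincides with $F_{oo}$ on $\pi(\m^2)$; this in turn is obtained by applying the \anc\ identity to $\pi(w)\oplus\pi(y)$ with $w$ a $1\times 1$ point and comparing it with $\pi(y)\oplus\pi(w)$ via the block-swap similarity, which both $\pi$ and $F_{oo}$ respect. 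Being freely holomorphic then amounts to local boundedness in the free topology together with holomorphy: the former is precisely the hypothesis on $F_{oo}$ carried through the continuity of $\pi$, while holomorphy of $f$ near a generic point is clear from holomorphy of $F_{oo}$ and the fact that $\pi$ is a polynomial map, and near the singular locus it again follows from the way $\g$ is assembled there from square-root charts. That the two assignments are mutually inverse is then immediate, and the bijection is canonical because no choices intervene.

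The hard part will be everything that happens over the singular set of the map \eqref{eqjma1} -- the set on which it is many-to-one rather than 2-to-1 -- which is, after all, the reason the Zariski-free topology and the nc Riemann surface of the square root were introduced in the first place. Away from that set the argument reduces to a standard branched-covering and descent computation. Across it, one must check that the square-root charts of $\g$ genuinely render $F_{oo}$ holomorphic (and that holomorphy pulls back), and, crucially, that they do so compatibly with direct sums -- that the nc-manifold structure of $\g$ (Section~\ref{manifolds}), the lifted projection $\pi$, and the \anc\ condition all cohere on the singular locus. Establishing this coherence is the technical core of the proof, and is where the constructions of Sections~\ref{zariski}, \ref{SquareRoot} and \ref{UniversalDomain} do their work.
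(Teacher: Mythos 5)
Your proposal rests on a premise that the paper explicitly warns against: you treat $\pi$ as a well-defined map $\m^2 \to \g$ and then define the bijection by composition and descent. But $\pi$, as the lift into $\m^3$, does \emph{not} carry all of $\m^2$ into $\g$. A point $\pi(w) = (u, v^2, vuv)$ lies in $\g$ only if $v^2$ is nonsingular and a further genericity condition (the non-commutation with $I\subgt(v^2)$ built into $\ugam$) holds; the paper states this just before Theorem~\ref{thment}: ``thus $\pi$ does not map $\m^2$ to $\g$.'' Consequently your assignment $f := F_{oo}\circ\pi$ does not define $f$ on all of $\m^2$, and in the other direction there is no global factorization $f = F_{oo}\circ\pi$ to speak of. The same misconception feeds the claim that ``the fibers of $\pi : \m^2 \to \g$ are exactly the $\sigma$-orbits'' and that one must ``check that $F_{oo}$ extends holomorphically across'' charts covering the singular locus -- but $\g$ has no charts over the singular locus: the whole point of cutting out the varieties $V\subgt$ via the Zariski-free topology is that $\g$ lives entirely over the nonsingular, generic subset $\s_{oo}$, not over all of $\m^2$.

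Because of this, the transfer has to go through intermediate stages rather than being a single descent. The paper's actual proof (Theorem~\ref{thm9.2}, specializing Theorem~\ref{thm9.1}) runs through five classes of functions on a chain of spaces $\s \supseteq \s_o \supseteq \s_{oo}$ and $\gg_{oo}, \gg_o$, with the key nontrivial steps being: (a) extension from $f_o$ on $\s_o$ to $f$ on $\s$ by a thin-set removable-singularity argument (since $\s\setminus\s_o$ is a thin set at each level, and $f_o$ is locally bounded), (b) reconstruction of $f_o$ from $f_{oo}$ via the auxiliary function $\widehat{f_{oo}}$ from the \anc\ structure, and (c) well-definedness of the descent $f_o\to F_o$, which needs Lemma~\ref{lempu3} (approximation by symmetric polynomials and the rational identity $v u^j v = (vuv)(v^2)^{-1}\cdots(vuv)$) rather than merely observing $\pi\circ\sigma=\pi$. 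None of this machinery appears in your sketch, and it cannot be dispensed with: without (a), there is no way to get back to a function on all of $\m^2$ from data that lives only over the generic set. Your argument would need to be rebuilt around the restriction-extension structure of the paper to be viable.
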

 
In Sections
 \ref{SymmetricFunctions} and \ref{secj} we prove the main results of the paper, Theorems~\ref{thm9.1}, \ref{thm9.2} and \ref{thm6.30}.
 Finally, in Section~\ref{girard},  we give a non-commutative version of the classical Newton-Girard formulae for power sums in terms of elementary symmetric functions.  There is a significant difference in the non-commutative context: there is no longer a finite algebraic basis for the
algebra of symmetric polynomials.  We can, however, derive  explicit iterative formulae for writing the symmetric sums
 \[
 p_n \ = \ x^n + y^n, 
\]
where  $n \in {\mathbb Z}$,
 as {\em rational} functions composed with the map $\pi$ of equation \eqref{eqjma1}.

\section{Mappings and topologies on $\m^d$} \label{ncUniverse}

\subsection{Nc-functions}
In this section we describe the basic objects of free analysis.  Firstly, the {\em nc-sets} in the nc-universe $\m^d$ of equation \eqref{2.10} are the sets that are closed under direct sums.  They are are the natural domains of definition of {\em nc-functions}, which  are the $\m^1$-valued functions that are graded, preserve direct sums and respect similarity transformations.  We now explain the precise meaning of these terms.

Let $\N$ denote the set of positive integers and, for $n\in\n$,
let
$$
\invn = \set{M \in \mn}{M \text{ is invertible}}.
$$
\index{$\invn$}
%and 
%$$\unin = \set{M \in \mn}{M \text{ is unitary}}.$$
For $x_1 = (x_1^1,\ldots,x_1^d) \in \m_{n_1}^d$ and $x_2 = (x_2^1,\ldots,x_2^d) \in \m_{n_2}^d$, we define $x_1 \oplus x_2 \in \m_{n_1+n_2}^d$  by identifying $\c^{n_1} \oplus \c^{n_2}$ with $\c^{n_1 + n_2}$ and direct summing $x_1$ and $x_2$ componentwise, that is,
$$x_1 \oplus x_2 = \Big(x_1^1 \oplus x_2^1, \ldots, x_1^d \oplus x_2^d\Big).$$
Likewise, if $x = (x^1,\ldots,x^d)\in \m_n^d$ and $S \in \m_n$ is invertible, we define $S^{-1} x  S\in \m_n^d$ by
$$
S^{-1} x S = (S^{-1} x^1 S, \ldots, S^{-1} x^d S).
$$
\begin{defin}\label{def2.10}
If $D \subseteq \m^d$ we say that $D$ is an \emph{nc-set} if $D$ is closed with respect to the formation of direct sums,
\index{nc!set}
that is:

 For all $n_1,n_2 \in \N$ and all  $x_1 \in D \cap \m_{n_1}^d, \,  x_2 \in D \cap \m_{n_2}^d$,
\be\label{2.20}
x_1 \oplus x_2 \in D \cap \m_{n_1+n_2}^d, \notag
\ee
\end{defin}

%\subsection{nc-Functions and mappings}
\begin{defin}\label{def2.20}
An \emph{nc-function in $d$ variables} is a function $f$ whose domain is an nc-set $D \subseteq \m^d$, whose codomain is $\m^1$, and which satisfies
\index{nc!function}
\begin{enumerate}
\item[\rm (1)] for all $n\in \N$ and  all $x \in D \cap \mn^d$
\be\label{2.50}
 f(x) \in \mn,\notag
\ee
\item[\rm (2)] for all $n_1,n_2 \in\N$ and all $x_1 \in D \cap \m_{n_1}^d $ and $ x_2 \in D \cap \m_{n_2}^d$
\be\label{2.60}
 f(x_1 \oplus x_2) = f(x_1) \oplus f(x_2), \text{ and} \notag
\ee
\item [\rm (3)]  for all $n\in\N$, all $x \in D \cap \mn^d$ and all $s \in \invn$, if  $s^{-1} x s \in D$ then
\be\label{2.70}
f(s^{-1}x s) = s^{-1}f(x)s. \notag
\ee
\end{enumerate}
A function $f$ that has property {\rm (1)} is said to be {\em graded}.
\index{function!graded}
\end{defin}

\begin{defin}\label{def2.30}
Let $D_1 \subseteq \m^{d_1}$ and $D_2 \subseteq \m^{d_2}$ be nc-sets and let $F:D_1 \to D_2$. We say that $F$ is an \emph{nc-mapping}
\index{nc!mapping}
 if there exist nc-functions $f_1,f_2,\ldots,f_{d_2}$ on $D_1$ such that
\[
F(x) = (f_1(x),f_2(x),\ldots,f_{d_2}(x))
\]
for all $x \in D_1$. If, in addition, $F$ is a bijection and both $F$ and $F^{-1}$ are nc-mappings then we say that $F$ is an \emph{nc-isomorphism}.
\end{defin}
% Abundant examples of 
% %nc-domains and
% nc-functions and nc-maps can be found in many papers, among them \cite{po08,hkm11b,kvv14,pas14,ptd17}.  
 J. Taylor proved \cite{tay70a} that properties (2) and (3) of Definition~\ref{def2.20} are equivalent to the single property
that $f$ respects intertwinings, in the following sense:

 (4)   {\em for all $n_1,n_2 \in\N$, for $x_1 \in D \cap \m_{n_1}^d $, for $ x_2 \in D \cap \m_{n_2}^d$ and all
 $n_2$-by-$n_1$ matrices $L$ satisfying $L x_1^j = x_2^j L$ for $j=1, \dots, d$, 
\be\label{2.607}
L  f(x_1 )=  f(x_2) L.
\ee
}

\subsection{Topologies on the nc-universe}
\label{subsectopnc}

For each $n$, $\mn$ carries the natural topology induced by the $n \times n$ matrix norm. This topology gives rise to the product topology on $\mn^d$. Using equation \eqref{2.10}, we can endow $\m^d$ with a topology whereby
a set $D\subseteq \m^d$ is open if $D \cap \mn^d$ is open in $\mn^d$ for each $n\in\N$.
This topology has been called the \emph{finitely open topology,} the \emph{disjoint union topology}
and the \emph{coproduct topology}. 
We shall call it the finitely open topology, abbreviated to $\dut$
\index{$\dut$}
\index{topology!disjoint union}
\index{topology!finitely open}
\begin{defin}\label{def2.40}
 Let $\tau$ be a topology on $\m^d$.  
 We say that $\tau$ is an  \emph{nc topology} or equivalently an \emph{admissible topology}
\index{nc!topology}
\index{topology!admissible}
 if $\tau$ has a basis consisting of finitely open nc-sets.
\end{defin}

As we have mentioned, the nc-universe $\m^d$ admits several natural topologies \cite{amif16}.   In this paper we shall consider nc-manifolds  based on $\m^d$ endowed with  three different topologies, the fine, free and Zariski-free topologies. 
The {\em fine} topology is the topology generated by {\em all}  finitely open nc-sets.
\index{topology!fine}

\begin{defin}
A domain in $\m^d$ is a finely open set.
\end{defin}
\index{domain}
Domains do not need to be nc-sets.   For example the set 
\be
\label{eqb1}
\mathcal{Q}=\{x\in \m^1: \sigma(x)\cap\sigma(-x)=\emptyset\}
\ee
 is a  domain (see Proposition \ref{prop4.10} below), indeed a free domain (see Definition \ref{def2.50}),  but is not an nc-set.

We now describe the free topology. It was introduced in \cite{amfree} in the context of a non-commutative Oka-Weil approximation theorem.  
%The Zariski-free topology, which requires the concept of a holomorphic function for its definition, will be discussed in Subsection \ref{zariski} below.

 For $\delta = [\delta_{ij}]$ an $I\times J$ matrix of free polynomials, let $B_\delta \subseteq \m^d$ be defined by
\[
\gdel = \set{x\in \m^d}{\norm{[\delta_{ij}(x)]} < 1}.
\]
Observe that if $\delta_1$ and $\delta_2$ are matrices of free polynomials, then
\[
B_{\delta_1} \cap B_{\delta_2} = B_{\delta_1 \oplus \delta_2}.
\]
As a consequence of this fact, the collection of sets of the form $B_\delta$ is closed with respect to finite intersections and thus forms a basis for a topology.
\begin{defin}\label{def2.50}
The \emph{free topology on $\m^d$} is the topology  for which a basis is the collection of sets of the form $\gdel$ where $\delta$ is a matrix of free polynomials in $d$ variables. A set that is open in the free topology is a \emph{free domain}.
\end{defin}
\index{topology!free}
Since $B_\delta$ is an nc-set and is open in the \dut\ topology, the free topology is admissible. 

 The free topology is much coarser than the \dut\ topology, so much so that while the \dut\ topology is Hausdorff,  the topology induced on $\mn^d$ by the free topology is not even $T_1$ for any $n\geq 2$ and $d\geq 1$.
One way to see this is to observe that free open sets are invariant under unitary conjugation.

\begin{prop}\label{frconn}
Any freely open nc-set is freely path-connected.
\end{prop}
\begin{proof}
Let $U$ be a freely open nc-set in $\m^d$.
  Consider $x, \ y\in U$; then $x\oplus y \in U$. Define $h: [0,1] \to U$ by
\[
h(t) = \threepartdef{x}{0\leq t< \half}{x\oplus y}{t=\half}{y}{\half < t\leq 1.}
\]
$h$ is constant on the intervals $[0,\half)$ and $(\half, 1]$, and so freely continuous on those intervals.  $h$ is also freely continuous at the point $\half$.  For consider any basic free neighborhood $B_\ga$ of the point $h(\half)=x\oplus y$ in $U$.  Then we have $\|\ga(x\oplus y)\| < 1$, and so 
$\|\ga(x)\|<1$ and $\|\ga(y)\| <1$, which is to say that $h(t)\in B_\ga$ for all $t$ in the  neighborhood $[0,1]$ of $\half$.  Thus $h$ is continuous at $\half$, and so $h$ is a continuous path in $U$ such that $h(0)=x$ and $h(1)=y$.
\end{proof}
\begin{cor}\label{mdconn}
$\m^d$ is connected and locally connected in the free topology.
\end{cor}
\begin{proof}
Path-connectedness implies connectedness.  To say that $\m^d$ is locally connected means that every point in $\m^d$ has a neighborhood base of connected sets.  The sets $\gdel$ comprise such a base.
\end{proof}
\begin{remark}\label{slightlymore} \rm
The proof of Proposition \ref{frconn} shows slightly more.  For any freely open set $U$ and any points $x,y\in U$ such that $x\oplus y \in U$, there is a freely continuous path in $U$ from $x$ to $y$.
\end{remark}
\black
%The {Zariski-free topology} is obtained by adjoining to the free topology sets that are locally complements of free varieties.  Detailed definitions will be given in Section \ref{zariski}.  

\section{Holomorphy with respect to admissible topologies} \label{holomorphic}
  The notion of holomorphy for a function on $\m^d$ depends on the chosen topology of $\m^d$. 
\begin{defin}\label{def2.60}
Let $\tau$ be an admissible topology on $\m^d$. We say that a graded function $f$ defined on a set $D \subseteq \m^d$ is a \emph{$\tau$-holomorphic function} if
\index{function!$\tau$-holomorphic}
\begin{enumerate}
\item [\rm (1)] $D$ is open in the topology $\tau$;
\item [\rm (2)] $f$ is \emph{$\tau$-locally nc}, that is, for each $x \in D$ there exists an nc set $U \in \tau$ such that $x\in U \subseteq D$ and $f|U$ is an nc-function;
\index{function!$\tau$-locally nc}
\item [\rm (3)] $f$ is \emph{$\tau$-locally bounded}, that is, for each $x \in D$, there exists a $ \tau$-neighborhood $U$ of $x$ such that $f|U$ is bounded.
\index{function!$\tau$-locally bounded}
\end{enumerate}
\end{defin}
At first sight this is a surprising definition.  It is at least partially justified by its relation to the following notion of analyticity.
\begin{defin}\label{defanalytic}
A graded function $f: D\to \m^1$, where $D\subseteq \m^d$, is {\em analytic} if $D$ is finitely open and, for every positive integer $n$,
the restriction of $f$ to $D\cap \m^d_n$ is analytic in the usual sense of several complex variables.
\end{defin}
\index{analytic}
\index{function!analytic}

\begin{prop}\label{analytic}
For any admissible  topology $\tau$ on $\m^d$, every $\tau$-holomorphic function $f$ on a domain $D$ in $\m^d$ is analytic.
\end{prop}
\begin{proof}
Consider a point $x\in D\cap \mn^d$.  Since $\ph$ is $\tau$-holomorphic we may choose a $\tau$-neighborhood $U$ of $x$ in $D$ on which $\ph$ is an nc-function and another $\tau$-neighborhood $V$ on which $\ph$ is bounded.  Since the topology $\tau$ is admissible, it is an nc-topology and so we may assume that $V$ is an nc-set.  Then $U\cap V$ is an nc-set and is a $\tau$-neighborhood of $x$ in $D$ on which $\ph$ is a bounded nc-function.  Theorem 4.6 of \cite{amfree} asserts that under these hypotheses $\ph$ is analytic on $U\cap V\cap \mn^d$.  Thus $\ph$ is analytic on some neighborhood of an arbitrary point of $D\cap \mn^d$, and therefore $\ph$ is analytic on $D\cap \mn^d$.
\end{proof}

\blue
We shall say that an nc set $U$ is hereditary if whenever $x \oplus y$ is in $U$ then $x$ and $y$ are in $U$.
An admissible hereditary topology is one that has a basis consisting of finitely open hereditary nc-sets.
\black

\begin{prop} \label{holnc}
Let $\tau$ be an admissible herditary topology on $\m^d$, let $D$ be a $\tau$-open set and let $f$ be a $\tau$-holomorphic function on $D$.
If $D$ is an nc set then $f$ is an nc function.
\end{prop}
\begin{proof}
Consider $z,w\in D$.  Since $z\oplus w \in D$, $f(z\oplus w)$ is defined.
By Definition \ref{def2.60}(2) there is a hereditary  nc set $U \in\tau$ such that $z\oplus w \in U \subseteq D$ and $f\left| U\right.$ is an nc function.
It follows that $f(z\oplus w)=f(z)\oplus f(w)$.
\end{proof}

The following statement is routine to check.
\begin{prop}\label{itsanalgebra}
Let $\tau$ be an admissible topology on $\m^d$ and let $D$ be a $\tau$-open set.  The set of $\tau$-holomorphic functions on $D$ is an algebra with respect to pointwise operations.   The set $H^\infty_\tau(D)$ of bounded $\tau$-holomorphic functions on $D$ is a Banach algebra with respect to pointwise operations and the supremum norm
\[
 \|f\|_\infty = \sup_{x\in D} \|f(x)\|.
\]
\end{prop}
\black
The case where $\tau$ is the free topology will be of special interest here, and in this case we refer to $\tau$-holomorphic functions as \emph{free} (or {\em freely})    \emph {holomorphic functions}. 
\index{function!freely holomorphic}
Such functions are particularly well behaved on account of the following theorem \cite{amfree} (it is also proved in \cite{bmv16b} and \cite{amfreeII}).
\begin{thm}\label{thm2.10}
Let $D \subseteq \m^d$ be a free domain.  A function  $f:D \subseteq \m^d\to \m^1$ is a free holomorphic function if and only if $f$ can be locally uniformly approximated by free polynomials. That is, $f$ is freely holomorphic if and only if for each $x \in D$ there exists a free domain $U$ satisfying $x \in U \subseteq D$  with the property that for each $\epsilon >0$ there exists a free polynomial $p$ such that
\[
\sup_{y \in U} \norm{f(y)-p(y)} < \epsilon.
\]
\end{thm}

 To prove that freely holomorphic functions are freely continuous we need the following simple observation.  
In the Lemma and elsewhere the norm $\|\cdot\|$ on $\mn^d$ is given by
\[
\|x\| = \max \{ \|x^1\|, \dots,\|x^d\| \}
\]
where $\|x^j\|$ is the standard $C^*$ norm on $\mn$.
\begin{lem}\label{Lip}
A free polynomial is freely locally Lipschitz.  That is, if $p$ is a free polynomial in $d$ variables and $z\in\m^d$ then there exist a free neighborhood $G$ of $z$ and a positive constant $K$ such that
\[
\norm{p(x)-p(y)}\leq  K\norm{x-y}
\]
for all $x, y \in G$.
\end{lem}
\begin{proof}
It is enough to prove the statement in the case that $p$ is the monomial
\[
p(x)= x^{r_1}x^{r_2}\dots x^{r_k}
\]
for some $k\in \n$ and $ r_1,\dots, r_k \in \{1,\dots, d\}$.  Let 
\[
\delta(x)= (1+\|z\|)^{-1}\diag\{x^1,\dots,x^d\}.
\]
Then $x\in B_\delta$ if and only if $\|x^j\| < 1+\|z\|$ for $j=1,\dots,d$.  Thus $z\in B_\delta$.  We have,
for any $x,y \in\m^d$,
\begin{align*}
p(x)-p(y)&=(x^{r_1}-y^{r_1})x^{r_2}\dots x^{r_k}+ y^{r_1}(x^{r_2}-y^{r_2}) x^{r_3}\dots x^{r_k} +\\
	& \hspace{1cm} \dots + y^{r_1}\dots y^{r_{k-1}}(x^{r_k} - y^{r_k}).
\end{align*}
Hence, for every $x,y\in B_\delta$,
\[
\|p(x)-p(y)\| \leq k(1+\|z\|)^{k-1} \|x-y\|.
\]
Thus $p$ is freely locally Lipschitz.
\end{proof}
\begin{prop}\label{prop2.30}
Let $U$ be a free domain in $\m^d$.  A function $f:U \to \m^1$ is  freely holomorphic if and only if $f$ is a freely locally nc-function (as in Definition {\rm \ref{def2.60} (2)}) and $f$ is continuous when $U$ and $\m^1$ are equipped with the free topologies.
\end{prop}
\begin{proof}
First assume that   $U$ is an nc-set,  $f$ is an nc-function and $f$ is continuous when $D$ and $\m^1$ are equipped with the free topologies.   
In the light of Definition \ref{2.60}  it suffices to show that $f$ is locally bounded. Fix $z \in U$. As $f$ is assumed continuous and $U$ is open in the free topology, there exists a free  matricial  polynomial $\delta$ such that $z \in B_\delta \subseteq U$ and such that
\[
f(B_\delta) \subseteq \set{x\in \m^1}{ \norm{x} < \norm{f(z)} + 1}.
\]
This proves that $f$ is locally bounded.

Now assume that $U$ is a free domain in $\m^d$ and $f$ is a free holomorphic function on $U$. The continuity of $f$ will follow if we can show that $f^{-1} (B_q)$ is freely open in $\m^d$ whenever $q$ is a square matrix of polynomials in one variable.

Fix $z \in f^{-1} (B_q)$. By Theorem \ref{thm2.10} there exists a square matrix $\delta$ of free polynomials such that $f$ is bounded on $B_\delta$ and a sequence of free polynomials $p_1,p_2,\ldots$ such that $z \in B_\delta \subseteq U$ and
\be\label{}
\lim_{k \to \infty}\sup_{x \in B_\delta}\norm{f(x) - p_k(x)} = 0.
\ee
It follows that the polynomials $p_k$ are uniformly bounded on $B_\delta$.
By Lemma \ref{Lip} $q$ is locally Lipschitz on $U$, and hence
\be\label{3.20a}
\lim_{k \to \infty}\sup_{x \in B_\delta}\norm{q(f(x)) - q(p_k(x))} = 0.
\ee
Fix a strictly decreasing sequence $t_1,t_2,\ldots$ with $t_k >1$ for all $k$ and $t_k \to 1$ as $k \to \infty$. Use  the property \eqref{3.20a} to construct inductively a sequence $k_1,k_2,\ldots$ of positive integers such that
\begin{align*}
B_{t_1(q\circ f)} \cap B_\delta
&\subseteq B_{t_2 (q \circ p_{k_1})}\cap B_\delta\\
 &\subseteq B_{t_3(q\circ f)} \cap B_\delta \\
 &\subseteq B_{t_4(q \circ p_{k_2})}\cap B_\delta\\
  &\subseteq B_{t_5(q\circ f)} \cap B_\delta\\
  & \ldots
\end{align*}
The construction ensures that
\[
B_{q\circ f} \cap B_\delta= \bigcup_{i=1}^\infty (B_{t_{2i-1} (q\circ f)} \cap B_\delta)= \bigcup_{\ell=1}^\infty (B_{t_{2\ell}(q \circ p_{k_\ell})} \cap B_\delta).
\]
The last union in this formula is a union of basic freely open sets and so is a freely open set in $\m^d$.   Hence $B_{q\circ f} \cap B_\delta$ is freely open.  Since $B_{q\circ f}=f^{-1}(B_q)$ we have $z \in B_{q\circ f} \cap B_\delta \subseteq f^{-1}(B_q)$ and therefore $f^{-1}(B_q)$ is open in $\m^d$.
\end{proof}
Similar results on analyticity of nc functions are known in several contexts, for example \cite[Chapter 7]{kvv14}.
\begin{defin}\label{def2.70}
Let $\tau$ be an admissible topology on $\m^{d_1}$ and let $D \subseteq \m^{d_1}$ be a $\tau$-open set.
We say that $F:D\to \m^{d_2}$ is a \emph{$\tau$-holomorphic mapping} if there exist $\tau$-holomorphic functions $f_1,f_2,\ldots,f_{d_2}$   on $D$ \black such that
\[
F(x) = (f_1(x),f_2(x),\ldots,f_{d_2}(x))
\]
for all $x \in D$. 

Let $D_1,D_2$ be $\tau$-open sets in $\m^d$.  We say that $F:D_1 \to D_2$ is a 
\emph{$\tau$-biholomorphic mapping} if $F$ is a bijection and both $F$ and $F^{-1}$ are $\tau$-holomorphic mappings.
\end{defin}
\begin{prop}\label{prop2.40}
Let $F=(f_1,\ldots,f_{d})$ be a freely locally nc mapping defined on a free open set $D \subseteq \m^{d}$. The following  statements are equivalent.
\begin{enumerate}
\item [\rm (1)] $F$ is a freely holomorphic map; 
\item [\rm (2)] $F$ is freely locally bounded;
\item [\rm (3)] $F$ is freely continuous.
\end{enumerate}
In particular, $F$ is a freely biholomorphic map if and only if $F$ is a homeomorphism in the free topology.
\end{prop}
This result is proved in much the same way as  Proposition \ref{prop2.30}.

\section{Nc-manifolds and topological nc-manifolds} \label{manifolds}

\subsection{Nc-manifolds}

In this section we  define nc-manifolds, topological nc-manifolds and free manifolds.
% and Zariski-free manifold.
The concept of an nc-manifold is the generalization of the concept of an nc-set as defined in Definition \ref{def2.10} and as such is purely algebraic in nature.
The notions of charts, atlases and transition functions transfer directly from the classical theory, only in the new context a chart is a bijective map from a subset of an nc-manifold to a subset of $\m^d$; in the case of a {\em topological} nc-manifold the image of a chart is an open set in $\m^d$ with respect to some specified topology.

\begin{defin}\label{defn2.30}
If $X$ is a set, then we say that $\alpha$ is a \emph{$d$-dimensional co-ordinate patch} or {\em chart}
\index{chart}
\index{co-ordinate patch}
 on $X$ if $\alpha$ is a bijection from a set $U_\alpha \subseteq X$ to a set $D_\alpha \subseteq \m^d$.  If $\alpha$ and $\beta$ are a pair of $d$-dimensional co-ordinate patches on $X$ with $U_\alpha \cap U_\beta \ne \emptyset$, then we define the \emph{transition map} $T_{\alpha\beta}$ by
\index{map!transition}
\be\label{2.80}
 T_{\alpha\beta}:\alpha(U_\alpha \cap U_\beta) \to \beta(U_\alpha \cap U_\beta),
 \ee
 \be\label{2.90}
  T_{\alpha\beta}(x) = \beta \circ \alpha^{-1} (x),\qquad x \in \alpha(U_\alpha \cap U_\beta).
 \ee

If $X$ is a set  then we say that $\a$ is a \emph{$d$-dimensional nc-atlas for $X$} 
\index{nc!atlas}
if $\a$ is a collection of $d$-dimensional co-ordinate patches on $X$,
\be\label{2.98}
\bigcup_{\alpha \in \a} U_\alpha = X
\ee
and, for all $\alpha,\beta \in \a$, 
\begin{enumerate}
\item [\rm (1)] $\alpha(U_\alpha \cap U_\beta)$ is a union of nc-sets, and
\item [\rm (2)] for every nc-subset $W$ of $\alpha(U_\alpha \cap U_\beta)$, the restriction of $T_{\al\beta}$ to $W$ is an nc-mapping. 
\end{enumerate}

\begin{comment}
%\be\label{2.100}
%U_\alpha \cap U_\beta   \ne \emptyset \implies T_{\alpha\beta} \text{ is an  nc-mapping.}
%\ee

%Note that the definition implies that $\alpha(U_\alpha \cap U_\beta)$, being the domain of  $T_{\alpha\beta}$, is an nc-set  in $\m^d$ for all $\alpha,\beta\in\a$.
\end{comment}

A \emph{$d$-dimensional nc-manifold} is an ordered pair $(X,\a)$ where $X$ is a set and $\a$ is a $d$-dimensional nc-atlas for $X$.
\index{nc!manifold}

If $(X_1,\a_1)$ and $(X_2,\a_2)$ are nc-manifolds of dimensions $d_1, \ d_2$ respectively, then a map $f:X_1 \to X_2$ is an {\em nc-map} (or {\em nc-mapping}) if, for every $\al\in\a_1$  (with domain $U_\al$ and codomain $D_\al$) and $\beta\in\a_2$ (with domain $V_\beta$ and codomain $E_\beta$) 
\begin{enumerate}
\item[\rm  $(1')$]  the set
\[
W_{\al\beta} \df \al(U_\al   \cap f^{-1}(V_\beta))
\]
is a union of nc-sets, and
\item[\rm $(2')$]  for every nc-subset $W$ of $W_{\al\beta}$
\[
\beta \circ f \circ (\al^{-1}| W) : W \to E_\beta  \subset \m^{d_2}
\]
is an nc-map.
\end{enumerate}

If  $(X_1,\a_1)$ and $(X_2,\a_2)$ are  nc-manifolds then an {\em nc-isomorphism} from $X_1$ to $X_2$ is a bijective map $f:X_1\to X_2$ such that both $f$ and $f^{-1}$ are nc-maps.
\end{defin}

\begin{remark}\label{astar} \rm
1.  If $(X,\a)$ is an nc-manifold then, for every $\al\in\a, \, D_\al$ is a union of nc-sets.   This is a consequence of condition (1) in the definition of an nc-atlas above and the identity
\[
D_\al = \bigcup_{\beta\in\a} \al(U_\al \cap U_\beta).
\]

\noindent 2.  If $(X,\a)$ is an nc-manifold then there is an nc-atlas $\a^*$ on $X$ such that the identity map $\id{X}$ is an nc-isomorphism from $(X,\a)$ to $(X,\a^*)$ and, for every $\al\in\a^*$, the range of $\al$ is an nc-set (not merely a {\em union} of nc-sets).  Indeed, we may define $\a^*$ to be the set of all maps $\al|V$ for some $\al\in\a$ and $V\subset U_\al$ such that $\al(V)$ is an nc-set.  It would be possible to develop the theory of nc-manifolds with the assumption that the ranges of charts are always nc-sets, but in the topological context it is convenient to allow them to be merely {\em unions} of nc-sets.
\end{remark}

To prove that $\id{X}:(X,\a) \to (X,\a^*)$ is an nc-isomorphism, consider any $\al\in \a$ and $\ga\in\a^*$, say $\ga= \beta|V$ where $V\subset U_\beta$ and $\beta(V)$ is an nc-subset of $D_\beta$.  Then $W_{\al\ga} = \al(U_\al\cap V)\subset \al(U_\al \cap U_\beta)= T_{\beta\al}(\beta(V))$.  Since $\beta(V)$ is an nc-set and $T_{\beta\al}$ is an nc-map, it follows that $W_{\al\ga}$ is an nc-set.  Moreover, for every nc-subset $W$ of $W_{\al\ga}=\al(U_\al \cap V)$,
\[
\beta\circ \id{X} \circ (\al^{-1}|W) = T_{\al\beta}
\]
is an nc-map.  Similarly, reversing the roles of $\a$  and $\a^*$, we obtain, for any nc-subset $W$ of $W_{\ga\al}=\beta(U_\al\cap V)$
\[
\al\circ \id{X}\circ (\ga^{-1}| W) = T_{\beta\al}| W
\]
is also an nc-map.  Thus $\id{X}$ is an nc-isomorphism with respect to $\a$ and $\a^*$.

\subsection{Topological nc-manifolds}
We now consider the case where $X$, in addition to carrying the structure of an nc-manifold, is a topological space.  $X$ will be a topological nc-manifold if it is locally homeomorphic to an open set in some $\m^d$.   Since there is no one ``correct'' topology to place on $\m^d$,  we first fix a topology $\tau$ on $\m^d$. We assume that $\tau$ is an admissible topology in the sense of  Definition \ref{def2.40}.

If $X$ is a set and $\t$ is a topology on $X$, then we say that $\alpha$ is a \emph{topological $d$-dimensional nc-co-ordinate patch on $(X,\t)$ with respect to $\tau$}  if $\alpha$ is a $d$-dimensional nc-co-ordinate patch on $X$ and, in addition,
\be\label{2.110}
\alpha:U_\alpha \to D_\alpha \text{ is a homeomorphism}.
\ee
Here $U_\alpha \subseteq X$ is a $\t$-open set equipped with the relative topology induced by $\t$ and $D_\alpha \subseteq \m^d$ is a $\tau$-open nc-set equipped with the relative topology induced by $\tau$.   Since $\tau$ is locally nc, $D_\al$ is a union of open nc-sets.

If $(X,\t)$ is a topological space then we say that $\a$ is a \emph{$d$-dimensional topological nc-atlas
\index{nc!atlas!topological}
 for $(X,\t)$ with respect to $\tau$ } if $\a$ is a collection of topological $d$-dimensional nc-co-ordinate patches on $(X,\t)$ and $\a$ is an nc-atlas for $X$  with respect to $\tau$  in the sense of Definition \ref{defn2.30}.

\begin{defin} \label{def2.32}
Let  $\tau$ be an admissible topology on $\m^d$.
A \emph{topological $d$-dimensional nc-manifold  with respect to $\tau$}
\index{nc!manifold!topological}
  is an ordered triple $(X,\t,\a)$ where $X$ is a set, $\t$ is a topology on $X$ and $\a$ is a $d$-dimensional topological nc-atlas for $(X,\t)$  with respect to $\tau$.  

In the special case where $\tau$ is the free  topology we say that $X$ is a {\em free  manifold}.
\end{defin}
\index{nc!manifold!free}
 The manifolds studied in analysis usually have some smoothness property, such as $C^\infty$ or analyticity, whereas the topological nc-manifolds introduced in Definition \ref{def2.32} are not assumed smooth.    It is simple to extend the notion of topological nc-manifold further to bring in appropriate notions of smoothness.
\begin{defin} \label{holman}
Let $\tau$ be an admissible topology on $\m^d$. A {\em $d$-dimensional holomorphic nc-manifold  with respect to $\tau$} is a topological $d$-dimensional nc-manifold $(X,\t,\a)$ with respect to $\tau$ such that, for all $\alpha,\beta\in\a$, the transition map $T_{\alpha\beta}$ is a $\tau$-holomorphic mapping in the sense of Definitions {\rm \ref{def2.70}} and {\rm \ref{def2.60}}.

If $(X,\t,\a)$ is a $d$-dimensional holomorphic nc-manifold  with respect to $\tau$ then a function $F:X\to \m^1$ is said to be {\em holomorphic on $X$} if, for every $\alpha\in\a$, the map
\[
F\circ \alpha^{-1}: \alpha(U_\alpha) \to \m^1
\]
is a $\tau$-holomorphic function on the $\tau$-open set $\alpha(U_\alpha)$.
\end{defin}
More generally, we can define the notion of a holomorphic map between two topological nc-manifolds.
\begin{defin}\label{twotopman}
Let $\tau_1,\tau_2$ be admissible topologies on $\m^{d_1}, \m^{d_2}$ respectively, and let $(X_j, \t_j,\a_j)$ be a topological $d_j$-dimensional manifold with respect to $\tau_j$ for $j=1,2$.  A map $f:X_1 \to X_2$ is said to be {\em holomorphic} at a point $x\in X_1$ if there exist $\al\in\a_1$ (with domain $U_\al$ and domain $D_\al$) and $\beta\in\a_2$ (with domain $V_\beta$ and range $E_\beta$) such that $x\in U_\al, \, f(x) \in V_\beta$ and the map
\[
\beta \circ f\circ \al^{-1}: \al(U_\al\cap f^{-1}(V_\beta)) \to  E_\beta 
\]
is $\tau_1$-holomorphic.  We say that $f$ is holomorphic on $X_1$ if $f$ is holomorphic at every point of $X_1$.
\end{defin}
\index{map!holomorphic}
 Note that the definition is independent of $\tau_2$, save for the requirement that $\tau_2$ be admissible.

In the case that $\tau$ is the free topology,  Proposition \ref{prop2.40} tells us that continuity implies holomorphy.  Accordingly, 
free manifolds are a precise noncommutative analog of complex manifolds. Indeed, they are the topological manifolds that are equipped with an atlas of homeomorphisms onto free domains in $\m^d$ with the property that the transition functions are freely biholomorphic maps. To see this fact, assume that $(X,\t,\a)$ is a free manifold, $\alpha,\beta \in \a$, and $U_\alpha, U_\beta \in \t$  with $U_\alpha \cap U_\beta \neq \emptyset$ . As $U_\alpha \cap U_\beta \in \t$, the hypothesis  \eqref{2.110} implies that $\alpha(U_\alpha \cap U_\beta)$ and $\beta(U_\alpha \cap U_\beta) $ are open in the free topology. The hypothesis \eqref{2.110} also implies that
\[
\beta \circ \alpha^{-1}:\alpha(U_\alpha \cap U_\beta) \to \beta(U_\alpha \cap U_\beta)
\]
is a homeomorphism. Hence, by Proposition \ref{prop2.40}, $\beta \circ \alpha^{-1}$ is a free biholomorphic mapping. Conversely, if the transition functions are assumed to be free biholomorphic mappings, then Proposition \ref{prop2.40} implies that the transition functions are free holomorphic mappings.

If  $(X,\t,\a)$ is a topological $d$-dimensional nc-manifold with respect to $\tau$ then the transition function $T_{\alpha\beta}$ is a composition of two homeomorphisms  and hence is a homeomorphism between $\alpha(U_\alpha\cap U_\beta)$ and $\beta(U_\alpha\cap U_\beta)$ in their respective $\tau$ topologies, as well as being an nc-isomorphism between nc-sets when restricted to any nc-subset of $\alpha(U_\alpha\cap U_\beta)$. 

If  $(X,\t,\a)$ is a topological $d$-dimensional nc-manifold with respect to $\tau$ and $\tau'$ is a finer admissible topology on $\m^d$ then we define the topology $\t'$ on $X$ to be the one for which a base is
\[
\{\al^{-1}(V): \al\in\a, V\subset D_\al\mbox{ and } V\in\tau'\}.
\]
Then $(X,\t',\a)$ is a $d$-dimensional topological nc-manifold with respect to $\tau'$.  The topology $\t'$ is finer than $\t$ and  the map $\id{X}:(X,\t',\a) \to (X,\t,\a)$ is a holomorphic map of topological nc-manifolds.

In particular we may take $\tau'$ to be the
fine  topology on $\m^d$.  Then the topology $\t'$
is the finest topology for which $X$ is a topological nc-manifold.

\section{Free holomorphic functions in one variable} \label{OneVariable}
In this section we show that a free holomorphic function in one variable is determined (via the functional calculus) by its restriction to $\m_1^1$.

Let $\rplus$ denote the set of positive real numbers. If $c \in \c^k$ and $r \in {\rplus}^k$, we define $\Delta(c,r) \subseteq \c$ by
\index{$\Delta(c,r)$}
\begin{align*}
\Delta(c,r) &= \bigcup_{j=1}^k\  \{z \in \c: \,  |z-c_j|<r_j\}\\
&=\bigcup_{j=1}^k\  (c_j +r_j \d)
\end{align*}
and define $D(c,r)\subseteq \m^1$ by
\index{$D(c,r)$}
\[
D(c,r) = \{x \in \m^1: \, \sigma(x) \subseteq \Delta(c,r)\}
\]
where $\si(x)$ denotes the spectrum of the matrix $x$.
In the sequel we make the standing assumption that the radii $r_1,\ldots,r_k$ are so small that the discs $c_1 +r_1 \d,\ldots,c_k +r_k \d$ are pairwise disjoint.
\begin{prop}\label{prop3.10}
$D(c,r)$ is a free domain.
\end{prop}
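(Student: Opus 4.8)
The plan is to show that $D(c,r)$ is open in the free topology and that it is an nc-set (closed under direct sums and joint similarities), these two properties together being what "free domain" requires. The nc-set property is the easy half: if $x \in \mn^1$ and $y \in \m_m^1$ both lie in $D(c,r)$, then $\sigma(x \oplus y) = \sigma(x) \cup \sigma(y) \subseteq \Delta(c,r)$, so $x \oplus y \in D(c,r)$; and if $s$ is an invertible $n \times n$ matrix, then $\sigma(s^{-1} x s) = \sigma(x) \subseteq \Delta(c,r)$, so $s^{-1}xs \in D(c,r)$. Hence $D(c,r)$ is an nc-set, and it is clearly nonempty (it contains, e.g., scalar multiples of the identity with scalar in $\Delta(c,r)$).

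The substantive part is openness in the free topology. The free topology on $\m^1$ has a basic-open-set description in terms of matrix inequalities $\|\delta(x)\| < 1$ for a rectangular matrix $\delta$ of free polynomials (a "basic free open set" $B_\delta$); a set is free-open iff every point has such a basic free neighbourhood inside it. So I would fix $x_0 \in D(c,r)$ with $\sigma(x_0) \subseteq \Delta(c,r)$ and exhibit a free polynomial $\delta$ (in one variable) with $\|\delta(x_0)\| < 1$ and $B_\delta \subseteq D(c,r)$. The natural candidate uses the defining polynomial of $\Delta(c,r)$: since the discs $c_j + r_j\d$ are disjoint, $\sigma(x_0)$ splits into clusters, and for each $j$ one can build a scalar polynomial $p_j(z) = \frac{1}{\rho_j}(z - c_j)$ with $\rho_j$ slightly larger than $r_j$ so that $\|p_j(x_0|_{E_j})\| < 1$ on the spectral subspace $E_j$ corresponding to the $j$-th cluster; one then assembles these into a single $\delta$ (a direct-sum-type construction, or a matrix whose norm controls all the $p_j$ simultaneously) whose sublevel set $B_\delta$ forces the spectrum of any $x \in B_\delta$ to lie in $\bigcup_j (c_j + \rho_j \d)$. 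Choosing the $\rho_j$ small enough that $c_j + \rho_j\d \subseteq \Delta(c,r)$ — possible because $|\,\cdot - c_j| < r_j$ is an open condition and $\sigma(x_0)$ is compact — gives $B_\delta \subseteq D(c,r)$.

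The one genuine obstacle is the passage from a norm bound on a polynomial of a matrix to a spectral inclusion, together with the fact that $x_0$ need not be diagonalizable, so one cannot literally restrict to eigenspaces with an orthogonal splitting. The clean way around this is: for a disc $c_j + r_j \d$, the operator $x \mapsto (x - c_j I)$ restricted to behave well requires the Riesz spectral projection $P_j = \frac{1}{2\pi i}\oint_{\Gamma_j}(zI - x)^{-1}\,dz$ around the $j$-th disc, which exists and depends holomorphically on $x$ near $x_0$; on $\ran P_j$ the spectrum of $x$ is exactly the part of $\sigma(x)$ inside $\Gamma_j$, and one controls $\|(x - c_j I)|_{\ran P_j}\|$ by a polynomial in $x$ times $P_j$, hence by a polynomial in $x$ once $P_j$ is itself rewritten via the resolvent. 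Since the free topology is designed precisely so that such holomorphic/resolvent constructions are free-continuous, this can be packaged so that $B_\delta$ is honestly basic free open. An alternative, perhaps cleaner, route is to invoke that the free topology is generated by the sets $\{x : \|r(x)\| < 1\}$ for free rational $r$ with poles off the relevant set, take $r(z) = (z - \lambda)^{-1}$ for suitable $\lambda \notin \Delta(c,r)$ to push eigenvalues away from the complement, and combine finitely many such to carve out $D(c,r)$; this reduces everything to the elementary fact that $\sigma(x) \subseteq \Delta$ iff $\|(x-\lambda I)^{-1}\|$ is suitably bounded for $\lambda$ ranging over a compact subset of the complement. Either way, the heart of the matter is this spectral-to-norm translation; the nc-set verification and the general topological bookkeeping are routine.
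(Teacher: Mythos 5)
Your plan has the right shape (find, for each $M\in D(c,r)$, a matricial polynomial $\delta$ with $M\in B_\delta\subseteq D(c,r)$), and you correctly flag the genuine obstacle: one cannot pass from a norm bound on a polynomial of a matrix to a spectral inclusion, or vice versa, by any fixed rescaling, because a nilpotent Jordan block has spectrum $\{0\}$ but arbitrarily large norm. However, neither of the routes you offer around this obstacle is actually carried out, and both have problems. The per--disc polynomials $p_j(z)=(z-c_j)/\rho_j$ together with Riesz projections do not directly yield a matricial \emph{polynomial} $\delta$, since the Riesz projection is only a holomorphic (resolvent) function of $x$; the assertion that this ``can be packaged so that $B_\delta$ is honestly basic free open'' is exactly the part that needs an argument. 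Your rational-function alternative has the same defect: basic free open sets $B_\delta$ are cut out by matrices of \emph{free polynomials}, not free rational maps, so you would still have to convert the resolvent inequalities into polynomial ones. There is also an internal inconsistency: you ask for $\rho_j$ ``slightly larger than $r_j$'' and simultaneously for $c_j+\rho_j\d\subseteq\Delta(c,r)$, which for the $j$-th disc forces $\rho_j\le r_j$.

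The missing idea is a power trick that resolves the spectral-to-norm translation cleanly in one step and avoids Riesz projections and rational functions entirely. Pick a single scalar polynomial $q$ (a lemniscate for $\Delta(c,r)$) with
\[
\sigma(M)\subseteq\{z:|q(z)|<1\}\subseteq\Delta(c,r).
\]
Then $\sigma(q(M))=q(\sigma(M))\subseteq\d$, so the spectral radius of $q(M)$ is $<1$; by the Gelfand spectral radius formula there is an integer $N$ with $\|q(M)^N\|<1$. Set $\delta=q^N$, a single scalar free polynomial. Then $M\in B_\delta$. Conversely, if $x\in B_\delta$, i.e.\ $\|q(x)^N\|<1$, then $q(\sigma(x))^N=\sigma(q(x)^N)\subseteq\d$, hence $q(\sigma(x))\subseteq\d$, hence $\sigma(x)\subseteq\{|q|<1\}\subseteq\Delta(c,r)$, so $x\in D(c,r)$. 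Taking the $N$-th power is precisely the device that turns a spectral condition on $q(M)$ into a norm condition, and the converse uses the spectral mapping theorem in the reverse direction where no power trick is needed (because $|\zeta|\ge1\Rightarrow|\zeta|^N\ge1$). This gives $M\in B_\delta\subseteq D(c,r)$ with $\delta$ an honest $1\times1$ free polynomial, closing the gap your proposal leaves open. (The nc-set verification you include is correct but is not part of what the proposition asserts; in the paper a free domain is simply a set open in the free topology, and the paper's proof addresses only openness.)
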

\begin{proof}
Assume that $M \in D(c,r)$. We construct a polynomial $\delta$ satisfying
\be\label{}
M \in \gdel \subseteq D(c,r). \notag
\ee
Choose a polynomial $q$ satisfying
\[
\sigma(M) \subseteq \set{z}{|q(z)|<1} \subseteq \Delta(c,r).
\]
As $\sigma(q(m)) = q(\sigma(m)) \subseteq \d$,
there exists an integer $N$ such that\\ $\norm{q(M)^N} < 1$. If we set $\delta = q^N$, it follows that $M \in \gdel$.

It remains to show that $\gdel \subseteq D(c,r)$. Fix $x \in \gdel$ such that $\norm{q(x)^N} <1$. It follows that
\[
q(\sigma(x))^N  = \sigma(q(x)^N) \subseteq \d.
\]
This implies that $q(\sigma(x)) \subseteq \d$. But then,
\[
\sigma(x) \subseteq  \set{z}{|q(z)|<1} \subseteq \Delta(c,r),
\]
which implies that $x \in D(c,r)$.
\end{proof}

If $c\in \c^k$ and $r \in {\rplus}^k$ we let $[c,r] =\{[c,r]_1,\ldots,[c,r]_k\}$ denote the system of paths where, for $j=1,\ldots,k$,
\[
[c,r]_j(t) = c_j +r_j e^{it}, \qquad 0 \le t \le 2\pi.
\]
If $f$ is holomorphic on a neighborhood of $\Delta(c,r)$ and $x \in D(c,r)$ then we may employ the Riesz Functional Calculus to define $f^\wedge(x)$ by the formula
\be\label{3.10}
f^\wedge(x) = \int_{[c,s]}  f(z) (z-x)^{-1}\ dz
\ee
where $s\in {\rplus}^k$ is chosen so that $\sigma(x) \subseteq \Delta(c,s)$ and $\Delta(c,s)^- \subseteq \Delta(c,r)$.
\begin{prop}\label{prop3.3}
If $f$ is holomorphic on $\Delta(c,r)$, then $f^\wedge$ is freely holomorphic on $D(c,r)$.
\end{prop}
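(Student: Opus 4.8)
The plan is to verify the defining conditions of ``$\tau$-holomorphic'' (Definitions \ref{def2.70} and \ref{def2.60}) directly for $f^\wedge$ on $D(c,r)$, using the fact, already proved in Proposition \ref{prop3.10}, that $D(c,r)$ is a free domain, and exploiting the contour-integral formula \eqref{3.10}. First I would fix a point $M\in D(c,r)$ and, as in the proof of Proposition \ref{prop3.10}, produce a polynomial $\delta$ with $M\in\gdel\subseteq D(c,r)$; on the basic free open set $\gdel$ one has a uniform choice of radii $s$ (say with $\norm{q(x)^N}<1$ forcing $\sigma(x)\subseteq\Delta(c,s)$ with $\Delta(c,s)^-\subseteq\Delta(c,r)$) that works simultaneously for all $x\in\gdel$, so the formula $f^\wedge(x)=\int_{[c,s]}f(z)(z-x)\inverse\,dz$ is valid throughout $\gdel$ with a single contour.

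Next I would check that $f^\wedge$ is a graded nc function: it is graded because the Riesz calculus sends $\mn^1$ to $\mn$; it respects direct sums because $(z-x\oplus y)\inverse=(z-x)\inverse\oplus(z-y)\inverse$ and the integral splits accordingly, using that $\sigma(x\oplus y)=\sigma(x)\cup\sigma(y)$ lies inside $\Delta(c,s)$ whenever both $x$ and $y$ do; and it respects joint similarities because $(z-SxS\inverse)\inverse=S(z-x)\inverse S\inverse$ and $S$ pulls out of the integral, noting $\sigma(SxS\inverse)=\sigma(x)$. Then, to get holomorphy, I would establish that $f^\wedge$ is locally bounded and Fréchet differentiable (or admits a ``derivative'' in the appropriate nc sense) on each $\gdel$: boundedness follows from $\norm{f^\wedge(x)}\le \frac{1}{2\pi}\sum_j \ell([c,s]_j)\,\sup_{[c,s]}|f|\cdot\sup_{x\in\gdel}\norm{(z-x)\inverse}$, the last supremum being finite since $z$ stays off $\sigma(x)$ by a positive margin on the compact contour; and differentiability follows by differentiating under the integral sign, the derivative of $x\mapsto(z-x)\inverse$ in direction $h$ being $(z-x)\inverse h (z-x)\inverse$, which is again continuous and bounded on $\gdel$. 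This gives that $f^\wedge$ restricted to each member of a free-open cover of $D(c,r)$ is holomorphic, hence holomorphic on $D(c,r)$.

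I expect the main obstacle to be bookkeeping rather than conceptual: namely, making sure that the contour $[c,s]$ can be chosen uniformly on a neighborhood of each point (so that the integral formula is genuinely local-uniform and one may differentiate under the integral sign and pass limits), and matching the verification to whatever precise definition of $\tau$-holomorphic mapping is codified in Definitions \ref{def2.70} and \ref{def2.60} --- in particular whether it is phrased via Fréchet derivatives on each $\mn^1$, via Gâteaux differentiability plus local boundedness, or via the intrinsic free difference-differential calculus. Once the uniform-contour point is secured, each required property (grading, direct sums, similarities, local boundedness, differentiability) is a one-line consequence of the corresponding elementary property of the resolvent $(z-x)\inverse$, so the proof reduces to assembling these observations in the order above.
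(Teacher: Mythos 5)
Your proposal has a genuine gap, and it is exactly the point that the paper's proof is designed to handle. You claim local boundedness of $f^\wedge$ on a basic free neighborhood $\gdel \subseteq D(c,r)$ by arguing that $\sup_{z\in [c,s],\, x\in\gdel}\norm{(z-x)^{-1}}$ is finite "since $z$ stays off $\sigma(x)$ by a positive margin on the compact contour." This is false. The distance from $z$ to $\sigma(x)$ bounds the spectral radius of $(z-x)^{-1}$, not its operator norm, and for non-normal matrices these can be wildly different. Concretely, if $\delta$ has the form $q^N$ as in the proof of Proposition~\ref{prop3.10}, then $\gdel$ contains, for each $n$, an $n\times n$ nilpotent Jordan block $x_n$ with superdiagonal entries equal to some fixed $a\in(0,1)$: then $\sigma(x_n)=\{0\}$ is well inside $\Delta(c,s)$, so $x_n\in\gdel$ for every $n$, but for $z$ on the contour with $|z|<a$ one has
\[
\norm{(z-x_n)^{-1}} = \left\| \tfrac{1}{z}\sum_{k=0}^{n-1}\big(\tfrac{x_n}{z}\big)^k \right\| \gtrsim \tfrac{1}{|z|}\Big(\tfrac{a}{|z|}\Big)^{n-1}\ \longrightarrow\ \infty .
\]
So no uniform resolvent bound holds on $\gdel$, and your estimate for $\norm{f^\wedge(x)}$ fails there. (The uniform choice of contour $[c,s]$, which you flagged as the potential obstacle, is in fact fine; the missing ingredient is control of the resolvent \emph{norm}, not the spectrum.)

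The paper avoids this by constructing a genuinely smaller free neighborhood on which the resolvent norm \emph{is} controlled. Fixing $M$, one first bounds $\norm{(z-M)^{-1}}<C$ on the contour, then picks a finite $(2C)^{-1}$-net $S\subset [c,s]$ and sets
\[
G \ =\ \bigcap_{w\in S}\set{x\in\m^1}{w\notin\sigma(x),\ \norm{(w-x)^{-1}}<C},
\]
which is freely open because each $x\mapsto (w-x)^{-1}$ is a free holomorphic (hence freely continuous) map. On $G\cap D(c,s)$, a Neumann-series perturbation argument gives $\norm{(z-x)^{-1}}\le 2C$ uniformly for all $z\in[c,s]$ and $x\in G$, from which the contour integral is bounded. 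The point is that the conditions $\norm{(w-x)^{-1}}<C$ are the data that must be built into the neighborhood; they do not follow from $x\in\gdel$. If you replace your claimed resolvent bound by this construction, the rest of your verification of the nc properties (graded, preserves direct sums and similarities) via the elementary resolvent identities is sound and matches the paper's one-line remark that $f^\wedge$ is an nc function.
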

\begin{proof}
It is straightforward to verify that $f^\wedge$ is an nc-function. The proposition will follow if we can show that $f^\wedge$ is locally bounded. To that end, fix $M \in D(c,r)$. Chose $s \in {\rplus}^k$ so that  $\sigma(M) \subseteq \Delta(c,s)$ and $\Delta(c,s)^- \subseteq \Delta(c,r)$. Choose $C \in \rplus$ such that
\be\label{3.20}
C > \max_{z \in [c,s]} \norm{(z-M)^{-1}}.
\ee
Choose a finite set $S \subseteq [c,s]$ such that, for all $z \in [c,s]$, there exists $w \in S$ such that
\be\label{3.30}
 |z-w|< \frac{1}{2C}.
\ee
Finally, define $G \subseteq \m^1$ by
\be\label{3.40}
G = \bigcap_{w \in S} \set{x \in \m^1}{ w \not\in \sigma(x) \text{ and } \norm{(w - x)^{-1}} < C}.
\ee
As $M \in G\cap D(c,s) \subseteq D(c,r)$, the proof of Proposition \ref{prop3.3} will be complete if we can prove the following two claims.\\ \\
{\bf Claim 1.} $G \cap D(c,s)$ is a free domain.\\ \\
{\bf Claim 2.} $f$ is bounded on $G \cap D(c,s)$.\\ \\

To prove Claim 1, first notice that by Proposition \ref{prop3.10}, it suffices to show that $G$ is a free domain. In the light of equation \eqref{3.40} and the fact that $S$ is finite it will follow that $G$ is a free domain  if we can show that
for each $w \in \c$ and each $C>0$,
\[
\set{x \in \m^1}{ w \not\in \sigma(x) \text{ and } \norm{(w - x)^{-1}} < C} \text{ is a free domain}.
\]
But by Theorem 10.1 in \cite{amfree} for each fixed $w\in \c$, $g_w(x) = (w-x)^{-1}$ is a free holomorphic function on the free domain $\set{x\in \m^1}{w \not\in \sigma(x)}$. As Proposition \ref{prop2.30} guarantees that $g_w$ is freely continuous, it follows that
\[
\set{x \in \m^1}{ w \not\in \sigma(x) \text{ and } \norm{(w - x)^{-1}} < C} = g_w^{-1} (\set{y \in \m^1}{\norm{y}<C})
\]
is a free domain.

To prove Claim 2, let $z \in [c,s]$ and let $x \in G$. As $z \in [c,s]$, inequality \eqref{3.30} guarantees that we may choose  $w \in S$ such that
\be\label{3.50}
|w-z| < \frac{1}{2C}.
\ee
As $x \in G$, equation \eqref{3.40} guarantees that $w \not\in \sigma(x)$ and
\be\label{3.60}
\norm{(w-x)^{-1}} < C.
\ee
Now,
\begin{align*}
(z-x)^{-1}&=\big(w-x)-(w-z)\big)^{-1}\\
&=(w-x)^{-1}\big(1-\frac{w-z}{w-x}\big)^{-1}.
\end{align*}
But as inequalities \eqref{3.50} and \eqref{3.60} imply that
\[
\left\| \frac{w-z}{w-x}\right\| < \frac{1}{2C}\ C = \frac{1}{2},
\]
we can use the inequality $\norm{(1-y)^{-1}} \le (1-\norm{y})^{-1}$ (valid for $\norm{y} <1$) to prove that
\[
\left\|\left(1-\frac{w-z}{w-x}\right)^{-1}\right\| \le \frac{1}{1-\norm{\frac{w-z}{w-x}}}<2.
\]
Hence
\[
\norm{(z-x)^{-1}}\le\norm{(w-x)^{-1}}
\left\|  \left(1-\frac{w-z}{w-x}\right)^{-1}\right\| <2C.
\]

To summarize, we have shown that if $z \in [c,s]$ and $x \in G$, then $\norm{(z-x)^{-1}}\le 2C$. Use the definition  \eqref{3.10} to estimate $f^\wedge(x)$ to deduce that, if $x \in G \cap D(c,s)$, then
\[
\norm{f^\wedge(x)} \le 2C \max_{z\in[c,s]}|f(z)| \text{ length}([c,s]).
\]
This completes the proof of both Claim 2 and the theorem.
\end{proof}

\begin{defin}\label{def3.10}
For any free domain in $U$ in $\m^1$ we define $\uone$ to be $ U \cap \m_1^1$. If $f$ is a free holomorphic function on $U$ we define a function $\fone$ on $\uone$ by
\[
\fone(z) = f([z]), \qquad z \in \uone.
\]
\end{defin}
In the sequel we make no distinction between $z \in \c$ and $[z] \in \m_1^1$, and in particular, view $\uone$ both as a subset of $\c$ and as a subset of $\m^1$.
\begin{prop}\label{prop3.30}
If $U \subseteq \m^1$ is a free domain and $M \in U$, then $\sigma(M) \subseteq \uone$.
\end{prop}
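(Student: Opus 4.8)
The plan is to use the description of the free topology by basic open sets $\gdel=\{x\in\m^1:\norm{\delta(x)}<1\}$, with $\delta$ a matrix of free polynomials in one variable, together with the elementary fact that evaluating $\delta$ at a matrix $M$ and at an eigenvalue of $M$ are linked by an eigenvector of $M$.

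Since $U$ is open in the free topology and $M\in U$, and since the sets $\gdel$ form a base for the free topology, there is a matrix-valued free polynomial $\delta$ with $M\in\gdel\subseteq U$; in particular $\norm{\delta(M)}<1$. Write $\delta=\sum_{m=0}^{N}C_m x^m$ with scalar coefficient matrices $C_m$, so that $\delta(M)=\sum_m C_m\otimes M^m$ acting on the appropriate tensor product (the ordering of the factors being immaterial), and $\delta(z)=\sum_m z^m C_m$ for $z\in\c$.

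Now fix $\la\in\sigma(M)$ and pick a unit vector $v\in\c^n$ with $Mv=\la v$; then $M^m v=\la^m v$ for every $m\ge 0$, so for all $\xi$,
\[
\delta(M)(\xi\otimes v)=\sum_m (C_m\xi)\otimes(M^m v)=\sum_m (C_m\xi)\otimes(\la^m v)=\Big(\sum_m\la^m C_m\xi\Big)\otimes v=(\delta(\la)\xi)\otimes v .
\]
Hence the isometry $\xi\mapsto\xi\otimes v$ compresses $\delta(M)$ to $\delta(\la)$, which gives $\norm{\delta(\la)}\le\norm{\delta(M)}<1$. Therefore $[\la]\in\gdel\subseteq U$, and since $[\la]\in\m_1^1$ this is exactly the statement that $\la\in\uone$. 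As $\la\in\sigma(M)$ was arbitrary, $\sigma(M)\subseteq\uone$.

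I do not expect a genuine obstacle here; the only points needing a little care are the bookkeeping conventions for evaluating a matrix of free polynomials on $\mn$ (including the rectangular case, handled identically with a second isometry on the target space) and the routine fact that every free-open neighbourhood of $M$ contains a basic set $\gdel$ with $M\in\gdel$, both of which follow directly from the definitions recalled earlier.
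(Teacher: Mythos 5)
Your proof is correct and uses essentially the same argument as the paper: choose a basic free neighborhood $B_\delta$ of $M$ inside $U$, then use an eigenvector $v$ of $M$ to produce an isometry $\xi\mapsto\xi\otimes v$ that transports $\delta(\la)$ into $\delta(M)$, yielding $\|\delta(\la)\|\le\|\delta(M)\|<1$. The only cosmetic difference is that you expand $\delta$ explicitly in coefficient matrices $C_m$, whereas the paper computes $\delta(M)(c\otimes v)=(\delta(z)c)\otimes v$ entrywise; the underlying identity and conclusion are identical.
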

\begin{proof}
Suppose that $M \in U$ and $z \in \sigma(M)$. We wish to show that $z \in \uone$ or equivalently, that $z \in U$.

Choose an $I\times J$ matrix of polynomials $\delta$ such that $M \in B_\delta \subseteq U$ and choose a unit vector $v \in \c^n$ such that $Mv = zv$. We view $\delta(M)$ as a linear transformation from $\c^J \otimes \c^n$ to $\c^I \otimes \c^n$. As $M \in B_\delta$, there exists $r<1$ such that $\norm{\delta(M)} \le r$.

With the setup of the previous paragraph, if $c \in \c^J$,
\begin{align*}
\norm{\delta(z)c} &=\norm{\delta(z)c}\ \norm{v}\\
&=\norm{(\delta (z)c)\otimes v}\\
&=\norm{\delta (M)(c\otimes v)}\\
&\le r \norm{c\otimes v}\\
&= r \norm{c}\norm{v}\\
&= r \norm{c}.
\end{align*}
This proves that $\norm{\delta(z)} \le r <1$. Hence $z \in B_\delta \subseteq U$, as was to be proved.
\end{proof}
\begin{cor}\label{level1}
A nonempty freely open set in $\m^1$ meets $\m^1_1$.
\end{cor}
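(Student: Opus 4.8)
The plan is to read this off directly from Proposition \ref{prop3.30} together with the fundamental theorem of algebra (nonemptiness of the spectrum of a square matrix over $\c$), with only a cosmetic remark needed to fit a general freely open set into the hypothesis of that proposition.

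First I would take $U$ to be a nonempty freely open subset of $\m^1$ and choose a point $M\in U$, say $M\in\m_n^1$ for some $n\ge 1$. Since $U$ is open in the free topology, there is a matrix of polynomials $\delta$ with $M\in\gdel\subseteq U$, and $\gdel$ is a free domain; applying Proposition \ref{prop3.30} to the free domain $\gdel$ and the point $M$ gives $\sigma(M)\subseteq(\gdel)_1\subseteq U\cap\m_1^1=\uone$. (Equivalently, if one regards $U$ itself as a free domain, Proposition \ref{prop3.30} applies to $U$ directly.) Next, because $\c$ is algebraically closed, the characteristic polynomial of the $n\times n$ matrix $M$ has a root, so $\sigma(M)\neq\emptyset$. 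Combining this with the inclusion $\sigma(M)\subseteq\uone$ shows that $\uone$ is nonempty, i.e.\ $U$ meets $\m_1^1$, which is the assertion.

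There is no genuine obstacle here; the only step warranting a word of justification is the passage from ``$U$ freely open'' to a set of the form $\gdel$ to which Proposition \ref{prop3.30} applies, and this is immediate from the description of the free topology established earlier in the paper. Everything else reduces to the nonvanishing of matrix spectra.
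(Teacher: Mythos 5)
Your argument is correct and is precisely the intended one: the paper leaves the proof of Corollary \ref{level1} implicit because it drops out of Proposition \ref{prop3.30} in exactly the way you describe — pick a point $M$ in the freely open set, note $\sigma(M)\neq\emptyset$ by algebraic closure of $\c$, and conclude via $\sigma(M)\subseteq\uone$. Your cosmetic remark about passing to a basic free neighborhood $\gdel$ (or treating $U$ itself as a free domain) correctly addresses the only small wrinkle.
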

\begin{prop}\label{prop3.28}
If $U \subseteq \m^1$ is a free domain and $f$ is a free holomorphic function on $U$, then $\fone$ is holomorphic on $\uone$.
\end{prop}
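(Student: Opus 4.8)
The plan is to reduce the statement to the basic fact that a free holomorphic function restricts to a continuous (indeed holomorphic) function on each matrix level $\m^1_n$, and then to read off holomorphy of $\fone$ from a $2\times 2$ similarity that encodes a difference quotient of $\fone$. First I would record that $\uone$ is open in $\c$: given $z_0\in\uone$, since $U$ is a free domain there is a matrix of polynomials $\delta$ with $[z_0]\in B_\delta\subseteq U$, and then $B_\delta\cap\m^1_1=\{z\in\c:\|\delta([z])\|<1\}$ is an open neighbourhood of $z_0$ inside $\uone$, since $z\mapsto\delta([z])$ is continuous. (Equivalently, the free topology on $\m^1$ induces the usual topology on $\m^1_1=\c$.)

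I would then fix $z_0\in\uone$ and exploit the direct-sum structure. A free domain is an nc set, so $\diag(z_0,z_0)=[z_0]\oplus[z_0]\in U$; since $U$ is freely open there is a matrix of polynomials $\delta$ with $\diag(z_0,z_0)\in B_\delta\subseteq U$, and $B_\delta$ is open in the norm topology at level $2$. Hence there are $\epsilon>0$ and $\rho>0$ such that $J(z_1,z_2)\df\left(\begin{smallmatrix} z_1 & h\\ 0 & z_2\end{smallmatrix}\right)$ lies in $B_\delta\subseteq U$ whenever $|h|<\epsilon$ and $|z_1-z_0|,|z_2-z_0|<\rho$; fix one such $h\ne 0$. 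When $z_1\ne z_2$, $J(z_1,z_2)$ is conjugate to $[z_1]\oplus[z_2]$ via $S=\left(\begin{smallmatrix} 1 & -h/(z_1-z_2)\\ 0 & 1\end{smallmatrix}\right)$, so, $f$ being an nc function and hence respecting similarities and direct sums,
\[
f\big(J(z_1,z_2)\big)=S\big(f([z_1])\oplus f([z_2])\big)S^{-1}=\bpm \fone(z_1) & h\,(\fone(z_1)-\fone(z_2))/(z_1-z_2)\\ 0 & \fone(z_2)\epm .
\]
By the basic theory of free holomorphic functions (see \cite{amfree}), the restriction of $f$ to $\m^1_2\cap U$ is holomorphic in the four matrix entries, so the $(1,2)$-entry $\psi(z_1,z_2)$ of the left-hand side is continuous on the whole bidisc $\{|z_1-z_0|<\rho\}\times\{|z_2-z_0|<\rho\}$, diagonal included. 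Since $\psi(z_1,z_2)=h\,(\fone(z_1)-\fone(z_2))/(z_1-z_2)$ for $z_1\ne z_2$, letting $z_1\to z_2$ shows that the difference quotient of $\fone$ has a limit at every $z$ with $|z-z_0|<\rho$; so $\fone$ is complex-differentiable there, with $\fone'(z)=h^{-1}\psi(z,z)$. As $z_0\in\uone$ was arbitrary, $\fone$ is complex-differentiable throughout the open set $\uone$, hence holomorphic on $\uone$.

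The step I expect to be the real content is the input that a free holomorphic function is continuous --- equivalently holomorphic --- on each level $\m^1_n$; this is exactly where one uses the strength of ``free holomorphic'' beyond merely ``nc''. If the notion of free holomorphic function adopted earlier in the paper already incorporates this, the whole argument collapses to the one-line remark that $\fone(z)=[f(\diag(z,z))]_{11}$ is a composite of holomorphic maps. Otherwise everything beyond that input is routine: openness of $\uone$, the $2\times 2$ similarity computation, and the standard passage from pointwise complex differentiability on an open set to holomorphy.
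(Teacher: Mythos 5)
Your proof is correct, but it follows a genuinely different route from the paper. The paper's argument is short: by its Theorem~\ref{thm2.10}, a free holomorphic function can be locally uniformly approximated by free polynomials on a basic free neighborhood $B_\delta$, hence $\fone$ is a locally uniform limit of (ordinary) polynomials on the disc $\{|z-z_0|<\epsilon\}\subseteq B_\delta\cap\c$, and is therefore holomorphic. You instead use the upper-triangular $2\times 2$ similarity that is the standard engine of the Kaliuzhnyi-Verbovetskyi--Vinnikov nc divided-difference calculus: the $(1,2)$ entry of $f\bigl(\begin{smallmatrix}z_1&h\\0&z_2\end{smallmatrix}\bigr)$ encodes $h(\fone(z_1)-\fone(z_2))/(z_1-z_2)$, and continuity of $f$ on level $2$ forces the difference quotient of $\fone$ to have a limit as $z_1\to z_2$. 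Both arguments lean on a nontrivial input from free analysis, but not the same one: the paper uses polynomial approximability of $f$, while you need (only) level-wise continuity of $f$, which is weaker. As you note yourself, either of these inputs nearly trivializes the statement, so the two proofs are of comparable weight. One small point worth making explicit in your write-up: in order to apply the similarity-preservation property of $f$ you need both $J(z_1,z_2)$ \emph{and} $[z_1]\oplus[z_2]$ to lie in $U$; the latter follows either by shrinking $\rho$ (since $B_\delta\cap\m_2$ is open) or, more simply, from the fact that $\uone$ is open and $U$, being a free domain, is closed under direct sums.
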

\begin{proof}
Fix $z_0 \in \uone$. As $f$ is free holomorphic, by Theorem \ref{thm2.10} $f$ can be locally uniformly approximated by polynomials. Choose $\delta$ such that $z_0 \in B_\delta \subseteq U$ and  $f$ can be uniformly approximated by polynomials on $B_\delta$. By continuity, there exists $\epsilon >0$ such that $\set{z \in \c}{|z-z_0|<\epsilon} \subseteq B_\delta$. It follows that $\fone$ can be uniformly approximated on a neighborhood in $\c$ by polynomials. Hence $\fone$ is holomorphic on a neighborhood of $z_0$. As $z_0 \in \uone$ was chosen arbitrarily, it follows that $\fone$ is holomorphic on $\uone$.
\end{proof}

\begin{prop}\label{prop3.40}
If $U \subseteq \m^1$ is a free domain and $f$ is a free holomorphic function defined on $U$, then
\[
f(x) = \fone^\wedge(x)
\]
for all $x \in U$.   Moreover $f(x)$ belongs to the algebra generated by $x$ and the identity matrix of appropriate type. 
\end{prop}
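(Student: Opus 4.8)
The plan is to reduce the identity $f(x) = \fone^\wedge(x)$ to the already-established facts about free domains in one variable and to the classical Riesz functional calculus. First I would fix $x \in U$, say $x \in \m_n^1$, and observe by Proposition \ref{prop3.30} that $\sigma(x) \subseteq \uone$. Since $\uone$ is an open subset of $\c$ (being freely open in $\m_1^1$, by Corollary \ref{level1} and the compatibility of the free topology with the usual topology on scalars) and $\fone$ is holomorphic on $\uone$ by Proposition \ref{prop3.28}, we can choose $c \in \c^k$ and $r \in {\rplus}^k$ so small that the discs $c_j + r_j\d$ are pairwise disjoint, $\sigma(x) \subseteq \Delta(c,r)$, and $\Delta(c,r)^- \subseteq \uone$. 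Then both $\fone^\wedge(x)$, defined by the contour integral \eqref{3.10}, and $x$ itself lie in $D(c,r)$, which is a free domain by Proposition \ref{prop3.10}; and $\fone$ is holomorphic on a neighborhood of $\Delta(c,r)$, so $\fone^\wedge$ is freely holomorphic on $D(c,r)$ by Proposition \ref{prop3.3}.

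Next I would show $f$ and $\fone^\wedge$ agree on $D(c,r)$. The key point is that two free holomorphic functions on a free domain that agree on the scalar level $\m_1^1$ must agree everywhere: indeed, by Theorem \ref{thm2.10} each is a local uniform limit of free polynomials, and a free polynomial is determined by its values on $\m_1^1$ only up to the obvious extent, so the cleanest route is to argue directly. On $D(c,r) \cap \m_1^1$ we have $f([z]) = \fone(z) = \fone^\wedge([z])$, the last equality being the statement that the Riesz functional calculus for the $1\times 1$ matrix $[z]$ returns $\fone(z)$ (a computation with \eqref{3.10} using Cauchy's integral formula). So $f$ and $\fone^\wedge$ are two free holomorphic functions on the free domain $D(c,r)$ whose restrictions to $D(c,r)_1$ coincide. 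I would then invoke the identity principle for free holomorphic functions (or, equivalently, use that $g \df f - \fone^\wedge$ is free holomorphic on the connected free domain $D(c,r)$, vanishes on $D(c,r)_1$, and hence vanishes identically, since its local polynomial approximants, restricted to scalars, tend uniformly to $0$, forcing all Taylor coefficients, and thus $g$, to vanish). Therefore $f(x) = \fone^\wedge(x)$. Since $x \in U$ was arbitrary, the first assertion follows.

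For the final sentence, I would note that formula \eqref{3.10} exhibits $\fone^\wedge(x) = \int_{[c,s]} \fone(z)(z - x)^{-1}\,dz$ as a norm limit of Riemann sums, each of which is a $\c$-linear combination of resolvents $(z-x)^{-1}$; each resolvent $(z I_n - x)^{-1}$ with $z \notin \sigma(x)$ lies in the unital algebra generated by $x$ (e.g. by the adjugate formula, or as a norm-convergent power series in $x$ off the spectrum), and this algebra is closed in $\m_n$, so the limit $f(x) = \fone^\wedge(x)$ lies in it as well.

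The main obstacle I anticipate is the identity-principle step: justifying rigorously that a free holomorphic function on a free domain is determined by its restriction to $\m_1^1$. One must be a little careful because a free domain need not be ``connected'' in a naive sense and the local polynomial approximation from Theorem \ref{thm2.10} only gives uniform convergence on small basic free sets $\gdel$; the cleanest fix is to work locally on such a $\gdel \subseteq D(c,r)$, where uniform approximation by free polynomials holds, and use that a free polynomial whose scalar restriction is small in sup-norm on $\gdel \cap \m_1^1$ is small in sup-norm on all of $\gdel$ — a fact that itself follows from the spectral characterization of $\gdel$ together with Proposition \ref{prop3.30}. Everything else is routine functional-calculus bookkeeping.
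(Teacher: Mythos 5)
Your outline identifies the right ingredients — Proposition \ref{prop3.30}, Proposition \ref{prop3.28}, Proposition \ref{prop3.3}, Theorem \ref{thm2.10}, and the Riesz contour integral — and the closing argument that $f(x)\in\mathrm{alg}(x)$ is fine. But the ``fact'' you lean on to close the identity-principle gap is false. You assert that a free polynomial whose scalar restriction is small in sup-norm on $\gdel\cap\m^1_1$ is small in sup-norm on all of $\gdel$. Take $\delta(z)=z^2$, so $\gdel=\{M:\|M^2\|<1\}$ and $\gdel_1=\d$. The nilpotent $2\times 2$ matrices $M_N=\left[\begin{smallmatrix}0&N\\0&0\end{smallmatrix}\right]$ lie in $\gdel$ for every $N>0$, while $p_n(z)=z/n$ satisfies $\sup_{\gdel_1}|p_n|=1/n\to 0$ but $\|p_n(M_{n^2})\|=n\to\infty$. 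So no such uniform transfer holds: Proposition \ref{prop3.30} controls spectra, not Jordan blocks, and resolvents $\|(z-M)^{-1}\|$ are not uniformly bounded over $M\in\gdel$ even when $z$ stays away from all their spectra.

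The good news is that the gap is spurious, because nothing uniform over $\gdel$ is needed — only a pointwise statement at each fixed $x\in U$. This is exactly how the paper proceeds: for a free polynomial $p$ one has $p_1^\wedge=p$ trivially; given $x\in U$, approximate $f$ uniformly by polynomials $p_n$ on some $\gdel'\ni x$ (Theorem \ref{thm2.10}), note $(p_n)_1\to\fone$ uniformly on $\gdel'_1$, and since $\sigma(x)\subseteq\gdel'_1$ by Proposition \ref{prop3.30} one can fix a contour $\gamma$ in $\gdel'_1$ around $\sigma(x)$; then $\|(z-x)^{-1}\|$ is bounded on $\gamma$ (with a constant that may depend on $x$, which is harmless), so $(p_n)_1^\wedge(x)\to\fone^\wedge(x)$, and also $p_n(x)\to f(x)$, giving $f(x)=\fone^\wedge(x)$. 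Your ``identity principle'' framing, with $g=f-\fone^\wedge$ vanishing on $D(c,r)_1$, can be run with the same fixed-$x$ contour estimate in place of the false uniform bound; it then reproduces the paper's argument with one extra layer of packaging.
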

\begin{proof}
First observe that the assertion of the proposition makes sense. If $x \in U$, then Proposition \ref{prop3.30} implies that $\sigma(x) \subseteq \uone$. Also, if $f$ is freely holomorphic on $U$, then Proposition \ref{prop3.28} implies that $\fone$ is holomorphic on $\uone$. Thus, $\fone^\wedge(x)$ is well defined by equation \eqref{3.10} for all $x \in U$.

If $p$ is a polynomial, then $p_1^\wedge = p_1$ and $p_1 = p$. Hence, the result holds in the special case where $f=p$. The general case then follows by approximation.
\end{proof}

\section{The Riemann surface for $\sqrt{x}$}\label{SquareRoot}
In this section we shall define $\sqrt{x}$ as a holomorphic function on a one-dimensional free nc-manifold.
By gluing together locally defined branches of the inverse of the free holomorphic function $f(x) = x^2$ we construct a free nc-manifold in much the same way that elementary textbooks construct the Riemann surface for $\sqrt{z}$ by piecing together locally-defined function elements.  
We obtain a locally finitely-sheeted one-dimensional free nc-manifold which has properties analogous to the Riemann surface for $\sqrt{z}$.  

The zero matrix in $\m_2$ has infinitely many square roots, but only one of them lies in the algebra generated by
the zero and identity matrices.  By Proposition \ref{prop3.40}, for any free holomorphic function $f$ and any $x$ for which $f(x)$ is defined, $f(x)$ lies in the algebra $\mathrm{alg} (x)$ generated by $x$ and the identity.  We shall therefore use the symbol $\sqrt{x}$   in the following way.
\begin{defin} \label{defsqrt}
The {\em free square root} $\sqrt{x}$ is the multivalued function on $\m^1$ given by
\[
\sqrt{x}=\{y\in \mathrm{alg} (x) | y^2=x\} \subset \m^1.
\]
\end{defin}
\index{free square root}
 Thus, if $x=0_n$, the $n\times n$ zero matrix then $\sqrt{x}=\{0_n\}$ and if $x=I_n$, the $n\times n$ identity matrix, then $\sqrt{x}=\{I_n,-I_n\}$, whereas if $x=\bbm 0&1\\0&0\ebm$ then $\sqrt{x}=\emptyset$.   We leave the proof of the following proposition to the reader.
\begin{prop}\label{sqrtEmpty}
The free square root of a matrix $x\in\m^1$ is empty if and only if the Jordan canonical form of $x$ contains a
nilpotent Jordan cell of type $k \times k$ for some $k\geq 2$.
\end{prop}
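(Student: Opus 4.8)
The plan is to translate the statement into a divisibility problem for polynomials and then localize at each eigenvalue. First recall that $\mathrm{alg}(x)=\{p(x):p\in\c[t]\}$ and that $p(x)=q(x)$ precisely when the minimal polynomial $m$ of $x$ divides $p-q$; thus $\mathrm{alg}(x)\cong\c[t]/(m)$. Consequently $\sqrt x\neq\emptyset$ if and only if there is a polynomial $p$ with $m(t)\mid p(t)^2-t$. Factoring $m(t)=\prod_j(t-\lambda_j)^{k_j}$, where the $\lambda_j$ are the distinct eigenvalues of $x$ and each $k_j$ is the size of the largest Jordan cell of $x$ with eigenvalue $\lambda_j$, I would invoke the Chinese Remainder Theorem: since the $(t-\lambda_j)^{k_j}$ are pairwise coprime, such a $p$ exists if and only if, for every $j$, there is a polynomial $p_j$ with $(t-\lambda_j)^{k_j}\mid p_j(t)^2-t$. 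So the whole question reduces to deciding, for each $\lambda\in\c$ and $k\ge1$, whether $p(t)^2\equiv t\pmod{(t-\lambda)^k}$ is solvable.

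For $\lambda\neq0$ it always is: in the local ring $\c[t]/((t-\lambda)^k)$ the class of $t$ is a unit (write $t=\lambda(1+N)$ with $N=(t-\lambda)/\lambda$ nilpotent), a square root $\sqrt\lambda$ exists in $\c$, and $(1+N)^{1/2}:=\sum_{n\ge0}\binom{1/2}{n}N^n$ is a terminating sum squaring to $1+N$, so one may take $p_\lambda$ to represent $\sqrt\lambda\,(1+N)^{1/2}$ (equivalently, the Taylor polynomial of degree $k-1$ at $\lambda$ of a holomorphic branch of the square root). Hence nonzero eigenvalues never obstruct the existence of a polynomial square root.

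For $\lambda=0$ the congruence $p(t)^2\equiv t\pmod{t^k}$ is solvable exactly when $k=1$: if $k=1$, take $p(t)=t$; if $k\ge2$, comparing constant terms forces $p(0)=0$, so $p(t)=t\,q(t)$ and $p(t)^2-t=t\bigl(t\,q(t)^2-1\bigr)$, which is not divisible by $t^2$ because $t\,q(t)^2-1$ has constant term $-1$. Putting the cases together, $\sqrt x=\emptyset$ if and only if the exponent of $t$ in $m$ is at least $2$ (with the convention that this exponent is $0$ when $0$ is not an eigenvalue), i.e. if and only if the largest nilpotent Jordan cell of $x$ has size $\ge2$ --- which is exactly the stated condition.

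I expect the work here to be almost entirely organizational: keeping straight the dictionary between ``exponent of $t$ in the minimal polynomial of $x$'' and ``largest nilpotent Jordan cell of $x$'', and making the CRT recombination precise, so that the local square roots (on which $p(0)=0$ is forced over the nilpotent part but nothing is forced elsewhere --- there is no conflict, as the moduli are coprime) assemble into one polynomial $p$ with $p(x)^2=x$ on all of $\c^n$. The only genuinely substantive step is the two-line impossibility argument at $\lambda=0,\ k\ge2$; the rest is routine linear algebra. For readability I might replace the CRT language by the equivalent device of splitting $\c^n$ into generalized eigenspaces, treating $x$ as invertible on $\bigoplus_{\lambda\neq0}\ker(x-\lambda)^n$ and purely nilpotent on $\ker x^n$, and recombining by polynomial interpolation using coprimality of the two minimal polynomials.
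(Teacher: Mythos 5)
The paper itself gives no proof of Proposition~\ref{sqrtEmpty} --- the text explicitly leaves it to the reader --- so there is nothing of the authors' to compare against. Your argument is correct, and it is the natural one. The dictionary $\mathrm{alg}(x)\cong\c[t]/(m)$ is precisely the intended meaning of the definition of the free square root: $\sqrt x\ne\emptyset$ iff $m(t)\mid p(t)^2-t$ for some polynomial $p$. The CRT reduction to the local congruences $p_j(t)^2\equiv t\pmod{(t-\lambda_j)^{k_j}}$ is sound because the moduli are pairwise coprime, and both local cases are handled cleanly: at $\lambda\ne0$ the Taylor polynomial of degree $k-1$ of a holomorphic branch of $\sqrt{\,\cdot\,}$ at $\lambda$ gives $(t-\lambda)^k\mid p^2-t$ (since $p^2-t=(p-\sqrt{\,\cdot\,})(p+\sqrt{\,\cdot\,})$ and the first factor vanishes to order $\ge k$); at $\lambda=0$, $k\ge2$, the constant-term count forces $p=tq$ and then $p^2-t=t(tq^2-1)$ has only a simple zero at $0$, a genuine obstruction. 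The translation back is also exact: the exponent of $t$ in the minimal polynomial equals the size of the largest nilpotent Jordan cell, so ``exponent $\ge2$'' is precisely ``some nilpotent cell of size $\ge2$''.

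Two very small points, neither a gap. First, you should note in passing that a polynomial square root $p_j$ of the unit $t$ in $\c[t]/((t-\lambda_j)^{k_j})$ \emph{must} be sought, rather than in some larger ring, but this is automatic since CRT is an isomorphism of $\c$-algebras onto the product and the lift of $(p_1,\ldots,p_r)$ is itself a polynomial class modulo $m$. Second, the $k=1$, $\lambda=0$ subcase could equally be dispatched by taking $p_0\equiv0$; your choice $p(t)=t$ also works and saves a word later. If one prefers to avoid the CRT formalism entirely, the equivalent route you sketch at the end (split $\c^n$ into the generalized eigenspace $\ker x^n$ and its $x$-invariant complement, on which $x$ is invertible, take a square root of the invertible part via the Riesz functional calculus as in Section~\ref{SquareRoot}, and observe that the nilpotent part admits a square root in $\mathrm{alg}(x|_{\ker x^n})$ iff $x|_{\ker x^n}=0$) matches the paper's toolkit more closely and might read a bit more smoothly in context, but the two are the same argument under a change of language.
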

The set
\[
\Xi\df \{x\in\m^1: \sqrt{x} \neq \emptyset\}
\]
\index{$\Xi$}
is not a freely open set:  it contains $0_{2\times 2}$, but any basic free neighborhood of $\Xi$ contains a matrix $\bbm 0&t\\0&0 \ebm$ for some $t\neq 0$.   For the purpose of constructing a Riemann surface we consider the restriction of the square root function to   $\Xi^o$, the interior of $\Xi$ in the free topology.  We shall show (Proposition \ref{Xinterior})
that $\Xi^o$ is the set of nonsingular matrices, and   that the union of the sets $\{x\}\times \sqrt{x}$, as $x$ ranges over all nonsingular matrices, can be given the structure of a free nc-manifold.

Let
\beq\label{definv}
\inv = \set{x \in \m^1}{x \text{ is nonsingular}}
\eeq
\index{$\inv$}
and
\be\label{defQ}
 \q=\set{x\in \m^1}{\sigma(x) \cap \sigma(-x) = \emptyset}.
\ee
\index{$\q$}
\begin{prop}\label{prop4.10}
The sets $\inv$ and $\q$ are open and connected 
in the free topology.
\end{prop}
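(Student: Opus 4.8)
The plan is to handle $\inv$ and $\q$ separately, in each case showing openness by producing, for an arbitrary point $M$, a matricial polynomial $\delta$ with $M\in B_\delta\subseteq\inv$ (resp. $\subseteq\q$), and showing connectedness by exhibiting free-continuous paths to a fixed base point.

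\emph{Openness of $\inv$.} Fix $M\in\inv$, so $M\in\mn$ for some $n$ and $0\notin\sigma(M)$. Since $\sigma(M)$ is a finite set bounded away from $0$, we may choose $\rho>0$ with $\|M^{-1}\|^{-1}>\rho$, equivalently $\|(zI-M)^{-1}\|$ is controlled for $|z|\le\rho$; more directly, the function $g_0(x)=x^{-1}$ is free holomorphic on $\set{x\in\m^1}{0\notin\sigma(x)}$ by Theorem 10.1 of \cite{amfree}, and is freely continuous by Proposition \ref{prop2.30}. Hence $\set{x\in\m^1}{0\notin\sigma(x)\text{ and }\|x^{-1}\|<C}=g_0^{-1}(\set{y\in\m^1}{\|y\|<C})$ is a free domain for every $C>0$, exactly as in the proof of Claim 1 in Proposition \ref{prop3.3}. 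Choosing $C>\|M^{-1}\|$ gives a basic free neighborhood of $M$ contained in $\inv$, so $\inv$ is freely open.

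\emph{Openness of $\q$.} Fix $M\in\q\cap\mn$. The condition $\sigma(M)\cap\sigma(-M)=\emptyset$ says precisely that the matrix $x\mapsto x\oplus(-x)$ has the property that $x$ and $-x$ have disjoint spectra, which is equivalent (by the standard resultant/Sylvester criterion) to invertibility of the linear map $Z\mapsto MZ+ZM$ on $\mn$, i.e.\ of the operator $M\otimes I + I\otimes M^{\mathsf T}$. Rather than invoke this, the cleaner route is: $M\in\q$ iff $0\notin\sigma(M)-\sigma(-M)$... but the most economical argument mirrors $\inv$. Observe that $x\mapsto x\oplus(-x)$ is a free (indeed nc) map, and $\q$ is the preimage under $x\mapsto(x,-x)$ of $\set{(a,b)\in\m^1\times\m^1}{\sigma(a)\cap\sigma(b)=\emptyset}$; alternatively, consider the free holomorphic function $h(x)=(x^2)^{-1}$ wherever $x^2$ is invertible. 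Concretely: for $x$ with $x^2$ invertible one has $\sigma(x)\cap\sigma(-x)=\emptyset$ iff $x$ is invertible and, on each block, $x$ has no eigenvalue $\lambda$ with $-\lambda$ also an eigenvalue. The clean statement to use is that $\q=\set{x}{x^2\text{ has all its square roots in }\mathrm{alg}(x) \text{ separated}}$; I would instead argue directly that $p(x)\df$ the map $\operatorname{ad}$-type expression $\det$ of $M\otimes I+I\otimes M$ restricted blockwise is a polynomial in the entries that is nonzero at $M$, hence nonzero on a free neighborhood — but since the free topology is defined via matricial polynomial balls, I would package this as: the function $x\mapsto (\text{left inverse of } L_x)$ where $L_x(Z)=xZ+Zx$ is free holomorphic on $\q$ (it is an nc function, being given by an algebraic formula, and locally bounded near $M$ by continuity of matrix inversion), and then $\q=\set{x}{L_x\text{ invertible with }\|L_x^{-1}\|<C\text{ for some }C}$ is a free domain by the same $g_w^{-1}(\mathrm{ball})$ argument. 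The main obstacle here is phrasing the "disjoint spectrum" condition as a free-open condition cleanly; the Sylvester operator $L_x$ is the natural device, and one must check $x\mapsto L_x^{-1}$ is genuinely a free holomorphic function (nc-ness is algebraic; local boundedness follows from continuity of inversion of invertible operators).

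\emph{Connectedness.} For $\inv$: given $M\in\inv\cap\mn$, the path $t\mapsto M_t$ obtained by scaling each eigenvalue continuously to $1$ — e.g.\ via the holomorphic functional calculus, $M_t=\exp((1-t)\log M)$ for a branch of $\log$ on a simply connected region containing $\sigma(M)$ and avoiding $0$ — stays in $\inv$, is free-continuous (it is $\phi^\wedge(M)$ for a scalar holomorphic $\phi$, hence free holomorphic by Proposition \ref{prop3.3}, hence freely continuous by Proposition \ref{prop2.30}), and joins $M$ to $I_n$. Concatenating with a block-reduction path (e.g.\ $I_n\mapsto I_{n-1}$ is achieved by noting all $I_n$ lie in one component since $\inv$ is closed under the free topology and the direct-sum structure lets us connect $I_n$ to $I_1\oplus I_{n-1}$, then induct) shows $\inv$ is connected. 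For $\q$: the same functional-calculus path works, since if $\sigma(M)\cap\sigma(-M)=\emptyset$ one can choose a continuous deformation of the eigenvalues of $M$ to $1$ that never lets an eigenvalue collide with the negative of another — concretely, deform along rays that avoid the "mirror" set; since the bad set $\set{(\lambda,\mu)}{\lambda=-\mu}$ is a real-codimension-$2$ condition in the relevant parameter space, a generic such deformation exists, and one makes this precise by first rotating all eigenvalues into the open right half-plane (possible since disjointness from $-\sigma(M)$ means $\sigma(M)$ lies in some half-plane through $0$... not quite — so instead argue: $\q$ contains $\inv\cap\{x:\sigma(x)\subseteq\text{open right half-plane}\}$ which is free-open, connected by the log path, and every $M\in\q$ connects into it by rotating eigenvalues without crossing the imaginary axis's reflection constraint). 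I expect the connectedness of $\q$ to be the genuinely fiddly part: one must produce the eigenvalue homotopy avoiding collisions $\lambda\leftrightarrow-\lambda$, and lift it to a free-continuous path via the functional calculus, using that $\Delta(c,r)$-type neighborhoods of the moving spectrum stay within the domain of a suitable holomorphic branch.

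In summary: openness is the $g_w^{-1}(\mathrm{ball})$ technique from Proposition \ref{prop3.3} applied to $g_0(x)=x^{-1}$ for $\inv$ and to the Sylvester-operator inverse for $\q$; connectedness is a functional-calculus homotopy of spectra to the point $I_n$ (or $1$), combined with the direct-sum structure to pass between block sizes. The main obstacle is the collision-avoiding eigenvalue homotopy needed for connectedness of $\q$.
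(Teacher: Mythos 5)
Your proposal diverges from the paper's proof on all three sub-arguments, and the one for openness of $\q$ has a real gap.

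\emph{Openness of $\inv$.} The paper does not appeal to \cite{amfree} or to the $g_w^{-1}(\text{ball})$ device from Proposition \ref{prop3.3}; it gives a direct, self-contained argument: if $q$ is the characteristic polynomial of $M$, set $\delta = q(0)^{-1}q$. Then $\delta(M)=0$ so $M\in B_\delta$, while for any $x\in B_\delta$ one has $\delta(\sigma(x))=\sigma(\delta(x))\subseteq\d$ and $\delta(0)=1$, so $0\notin\sigma(x)$. Your route is not wrong per se, but note that to invoke $g_0(x)=x^{-1}$ as a free holomorphic function you must already know that $\set{x}{0\notin\sigma(x)}=\inv$ is a free domain; if you are granting that fact from \cite{amfree}, the ball-pullback step is superfluous, and if you are not, the argument is circular. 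The paper's characteristic-polynomial trick sidesteps this.

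\emph{Openness of $\q$.} Here is the genuine gap. Your proposed device is the Sylvester operator $L_x(Z)=xZ+Zx$ and the map $x\mapsto L_x^{-1}$. But $L_x$ is an operator on $\mn$, hence an element of $\m_{n^2}$ when $x\in\mn$: the assignment $x\mapsto L_x^{-1}$ is \emph{not graded} (it does not send $\mn^1$ into $\mn$), so it is not a free holomorphic function in the sense of this paper, and the $g_w^{-1}(\text{ball})$ argument from Proposition \ref{prop3.3} cannot be applied to it without substantial reworking (e.g.\ realizing $L_x$ as a matricial polynomial $\delta(x)$ and arguing about norms of its inverse, which requires extra estimates). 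You acknowledge the issue yourself (``the main obstacle here is phrasing the `disjoint spectrum' condition as a free-open condition cleanly''). The paper's argument is much simpler and avoids this entirely: take the characteristic polynomial $q$ of $M$, choose a simple set $\Delta(c,r)\supseteq\sigma(M)$ with $\Delta(c,r)\cap\Delta(-c,r)=\emptyset$, and scale $\delta=\rho q$ so that $\{z:|\delta(z)|<1\}\subseteq\Delta(c,r)$. Then $x\in B_\delta$ forces $\sigma(x)\subseteq\Delta(c,r)$ and $\sigma(-x)\subseteq\Delta(-c,r)$, hence $\sigma(x)\cap\sigma(-x)=\emptyset$.

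\emph{Connectedness.} The paper's route is quite different from yours and far shorter. For $\inv$ it simply observes that $\inv$ is closed under direct sums and invokes a prior result (Proposition \ref{frconn}) that nc-sets are freely connected. For $\q$, which is \emph{not} closed under $\oplus$, it chooses $t\in(0,1]$ so that $\sigma(tx)$ is disjoint from $\sigma(-y)$, so $tx\oplus y\in\q$, and then uses Remark \ref{slightlymore} to produce a freely continuous path from $tx$ to $y$, concatenated with the obvious scaling path from $x$ to $tx$. Your functional-calculus homotopy of spectra is a legitimate alternative (indeed the authors sketch a version of it in commented-out code), but as you observe it is genuinely fiddly for $\q$ because of the collision-avoidance requirement $\lambda\neq-\mu$ along the deformation; the paper's scaling-plus-direct-sum argument dissolves that difficulty entirely. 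Also, your step ``connect $I_n$ to $I_1\oplus I_{n-1}$, then induct'' is content-free since $I_n=I_1\oplus I_{n-1}$ already; what is actually needed (and what Proposition \ref{frconn} supplies) is a mechanism for connecting across levels, e.g.\ connecting $x$ to $x\oplus x$.
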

\begin{proof}
Fix $M \in \inv$. If $q$ is the characteristic polynomial of $M$, then $q(M) = 0$ and $q(0) \neq 0$. If we set $\delta = q(0)^{-1}q$, then clearly, as $\norm{\delta(M)} = 0 <1$, $M \in B_\delta$. Also, if $x \in B_\delta$, then as $\norm{\delta(x)} < 1$,
\[
\delta(\sigma(x)) = \sigma(\delta(x)) \subseteq \d.
\]
Since $\delta(0)=1 \not\in \delta(\sigma(x))$, it follows that $0 \not\in \sigma(x)$, that is $x \in \inv$. Summarizing, given $M \in \inv$, we have constructed a free polynomial $\delta$ such that $M \in B_\delta \subseteq \inv$. Hence $\inv$ is open in the free topology. 

$\inv$ is clearly closed under direct sums, and so it is freely connected, by Proposition \ref{frconn}.

Now fix $M \in \q$. Let $\sigma(M) = \{c_1,\ldots,c_k\}$ and choose $r_1,\ldots,r_k \in {\rplus}$ such that $\Delta(c,r) \cap \Delta(-c,r) = \emptyset$. Let $q$ be the characteristic polynomial of $M$ and define $\delta = \rho q$ where $\rho \in \rplus$ is chosen so large that
\[
\set{z\in \c}{|\delta(z)|<1} \subseteq \Delta(c,r).
\]
With these choices, $M \in B_\delta$ and if $x \in B_\delta$ then $\sigma(x) \subseteq \Delta(c,r)$. Hence, as  then $\sigma(-x) \subseteq \Delta(-c,r)$ as well, $\sigma(x) \cap \sigma(-x) = \emptyset$, that is, $x \in \q$. Summarizing, given $M \in \q$, we have constructed a free polynomial $\delta$ such that $M \in B_\delta \subseteq \q$. Hence $\q$ is open in the free topology.  

Consider any points $x, y\in \q$.  Although $\q$ is not closed under $\oplus$, since $\si(x)$ can meet $\si(-y)$, it is nearly so.  Choose $t\in (0,1]$ such that $\si(tx)$ is disjoint from $\si(-y)$; then $tx\oplus y \in\q$.  Remark \ref{slightlymore} shows that there is a freely continuous path in $\q$ from $tx$ to $y$, while there is an obvious freely continuous path in $\q$ from $x$ to $tx$.
\end{proof}

\begin{prop}\label{Xinterior}
The interior $\Xi^o$ of the set $\Xi$ in the free topology is $\inv$.
\end{prop}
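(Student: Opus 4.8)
The plan is to prove the two inclusions $\inv \subseteq \Xi^o$ and $\Xi^o \subseteq \inv$ separately. For the first inclusion, I would start by observing that $\inv$ is freely open (this is Proposition~\ref{prop4.10}) and that $\inv \subseteq \Xi$: if $x$ is nonsingular then its Jordan form has no nilpotent cells at all, so by Proposition~\ref{sqrtEmpty} the free square root $\sqrt{x}$ is nonempty, whence $x \in \Xi$. Since $\inv$ is a freely open subset of $\Xi$, it is contained in the interior $\Xi^o$. This direction is easy.

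The substance is the reverse inclusion $\Xi^o \subseteq \inv$, equivalently: every singular matrix fails to be an interior point of $\Xi$. So fix a singular $M \in \m^1_n$; I must show every basic free neighborhood $B_\delta$ of $M$ meets the complement of $\Xi$, i.e.\ contains a matrix whose free square root is empty. By Proposition~\ref{sqrtEmpty} it suffices to produce, in every $B_\delta \ni M$, a matrix whose Jordan form has a $k\times k$ nilpotent cell with $k \ge 2$. Since $M$ is singular, $0 \in \sigma(M)$; pick a unit vector $v$ with $Mv = 0$. Extend $v$ to a basis and consider perturbations of $M$ supported near the kernel. The cleanest approach: it is enough to handle the case $M = 0_n$ after a similarity and direct-sum reduction, but more directly, I would argue as in the discussion preceding the proposition (the remark that ``any basic free neighborhood of $\Xi$ contains a matrix $\bbm 0&t\\0&0\ebm$''). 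Concretely, because $B_\delta$ is a basic free neighborhood, $\norm{\delta(M)} < 1$, and $\delta$ is a matrix of free polynomials, hence (jointly) continuous in the entries of its argument in the ordinary Euclidean sense on each $\mn$. Therefore there is a Euclidean neighborhood of $M$ in $\m^1_n$ contained in $B_\delta$. Within that Euclidean neighborhood I can perturb $M$ to a matrix $M'$ that is similar to $M$ on the range of the spectral projection for nonzero eigenvalues, but whose restriction to the generalized $0$-eigenspace is changed: if $M$ already has a nilpotent cell of size $\ge 2$, take $M' = M$; otherwise the $0$-generalized eigenspace of $M$ has dimension $j \ge 1$ on which $M$ acts as $0_j$, and if $j \ge 2$ I replace $0_j$ by a small nilpotent Jordan block $\varepsilon N_2 \oplus 0_{j-2}$; if $j = 1$, I instead coalesce by moving one small nonzero eigenvalue $\mu$ of $M$ towards $0$ while simultaneously forming a $2\times 2$ block $\bbm 0 & \varepsilon \\ 0 & \mu\ebm$ and then letting $\mu \to 0$, producing a genuine $2\times2$ nilpotent cell at the limit — but to stay inside the Euclidean ball I only need $\mu$ and $\varepsilon$ small, and a $2\times 2$ block $\bbm 0&\varepsilon\\ 0&\mu\ebm$ with $\mu \neq 0$ is diagonalizable, so I must actually push $\mu$ to exactly $0$, giving $\bbm 0 & \varepsilon\\0&0\ebm$, which is nilpotent of size $2$. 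In all cases, for $\varepsilon$ small enough $M'$ lies in the Euclidean ball around $M$ inside $B_\delta$, and $M'$ has a $\ge 2$ nilpotent cell, so $\sqrt{M'} = \emptyset$ by Proposition~\ref{sqrtEmpty}; hence $M' \in B_\delta \setminus \Xi$, so $M \notin \Xi^o$.

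The main obstacle is the bookkeeping in the last step: one must verify that a small perturbation of $M$ can be arranged to produce a nilpotent Jordan cell of size at least $2$ while keeping the matrix inside the given free basic neighborhood $B_\delta$. The key facts that make this work are that free polynomial maps $\delta$ are Euclidean-continuous on each matrix level (so $B_\delta$ contains an ordinary open ball around $M$ at level $n$), that $\delta(M)$ being a strict contraction gives room for perturbation, and that whether $\sqrt{x} = \emptyset$ is governed purely by the Jordan structure via Proposition~\ref{sqrtEmpty}. I would present the perturbation cleanly by first conjugating $M$ to Jordan form (a free neighborhood is closed under joint similarity, so this is harmless), separating the singular part $0_n \oplus$(nilpotent part)$\oplus$(invertible part), and then doing the perturbation only on the part acting on $\ker M$ together with, if necessary, one nearby eigenvalue, exactly as sketched above. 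I expect the rest of the argument — the inclusion $\inv \subseteq \Xi^o$ and the citations to Propositions~\ref{prop4.10} and~\ref{sqrtEmpty} — to be routine.
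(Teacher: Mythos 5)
Your outline is the right one (prove $\inv\subseteq\Xi^o$ via Propositions~\ref{prop4.10} and~\ref{sqrtEmpty}, then show a singular $M$ is not interior by producing inside every $B_\delta\ni M$ a matrix with a nilpotent Jordan cell of size $\ge 2$), and your observation that $B_\delta$ is Euclidean-open at each level $n$ is correct. The subcase $j\ge 2$ of your perturbation (replace $0_j$ by $\varepsilon N_2\oplus 0_{j-2}$) is sound and is essentially what the paper does after its preliminary step.

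The gap is the case $j=1$, where $0$ is a simple eigenvalue of $M$. You propose to ``move one small nonzero eigenvalue $\mu$ of $M$ towards $0$,'' but $\mu$ is not a free parameter: it is whatever nonzero eigenvalue $M$ happens to have. If every nonzero eigenvalue of $M$ is bounded away from $0$, coalescing one with $0$ is a perturbation of definite size, not an arbitrarily small one, and need not stay in $B_\delta$. In fact no small level-$n$ perturbation can work here: by continuity of the spectrum, any $M'$ in a sufficiently small Euclidean ball around $M$ has exactly one eigenvalue near $0$, and it is simple, so the Jordan block at that eigenvalue is $1\times 1$ and $\sqrt{M'}\neq\emptyset$ by Proposition~\ref{sqrtEmpty}. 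The extreme case $M=[0]\in\m_1$ makes the obstruction stark: at level $1$ there is no room for a $2\times 2$ nilpotent block at all, yet $[0]$ must be shown non-interior.

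This is precisely where the nc structure of a free neighborhood is indispensable, and it is what the paper's proof uses: since $B_\delta$ is closed under direct sums, $N=M\oplus M\in B_\delta$ at level $2n$, and now the $0$-eigenvalue has multiplicity at least $2$. After a unitary (permutation) conjugation, which also preserves $B_\delta$, one gets an upper triangular matrix with $0_{2\times 2}$ in its top-left corner, and a single small off-diagonal perturbation gives a matrix in $B_\delta$ with a $\ge 2\times 2$ nilpotent cell, hence empty free square root. You mention ``a similarity and direct-sum reduction'' in passing but set it aside for the level-$n$ argument; it is not an optional shortcut but the step that makes the proof go through.
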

\begin{proof}
By Proposition \ref{prop4.10}, $\inv$ is freely open, and it is clearly contained in $\Xi$.  Suppose $\inv$ is a proper subset of $\Xi^o$; then there is a singular matrix $M\in \Xi$ and a basic free neighborhood $B_\delta$ of $M$ contained in $\Xi$.  Since $B_\delta$ is invariant under unitary conjugations, we may assume that $M$ is upper triangular, and since $M$ is singular, we may take $M$ to have zero as its $(1,1)$ entry.   Let $N=M\oplus M$; then $N\in B_\delta$, and for a suitable permutation matrix $P$, the matrix $P^*NP$ in $B_\delta$ is upper triangular and has $0_{2\times 2}$ as a block in the $(1,1)$ position.  For some complex $\zeta\neq 0$, $B_\delta$ contains the upper triangular matrix $T$ differing from $P^*NP$ only in that its $(1,2)$ entry is $\zeta$.  If $e_1, e_2, \dots$ denotes the standard basis of $\c^n$, for the appropriate $n$, then a Jordan chain for $T$ corresponding to the eigenvalue $0$ is $e_2, \zeta e_1$.  Hence the Jordan form of $T$ has a nilpotent Jordan cell of type at least $2\times 2$, and therefore, by Proposition \ref{sqrtEmpty}, $\sqrt{T}$ is empty, contradicting the fact that $B_\delta \subset \Xi$.  Hence $\Xi^o= \inv$.
\end{proof}

We shall construct the Riemann surface for $\sqrt{x}$ by piecing together function elements over $\inv$.
\begin{defin}\label{def4.10}
By a \emph{free function element over $\inv$}
\index{free!function element}
 is meant a pair $(f,U)$ where $U$ is a free domain in $\inv$ and $f$ is a free holomorphic function on $U$. We say a function element $(f,U)$ is a \emph{branch of $\sqrt{x}$} if $f(x)^2 = x$ for all $x\in U$.
\end{defin}
\begin{lem}\label{lem4.10}
Let $(f,U)$ and $(g,V)$ be free function elements over $\inv$, both assumed to be branches of $\sqrt{x}$. If $M \in U \cap V$ and $f(M) = g(M)$, then there exists a free domain $W$ such that $M \in W \subseteq U \cap V$ and $f|W = g|W$.
\end{lem}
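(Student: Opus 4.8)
The plan is to work locally near $M$ using the single-variable function theory developed in Section~\ref{OneVariable}, exploiting the fact that both $f$ and $g$ are \emph{nc} branches of the square root, hence are given by a Riesz functional calculus. Say $M\in\mn^1$, so that $\sigma(M)\subseteq\inv_1$ by Proposition~\ref{prop3.30}, and $0\notin\sigma(M)$. Since $U\cap V$ is a free domain containing $M$, I would first choose a polynomial $\delta$ with $M\in B_\delta\subseteq U\cap V$ (Proposition~\ref{prop3.10}-type argument, or directly). I then intend to shrink to a basic neighborhood on which both branches are represented by explicit scalar holomorphic square roots.

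Here is the key step. Write $\sigma(M)=\{c_1,\dots,c_k\}$, all nonzero. By Propositions~\ref{prop3.28} and~\ref{prop3.40}, on a small free neighborhood of $M$ we have $f=\phi^\wedge$ and $g=\psi^\wedge$, where $\phi=f_1$ and $\psi=g_1$ are holomorphic on a disjoint union $\Delta(c,r)$ of small discs around the eigenvalues of $M$, chosen so small that $0\notin\Delta(c,r)^-$ and each disc admits a holomorphic square root. On each disc $c_j+r_j\d$ there are exactly two holomorphic branches of $\sqrt{z}$ (the disc is simply connected and omits $0$), so $\phi$ restricted to that disc equals $\pm$ one fixed branch; likewise $\psi$. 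Now the hypothesis $f(M)=g(M)$: evaluating at $M$ via the functional calculus, $\phi^\wedge(M)=\psi^\wedge(M)$. Because $M$ acts on each of its spectral subspaces with spectrum in a single disc $c_j+r_j\d$ and $c_j\neq 0$, the value $\phi^\wedge(M)$ on that spectral subspace determines the sign of $\phi$ on the $j$-th disc (the two possible values $\pm(\text{branch})(M|_{\text{subspace}})$ are distinct since that restricted operator is invertible, hence nonzero). Therefore $f(M)=g(M)$ forces $\phi=\psi$ on every disc, i.e. $\phi\equiv\psi$ on $\Delta(c,r)$. Consequently $\phi^\wedge=\psi^\wedge$ wherever both are defined by~\eqref{3.10}, in particular on the free domain $W\df\{x\in B_\delta : \sigma(x)\subseteq\Delta(c,r)\}=B_\delta\cap D(c,r)$ after shrinking $\delta$ if necessary so that $B_\delta\subseteq D(c,r)$. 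This $W$ is a free domain by Proposition~\ref{prop3.10} (intersection of two basic free domains is again a basic free domain), $M\in W\subseteq U\cap V$, and $f|W=g|W$ because both equal $\phi^\wedge|W$.

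The main obstacle is the sign-matching argument: one must be careful that $f(M)=g(M)$ really pins down the branch \emph{disc by disc}, not just in aggregate. The clean way is to use the Riesz idempotents $p_j$ associated to the components $c_j+r_j\d$ of $\Delta(c,r)$; these satisfy $p_j=\mathbf 1_{c_j+r_j\d}^\wedge(M)$, commute with $M$, and $\phi^\wedge(M)p_j$ depends only on $\phi|_{c_j+r_j\d}$ and equals $\epsilon_j\,\beta_j^\wedge(M)p_j$ where $\epsilon_j=\pm1$ and $\beta_j$ is a fixed branch. Since $\beta_j^\wedge(M)p_j$ is an invertible operator on $\ran p_j$ (its spectrum is $\beta_j(\sigma(M)\cap(c_j+r_j\d))\not\ni 0$), the equality $\phi^\wedge(M)p_j=\psi^\wedge(M)p_j$ yields $\epsilon_j$ agrees for $\phi$ and $\psi$, hence $\phi=\psi$ on that component. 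Once this is in hand the rest is the routine bookkeeping of replacing $U\cap V$ by the explicitly constructed basic free domain $W$, which I would not spell out in detail.
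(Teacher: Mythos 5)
Your proof is correct and takes essentially the same route as the paper's: reduce to the scalar functions $f_1$, $g_1$ via Propositions~\ref{prop3.28} and~\ref{prop3.40}, use $f(M)=g(M)$ to pin down the sign of the branch on each connected component of a suitable $\Delta(c,r)$, deduce $f_1=g_1$ on $\Delta(c,r)$, and conclude $f=g$ on $W=U\cap V\cap D(c,r)$ by the functional calculus. The one place where you add value is the Riesz-idempotent justification of the claim that $f(M)=g(M)$ forces the branches to agree component by component; the paper's proof states this more tersely (``since $f(M)=g(M)$, $f_1=g_1$ on $\sigma(M)$'') without spelling out that the sign ambiguity on each spectral block is resolved because the restricted operator $\beta_j^\wedge(M)\big|_{\ran p_j}$ is invertible (as $0\notin\sqrt{\sigma(M)}$).
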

\begin{proof}
Let $(f,U)$ and $(g,V)$ be branches of $\sqrt{x}$ and assume that $M \in U \cap V$ and $f(M) = g(M)$. By Proposition \ref{prop3.40} there exist holomorphic functions $\fone$ on $\uone$ and $\gone$ on $\vone$ such that $f=\fone^\wedge$ on $U$ and $g=\gone^\wedge$ on $V$.

Now, since $M \in U\cap V$, by Proposition \ref{3.30}
\be\label{}
\sigma(M) \subseteq \uone \cap \vone.\notag
\ee
Furthermore, since $f$ and $g$ are branches of $\sqrt{x}$ on $U$ and $V$ respectively,
\be\label{4.10}
\fone,\gone \text{ are branches of } \sqrt{z} \text{ on } \uone \cap \vone.
\ee
Let $\sigma(M) = \{c_1,\ldots,c_k\}$ and choose $r \in {\rplus}^k$ so that $\Delta(c,r) \subseteq \uone \cap \vone$. It follows from equation  \eqref{4.10} that
\be\label{4.20}
\fone,\gone \text{ are branches of } \sqrt{z} \text{ on } \Delta(c,r).
\ee

But since $f(M) = g(M)$, $\fone = \gone$ on $\sigma(M)$. Since each component of $\Delta(c,r)$ meets $\sigma(M)$, it follows from  equation \eqref{4.20} that $\fone = \gone$ on $\Delta(c,r)$. Therefore, by Proposition \ref{prop3.40},
\[
f(x) = \fone^\wedge (x) = \gone^\wedge(x) = g(x)
\]
for all $x \in D(c,r)$. Since Proposition \ref{prop3.10} guarantees that $D(c,r)$ is a free domain, the lemma follows by choice of $W=U\cap V \cap D(c,r)$.
\end{proof}

 The following definition expresses the Riemann surface for $\sqrt{x}$ as a union of graphs of function elements. This approach follows Chapter 8 of \cite{ahl} quite closely. An alternative approach, based on cross-sections of a sheaf of germs of free holomorphic functions over $\inv$, is also possible. However, in the simple special case we are considering, this latter approach would amount to little more than a change in notation.
\begin{defin}\label{def4.20}
If $(f,U)$ is a free function element, let
\[
\gra (f,U) = \set{(x,f(x))}{x \in U}.
 \]
\index{$\gra(f,U)$}
Let $S$ denote the collection of all branches $(f,U)$ of $\sqrt{x}$ where $U$ is a basic free open set $B_\delta$ in $\inv$ for some matricial free polynomial $\delta$. Define $\calr$ by
\[
\calr = \bigcup_{(f,U) \in S} \gra{(f,U)}
\]
\index{$\calr$}
and define $\calb$ by
\[
\calb = \set{\gra (f,U)}{(f,U) \in S}.
\]
\end{defin}
\index{$\calb$}
\begin{lem}\label{lem4.20}
There exists a unique topology $\t$ on $\calr$ such that $\calb$ is a basis for $\t$.
\end{lem}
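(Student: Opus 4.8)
The plan is to invoke the standard criterion for a basis: a collection $\calb$ of subsets of a set $X$ is a basis for a (necessarily unique) topology on $X$ precisely when $\bigcup_{B\in\calb} B = X$ and, for every $B_1,B_2\in\calb$ and every $p\in B_1\cap B_2$, there is $B_3\in\calb$ with $p\in B_3\subseteq B_1\cap B_2$. Once these two conditions are verified, the topology $\t$ is exactly the collection of all unions of members of $\calb$, and this is the only topology having $\calb$ as a basis; thus uniqueness is automatic and the entire content of the lemma is the verification of the two conditions.

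The covering condition is immediate from the definitions in Definition \ref{def4.20}: any point of $\calr$ lies, by construction, in $\gra(f,U)$ for some branch $(f,U)\in S$, and $\gra(f,U)\in\calb$; hence $\calr=\bigcup_{B\in\calb}B$.

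For the intersection condition, take two members $\gra(f,U),\gra(g,V)\in\calb$, where $U=B_\delta$ and $V=B_\epsilon$ are basic free open subsets of $\inv$ and $(f,U),(g,V)$ are branches of $\sqrt{x}$. A point $p$ of $\gra(f,U)\cap\gra(g,V)$ necessarily has the form $p=(M,f(M))=(M,g(M))$ with $M\in U\cap V$ and $f(M)=g(M)$, since it belongs to both graphs. Lemma \ref{lem4.10} then supplies a free domain $W$ with $M\in W\subseteq U\cap V$ and $f|W=g|W$. As $W$ is open in the free topology and the basic free open sets form a basis for that topology, we may choose a matricial free polynomial $\eta$ with $M\in B_\eta\subseteq W$; in particular $B_\eta\subseteq\inv$, and the restriction $(f|B_\eta,B_\eta)$ is again a branch of $\sqrt{x}$, so it belongs to $S$ and $\gra(f|B_\eta,B_\eta)\in\calb$. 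Finally $p\in\gra(f|B_\eta,B_\eta)$, and since $f|B_\eta=g|B_\eta$ while $B_\eta\subseteq U\cap V$, for every $x\in B_\eta$ we have $(x,f(x))=(x,g(x))$, whence $\gra(f|B_\eta,B_\eta)\subseteq\gra(f,U)\cap\gra(g,V)$. This establishes the intersection condition and completes the proof.

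The only step that requires any real input is the intersection condition, and there the essential work has already been carried out in Lemma \ref{lem4.10}; the remaining point—that the free domain $W$ can be shrunk to a basic free open neighborhood $B_\eta$ of $M$—is merely the statement that the sets $B_\delta$ form a basis for the free topology. I do not anticipate any genuine obstacle.
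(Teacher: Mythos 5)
Your proof is correct and follows essentially the same route as the paper: verify the standard basis criterion, with the intersection condition supplied by Lemma \ref{lem4.10}. You also make explicit a small step the paper leaves implicit, namely that the free domain $W$ furnished by Lemma \ref{lem4.10} must be shrunk to a basic free open set $B_\eta$ so that the resulting function element lies in $S$.
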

\begin{proof}
The statement is that, if $(f,U), (g,V) \in S$ and
\[
(M,N) \in \gra(f,U)\cap \gra(g,V),
\]
then there exists $(h,W) \in S$ such that
\[
(M,N) \in \gra(h,W) \subseteq \gra(f,U)\cap \gra(g,V).
\]
This assertion follows immediately from Lemma \ref{lem4.10}.
\end{proof}

\begin{defin}\label{def4.30}
Let $\calr$ be equipped with the topology $\t$ of Lemma {\rm \ref{lem4.20}}.
For $(f,U) \in S$, define
\[
\alpha_{(f,U)}:\gra (f,U) \to U
\]
by the formula
\[
\alpha_{(f,U)}(x,f(x)) = x, \qquad x \in U.
\]
Let
\[
\a = \set{\alpha_{(f,U)}}{(f,U) \in S}.
\]
\end{defin}
\begin{thm}\label{thm4.10}
$(\calr,\t,\a)$ is a free manifold.
\end{thm}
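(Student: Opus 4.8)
The plan is to verify the two conditions in Definition~\ref{defn2.30} for the collection $\a = \{\alpha_{(f,U)}\}$, together with the topological compatibility conditions from Definition~\ref{def2.32}, namely that each $\alpha_{(f,U)}$ is a homeomorphism onto a free domain and that the transition maps are nc-maps. First I would record the easy structural facts: each $\alpha_{(f,U)}$ is by construction a bijection from $\gra(f,U)$ onto $U = B_\delta$, which by Proposition~\ref{prop4.10} (applied to $\inv$) is a free domain, hence in particular a $\tau$-open nc-set for $\tau$ the free topology; and $\bigcup_{(f,U)\in S} \gra(f,U) = \calr$ by the very definition of $\calr$, so \eqref{2.98} holds. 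The topology $\t$ on $\calr$ is the one furnished by Lemma~\ref{lem4.20}, for which $\calb = \{\gra(f,U)\}$ is a basis, so each $\gra(f,U)$ is $\t$-open.

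Next I would show $\alpha_{(f,U)}: \gra(f,U) \to U$ is a homeomorphism. It is a continuous open bijection: openness follows because a basic $\t$-open subset of $\gra(f,U)$ is of the form $\gra(g,V)$ with $(g,V)\in S$ and, on the overlap, Lemma~\ref{lem4.10} forces $g = f$ on a free domain $W$ with $M \in W \subseteq U\cap V$, so $\alpha_{(f,U)}$ carries $\gra(g,V)\cap \gra(f,U)$ onto a set which is a union of such free domains $W$, hence free-open; continuity follows symmetrically, since the preimage under $\alpha_{(f,U)}$ of a basic free-open $B_\eta \subseteq U$ is $\gra(f|B_\eta, B_\eta)$, which lies in $S$ after intersecting with the $B_\delta$'s in the atlas, and is therefore $\t$-open. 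One should be slightly careful here that $f|B_\eta$ need not have domain exactly a basic free open set contained in $\inv$, but one covers $B_\eta$ by basic free open sets, using that $f$ restricted to each is again a branch of $\sqrt{x}$; this is routine.

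Then I would identify the transition maps. For $\alpha = \alpha_{(f,U)}$ and $\beta = \alpha_{(g,V)}$ with $U_\alpha \cap U_\beta = \gra(f,U)\cap\gra(g,V) \neq \emptyset$, a point $(M,N)$ in the overlap has $f(M) = N = g(M)$, so by Lemma~\ref{lem4.10} there is a free domain $W$ with $M\in W\subseteq U\cap V$ and $f|W = g|W$; consequently $\alpha(U_\alpha\cap U_\beta)$ is a union of such free domains $W$ (hence a union of nc-sets, giving condition~(1) of Definition~\ref{defn2.30}), and on each such $W$ one computes $T_{\alpha\beta} = \beta\circ\alpha^{-1}$ maps $x \mapsto (x, g(x)) \mapsto x$, i.e. $T_{\alpha\beta}$ is the \emph{identity} map on $W\subseteq \inv \subseteq \m^1$. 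The identity is trivially an nc-map and freely holomorphic, so conditions~(2) of Definition~\ref{defn2.30} and the homeomorphism requirement on transition maps hold. In fact this shows more: $(\calr,\t,\a)$ is a holomorphic nc-manifold in the sense of Definition~\ref{holman}, though the theorem only asserts it is a free manifold.

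The main obstacle is not any single deep fact but the bookkeeping around the two roles the charts play --- as algebraic (nc) objects and as topological objects --- and in particular making sure that when one restricts a branch $(f,U)$ to a smaller free-open subset the restriction still qualifies as an element of $S$ (so that the basis $\calb$ is genuinely stable under the operations needed). The key enabling lemma throughout is Lemma~\ref{lem4.10}: it is what makes the transition maps be \emph{locally the identity}, which collapses all the nc-map and holomorphy verifications to triviality; without it one would have to argue about agreement of branches more delicately. So the real content of the theorem is that Lemma~\ref{lem4.10} plus Proposition~\ref{prop3.40} (free holomorphic functions in one variable are given by functional calculus, hence determined by their scalar values) together force any two overlapping branches to agree on a free neighborhood, and everything else is unwinding definitions.
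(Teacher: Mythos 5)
Your proposal is correct and follows essentially the same route as the paper: you reduce the claim to the observation (via Lemma~\ref{lem4.10}) that the transition maps are locally the identity on free domains, together with routine checks that the charts are homeomorphisms onto free domains. The paper states the same four facts and proves them the same way; your write-up is merely a bit more explicit about verifying conditions~(1) and~(2) of Definition~\ref{defn2.30} and about the restriction of branches to smaller basic free open sets (a point the paper also glosses over).
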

\begin{proof}
The theorem follows from the following four facts each of which is a simple consequence of the previous constructions:
\begin{enumerate}
\item For each $(f,U) \in S$, $\gra (f,U) \in \t$,
\item For each $(f,U) \in S$, $U$ is a free domain,
\item For each $(f,U) \in S$, $\alpha_{f,U}:\gra (f,U) \to U$ is a homeomorphism from $\gra (f,U)$ equipped with the $\t$ topology to  $U$ equipped with the free topology.
    \item If $(f,U),(g,V) \in S$ and $\gra (f,U) \cap \gra (g,V)  \neq  \emptyset$, then $\alpha_{(g,V)} \circ \alpha_{(f,U)}^{-1}$ is a free holomorphic function.
\end{enumerate}
1. holds as $\gra (f,U) \in \calb$ if $(f,U) \in S$. \\ \\According to Definition \ref{def4.20}, if $(f,U) \in S$ then $U$ is a free domain. \\So 2. holds. \\ \\To see 3., assume that $V$ is a free domain in $U$. Then
\[
\alpha_{(f,U)}^{-1}(V) = \gra (f|V,V) \in \calb \subseteq \t.
\]
Conversely, if $\gra(V,g)$ is a basic $\t$-open set in $\gra(f,U)$, then, as $V\subseteq U$ and $g=f|V$,
\[
\alpha_{(f,U)}(\gra(g,V)) = V \text{ is a free domain.}
\]
4. follows from the fact that if $x \in \alpha(\gra (f,U) \cap \gra (g,V) )$\\ then $\alpha_{(g,V)} \circ \alpha_{(f,U)}^{-1}(x) = x$.
\end{proof}

As in the classical case, the point of the Riemann surface for a multivalued function $f$ on a domain $D$ is that $f$ can be regarded as a single-valued holomorphic function on the Riemann surface, which is a holomorphic manifold lying over $D$.  The following statement makes this notion precise in the context of the matricial square root.  Consider a point $w\in\calr$: then $w\in\gra(f,U)$ for some function element $(f,U)$, which is to say that $w=(x,f(x))$ for some $x\in U$.  We shall say that $w$ {\em lies over} $x$.
\begin{thm}\label{singlevalued}
There is a holomorphic function $F$ on the free holomorphic manifold $(\calr,\t,\a)$ such that, if $w\in\calr$ lies over $x\in\inv$, then $F(w)$ is a square root of $x$.
\end{thm}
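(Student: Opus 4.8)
The plan is to take $F$ to be the restriction to $\calr$ of the second-coordinate projection $\m^1\times\m^1\to\m^1$, and then to check that this map is holomorphic in the sense of Definition \ref{holman} and that it solves $F(w)^2=x$ when $w$ lies over $x$. By the construction of $\calr$ in Definition \ref{def4.20}, every $w\in\calr$ is an ordered pair $(x,y)$ with $x\in\inv$ and $y=f(x)$ for some branch $(f,U)\in S$ having $x\in U$; thus $w$ lies over $x$, and $y$ is determined by $w$. Hence the formula $F(x,y)=y$ defines $F:\calr\to\m^1$ with no ambiguity, so there is nothing to verify for well-definedness: $F$ is literally a coordinate projection.

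Next I would verify holomorphy. By Theorem \ref{thm4.10}, $(\calr,\t,\a)$ is a free manifold, and, as observed following Definition \ref{twotopman} (using Proposition \ref{prop2.40}), a free manifold is automatically a holomorphic nc-manifold: here the transition maps $\alpha_{(g,V)}\circ\alpha_{(f,U)}^{-1}$ are restrictions of the identity, which is free holomorphic, as was recorded in the proof of Theorem \ref{thm4.10}. By Definition \ref{holman} it therefore suffices to check that for each chart $\alpha_{(f,U)}\in\a$ the composition
\[
F\circ\alpha_{(f,U)}^{-1}:\ \alpha_{(f,U)}(\gra(f,U))=U\ \longrightarrow\ \m^1
\]
is freely holomorphic on the free domain $U$. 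But $\alpha_{(f,U)}^{-1}(x)=(x,f(x))$ by Definition \ref{def4.30}, so $F\circ\alpha_{(f,U)}^{-1}=f$ on $U$, and $f$ is by hypothesis free holomorphic on $U$ since $(f,U)\in S$. Hence $F$ is holomorphic on $(\calr,\t,\a)$.

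Finally, if $w\in\calr$ lies over $x\in\inv$, then $w=(x,f(x))$ for some branch $(f,U)$ of $\sqrt{x}$ with $x\in U$, and so $F(w)^2=f(x)^2=x$ by the defining property of a branch in Definition \ref{def4.10}; since $f$ is free holomorphic, $F(w)=f(x)$ also lies in $\mathrm{alg}(x)$ by Proposition \ref{prop3.40}, so $F(w)\in\sqrt{x}$. I do not anticipate a serious obstacle: the topology $\t$ and atlas $\a$ on $\calr$ were set up in Lemmas \ref{lem4.10}--\ref{lem4.20} and Definition \ref{def4.30} precisely so that the second-coordinate projection glues the locally defined branches into a single holomorphic function, and the only point that needs a word of care is recording that a free manifold is ipso facto a holomorphic manifold, so that Definition \ref{holman} is applicable.
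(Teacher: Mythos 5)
Your proposal is correct and follows the same route as the paper: define $F$ as the second-coordinate projection restricted to $\calr$, check holomorphy chart-by-chart via $F\circ\alpha_{(f,U)}^{-1}=f$, and observe that $F(w)=f(x)$ is a square root of $x$. The only difference is that you spell out the chart-level verification that the paper compresses to "it is easy to check."
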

\begin{proof}
Define $F$ to be the restriction to $\calr$ of the co-ordinate projection $(M^1,M^2) \mapsto M^2$.  It is easy to check that $F$ is holomorphic on $\calr$.  If $w$ lies over $x\in\inv$ then $w=(x,f(x))$ for some branch $f$ of the square root on a neighborhood of $x$; then $F(w)=f(x)$, which is a square root of $x$.
\end{proof}

It is interesting to observe that, in contrast to the commutative case, where the Riemann surface for $\sqrt z$ lies over $\c \setminus \{0\}$ as a 2-sheeted surface, in the noncommutative case there is no bound on the number of sheets in $\calr$ that lie over a given point $M \in \inv$.
\begin{prop}\label{prop4.20}
Let $M \in \inv$ and let $\sigma(M)$ have $k$ elements. There exist exactly $2^k$ points $N \in \m^1$ such that $(M,N) \in \calr$.
\end{prop}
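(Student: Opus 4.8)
The plan is to identify the fibre of $\calr$ over $M$ with the set $\sqrt{M}=\{y\in\mathrm{alg}(M):y^2=M\}$ of Definition~\ref{defsqrt}, and then to count the latter using the classical theory of matrix square roots. So the first step is to show that $(M,N)\in\calr$ if and only if $N\in\mathrm{alg}(M)$ and $N^2=M$. One direction is immediate: if $(M,N)\in\calr$ then there is a branch $(f,B_\delta)\in S$ of $\sqrt{x}$ with $M\in B_\delta$ and $f(M)=N$, whence $N^2=M$, and by Proposition~\ref{prop3.40} $N=f(M)$ lies in the algebra generated by $M$ and the identity. For the converse, suppose $N\in\mathrm{alg}(M)$ with $N^2=M$. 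Writing $\sigma(M)=\{c_1,\dots,c_k\}$ (each $c_j\ne 0$ since $M\in\inv$), I would choose radii $r_j$ so small that the discs $c_j+r_j\d$ are pairwise disjoint and avoid $0$, so that $D(c,r)\subseteq\inv$. On the generalized $c_j$-eigenspace $E_j$ of $M$ the operator $N|_{E_j}$ is a polynomial in $M|_{E_j}=c_jI+(\text{nilpotent})$, hence is a polynomial square root of $M|_{E_j}$ with a single eigenvalue $\epsilon_j\sqrt{c_j}$ for some sign $\epsilon_j$. Letting $f_1$ be the branch of the scalar $\sqrt{z}$ on $\Delta(c,r)$ with $f_1(c_j)=\epsilon_j\sqrt{c_j}$, the classical uniqueness of polynomial square roots of a nonsingular matrix (given the eigenvalue choices) forces $f_1^{\wedge}(M)=N$; and $f_1^{\wedge}$ is a branch of $\sqrt{x}$ on the free domain $D(c,r)$ by Propositions~\ref{prop3.10} and \ref{prop3.3}, since multiplicativity of the functional calculus gives $(f_1^{\wedge})^2=\mathrm{id}^{\wedge}=\mathrm{id}$. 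Shrinking $D(c,r)$ to a basic free open set $B_\delta$ with $M\in B_\delta$ puts $(f_1^{\wedge}|B_\delta,B_\delta)$ in $S$ and exhibits $(M,N)\in\calr$.

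It then remains to count $\sqrt{M}$. Here I would invoke the classical description: for a nonsingular matrix with $k$ distinct eigenvalues, the square roots lying in $\mathrm{alg}(M)$ are precisely the values $f_\epsilon^{\wedge}(M)$ as $\epsilon$ ranges over $\{\pm1\}^k$, where $f_\epsilon$ is the holomorphic branch of $\sqrt{z}$ near $\sigma(M)$ with $f_\epsilon(c_j)=\epsilon_j\sqrt{c_j}$ (the discs around the eigenvalues being disjoint, the sign choices are independent). Distinct sign vectors give distinct values: if $\epsilon_j\ne\epsilon'_j$, then $f_\epsilon^{\wedge}(M)$ and $f_{\epsilon'}^{\wedge}(M)$ restrict to $E_j$ with different eigenvalues $\pm\sqrt{c_j}$ there, and these are nonzero as $c_j\ne0$, so the two matrices differ. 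Hence $\sqrt{M}$ has exactly $2^k$ elements, and by the first step these are exactly the $N$ with $(M,N)\in\calr$.

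The main obstacle is the uniqueness/realization assertion used in the converse direction of step one, namely that a polynomial square root of a nonsingular matrix is determined by the choice of $\sqrt{c_j}$ on each eigenvalue, and that this square root is exactly $f_\epsilon^{\wedge}(M)$. On a single Jordan block $c_jI+J$ with $c_j\ne0$ this is checked by expanding $\epsilon_j\sqrt{c_j}\,(I+J/c_j)^{1/2}$ as a terminating binomial series in the nilpotent $J$ and matching coefficients with any polynomial square root of eigenvalue $\epsilon_j\sqrt{c_j}$; assembling over the Jordan blocks within each $E_j$ and then over the $E_j$ gives global uniqueness and simultaneously identifies the value with the prescribed $N$. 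Everything else reduces to bookkeeping with Propositions~\ref{prop3.10}, \ref{prop3.3} and \ref{prop3.40}, which are already established above.
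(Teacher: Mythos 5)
Your proof is correct and follows essentially the same route as the paper: the paper also uses Proposition~\ref{prop3.40} to get the upper bound $2^k$ (via the observation that the fibre lies in $\{N\in\mathrm{alg}(M):N^2=M\}$) and then realizes each of the $2^k$ sign choices by an explicit functional-calculus branch $h_b^\wedge$ on $D(c,r)$ to get the lower bound. The only difference is that you supply the short Jordan-block argument for the linear-algebra fact that a nonsingular matrix has exactly $2^k$ square roots in $\mathrm{alg}(M)$, which the paper simply cites as a ``simple observation.''
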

\begin{proof}
The proof uses the simple observation from linear algebra that a matrix $M$ has exactly $2^k$ square roots that lie in $\mathrm{alg}(M)$, the algebra generated by $M$. Fix $M \in \inv$ and assume that $|\sigma(M)|=k$.

If $(M,N) \in \calr$, then there exists $(f,U) \in \s$ such that $(M,N) \in \gra(f,U)$. Then
\[
N = f(M) = \fone^\wedge(M) \in \mathrm{alg}(M).
\]
Hence, there exist at most $2^k$ matrices $N$ such that $(M,N) \in \calr$.

To see that there exist at least $2^k$ matrices $N$ such that $(M,N) \in \calr$, let $\sigma = \{c_1,\ldots,c_k\}$ and choose $r \in {\rplus}^k$ so that $\Delta(c,r)$ has $k$ components and $0 \not\in \Delta(c,r)$. Each of the $2^k$ distinct choices of square roots $b_1,\ldots,b_k$ for the points $c_1,\ldots,c_k$ gives rise to a distinct holomorphic branch $h_b$ of $\sqrt z$ on $\Delta(c,r)$ satisfying $h_b(c_i) = b_i$, $i=1,\ldots,k$. In turn each of these distinct holomorphic branches of  $h_b$ gives rise to a distinct function element $(h_b^\wedge,\Delta(c,r)) \in S$. As $\sigma(h_b^\wedge(M)) =  \{b_1,\ldots,b_k\}$, this proves that there are at least $2^k$ matrices $N$ such that $(M,N) \in \calr$.
\end{proof}
Equally interesting is to observe that despite the phenomenon described in the preceding proposition, $\calr$ is isomorphic to a free domain in $\m^1$.
\begin{prop}\label{prop4.30}
The map $\sigma:\q \to \calr$ defined by the formula 
\[
\sigma(y) = (y^2,y) \qquad \mbox{ for } y \in \q
\]
 is a free biholomorphism from $\q$ onto $\calr$.
\end{prop}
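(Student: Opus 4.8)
The plan is to exhibit $\sigma$ as an nc-map, show it is bijective onto $\calr$, and check that both $\sigma$ and $\sigma^{-1}$ are holomorphic in the relevant sense; since the domain $\q$ is a free domain in $\m^1$ (Proposition \ref{prop4.10}) and $\calr$ is a free manifold (Theorem \ref{thm4.10}), this amounts to checking the local chart conditions of Definition \ref{twotopman}. The key structural observation is that if $y\in\q$, then $y$ and $-y$ have disjoint spectra, so by Proposition \ref{prop4.10} and the Riesz functional calculus there is a locally-defined holomorphic branch $h$ of $\sqrt{z}$ on a neighborhood of $\sigma(y^2)$ with $h(y^2)=y$; hence $(y^2,y)\in\gra(h^\wedge,B_\delta)\subseteq\calr$ for a suitable $\delta$ with $y^2\in B_\delta\subseteq\inv$. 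This shows $\sigma$ maps into $\calr$.

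First I would prove $\sigma$ is injective: this is immediate, since the second coordinate of $\sigma(y)$ recovers $y$. Next, surjectivity: given $(M,N)\in\calr$, there is a branch $(f,U)\in S$ with $N=f(M)$ and $f(M)^2=M$; since $U\subseteq\inv$, $M$ is nonsingular, and by Proposition \ref{prop3.40} $N=f(M)\in\mathrm{alg}(M)$, so $N$ is an invertible square root of $M$ lying in $\mathrm{alg}(M)$. It remains to verify $N\in\q$, i.e. $\sigma(N)\cap\sigma(-N)=\emptyset$; but $\sigma(N)\subseteq\{$square roots of eigenvalues of $M\}$ and since $M$ is invertible no eigenvalue of $N$ is $0$, and an eigenvalue $\lambda$ of $N$ with $-\lambda$ also an eigenvalue of $N$ would force $\lambda^2=(-\lambda)^2$ to be a repeated situation that is nonetheless consistent — so one must argue more carefully here. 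The correct point is that $N$ lies in $\mathrm{alg}(M)$, so $N$ is a polynomial in $M$; writing $p(M)=N$ with $p(t)^2\equiv t$ on $\sigma(M)$, the eigenvalues of $N$ on a generalized eigenspace of $M$ for eigenvalue $\mu$ are all equal to $p(\mu)$, a single fixed square root of $\mu$; so distinct eigenvalues of $N$ have distinct squares, hence $\sigma(N)\cap(-\sigma(N))=\emptyset$ unless some $\lambda$ and $-\lambda$ are both eigenvalues, which would require $\lambda^2=(-\lambda)^2$ to arise from two different eigenvalues $\mu_1\neq\mu_2$ of $M$ with $p(\mu_1)=-p(\mu_2)$ — impossible since then $\mu_1=p(\mu_1)^2=p(\mu_2)^2=\mu_2$. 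Thus $N\in\q$ and $\sigma(N)=(N^2,N)=(M,N)$.

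For the holomorphic structure, I would use the atlas $\a=\{\alpha_{(f,U)}\}$ on $\calr$ from Definition \ref{def4.30}. Fixing $(f,U)\in S$, the composition $\alpha_{(f,U)}\circ\sigma$ is defined on $\sigma^{-1}(\gra(f,U))=\{y\in\q: y^2\in U,\ f(y^2)=y\}$ and sends $y\mapsto y^2$, which is a free holomorphic (indeed polynomial) map; conversely $\sigma^{-1}$ is the restriction to $\calr$ of the coordinate projection $(M^1,M^2)\mapsto M^2$, which is the holomorphic function $F$ of Theorem \ref{singlevalued}, and composing with $\alpha_{(f,U)}^{-1}$ gives $x\mapsto f(x)$, free holomorphic on $U$. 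So both $\sigma$ and $\sigma^{-1}$ are holomorphic maps of manifolds, and $\sigma$ is a free biholomorphism. The main obstacle, and the step deserving the most care, is the verification that the second coordinate $N$ of a point of $\calr$ actually lands back in $\q$ (the disjoint-spectrum condition) — the bijectivity on the set-theoretic level is otherwise routine, but this is where the specific choice of the domain $\q$, rather than merely $\inv$, is forced, and it hinges on the fact (via Proposition \ref{prop3.40}) that the square root values lie in $\mathrm{alg}(M)$.
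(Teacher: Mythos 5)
Your proof is correct and takes essentially the same route as the paper: the paper asserts bijectivity with a bare ``Clearly'' and then verifies, exactly as you do, that $\alpha_{(f,U)}\circ\sigma$ is the polynomial map $y\mapsto y^2$ and $\sigma^{-1}\circ\alpha_{(f,U)}^{-1}$ is $x\mapsto f(x)$. Your additional care in checking that $\sigma$ lands in $\calr$ and that the second coordinate of a point of $\calr$ lies in $\q$ (via the spectral argument using $N\in\mathrm{alg}(M)$) fills in the details the paper omits, and the argument is sound.
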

\begin{proof}
Clearly, $\sigma$ is injective and onto. If $(f,U) \in S$, then
\[
(\alpha_{(f,U)} \circ \sigma)\ (y) = \alpha_{(f,U)}(y^2,y) = y^2
\]
is a free holomorphic function defined on $\set{y}{(y^2,y) \in \gra(f,U)}$ and
\[
(\sigma^{-1} \circ \alpha_{(f,U)}^{-1})\ (x) = \sigma^{-1}(x,f(x)) = f(x)
\]
is a free holomorphic function defined on $\set{x}{x \in U}$. Therefore $\sigma$ is a free biholomorphic mapping.
\end{proof}

\section{The Zariski-free topology}\label{zariski}
 We come now to a modification of the free topology on $\m^d$ that will be needed for the construction of the topological nc-manifold $\g$ with properties described in Theorems \ref{thm5.10} and \ref{thment}.

\subsection{Thin sets}
\label{subsec7.1}
Recall that a set $T$ in a domain $U \subseteq \c^d$ is said to be \emph{thin}
\index{set!thin}
 if at each point $z \in U$ there exists an open neighborhood $V$ of $z$ in $U$ and a nonconstant holomorphic function $f$ on $V$ such that $f=0$ on $V \cap T$. Simple facts are that thin sets are nowhere dense, closures of thin sets are thin sets and finite unions of thin sets are thin sets. 

 A more subtle property of thin sets will be fundamental in later sections: if $T$ is a thin set in $U$ then every  holomorphic function $f$  on $U\setminus T$ that is locally bounded on $U$  has a unique holomorphic extension to all of $U$ \cite[Theorem I.3.4]{ran86}.  Here the boundedness hypothesis  is defined as follows.

\begin{defin}\label{def2.110}
%\begin{enumerate}
%\item[\rm (1)]
 Let $U$ be a domain in $\c^d$ and let $T\subset\c^d$.  A function $f$ on $U\setminus T$  is said to be {\em locally bounded on $U$}
\index{function!locally bounded}
 if, for every point $z\in U$ there is a neighborhood $V$ of $z$ in $U$ such that $f$ is bounded on $V\setminus T$.
%\item[\rm(2)] Let $U \subseteq \m^d$ be open in the disjoint union topology and suppose that $f$ is a graded function defined on $U_o \subseteq U$. We say that $f$ is \emph{locally bounded on $U$} if, for every $n\in\n$, every entry $f_{ij}$ of $f|U_o\cap\mn^d$ is locally bounded on $U\cap\mn^d$.
%\end{enumerate}
\end{defin}
\begin{comment}
For $M \in \m^d$ and $\end{proof}silon > 0$, we define $B(M,\end{proof}silon) \subseteq \mn^d$ by choosing $n$ such that $M \in \mn^d$ and setting
 \[
 B(M,\end{proof}silon) = \set{x \in \mn^d}{ \norm{x^r - M^r } < \end{proof}silon \mbox{ for } r=1,\ldots, d}.
 \]
 \end{comment}
%A slightly different (but equivalent) version of (1) is used in \cite{ran86}:  $f$ is locally bounded on $U$ if, for every $n\in\n$ and for every relatively compact domain $\Omega$ in $ U\cap\mn^d$, $f$ is bounded on $\Omega\setminus T$.  

It is false that every holomorphic locally bounded function on $U\setminus T$, where $T$ is thin and $U$ is a domain in $\c^d$, has a holomorphic extension to $U$.  An easy counterexample is $U=\c, T=\{0\}, \, f(z)=z^{-1}$.
It is essential that $f$ be locally bounded {\em on $U$} for a holomorphic extension to exist.

\subsection{The Zariski-free topology}

Let $D$ be a free domain.   By a \emph{free variety in $D$} we mean a set $V \subseteq D$ that has the form
\be \label{freevariety}
V = \set{x \in D}{ f(x) =0 \mbox{ for all }f\in S}
\ee
for some set $S$ of  freely holomorphic functions on $D$.  
 Note that the zero symbol in equation \eqref{freevariety} stands simultaneously for square zero matrices of all orders.  To make matters precise we define
\[
\mathbf {0}\df \{0_{1}, 0_{2}, 0_{3},\ldots,\}
\]
where $0_n$ denotes the zero matrix in $\mn$.    Thus the definition \eqref{freevariety} can equally be written
\[
V = \set{x \in D}{ f(x) \in \mathbf{0} \mbox{ for all }f\in S}.
\]
We shall be loose about distinguishing $\mathbf {0}$ and $0$.
%\blue Added May 2015 
%
%In contrast to the commutative case, free domains are not required to be freely connected, and it can therefore happen that a free variety in $D$ is a proper subset of $D$ and yet contains a nonempty open set (let $D=D_1 \cup D_2$ where $D_1, \ D_2$ are disjoint nonempty free domains; then the characteristic function of $D_2$ is freely holomorphic on $D$ and so its zero set $D_1$ is a free variety in $D$).
%
%\green  Green text is a possible modification, added 6th May 2015 onwards.\\
%
%%A freely holomorphic function $f$ on $D$ will be called {\em dut-locally nonzero} if, for every $x\in D$, there is a dut-neighborhood $U$ of $x$ in $D$ such that $f|U$ is not identically zero.   
%A free variety $V$ in $D$ will be called {\em proper} if it contains no nonempty freely open subset of $D$.  Equivalently, $V$ is proper if its interior in $\m^d$ with respect to the free topology is empty.
%
%% Clearly, the zero set of a locally nonzero freely holomorphic function is a proper free variety.

\black
The following statement illustrates just one of the many surprises that result from the free topology not being Hausdorff.
Free varieties in a free domain $D$ are not necessarily relatively closed in $D$ in the free topology.
\begin{prop}\label{prop2.50}
For any freely holomorphic function $p$ on a free domain $D\subset \m^1$, the free closure of the free variety  
\[
T=\{x\in D: p(x)=0\}
\]
 in $D$  is the set
\[
\set{x \in D}{p(x) \text { is singular}}.
\]
\end{prop}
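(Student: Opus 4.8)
The plan is to prove the two inclusions separately. Write $T = \{x \in D : p(x) = 0\}$, let $\overline{T}$ denote its free closure in $D$, and let $E = \{x \in D : p(x) \text{ is singular}\}$. We want $\overline{T} = E$.

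First I would show $\overline{T} \subseteq E$, which should be the easier direction. The idea is that the map $x \mapsto \det p(x)$ is, locally on $D$, a free holomorphic function (it is a polynomial in the entries of $p(x)$ composed with a free holomorphic function), hence freely continuous by Proposition~\ref{prop2.30}. The set where $\det p(x) \ne 0$ is then freely open and disjoint from $T$, so its complement in $D$, which is exactly $E$, is a free closed set containing $T$; therefore $\overline{T} \subseteq E$. I should be a little careful: $\det$ is only defined once a matrix size is fixed, so I would argue size-by-size, using that the free closure respects the grading, or else invoke the functional-calculus representation $p = \widehat{p_1}$ from Proposition~\ref{prop3.40} to see that $p(x)$ singular is equivalent to $p_1$ vanishing somewhere on $\sigma(x)$, which is a spectrally-defined (hence freely closed) condition.

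The substantive direction is $E \subseteq \overline{T}$: every $M \in D$ with $p(M)$ singular lies in the free closure of $T$. Fix such an $M \in \mn^1$ and a basic free neighborhood $\gdel$ of $M$ contained in $D$; I must produce a point of $\gdel$ at which $p$ vanishes. Using $p = \widehat{p_1}$ with $p_1$ holomorphic on $U_1 \supseteq \sigma(M)$ (Proposition~\ref{prop3.40}, Proposition~\ref{prop3.30}), the singularity of $p(M)$ means $p_1(c) = 0$ for some eigenvalue $c \in \sigma(M)$. The trick, exactly as in the proof of Proposition~\ref{Xinterior}, is to pass to a large direct sum $N = M^{\oplus k} \in \gdel$, conjugate by a permutation so that a block $c\,I + (\text{nilpotent})$ associated to the eigenvalue $c$ is isolated in the corner, and then perturb that Jordan block slightly — staying inside $\gdel$ by continuity of the finitely many free polynomials $\delta_{ij}$ — to a matrix $T$ whose spectrum still contains $c$ but whose Jordan structure forces some adjustment; the cleaner route is simply to note $\gdel$ contains all matrices $M' \in \mn^1$ with $\|\delta(M')\| < 1$, and since $\|\delta(cI_m)\| = \|\delta(c)\otimes I_m\| = |\delta(c)| < 1$ for the scalar value $\delta(c)$ (because $c \in \sigma(M)$ and $\|\delta(M)\|<1$ forces $|\delta(c)| \le \|\delta(M)\| < 1$), the scalar matrix $cI_m$ lies in $\gdel$ for every $m$. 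But $p(cI_m) = p_1(c) I_m = 0$, so $cI_m \in T \cap \gdel$, and this holds for every basic free neighborhood $\gdel$ of $M$; hence $M \in \overline{T}$.

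The main obstacle, and the only place requiring genuine care, is justifying $|\delta(c)| \le \|\delta(M)\|$ when $\delta$ is a matrix of free polynomials and $c \in \sigma(M)$: this is precisely the estimate extracted in the proof of Proposition~\ref{prop3.30} (if $Mv = cv$ then $\|\delta(c)w\| \le \|\delta(M)\|\,\|w\|$ for all $w$), so I would cite that computation rather than redo it. Everything else is either the spectral-continuity argument of Proposition~\ref{prop2.30} or a direct-sum/scalar-matrix maneuver of the kind already used for $\Xi^o$. I would also remark that this makes transparent why free varieties need not be freely closed: $T$ may fail to contain the scalar matrices $cI_m$ even though every free neighborhood of such a matrix meets $T$ — indeed $T$ itself need not be relatively closed precisely because $E \setminus T$ can be nonempty.
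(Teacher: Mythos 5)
Your argument for the substantive inclusion (every $M\in D$ with $p(M)$ singular lies in the free closure of $T$) is essentially the paper's: reduce via $p=p_1^\wedge$ and the spectral mapping theorem to finding $c\in\sigma(M)$ with $p_1(c)=0$, establish $\|\delta(c)\|\le\|\delta(M)\|<1$ for the matricial free polynomial $\delta$ defining a basic free neighborhood of $M$ (the eigenvector estimate from the proof of Proposition~\ref{prop3.30}, which the paper's proof rederives for an $m\times m$ block $\delta$), hence $[c]\in B_\delta$, and then note $p([c])=p_1(c)=0$, so $[c]\in T\cap B_\delta$. One notational slip: $\delta(c)$ is an $I\times J$ scalar matrix, so write $\|\delta(c)\|$ rather than $|\delta(c)|$. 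You are right that the Jordan-chain perturbation of Proposition~\ref{Xinterior} is overkill here; the paper too uses only the scalar-matrix observation.

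For $\overline{T}\subseteq E$ your route diverges from the paper's, and the first argument you offer has a gap. The map $x\mapsto\det p(x)$ sends $\mn$ into $\c$, not into $\mn$, so it is not a graded (let alone free holomorphic) function, and Proposition~\ref{prop2.30} says nothing about it; nor does Euclidean continuity of $\det p$ at each level make $\{x:\det p(x)\ne0\}$ freely open, since the free topology restricted to any $\mn$ is coarser than the Euclidean one. Your fallback (``spectrally-defined, hence freely closed'') is the right instinct but too vague to stand alone. The precise version is: $D\setminus E = D\cap p^{-1}(\inv)$, and since $\inv$ is freely open (Proposition~\ref{prop4.10}) and $p$ is freely continuous (Proposition~\ref{prop2.30}), $D\setminus E$ is relatively open, so $E$ is relatively closed in $D$ and contains $T$. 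The paper instead gives a self-contained construction not relying on Proposition~\ref{prop4.10}: take $\delta=c(g\circ p)$ with $g$ the characteristic polynomial of $p(M)$ and $c>1/|\det p(M)|$; then $\delta(M)=0$ by Cayley--Hamilton, while every $N\in T$ has $\|\delta(N)\|=c|\det p(M)|>1$. Both routes are valid; the paper's construction generalizes the very one used to prove Proposition~\ref{prop4.10} (the case $p(x)=x$). Your closing remark about why free varieties fail to be freely closed is exactly the content of Corollary~\ref{nonclosed}.
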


\begin{proof} 
Let $x\in D\cap\mn$ be such that $p(x)$ is singular.  We claim that $x$ is in the closure $T^-$ of $T$.  

Since $p(x)$ is singular, there is an eigenvalue $\la$ of $x$ such that $p(\la)=0$.  Let $u\in\c^n$ be a corresponding eigenvector.
Consider any  basic free neighborhood $B_\delta$ of $x$ in $D$, where $\delta$ is an $m\times m$ matrix of polynomials in one variable; then $\delta(x)\in \m_{mn}$ and $\norm{\delta(x)}_{\m_{mn}}< 1$.
  For any choice of $\zeta=(\zeta_1,\ldots,\zeta_m) \in \C^m$,
\[
\delta_{ij}(x) \zeta_j u = \delta_{ij}(\la) \zeta_j u \quad \mbox{ for } i,j=1,\ldots,m.
\]
Let $\zeta\otimes u$ denote the $nm\times 1$ matrix $[\zeta_1u^T  \ldots \zeta_m u^T]^T$, where the superscript $T$ denotes transposition.  The last equation shows that 
\begin{align*}
\delta(x) (\zeta\otimes u) &= \bbm \de_{11}(x) & \dots & \de_{1m}(x) \\ \cdot & \dots &\cdot \\ \de_{m1}(x)& \dots & \de_{mm}(x) \ebm
	\bbm \zeta_1 u \\  \vdots \\ \zeta_m u \ebm \\
	&=  \bbm \de_{11}(\la) & \dots & \de_{1m}(\la) \\ \cdot & \dots &\cdot \\ \de_{m1}(\la)& \dots & \de_{mm}(\la) \ebm
	\bbm \zeta_1 u \\ \vdots \\ \zeta_m u \ebm \\
	&= (\delta(\la)\zeta)\otimes u.
\end{align*}
Take norms of both sides in $\c^{mn}$:
\begin{align}\label{nmboth}
\|\de(x)(\zeta\otimes u)\|_{\c^{mn}} &= \|(\delta(\la)\zeta)\otimes u\|_{\c^{mn}} \notag \\
	&= \|\de(\la)\zeta\|_{\c^m} \; \|u\|_{\c^n}.
\end{align}
Choose $\zeta$ to be a unit maximizing vector for $\delta(\la)$, so that
\begin{align}\label{sothat}
\|\zeta\|_{\c^m}=1 \quad\mbox{ and }\quad \|\de(\la)\zeta\|_{\c^m} = \|\de(\la)\|_{\m_m}.
\end{align}
Combining equations \eqref{nmboth} and \eqref{sothat}, we find that
\[
\|\de(x)(\zeta\otimes u)\|_{\c^{mn}} =  \|\de(\la)\|_{\m_m} \; \|u\|_{\c^n}.
\]
Since  $\|\de(x)\| < 1$,
\begin{align*}
\norm{\delta(\la)}_{\m_m} \; \norm{u}_{\c^n} & \leq \norm{\delta(x)}_{\m_{mn}}  \; \|\zeta\otimes u\|_{\c^{mn}} \\
	& <  \|\zeta\|_{\c^m} \, \|u\|_{\c^n} \\
	&= \|u\|_{\c^n},
\end{align*}
and hence 
\[
\norm{\delta(\la)}_{\m_m} < 1.
\]
That is, $\la \in G_\de$.  Since $p(\la)=0, \; \la\in T$.  We have shown that every basic free neighborhood of $x$ in $D$ meets $T$, and so $x$ is in the free closure of $T$.

Conversely, suppose that $x\in D\cap\mn$  and $p(x)$ is nonsingular.  Then there is a basic free neighborhood $B_\delta$ of $x$ in $\m^1$ that is disjoint from $T$.
Indeed, we may choose $\delta$ to be the $1\times 1$ polynomial $c(g\circ p)$ where $g$ is the characteristic polynomial of $p(x)$ and $c > 1/|\det p(x)|$.    By the Cayley-Hamilton theorem, $\delta(x)=cg(p(x))=0$, so that $x\in B_\delta$.  For any $m\in\N$ and $M\in T\cap \m_m$,
\[
\de(M)=cg(p(M))=cg(0_m)= cg(0_1) 1_m= c(\det p(x)) 1_m,
\]
and therefore
\[
\norm{\delta(M)}=\norm{c(\det p(x)) 1_m} = c|\det p(x)|> 1,
\]
that is, $M\notin G_\de$.  Thus $D\cap \gdel$ is a free  neighborhood of $x$ in $D$ disjoint from $T$.  Thus $x\notin T^-$.
\end{proof}
\begin{cor}\label{nonclosed}
If $p$ is a nonconstant free polynomial in one variable then the free variety $p^{-1}(\mathbf{0})$ is not freely closed in $\m^1$.
\end{cor}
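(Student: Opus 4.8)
The plan is to deduce this directly from Proposition \ref{prop2.50}, which was just proved. A nonconstant free polynomial $p$ in one variable is, in particular, a freely holomorphic function on the free domain $D = \m^1$, so the proposition applies with this choice of $D$. It tells us that the free closure of the variety $T = p^{-1}(\mathbf 0) = \{x \in \m^1 : p(x) = 0\}$ is the set $\{x \in \m^1 : p(x) \text{ is singular}\}$. To conclude that $T$ is not freely closed it therefore suffices to exhibit a matrix $x \in \m^1$ for which $p(x)$ is singular but $p(x) \neq 0$.

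The key step is this construction. Since $p$ is nonconstant, it has a root $\la \in \c$, say of multiplicity $k \geq 1$; write $p(z) = (z-\la)^k q(z)$ with $q(\la) \neq 0$. I would take $x$ to be the $2\times 2$ Jordan block $x = \bbm \la & 1 \\ 0 & \la \ebm$. Then $p(x)$ is upper triangular with both diagonal entries equal to $p(\la) = 0$, so $p(x)$ is nilpotent, hence singular. On the other hand $p(x) \neq 0$: the $(1,2)$ entry of $p(x)$ is $p'(\la)$, which is nonzero precisely when $k = 1$. If $k \geq 2$ this particular $x$ gives $p(x) = 0$, so instead one should use a perturbation — take $x_\eta = \bbm \la & \eta \\ 0 & \mu \ebm$ with $\mu$ a second root of $p$ if one exists, or more simply just pick $\la$ to be a root of smallest multiplicity and observe that if $p$ has a simple root the Jordan block argument already works; if every root of $p$ has multiplicity $\geq 2$, one can instead take the $2\times 2$ matrix $x = \bbm \la & \eta \\ 0 & \la \ebm$ and note that $p(x) = \eta \, p'(\la) E_{12} + p(\la) I = 0$ still, so this does not help, and one genuinely needs a distinct eigenvalue.

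The cleanest uniform argument avoids the multiplicity issue entirely: pick any $\la \in \c$ with $p(\la) = 0$, and pick any $\beta \in \c$ with $p(\beta) \neq 0$ (such $\beta$ exists since $p$ is a nonconstant polynomial, hence has finitely many roots). Let $x = \bbm \la & 0 \\ 0 & \beta \ebm$. Then $p(x) = \bbm p(\la) & 0 \\ 0 & p(\beta) \ebm = \bbm 0 & 0 \\ 0 & p(\beta) \ebm$, which is singular (its determinant is $0$) but nonzero (the $(2,2)$ entry is $p(\beta) \neq 0$). By Proposition \ref{prop2.50}, $x$ lies in the free closure of $T$ in $\m^1$, but $x \notin T$ since $p(x) \neq 0$. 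Hence $T = p^{-1}(\mathbf 0)$ is not freely closed in $\m^1$.

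I do not expect any real obstacle here; the only thing to be careful about is producing a witness matrix $x$ with $p(x)$ singular but nonzero for \emph{every} nonconstant $p$, and the block-diagonal choice $\diag(\la,\beta)$ with $p(\la)=0$, $p(\beta)\neq 0$ does this uniformly.
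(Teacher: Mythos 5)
Your final argument is exactly the paper's: exhibit a $2\times 2$ diagonal matrix $\diag(\lambda,\beta)$ with $p(\lambda)=0$ and $p(\beta)\neq 0$, so that $p(x)$ is singular but nonzero, and invoke Proposition~\ref{prop2.50}. The intermediate detour through Jordan blocks and multiplicities is unnecessary, but the clean version you settle on is correct and is the paper's proof.
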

For we can easily write down a $2\times 2$ diagonal matrix $M$ such that $p(M)$ is singular but not $0_2$.
\begin{cor}\label{closT}
For any free polynomial $p$ in $d$ variables, the free closure of the free variety  
\[
T=\{x\in\m^d: p(x)=0\}
\]
 in $\m^d$  is contained in the set
\[
\set{x \in \m^d}{p(x) \text { is singular}}.
\]
\end{cor}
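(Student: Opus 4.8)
The plan is to deduce this from the one‑variable Proposition~\ref{prop2.50} by viewing $T$ as the preimage of $\mathbf 0$ under the freely continuous map $p$. Since $p$ is a free polynomial it is freely holomorphic on all of $\m^d$ (trivially approximable by polynomials), hence freely continuous as a map from $\m^d$ with the free topology to $\m^1$ with the free topology, by Proposition~\ref{prop2.30}. The relevant topological principle is that for any continuous map $f\colon X\to Y$ and any $A\subseteq Y$ one has $\overline{f^{-1}(A)}\subseteq f^{-1}(\overline A)$, because $f^{-1}(\overline A)$ is a closed set containing $f^{-1}(A)$; note this needs no separation axiom, which matters here since the free topology is not Hausdorff.

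Carrying this out: first write $T=p^{-1}(\mathbf 0)$, where $\mathbf 0=\{0_n:n\ge 1\}\subseteq\m^1$. Next apply Proposition~\ref{prop2.50} with $D=\m^1$ and the coordinate polynomial $q(y)=y$; then $\{y\in\m^1: q(y)=0\}=\mathbf 0$, and the proposition identifies its free closure in $\m^1$ as the set $N=\{y\in\m^1: y\text{ is singular}\}$. Since $N$ is a free closure it is freely closed in $\m^1$, so $p^{-1}(N)$ is freely closed in $\m^d$ and contains $T$; hence $T^-\subseteq p^{-1}(N)=\{x\in\m^d: p(x)\text{ is singular}\}$, which is exactly the assertion.

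There is no serious obstacle here — this genuinely is a corollary — and the only items meriting a line of care are (a) confirming that a free polynomial is freely continuous on $\m^d$, so that the preimage argument applies, and (b) recognising that $\mathbf 0\subseteq\m^1$ is precisely the free variety of the single polynomial $y\mapsto y$, so that Proposition~\ref{prop2.50} describes its closure directly. As a cross‑check, one can instead argue the contrapositive by mimicking the converse half of Proposition~\ref{prop2.50}: given $x\in\m_n^d$ with $p(x)$ nonsingular, let $g$ be the characteristic polynomial of the matrix $p(x)\in\mn$ and set $\delta=c\,(g\circ p)$ with $c>1/|\det p(x)|$; Cayley–Hamilton gives $\delta(x)=c\,g(p(x))=0$ so $x\in B_\delta$, while for any $M\in\m_m^d$ with $p(M)=0_m$ we get $\delta(M)=c\,g(0_m)=c\,g(0)\,I_m=(-1)^n c\,(\det p(x))\,I_m$, so $\norm{\delta(M)}=c\,|\det p(x)|>1$ and $M\notin B_\delta$. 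Thus $B_\delta$ is a free neighbourhood of $x$ disjoint from $T$, so $x\notin T^-$; the only point beyond the scalar case is that these computations are insensitive to the matrix size $m$, which they plainly are.
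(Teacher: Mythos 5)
Your proof is correct and follows the same route as the paper: apply Proposition~\ref{prop2.50} (here with the identity polynomial on $D=\m^1$) to see that the set $\mathrm{Sing}$ of singular matrices is the free closure of $\mathbf{0}$ and hence freely closed, then use the free continuity of $p$ to pull this closed set back to $\m^d$, where it contains $T$ and therefore its free closure. The direct cross-check you append is also sound, and is a faithful $d$-variable transcription of the converse half of the proof of Proposition~\ref{prop2.50}.
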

\begin{proof}
Let $\mathrm{Sing}$ denote the set of singular matrices in $\m^1$.  By Proposition \ref{prop2.50} and the free continuity of $p, \; p^{-1}(\mathrm{Sing})$ is a freely closed set in $\m^d$.  It clearly contains the set
 $T=p^{-1}(\mathbf{0})$.  Hence it contains the free closure of $T$.
\end{proof}

Many of the important results about varieties in commutative analysis depend critically on the fact that varieties are relatively closed.  Accordingly the following modification of the free topology is natural and fruitful in the nc context, as we shall see in Subsection \ref{thedefofg}.
\begin{defin}\label{def2.80}
A \emph{basic Zariski-free set} in $\m^d$ is a set $B$ that has the form
\[
B = D \setminus T
\]
where $D$ is a free domain in $\m^d$ and
\[
T = \bigcup_{i=1}^k V_i
\]
is a finite union of  free varieties in $D$.
\end{defin}
\index{topology!Zariski-free}
%\red
%NB: This definition gives a different topology than the original definition gave, and the new topology
%is strictly coarser.
%In particular, it is no longer true that
%If $D_1$ and $D_2$ are free domains and $f:D_1 \to D_2$ is a free holomorphic mapping then $f$ is continuous in the Zariski-free topology.
% Eg take $f(x) = x^1 x^2 - x^2 x^1$ and $T = \{ 0 \}$.
% \black

%\green
%We cannot add the stipulation that $T \cap D$ has empty interior in the free topology, since none of the varieties
%$V_{\gamma \tau}$ have empty interior.
%\black

In the definition, the set $T$ is taken to be a finite union of free varieties. This is because, in contrast to the commutative case, a finite union of free varieties is not in general a free variety.

\begin{prop}\label{zfbase}
The collection of basic Zariski-free  sets in $\m^d$ constitutes a base for a topology on $\m^d$.
\end{prop}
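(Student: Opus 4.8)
The plan is to verify the standard base axiom: that the collection of basic Zariski-free sets covers $\m^d$ and that, given two basic Zariski-free sets $B_1 = D_1 \setminus T_1$ and $B_2 = D_2 \setminus T_2$ and a point $x \in B_1 \cap B_2$, there is a basic Zariski-free set $B$ with $x \in B \subseteq B_1 \cap B_2$. Covering is immediate: each free domain $D$ is itself a basic Zariski-free set (take $T = \emptyset$, the empty union of free varieties), and the free domains already cover $\m^d$ since the free topology is a topology.

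For the intersection property, the key observation is that $B_1 \cap B_2 = (D_1 \cap D_2) \setminus (T_1 \cup T_2)$. First I would use the fact that the free domains form a base for the free topology to find a free domain $D$ with $x \in D \subseteq D_1 \cap D_2$; concretely, one may take $D$ to be a basic free set $B_\delta$ containing $x$ and contained in $D_1 \cap D_2$. Now $T_1$ is a finite union $\bigcup_i V_i$ of free varieties in $D_1$, and $T_2 = \bigcup_j W_j$ is a finite union of free varieties in $D_2$. I need to exhibit $(T_1 \cup T_2) \cap D$ as a finite union of free varieties \emph{in $D$}. For each $V_i = \{y \in D_1 : g(y) = 0 \text{ for all } g \in S_i\}$, the restrictions of the functions in $S_i$ to $D$ are freely holomorphic on $D$ (restriction of a freely holomorphic function to a smaller free domain is freely holomorphic), so $V_i \cap D = \{y \in D : g(y) = 0 \text{ for all } g \in S_i\}$ is a free variety in $D$; likewise for each $W_j \cap D$. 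Hence $(T_1 \cup T_2) \cap D = \bigcup_i (V_i \cap D) \cup \bigcup_j (W_j \cap D)$ is a finite union of free varieties in $D$, and $B := D \setminus \big((T_1 \cup T_2) \cap D\big)$ is a basic Zariski-free set. Since $x \notin T_1$ and $x \notin T_2$ and $x \in D$, we have $x \in B$; and $B = D \setminus (T_1 \cup T_2) \subseteq (D_1 \cap D_2) \setminus (T_1 \cup T_2) = B_1 \cap B_2$, as required.

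The one point that needs a little care — and is the mild obstacle — is confirming that the restriction of a free variety to a sub-free-domain is again a free variety there, which amounts to knowing that restriction preserves free holomorphy; this is routine from the definition of freely holomorphic function (local boundedness and the nc property are inherited by restriction to an open subset). With that in hand the argument is a direct transcription of the classical verification that the complements of subvarieties form a base, and no further subtlety arises from the non-Hausdorff nature of the free topology.
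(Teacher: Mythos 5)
Your proof is correct and rests on the same two observations as the paper's: the set-theoretic identity $(D_1\setminus T_1)\cap(D_2\setminus T_2)=(D_1\cap D_2)\setminus\big((T_1\cup T_2)\cap D_1\cap D_2\big)$, and the fact that restricting a freely holomorphic function to a smaller free domain keeps it freely holomorphic, so that free varieties pull back to free varieties on subdomains. The only cosmetic difference is that you shrink to a basic free set $B_\delta$ around the point $x$ and verify the pointwise base axiom, whereas the paper notes that $D_1\cap D_2$ is already a free domain and concludes directly that the collection is closed under finite intersections (hence the intersection of two basic Zariski-free sets is itself basic Zariski-free); this is a slightly cleaner route to the same conclusion.
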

\begin{proof}
Clearly the collection covers $\m^d$.  Consider any pair $D_j\setminus T_j, \ j=1,2$, of basic \zf sets in $\m^d$.  Observe that, for any freely open subset $D$ of $D_j$, the intersection $D\cap T_j$ is a finite union of free varieties in $D$, since the restriction to $D$ of a freely holomorphic function on $D_j$ is freely holomorphic on $D$. 

The set-theoretic identity 
\be\label{setthid}
(D_1\setminus T_1) \cap (D_2\setminus T_2) = (D_1 \cap D_2) \setminus \left((T_1 \cup T_2) \cap D_1\cap D_2\right),
\ee
thus implies that the collection of basic Zariski-free  sets in $\m^d$ is closed with respect to finite intersections and hence forms a base for a topology.
\end{proof}

%We shall say that a finite union of free varieties $T$ in $D$ is {\em proper} if $T \cap D$ has no interior in the free topology.
%The proof of the  claim in the preceding proof of Proposition~\ref{zfbase} shows that if $V_1, \dots, V_k$
%are   proper free varieties, then $\cup_{i=1}^k V_i$ is also proper.
\begin{defin}\label{def2.90}
The \emph{Zariski-free topology on $\m^d$}
\index{topology!Zariski-free}
 is the topology that has as a base the collection of basic Zariski-free sets. A set that is open in the Zariski-free topology is a \emph{Zariski-free domain}.
\end{defin}
Note that there is also a smaller base for the Zariski-free topology, consisting of the sets $B_\delta\setminus T$ where $\delta$ ranges over  all matricial free polynomials and $T$ ranges over all finite unions of  free varieties in $B_\delta$.  This base has the additional feature that it consists of nc-sets.   Consequently the topology is an nc topology.  It is clearly coarser than the finitely open topology.  We deduce, in the terminology of Definition \ref{def2.40}:
\begin{prop}\label{Zfadmiss}
The Zariski-free topology is an admissible topology on $\m^d$.   
\end{prop}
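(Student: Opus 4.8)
The plan is to verify that the Zariski-free topology satisfies the three conditions in the definition of an admissible topology (Definition \ref{def2.40}): that it is an nc topology, that it is locally nc, and that it is coarser than the finitely open topology (equivalently, that it lies between the free topology and the finitely open topology in refinement). Most of the work has already been done in the preceding discussion; what remains is to assemble these observations into a clean verification.

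First I would record the existence of the smaller base $\{B_\delta \setminus T\}$ noted just before the statement, where $\delta$ runs over matricial free polynomials and $T$ runs over finite unions of free varieties in $B_\delta$. The key point is that each such basic set is itself an nc-set: $B_\delta$ is an nc-set by the standard properties of basic free domains, and a free variety $V \subseteq B_\delta$ is closed under direct sums and simultaneous similarities because freely holomorphic functions are nc-functions (so $f(x \oplus y) = f(x) \oplus f(y)$, hence vanishes on $x \oplus y$ precisely when it vanishes on both $x$ and $y$; similarly for similarities). A finite union of such varieties retains this closure property, and the set-difference $B_\delta \setminus T$ is then an nc-set as well. Since the Zariski-free topology has a base of nc-sets, it is an nc topology, and indeed locally nc.

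Next I would check the comparison with the free and finitely open topologies. Every free domain $D$ is a basic Zariski-free set (take $T = \emptyset$), so the free topology is coarser than the Zariski-free topology. In the other direction, a basic Zariski-free set $B_\delta \setminus T$ is finitely open: the free topology is coarser than the finitely open topology, so $B_\delta$ is finitely open, and each free variety $V_i$ is finitely closed because freely holomorphic functions are continuous in the finitely open topology (being locally uniform limits of free polynomials, or by Proposition \ref{prop2.30}), hence $T$ is finitely closed and $B_\delta \setminus T$ is finitely open. Therefore the Zariski-free topology is coarser than the finitely open topology.

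The main obstacle — though it is a mild one — is simply making sure the relevant closure properties of free varieties under direct sums and similarities are invoked correctly, so that the basic Zariski-free sets genuinely are nc-sets rather than merely unions of nc-sets; once that is in hand, the three defining properties of admissibility follow immediately from Proposition \ref{zfbase} together with the observations above. I would conclude by noting that, by Definition \ref{def2.40}, the Zariski-free topology is admissible.
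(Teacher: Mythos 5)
Your proposal is correct and follows the same route as the paper, which proves this proposition by the terse discussion immediately preceding it: note the smaller base of nc-sets $B_\delta \setminus T$, conclude the topology is locally nc, observe it is coarser than the finitely open topology, and cite the definition. You fill in the two justifications the paper leaves implicit (that the sets $B_\delta \setminus T$ are nc-sets, and that free varieties are relatively closed in the finitely open topology), which is exactly the right content.

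One small wording caution: you write that "a finite union of such varieties retains this closure property," but a finite union of sets each closed under direct sums is not in general closed under direct sums (take $x\in V_1\setminus V_2$, $y\in V_2\setminus V_1$; then $x\oplus y$ need not lie in $V_1\cup V_2$). The property you actually need — and the one your parenthetical establishes for a single variety — is the implication $x\oplus y\in V\Rightarrow x\in V \text{ and } y\in V$, which does pass to finite unions and is what makes the complement $B_\delta\setminus T$ closed under direct sums. The argument is sound; just phrase it as a hereditary property of $T$ rather than closure of $T$ under direct sums.
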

An empty union is conventionally taken to be the empty set.  Thus every freely open set in $\m^d$ is also \zfly open. 
\begin{prop}\label{zffiner}
The \zf topology is finer than the free topology on $\m^d$.
\end{prop}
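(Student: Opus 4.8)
The statement to prove is Proposition~\ref{zffiner}: the Zariski-free topology is finer than the free topology on $\m^d$. The plan is to show that every basic free open set is Zariski-free open, which suffices since the free topology is generated by the basic free open sets.

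Recall that a basic free open set has the form $B_\delta$ for some matricial free polynomial $\delta$, and more generally a free domain $D$ is a union of such sets. The key observation is that a basic Zariski-free set is $D \setminus T$ where $D$ is a free domain and $T$ is a \emph{finite} union of free varieties in $D$, and crucially the convention just stated before the proposition: an empty union is the empty set. So if I take $T = \emptyset$ (the empty union, $k=0$ in Definition~\ref{def2.80}), then $D \setminus \emptyset = D$ is a basic Zariski-free set. Hence every free domain is a basic Zariski-free set, and in particular every freely open set (being a union of free domains) is Zariski-free open.

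The main steps are therefore: (1) observe that a free domain $D$ equals $D \setminus T$ with $T$ the empty union of free varieties, so $D$ is itself a basic Zariski-free set by Definition~\ref{def2.80}; (2) conclude that the base of basic Zariski-free sets contains the base of free domains; (3) deduce that every freely open set is Zariski-free open, i.e., the Zariski-free topology contains the free topology, which is exactly the assertion that it is finer. I would also want to note, for the strictness implicit in "finer" (if one reads it strictly), that the containment is proper: Corollary~\ref{nonclosed} exhibits a free variety $p^{-1}(\mathbf 0)$ in $\m^1$ that is not freely closed, so its complement $\m^1 \setminus p^{-1}(\mathbf 0)$ is a basic Zariski-free set that is not freely open — but since the paper's convention (''finer'' typically meaning ''at least as fine'') may only require containment, this last remark is optional.

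There is essentially no obstacle here: the proof is immediate once one unwinds the definitions and invokes the empty-union convention. The only thing to be careful about is to cite the right base for the free topology (free domains, equivalently basic free open sets $B_\delta$) and to confirm that Definition~\ref{def2.80} genuinely permits $k=0$, which the preceding sentence ("An empty union is conventionally taken to be the empty set") makes explicit. I would write the proof in one or two sentences.

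\begin{proof}[Proof sketch]
Every free domain $D$ can be written as $D\setminus T$ where $T$ is the empty union of free varieties in $D$ (taking $k=0$ in Definition~\ref{def2.80}), since an empty union is the empty set. Hence every free domain is a basic Zariski-free set, so the base of basic Zariski-free sets refines the base of free domains. It follows that every freely open set in $\m^d$ is Zariski-free open, which is precisely the statement that the Zariski-free topology is finer than the free topology.
\end{proof}
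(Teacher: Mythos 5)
Your proof is correct and matches the paper's own reasoning exactly: the paper's argument is precisely the observation, stated in the two sentences immediately preceding the proposition, that an empty union of free varieties is empty, so every free domain $D = D\setminus\emptyset$ is a basic Zariski-free set and hence every freely open set is Zariski-freely open. Your optional remark about strict containment via Corollary~\ref{nonclosed} is a correct bonus, though the paper does not include it in this proposition.
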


Any free variety, like a free open set, is preserved by unitary conjugations.  It follows that Zariski-free open sets are also preserved by unitary conjugations.

We now explore some important relationships between free and Zariski-free domains and holomorphic functions.
 \begin{prop}\label{prop2.60}
If $D_1$ and $D_2$ are free domains and $f:D_1 \to D_2$ is a free holomorphic mapping 
%that is open 
then $f$ is continuous in the Zariski-free topology.
 \end{prop}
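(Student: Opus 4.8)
The plan is to reduce the statement to the behavior of $f$ on basic Zariski-free sets, using the fact that the preimage of a base element suffices for continuity. So let $B = D \setminus T$ be a basic Zariski-free set in $D_2$, where $T = \bigcup_{i=1}^k V_i$ is a finite union of free varieties in $D$, with $D$ a free domain contained in $D_2$. I must show that $f^{-1}(B)$ is Zariski-free open in $D_1$; since $f^{-1}(B) = f^{-1}(D) \setminus f^{-1}(T) = f^{-1}(D) \setminus \bigcup_{i=1}^k f^{-1}(V_i)$, and $f^{-1}(D)$ is freely open in $D_1$ (as $f$ is freely continuous by Proposition \ref{prop2.30}, hence free-to-free continuous), it is enough to work locally: fix a point $w \in f^{-1}(B)$, choose a basic free neighborhood $B_\delta \subseteq f^{-1}(D)$ of $w$ inside $D_1$, and show that $B_\delta \setminus \bigcup_{i=1}^k f^{-1}(V_i)$ is a basic Zariski-free set, i.e. that each $f^{-1}(V_i) \cap B_\delta$ is a finite union of free varieties in $B_\delta$.

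The key step is then: if $V = \{y \in D : g(y) = 0 \text{ for all } g \in S\}$ is a free variety in $D$ defined by a family $S$ of freely holomorphic functions, then $f^{-1}(V) \cap B_\delta = \{x \in B_\delta : (g \circ f)(x) = 0 \text{ for all } g \in S\}$. Each composite $g \circ f$ is freely holomorphic on $B_\delta$ — this is where I invoke that the composition of free holomorphic maps is free holomorphic (Definitions \ref{def2.70}, \ref{def2.60} and the approximation characterization of Theorem \ref{thm2.10}, together with $f(B_\delta) \subseteq D$ where each $g$ is defined). Hence $f^{-1}(V) \cap B_\delta$ is itself a free variety in $B_\delta$, cut out by the family $\{g \circ f : g \in S\}$. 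Taking the union over $i = 1, \dots, k$ shows $f^{-1}(T) \cap B_\delta$ is a finite union of free varieties in $B_\delta$, so $B_\delta \setminus (f^{-1}(T) \cap B_\delta)$ is a basic Zariski-free set containing $w$ and contained in $f^{-1}(B)$. Since $w \in f^{-1}(B)$ was arbitrary, $f^{-1}(B)$ is Zariski-free open, and since the basic Zariski-free sets form a base for the Zariski-free topology on $D_2$ (Proposition \ref{zfbase}), $f$ is continuous in the Zariski-free topology.

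The main obstacle — really the only nontrivial point — is checking that $g \circ f$ is freely holomorphic on $B_\delta$ for $g$ freely holomorphic on $D$; everything else is bookkeeping with the set-theoretic identities for preimages of differences and unions (as in the identity \eqref{setthid}) and the fact that free continuity of $f$ makes $f^{-1}(D)$ freely open. If a composition-of-holomorphic-maps lemma is not already available in the form needed, the fallback is to argue directly: $f$ is locally uniformly approximable by free polynomials, $g$ likewise, and one composes the approximations, using local boundedness of $f$ on $B_\delta$ and of $g$ near $f(w)$ to control the convergence; this yields that $g \circ f$ is an nc function that is locally bounded, hence freely holomorphic. I would expect the paper to have set up exactly the ingredients to make this immediate rather than needing the $\epsilon$-$\delta$ version.
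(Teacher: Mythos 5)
Your proof is correct and follows essentially the same approach as the paper's: the key step in both is that for a free variety $V=\set{y\in D}{g(y)=0\ \forall g\in S}$, the preimage $f^{-1}(V)$ is again a free variety cut out by the composites $g\circ f$, which are freely holomorphic. The only organizational difference is that you localize to a basic free neighborhood $B_\delta$ of each point, whereas the paper works directly with the free domain $f^{-1}(D)$ (which is freely open since $f$ is freely continuous) and defines the pulled-back varieties $f^{-1}(V_i)$ there in one step; this makes your localization an unnecessary but harmless detour.
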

% This is false if we use the modified definition of Zariski free, where T is nowhere dense.
% Eg take $f(x) = x^1 x^2 - x^2 x^1$ and $T = \{ 0 \}$.
 \begin{proof}
Let $D_1\subset \m^{d_1}, \ D_2\subset \m^{d_2}$.
 Fix a basic Zariski-free set $B = D \setminus T \subseteq \m^{d_2}$, where $D \subseteq D_2$ is a free domain and $T$ is a finite union of   free varieties in $D$.
   Suppose that, as in Definition \ref{def2.80}, $T=V_1\cup\dots\cup V_k$ where $V_i = \set{x \in D}{ g(x) =0 \mbox{ for all } g\in S_i }$ and
$S_i$ is a set of free holomorphic functions on $D$.  Then
 \[
 f^{-1}(V_i) = \set{x \in f^{-1}(D)}{ g(f(x)) =0 \mbox{ for all } g\in S_i }.
 \]
Therefore, if we let $S_i^\sim = \set{g\circ f}{g \in S_i}$, then
\[
f^{-1}(V_i) = \set{x \in f^{-1}(D)}{ h(x) =0\mbox{ for all } h\in S_i^\sim}
\]
is a free variety in $f^{-1}(D)$. Consequently
\[
f^{-1}(T) = \bigcup_{i=1}^k \set{x \in f^{-1}(D)}{ h(x) =0 \mbox{ for all }h\in S_i^\sim}
\]
is a finite union of free varieties in $f^{-1}(D)$. 
The set 
%Since Proposition \ref{prop2.40} guarantees that $f^{-1}(D)$ is a free domain,
 \[
 f^{-1}(B) = f^{-1}(D) \setminus f^{-1}(T)
 \]
is therefore Zariski-freely open.
 \end{proof}

\begin{lem}
\label{lemlib1}
Let $\phi$ be a Zariski-freely holomorphic function on $D \setminus T$, where $D$ is a freely open
nc set,
 $T$ is a finite union of free varieties in $D$, and $D \setminus T$ is non-empty.
Then $\phi$ extends to a unique freely holomorphic nc function
$\hp$ on $D$.
\end{lem}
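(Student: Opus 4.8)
The plan is to reduce the multivariable statement to the one-variable result established in Proposition~\ref{prop2.50} (together with its corollaries) by a slicing argument, and then to verify that the resulting extension is genuinely an nc function that is freely holomorphic. First I would observe that since $D$ is a freely open nc set and $T = \bigcup_{i=1}^k V_i$ is a finite union of free varieties, the set $D \setminus T$ is a basic Zariski-free set, and $\phi$ is by hypothesis $\tau'$-holomorphic on it for the Zariski-free topology $\tau'$; in particular $\phi$ is locally bounded on $D \setminus T$. The first real step is to show that $\phi$ is locally bounded \emph{on $D$} in the sense that near each point $x_0 \in D$ there is a free neighborhood $B_\delta \subseteq D$ on which $\phi$ is bounded on $B_\delta \setminus T$. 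This is where Corollary~\ref{closT} enters: the free closure of each $V_i$ in $D$ is contained in the set where the defining polynomials become merely singular (not zero), and one must check that this larger ``singular locus'' is still thin when sliced to scalar points, so that boundedness of $\phi$ on a free neighborhood minus $T$ propagates correctly. Concretely, for each $n$ the set $T \cap \mn^d$ is contained in a thin subset $\widetilde T_n$ of $\Delta$-type scalar domains obtained by applying the classical theory; local boundedness of $\phi$ on the free neighborhood then gives local boundedness of each matricial slice $\phi|_{\mn^d}$ on the complement of $\widetilde T_n$.

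Next I would invoke the classical removable-singularity theorem for thin sets, cited as \cite[Theorem I.3.4]{ran86}, applied levelwise: for each $n$, the restriction $\phi|_{(D \setminus T) \cap \mn^d}$ is holomorphic and locally bounded on $D \cap \mn^d$, so it extends uniquely to a holomorphic function $\hp_n$ on $D \cap \mn^d$. Setting $\hp = \bigcup_n \hp_n$ gives a well-defined graded function on $D$ extending $\phi$, and uniqueness of $\hp$ as a function (not yet as an nc function) follows from the uniqueness of each levelwise extension together with the fact that $D \setminus T$ meets every level $\mn^d$ in a dense subset of $D \cap \mn^d$ — here one again uses that $T \cap \mn^d$ is thin, hence nowhere dense, in each connected component of $D \cap \mn^d$, and that $D$ is open and nonempty on the relevant levels (cf.\ Corollary~\ref{level1} for why nonemptiness persists).

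The third step is to verify that $\hp$ is an \emph{nc function}: it must preserve direct sums and joint similarities. Joint similarities are the easier of the two: if $s \in \gl_n$ and both $x$ and $s^{-1}xs$ lie in $D \cap \mn^d$, then $\phi(s^{-1}xs) = s^{-1}\phi(x)s$ holds on the dense open subset $D \setminus T$ (since $\phi$ is an nc function there, $T$ being preserved under similarities because each $V_i$ is cut out by nc functions), and both sides are holomorphic on $D \cap \mn^d$, so the identity extends by continuity. For direct sums: given $x \in D \cap \mn^d$ and $y \in D \cap \m_m^d$ with $x \oplus y \in D$, one chooses a one-parameter family, e.g.\ perturbing $x$ and $y$ slightly within their levels so that the perturbed pair avoids $T$ and still direct-sums into $D$ (using that $T$ is nowhere dense at each level and that $D$ is freely open, so membership of direct sums in $D$ is an open condition), applies $\phi(x'\oplus y') = \phi(x') \oplus \phi(y')$ on $D \setminus T$, and passes to the limit. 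Finally, $\hp$ is locally bounded on $D$ by construction on each free neighborhood, so by Theorem~\ref{thm2.10} (the polynomial approximation characterization of free holomorphy) $\hp$ is freely holomorphic on $D$; uniqueness as a freely holomorphic nc function is immediate from uniqueness as a function.

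The main obstacle I anticipate is the passage from ``$\phi$ locally bounded for the Zariski-free topology on $D \setminus T$'' to ``each matricial slice $\phi|_{\mn^d}$ is locally bounded, in the classical sense, on the complement in $D \cap \mn^d$ of a genuinely thin set.'' The subtlety is that a free variety $V_i$, when intersected with $\mn^d$, need not be a classical analytic variety of the expected codimension — its scalar slice could a priori be large — and Corollary~\ref{closT} only controls the free \emph{closure}, not the raw slice. One must argue that a free variety defined by a nonzero nc function does slice to a thin (indeed nowhere dense, locally contained in a zero set of a nonconstant holomorphic function) subset of each $\mn^d$; this requires knowing that the defining nc functions, restricted to $\mn^d$, are holomorphic in the classical sense and not identically zero on any component, which follows from free holomorphy but needs care about connectedness of $D \cap \mn^d$ and the fact that an nc function vanishing on a free-open piece of one level vanishes on a saturated set (use direct sums to propagate vanishing between levels). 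Once this thinness is secured, the rest is a routine application of the classical extension theorem plus continuity arguments.
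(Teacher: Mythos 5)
Your proposal takes a genuinely different route from the paper's, and it contains a gap that I don't think can be repaired without essentially abandoning the strategy. The paper's proof never touches the classical removable-singularity theorem: instead it exploits the nc structure directly. Given $z \in D \cap T$, pick any fixed $w \in D \setminus T$; since $D$ is an nc set, $z \oplus w \in D$, and since each defining function $f \in S_i$ of the variety $V_i$ satisfies $f(z \oplus w) = f(z) \oplus f(w)$, the fact that $w \notin V_i$ forces $z \oplus w \notin T$. Then the intertwining property of nc functions, applied to the rectangular matrices $\begin{bmatrix}1 & 0\end{bmatrix}$ and $\begin{bmatrix}0\\1\end{bmatrix}$, produces $\phi(z \oplus w) = \hp(z) \oplus \phi(w)$ for a block $\hp(z)$ which (again by intertwining) is independent of the choice of $w$; the rest of the proof checks that this $\hp$ preserves direct sums, similarities, and is freely locally bounded. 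This is a one-point algebraic extraction, not a levelwise analytic continuation.

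The gap in your approach is the assertion, in your final paragraph, that a free variety slices to a thin subset of each level $\mn^d$ and that this ``follows from free holomorphy.'' It does not. Take $D = \m^2$ and the single freely holomorphic function $f(x^1,x^2) = x^1 x^2 - x^2 x^1$. Then $V = \{x \in D : f(x) = 0\}$ is a proper free variety (there are non-commuting pairs at level $2$), yet $V \cap \m^2_1 = D \cap \m^2_1$: the entire scalar level. With $T = V$, the set $(D\setminus T) \cap \m^2_1$ is empty, so there is no thin-set extension to perform at level $1$ — there is literally nothing to extend from. Your suggested repair, ``an nc function vanishing on a free-open piece of one level vanishes on a saturated set,'' is false for the same reason: $f$ vanishes on a nonempty open subset of $\c^2$ (the whole of any $B_\delta \cap \m^2_1$) without vanishing on $B_\delta$. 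The difficulty is structural, not technical; free varieties can swallow whole levels while remaining proper, and no amount of care about connectedness of $D \cap \mn^d$ will change that. The paper's direct-sum trick is precisely the device that sidesteps this problem, because it extracts $\hp(z)$ from a higher level $n+m$ where $D\setminus T$ is nonempty, rather than trying to approach $z$ from within its own level.

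A secondary issue: your step ``$F_o \to f_o$'' analog — the verification that your $\hp$ preserves direct sums — relies on perturbing $x$ and $y$ within their levels to avoid $T$ and then passing to a limit. This again presupposes density of $D\setminus T$ at each level, which fails by the example above. The uniqueness argument has the same dependency. By contrast, the paper's uniqueness proof is again algebraic: if a freely holomorphic nc function $\psi$ vanishes on $D\setminus T$ and $\psi(z) \neq 0$ for some $z \in T \cap D$, then choosing $w \in D \setminus T$ gives $\psi(z \oplus w) = \psi(z)\oplus\psi(w) \neq 0$ while $z \oplus w \in D \setminus T$, a contradiction — no density, no continuity.
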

\begin{proof}
Let $z \in D \cap T$. Choose $w \in D \setminus T$.
 Then $z\oplus w \in D \setminus T$, and so $\ph(z\oplus w)$ is defined.
For each component of the $d$-tuple $z\oplus w$
\[
\bbm 1&0\ebm z^j\oplus w^j = z^j \bbm 1&0\ebm.
\]
Hence, by the intertwining property \eqref{2.607}
of nc functions,
\[
\bbm 1&0\ebm \ph(z\oplus w) = \ph(z) \bbm 1&0\ebm.
\]
Similarly, for each $j$,
\[
z^j\oplus w^j \bbm 0\\ 1\ebm = \bbm 0\\ 1\ebm w^j
\]
and therefore
\[
\ph(z\oplus w) \bbm 0\\ 1\ebm = \bbm 0\\ 1\ebm \ph(w).
\]
Hence
\be\label{gota}
\ph(z\oplus w) = \bbm a&0\\0&\ph(w) \ebm
\ee
for some matrix $a$ of the same type as the $z^j$.
Likewise, if $w_1$ and $w_2$ are distinct elements of $D \setminus T$,
then since
\[
 \begin{bmatrix}
 1&0 \\
 0 & 0
 \end{bmatrix}
\
 \begin{bmatrix}
 z&0 \\
 0 & w_1
 \end{bmatrix}
 \ = \
 \begin{bmatrix}
 z&0 \\
 0 & w_2
 \end{bmatrix}
 \
 \begin{bmatrix}
 1&0 \\
 0 & 0
 \end{bmatrix},
 \]
the intertwining property \eqref{2.607} tells us that
the value of $a$ in equation \eqref{gota}
 does not depend on the choice of $w$ in $D \setminus T$.
Hence we may define a graded function $\hp$ on $D\cap T$ by the relation
\be\label{gothp}
\ph(z\oplus w) = \bbm \hp(z)&0\\0&\ph(w) \ebm
\ee
for all $z\in D\cap T$ and $w\in D\setminus T$.

Extend $\hp$ to $D$ by defining $\hp$ to agree with $\ph$ on $D\setminus T$.
We claim that $\hp$ is a freely holomorphic extension of $\phi$ from $D \setminus T$ to $D$.
That $\hp (z_1 \oplus z_2) = \hp(z_1) \oplus \hp(z_2)$ is immediate if both $z_1 $ and $z_2$ are
in $D \setminus T$, since $\phi$ is an nc function by Proposition \ref{holnc}.  It is also immediate if just one of $z_1,z_2$ is in $D \setminus T$, by choice of the other as $w$.
Assume therefore that both $z_1$ and $z_2$ are in $D\cap T$, and choose any $w\in D\setminus T$.
The relation
\be\label{hpzz}
\ph\left(\bbm z_1\oplus z_2 & 0\\0& w\ebm\right) = \bbm \hp(z_1\oplus z_2)&0\\0&\ph(w) \ebm,
\ee
is true by the  definition of $\hp$ if  $z_1\oplus z_2 \in T$, and by the fact that $\hp$ extends $\ph$ together with the nc property of $\ph$ if $z_1\oplus z_2 \notin T$.

Since $z_2\oplus w \in D\setminus T$,
\begin{align}\label{hpz1}
\ph\left(\bbm z_1\oplus z_2 &0\\0&w\ebm\right)&=\ph\left(\bbm z_1&0\\0&z_2\oplus w\ebm\right)\notag \\
	&=\bbm\hp(z_1) &0\\ 0&\ph(z_2\oplus w)\ebm \notag \\
	&= \bbm \hp(z_1) & 0&0\\0&\hp(z_2)&0\\0&0&\ph(w)\ebm.
\end{align}
Comparison of equations \eqref{hpzz} and \eqref{hpz1} reveals that 
\[
\hp(z_1\oplus z_2)= \hp(z_1) \oplus \hp(z_2).
\]

\begin{comment}

\[
 \begin{bmatrix}
 1&0 &0\\
 0 & 0 & 0 \\
  0 & 0 & 0
 \end{bmatrix}
\
 \begin{bmatrix}
 z_1&0&0 \\
 0&z_2&0 \\
 0 & 0&w
 \end{bmatrix}
 \ = \
\begin{bmatrix}
 z_1&0&0 \\
 0&z_2&0 \\
 0 & 0&w
 \end{bmatrix}
 \
  \begin{bmatrix}
 1&0 &0\\
 0 & 0 & 0 \\
  0 & 0 & 0
 \end{bmatrix}
 \]
and $z_2 \oplus w \in D \setminus T$, we have
 \be
 \label{eqx55}
 \hp \left(
 \begin{bmatrix}
 z_1&0 \\
 0 & z_2
 \end{bmatrix}
\right)
\ = \ 
\begin{bmatrix}
 \hp(z_1)&0 \\
 0 & *
 \end{bmatrix},
\ee
and interchanging $z_1$ and $z_2$ we deduce that the $(2,2)$ entry in 
\eqref{eqx55} is $\hp(z_2)$.
To see $\hp$ is similarity preserving, observe that
\[
\phi  \left(
 \begin{bmatrix}
 s^{-1}&0 \\
 0 & 1
 \end{bmatrix}
 \
  \begin{bmatrix}
 z&0 \\
 0 & w
 \end{bmatrix}
 \
  \begin{bmatrix}
 s&0 \\
 0 & 1
 \end{bmatrix} 
\right)
\ = \ 
\begin{bmatrix}
 \hp(s^{-1} z s)&0 \\
 0 & \phi(w)
 \end{bmatrix} .
 \]
\end{comment}

To see that $\hp$ preserves similarity, consider $x\in D$ and $s\in\inv$ such that $s^{-1}xs\in D$.
If $x\notin T$ then also $s^{-1}xs\in D\setminus T$, and so 
\[
 \hp(s^{-1} z s) = \ph (s^{-1} z s) = s^{-1}\ph( z) s = s^{-1}\hp( z) s.
\]
Now suppose that $x\in D\cap T$; then $s^{-1}xs\in D\cap T$.   Choose any $w\in D\setminus T$. We have
\begin{align*}
\hp(s^{-1}xs)\oplus \ph(w)&= \ph(s^{-1}xs\oplus w) \qquad\qquad \mbox{ by definition of} \hp\\
	&=\ph\left((s\oplus 1)^{-1}(x\oplus w)(s\oplus 1)\right) \\
	&=(s\oplus 1)^{-1}\ph(x\oplus w)(s\oplus 1) \qquad \mbox{ since $\ph$ is nc}\\
	&=(s\oplus 1)^{-1}(\hp(x)\oplus \ph(w))(s\oplus 1)\\
	&= s^{-1}\hp(x) s \oplus \ph(w).
\end{align*}
Hence $\hp(s^{-1}xs)=s^{-1}\hp(x) s$.

It remains to show that $\hp$ is freely locally bounded, hence freely holomorphic.
Let $z$ be any point in $D$, and choose $w \in D \setminus T$.
There is a Zariski-free basic neighborhood $\gdel \setminus S$ containing $z \oplus w$
on which $\phi$ is bounded, by $M$ say.
For every $x \in \gdel$,
\[
 \phi( x \oplus z \oplus w) \ = \
 \hp(x) \oplus \hp(z) \oplus \phi (w)
 \]
 is bounded by $M$, so $\hp$ is bounded on the free neighborhood $\gdel$ of $z$.

 To see that the extension is unique, it is sufficient to prove that if $\psi$ is a free nc holomorphic function
 on $D$ that vanishes on $D \setminus T$, then it is identically zero.
 Suppose not. Then there is some point $z \in T \cap D$ such that $\psi(z) \neq 0$.
There is some point $w \in D \setminus T$. Then $z \oplus w$ 
 is in $D \setminus T$, but $\psi(z \oplus w) = \psi(z) \oplus \psi(w) \neq 0$,
 a contradiction.
 \end{proof}

\begin{lem}
\label{lemlib2}
Let $\phi : \gdel \setminus T \to \ggam \setminus S$ be a  Zariski-freely holomorphic nc map.
Then $\phi$ extends to a unique freely holomorphic map $\hp : \gdel \to \ggam$.
Moreover, if $\phi$ is a Zariski-free homeomorphism, then $\hp$ is a free homeomorphism.
\end{lem}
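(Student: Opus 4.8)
The plan is to reduce everything to the scalar case settled in Lemma~\ref{lemlib1} and then reassemble the pieces. Writing $\gdel\subset\m^{d_1}$, $\ggam\subset\m^{d_2}$ and $\phi=(\phi^1,\dots,\phi^{d_2})$ for the coordinate functions, each $\phi^j$ is a Zariski-freely holomorphic nc function on $\gdel\setminus T$, while $\gdel$ is a freely open nc set and $T$ a finite union of free varieties in it. Thus Lemma~\ref{lemlib1} applies coordinatewise and yields, for each $j$, a unique freely holomorphic nc function $\hp^j$ on $\gdel$ extending $\phi^j$. Put $\hp=(\hp^1,\dots,\hp^{d_2})$: this is an nc map on $\gdel$ each coordinate of which is freely holomorphic, it extends $\phi$, and the uniqueness clause of Lemma~\ref{lemlib1} applied coordinatewise shows it is the only freely holomorphic map with this property.

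The one genuinely new point is that $\hp$ maps $\gdel$ into $\ggam$, not merely into $\m^{d_2}$. For $x\in\gdel\setminus T$ this is immediate since $\hp(x)=\phi(x)\in\ggam\setminus S\subseteq\ggam$. For $x\in\gdel\cap T$ I would choose some $w\in\gdel\setminus T$ and form $x\oplus w$; this point again lies in $\gdel\setminus T$, since it lies in $\gdel$ (an nc set) and in no free variety making up $T$ (because $w$ does not: if $g$ is freely holomorphic on $\gdel$ with $g(w)\neq0$, then $g(x\oplus w)=g(x)\oplus g(w)\neq0$). Hence $\|\gamma(\phi(x\oplus w))\|<1$; as $\hp$ is nc and extends $\phi$, $\phi(x\oplus w)=\hp(x\oplus w)=\hp(x)\oplus\hp(w)$, and since evaluating the matricial polynomial $\gamma$ on a direct sum gives, up to the canonical reordering, the direct sum of $\gamma(\hp(x))$ and $\gamma(\hp(w))$, we get $\|\gamma(\hp(x))\|\le\|\gamma(\hp(x)\oplus\hp(w))\|<1$, i.e. $\hp(x)\in\ggam$. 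This proves the first assertion.

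For the homeomorphism statement, suppose $\phi$ is a Zariski-free homeomorphism, so that $\psi:=\phi^{-1}:\ggam\setminus S\to\gdel\setminus T$ is again a Zariski-freely holomorphic nc map: its nc property follows from that of $\phi$ together with the intertwining property~\eqref{2.607} exactly as in Lemma~\ref{lemlib1}, and its Zariski-free holomorphy is the step that actually uses the homeomorphism hypothesis, coming from the Zariski-free continuity of $\psi$. Applying the first part of the lemma to $\psi$ gives a freely holomorphic map $\widehat\psi:\ggam\to\gdel$ extending $\psi$. Then $\widehat\psi\circ\hp$ is a freely holomorphic nc self-map of $\gdel$ agreeing with $\ide_{\gdel}$ on $\gdel\setminus T$, so by the uniqueness clause (with $\ide_{\gdel}$ as the competing extension) $\widehat\psi\circ\hp=\ide_{\gdel}$; symmetrically $\hp\circ\widehat\psi=\ide_{\ggam}$. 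Hence $\hp$ is a bijection with inverse $\widehat\psi$, and since both maps are freely holomorphic, hence freely continuous, $\hp$ is a free homeomorphism.

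I expect the extension itself to be almost mechanical given Lemma~\ref{lemlib1}; the real work is the bookkeeping needed to see that the extension lands in $\ggam$ (the direct-sum argument of the second paragraph) and, in the homeomorphism part, the verification that $\psi=\phi^{-1}$ is itself a Zariski-freely holomorphic nc map so that the first part of the lemma applies to it. Once both of those are secured, the ``composition equals the identity'' argument yielding the homeomorphism is routine.
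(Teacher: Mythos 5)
Your proposal matches the paper's proof: coordinatewise application of Lemma~\ref{lemlib1}, the direct-sum argument to show the extension lands in $\ggam$ (the paper phrases this by noting $\gamma(\hp(x))$ is a corner compression of $\gamma(\phi(x\oplus w))$, you phrase it via $\|\gamma(\hp(x))\|\le\|\gamma(\phi(x\oplus w))\|<1$, which is the same estimate), and extension of $\psi=\phi^{-1}$ to obtain the inverse of $\hp$. The only divergence is cosmetic: the paper verifies $\widehat\psi\circ\hp=\ide$ by another explicit direct-sum computation, while you invoke the uniqueness clause of Lemma~\ref{lemlib1} with $\ide_{\gdel}$ as the competing extension --- both are fine, and you are also slightly more explicit than the paper about why $x\oplus w\notin T$ and about the symmetric identity $\hp\circ\widehat\psi=\ide_{\ggam}$.
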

\begin{proof}
Since $B_\ga$ is an nc set, we may apply Lemma~\ref{lemlib1} to each component of $\ph$ in turn to obtain a freely holomorphic nc map
 $\hp$ on $B_\ga$.
We must show that the range of $\hp$ is contained in $\ggam$.
This is automatic for points in $\gdel \setminus T$, so consider a point $x \in T \cap \gdel$. Suppose $\| \gamma ( \hp (x) ) \| \geq 1$.
Then there are unit vectors $u,v$ such that
\be
\label{eq1}
| \langle  \gamma ( \hp (x) ) u, v \rangle | \ \geq \ 1 .
\ee
Let $w \in \gdel \setminus T$.
Then
\[
| \langle  \gamma ( \hp (x) ) u, v \rangle |
\ = \
| \langle  \gamma ( \phi (x \oplus w) ) u \oplus 0, v \oplus 0 \rangle | \ < \ 1,
\]
a contradiction.

The uniqueness of $\hp$ follows by Lemma~\ref{lemlib1}.

Suppose $\psi :  \ggam \setminus S \to \gdel \setminus T$ is the inverse of $\phi$.
Let $w \in \gdel \setminus T$.
Then for all $z \in \gdel$, we have $z \oplus w \in B_\de\setminus T$, and
\begin{eqnarray*}
\begin{bmatrix}
 z&0 \\
 0 & w
 \end{bmatrix} & \ = \ &
\psi \circ \phi \left(
\begin{bmatrix}
 z&0 \\
 0 & w
 \end{bmatrix}
\right) \\
& \ = \ &
\psi
 \left(
\begin{bmatrix}
 \hp(z)&0 \\
 0 & \phi(w)
 \end{bmatrix}
\right) \\
& \ = \ &
\begin{bmatrix}
\hat{\psi} \circ  \hp(z)&0 \\
 0 & \psi \circ \phi(w)
 \end{bmatrix}.
\end{eqnarray*}
Thus $\hat{\psi}$ is the inverse of $\hp$.
\end{proof}
\begin{lem}
\label{lemlib3}
Let $\phi$ be a Zariski-freely holomorphic function on a Zariski-free open set $U \subseteq \m^d$.
Suppose that $U$ can be written as
\[
U \ = \
\bigcup_{\alpha \in A} \ (\bda \setminus T_\alpha ),
\]
where $T_\alpha$ is a finite union of free varieties in $\bda$ such that

{\rm (i)} whenever $\bda \cap \bdb$ is non-empty, so is
$(\bda \setminus T_\alpha) \cap (\bdb \setminus T_\beta)$;

{\rm (ii)} the function $\phi$ is bounded and nc on each set $\bda \setminus T_\alpha$ for $\al\in A$.

Then $\phi$ extends to a unique freely holomorphic function $\hp$ on 
\[
\hat{U} \ := \ 
\bigcup_{\alpha \in A} \ \bda.
\]
\end{lem}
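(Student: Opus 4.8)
The plan is to construct $\hp$ by gluing together the local extensions provided by Lemma~\ref{lemlib1}. For each $\alpha \in A$, Lemma~\ref{lemlib1} applies to $\phi|_{\bda \setminus T_\alpha}$ (using hypothesis (ii), which says $\phi$ is bounded and nc there, so in particular freely locally bounded on $\bda$) and yields a unique freely holomorphic nc function $\hp_\alpha$ on $\bda$ extending $\phi|_{\bda \setminus T_\alpha}$. The first main step is to check that these local extensions agree on overlaps, i.e. $\hp_\alpha = \hp_\beta$ on $\bda \cap \bdb$ whenever this intersection is non-empty. Here hypothesis (i) is the crucial ingredient: since $(\bda \setminus T_\alpha) \cap (\bdb \setminus T_\beta)$ is non-empty and $T_\alpha \cup T_\beta$ is a finite union of free varieties in the free domain $\bda \cap \bdb$, Proposition~\ref{prop2.50} (or rather the fact that the complement of a finite union of free varieties is freely dense, together with the remark that $\phi$ restricted to $(\bda \setminus T_\alpha)\cap(\bdb\setminus T_\beta)$ determines any freely holomorphic extension uniquely on $\bda\cap\bdb$ by the uniqueness clause of Lemma~\ref{lemlib1}) forces $\hp_\alpha$ and $\hp_\beta$ to coincide on $\bda \cap \bdb$: both are freely holomorphic nc extensions of the same function $\phi$ from a non-empty subset, so by uniqueness in Lemma~\ref{lemlib1} applied on $\bda\cap\bdb$ with thin set $(T_\alpha\cup T_\beta)\cap\bda\cap\bdb$ they must be equal.

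The second step is to define $\hp$ on $\hat U = \bigcup_\alpha \bda$ by setting $\hp(x) = \hp_\alpha(x)$ for any $\alpha$ with $x \in \bda$; the overlap agreement just established makes this well defined. One then verifies $\hp$ is freely holomorphic on $\hat U$: holomorphy is a local property and $\hat U$ is covered by the free domains $\bda$ on each of which $\hp$ restricts to $\hp_\alpha$, which is freely holomorphic by construction. Similarly $\hp$ is an nc function on $\hat U$ because each $\hp_\alpha$ is, and the nc structure is local in the same sense; one should note that $\hat U$ is a union of nc sets so this makes sense. Finally $\hp$ extends $\phi$ from $U$: given $x \in U$ we have $x \in \bda \setminus T_\alpha$ for some $\alpha$, and there $\hp(x) = \hp_\alpha(x) = \phi(x)$.

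For uniqueness, suppose $\hp$ and $\hp'$ are two freely holomorphic functions on $\hat U$ both restricting to $\phi$ on $U$. For each $\alpha$, both $\hp|_{\bda}$ and $\hp'|_{\bda}$ are freely holomorphic nc extensions of $\phi|_{\bda \setminus T_\alpha}$, so by the uniqueness clause of Lemma~\ref{lemlib1} they coincide on $\bda$; as $\alpha$ ranges over $A$ this gives $\hp = \hp'$ on all of $\hat U$.

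The main obstacle I anticipate is the overlap-agreement step: one must be careful that $\bda \cap \bdb$ is genuinely a free domain on which Lemma~\ref{lemlib1} can be invoked (it is freely open as an intersection of two basic free open sets, but one should confirm it is freely connected or at least argue componentwise), and that $(T_\alpha \cup T_\beta)\cap \bda\cap\bdb$ is a finite union of free varieties \emph{in} $\bda\cap\bdb$ — this uses that restrictions of freely holomorphic functions stay freely holomorphic, exactly as in the proof of Proposition~\ref{zfbase}. Once the data is correctly set up as input to Lemma~\ref{lemlib1}, the rest is formal.
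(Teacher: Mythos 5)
Your proof is correct and follows essentially the same approach as the paper: apply Lemma~\ref{lemlib1} to each $\bda\setminus T_\alpha$, verify agreement on overlaps using hypothesis (i), and glue. The only difference is the mechanism for establishing overlap agreement: the paper works directly with the explicit construction, choosing a single anchor point $w\in(\bda\setminus T_\alpha)\cap(\bdb\setminus T_\beta)$ and noting that equation~\eqref{gothp} then produces the same value on both sheets, whereas you invoke the abstract uniqueness clause of Lemma~\ref{lemlib1} applied to the freely open nc set $\bda\cap\bdb$ with thin set $(T_\alpha\cup T_\beta)\cap\bda\cap\bdb$; both arguments are valid and the explicit version is arguably just the unwound form of yours. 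One small inaccuracy worth flagging: you assert that the glued $\hp$ is an nc function on $\hat U$, but $\hat U$ is merely a union of nc sets and need not itself be closed under direct sums, so $\hp$ need only be \emph{freely locally} nc (i.e.\ freely holomorphic) there --- which is all the lemma claims, and which your local-covering argument does correctly establish.
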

\begin{proof}
By Lemma~\ref{lemlib1}, $\phi$ can be extended to a unique free function on each $\bda$.
To see that these extensions coincide on each $\bda \cap \bdb$, we observe that, by condition (i), if this intersection
is non-empty, then there is a point $w$ in  $(\bda \setminus T_\alpha) \cap (\bdb \setminus T_\beta)$.
We can use this point $w$ in equation \eqref{gothp} to construct the extension of $\ph$ to both $\bda$ and $\bdb$, and so the
extension will agree on the intersection.
\end{proof}

\black

The next result shows how close Zariski-free holomorphy is to free holomorphy.
\begin{prop}\label{veryclose}
A graded function on a free domain in $\m^d$ is freely holomorphic if and only if it is  \zfly holomorphic.
\end{prop}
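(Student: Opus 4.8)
The plan is to prove the two implications separately, with the free-implies-Zariski-free direction being essentially immediate and the converse being the substantive one. For the easy direction, suppose $f$ is freely holomorphic on a free domain $D \subseteq \m^d$. Since the Zariski-free topology is finer than the free topology (Proposition~\ref{zffiner}), $f$ is automatically continuous in the Zariski-free topology; more to the point, $f$ is locally bounded in the free topology, hence locally bounded in the Zariski-free topology, and it is an nc function, so it meets the definition of Zariski-free holomorphy on $D$ (viewing $D$ itself as a Zariski-free open set, taking the thin set to be empty). Actually the cleanest way to phrase it: every freely open set is Zariski-freely open with empty deleted variety, so there is nothing to check beyond unwinding Definition~\ref{def2.70}.

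For the converse, suppose $f$ is a graded function on a free domain $D$ that is Zariski-freely holomorphic. The idea is that $f$ is already defined on all of $D$ (no points are deleted), and we must upgrade ``Zariski-freely holomorphic'' to ``freely holomorphic''. First I would reduce to a basic neighborhood: fix $z \in D$ and choose a basic free set $\gdel$ with $z \in \gdel \subseteq D$ which is an nc set. Restricting $f$ to $\gdel$, it is Zariski-freely holomorphic there, so it is locally bounded in the Zariski-free topology and nc. The key observation is that $\gdel \setminus \emptyset = \gdel$ is itself a basic Zariski-free set, so $f|_{\gdel}$ is a Zariski-freely holomorphic function on $\gdel \setminus T$ with $T = \emptyset$, and Lemma~\ref{lemlib1} applies directly: $f|_{\gdel}$ extends to a unique freely holomorphic nc function $\hat f$ on $\gdel$. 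But since $T = \emptyset$, the extension is $f$ itself, so $f|_{\gdel}$ is freely holomorphic. As $z \in D$ was arbitrary and free holomorphy is a local property, $f$ is freely holomorphic on $D$.

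The main obstacle — or rather the point requiring care — is making sure the hypotheses of Lemma~\ref{lemlib1} really are satisfied with $T = \emptyset$, in particular that $\gdel \setminus T = \gdel$ is non-empty (true, it contains $z$) and that $f|_{\gdel}$ is genuinely nc and bounded; these follow from $f$ being Zariski-freely holomorphic on the free domain $D$ together with the fact that $\gdel$ is an nc set, so that local boundedness in the Zariski-free topology at each point of $\gdel$ gives an honest bound on a smaller basic set. One should double-check that the definition of Zariski-free holomorphy (Definition~\ref{def2.70}, referenced via Definition~\ref{holman} and the surrounding material) bakes in the nc property; if it does not, one invokes Proposition~\ref{holnc} (cited in the proof of Lemma~\ref{lemlib1}) to get it. No deeper input — no thin-set removable singularity theorem — is needed here precisely because nothing has been deleted; the content of the proposition is just that the finer topology does not enlarge the class of holomorphic functions when the domain is already freely open.
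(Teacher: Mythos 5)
Your proof is correct, and it reaches the conclusion by a slightly different invocation of the same key tool as the paper. Both proofs reduce to a basic free set $\gdel$ around a given point and then call Lemma~\ref{lemlib1}. The difference is in how the lemma is invoked. The paper takes the (typically nonempty) set $T$ supplied by the Zariski-free local boundedness of $\varphi$ — a basic Zariski-free neighborhood $\gdel\setminus T$ of the point on which $\varphi$ is nc and bounded — applies Lemma~\ref{lemlib1} to produce a freely holomorphic extension $\hat\varphi$ of $\varphi|_{\gdel\setminus T}$ to $\gdel$, and then uses a short direct-sum argument (that $\hat\varphi(x\oplus y)=\varphi(x\oplus y)$ forces $\hat\varphi(y)=\varphi(y)$ for $y\in\gdel\cap T$) to conclude that the extension \emph{is} $\varphi$, hence $\varphi$ is freely locally bounded at the given point. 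You instead note that $\gdel=\gdel\setminus\emptyset$ is itself a basic Zariski-free set and apply Lemma~\ref{lemlib1} with $T=\emptyset$, so the lemma's conclusion already says that $\varphi|_\gdel$ is freely holomorphic with no extension-agreement step required. That is a legitimate and mildly more economical use of the lemma: the extension-over-$T$ mechanism is then pushed entirely inside the lemma's own proof rather than being partially visible at the level of Proposition~\ref{veryclose}.

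One small caution in your write-up: you worry whether ``$f|_\gdel$ is genuinely nc and bounded'' and speak of extracting ``an honest bound on a smaller basic set,'' but Lemma~\ref{lemlib1} does not have global boundedness of $\varphi$ on $D\setminus T$ as a hypothesis — it asks only that $\varphi$ be Zariski-freely holomorphic on $D\setminus T$, which is a local condition (locally bounded and locally nc in the Zariski-free topology), together with $D$ being a freely open nc set and $D\setminus T$ nonempty. All of these are immediate in your setting with $T=\emptyset$ and $\gdel\ni z$, so there is no gap, but the hypothesis you are double-checking is not actually one the lemma imposes. The genuine point of care is the one you implicitly rely on: that the lemma, when read with $T=\emptyset$, really does deliver free holomorphy of $\hat\varphi=\varphi$ on $\gdel$; and its proof does, since its final ``freely locally bounded'' paragraph is exactly the argument one would otherwise have to write out by hand.
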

\begin{proof}
Let $U$ be a free domain in $\m^d$ and let $\ph: U\to \m^1$ be a graded function.  Suppose that $\ph$ is freely holomorphic -- that is, it is freely locally nc and freely locally bounded on $U$.  Since the \zf topology is finer than the free topology, it is immediate that $\ph$ is \zfly locally nc and \zfly locally bounded.  Thus $\ph$ is \zfly holomorphic.

Conversely, suppose that $\ph$ is  \zfly holomorphic on $U$.  
%By Proposition \ref{analytic}, $\ph$ is analytic on $U$ and therefore  continuous with respect to the disjoint union topologies on both domain and codomain.
Let $x \in U$. There is a Zariski-free open set $D \setminus T$ containing $x$ on which $\ph$ is nc and bounded.
Therefore there exists $\gdel$ such that $\gdel \setminus T$ contains $x$ and $\phi$ is nc and bounded on 
$\gdel \setminus T$. 
By Lemma~\ref{lemlib1}, there is a bounded nc extension $\hat{\ph}$ of $\ph |_{\gdel \setminus T}$ to $\gdel$.
If $y \in \gdel \cap T$, then
\begin{eqnarray*}
\hat{\ph}(x \oplus y) &\ = \ & \ph( x \oplus y) \\
\shortparallel\quad &&\quad  \shortparallel \\
\hat{\ph}(x) \oplus \hat{\ph}(y)  &\ =\  &\ph(x) \oplus \ph (y) .
\end{eqnarray*}
So $\hat{\ph}$ agrees with $\ph$ on all of $\gdel$, including $T$. Therefore $\ph$ is freely locally bounded.
\end{proof}

\section{An nc-manifold for symmetrization } \label{UniversalDomain}

In this section we construct a  two-dimensional topological nc-manifold $\g$ such that the algebra of holomorphic functions on $\g$ is canonically isomorphic (in a sense to be made precise in Theorem \ref{thment} below) to the algebra of symmetric free holomorphic functions in $\m^2$.  

In the commutative case, if $\Omega\subset \C^2$ is a symmetric domain (that is, if $(z,w)\in\Omega$ implies that $(w,z) \in\Omega$) then the domain $\tilde\Omega \df \{(z+w,zw):(z,w)\in\Omega\}$ has the property that the symmetric holomorphic functions $f$ on $\Omega$ are in bijective correspondence with the holomorphic functions $F$ on $\tilde\Omega$ via the relation
\[
f(z,w) =F(z+w,zw) \quad \mbox{ for all }(z,w)\in \Omega.
\]
We shall construct a topological nc-manifold $\g$ with properties analogous to those of $\tilde\Omega$ for the  symmetric free domain $\Omega= \m^2$.

In \cite{ay14} the algebra of symmetric free holomorphic functions on the biball $B^2$, the noncommutative analog of the bidisc, is  analysed  by means of operator-theoretic methods.
The authors constructed a set $\Omega \subset \m^\infty$ and a map  $\pi:B^2 \to\Omega$ such that every symmetric free holomorphic function $\ph$ on $B^2$ can be expressed as $\Phi\circ \pi$ for some holomorphic nc-function $\Phi$ on
$\Omega$.  Here $\Omega$ is an infinite-dimensional set, but it can nevertheless be described in terms
 of only three noncommuting variables, provided that inverses and square roots are allowed.  In this paper we adopt a more topological approach and obtain a {\em $2$-dimensional} topological nc-manifold with properties analogous to $\Omega$, but for symmetric functions on an arbitrary symmetric free domain.  The theme that three variables suffice is reflected in the fact that the manifold we obtain in this section is presented as a subset of $\m^3$.  However the topology of the manifold structure of $\g$ is not that induced by any natural topology of $\m^3$.

\subsection{A geometric lemma}

In this and the next two subsections we describe some simple combinatorial geometry and properties of free square roots, which play an essential part in the construction of $\g$. 

Throughout Section \ref{UniversalDomain} the symbol $\Delta$ will be reserved for sets in the plane that have the form
\[
\Delta = \bigcup_{i=1}^k (c_i + r\d).
\]
Alternatively, if $\gamma$ denotes the finite set $\{c_1,c_2,\ldots c_k\}$,
\[
\Delta = \gamma + r\d.
\]
\index{$\Delta$}
We refer to such sets as \emph{simple sets with radius $r$}. 
\index{set!simple}
We define the \emph{separation} of a simple set $\Delta = \gamma + r\d$, denoted  by $\sep \Delta$, by
\index{$\sep \Delta$}
\[
\sep \Delta = \min \{| c-d|: c,d\in \gamma, c\neq d\}.
\]
\index{separation}
\begin{defin}\label{def5.10}
A simple set $\Delta$ with radius $r$ is \emph{$t$-isolated} if $r < t \sep \Delta$. If $\Delta_1$ and $\Delta_2$ are simple sets, then $\Delta_2$ is {\em subordinate} to $\Delta_1$ if each disc in $\Delta_2$ meets at most one disc in $\Delta_1$.
\end{defin}

\begin{lem}\label{lem5.10}
Let $\Delta_1$ and $\Delta_2$ be simple sets with radii $r_1$ and $r_2$ respectively. If $\Delta_1$ and $\Delta_2$ are $\tfrac 14$-isolated, then either $\Delta_1$ is subordinate to $\Delta_2$ or $\Delta_2$ is subordinate to $\Delta_1$.
\end{lem}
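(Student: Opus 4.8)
The plan is to argue by contradiction, assuming that neither $\Delta_1$ is subordinate to $\Delta_2$ nor $\Delta_2$ is subordinate to $\Delta_1$, and to extract from this a violation of one of the $\tfrac14$-isolation hypotheses. By the definition of subordinate, the failure of ``$\Delta_2$ subordinate to $\Delta_1$'' means there is a disc $D$ of $\Delta_2$, say $D = c + r_2\d$ with $c \in \gamma_2$, that meets at least two discs of $\Delta_1$; call their centres $a, b \in \gamma_1$ with $a \neq b$. Symmetrically, the failure of ``$\Delta_1$ subordinate to $\Delta_2$'' gives a disc $a' + r_1 \d$ of $\Delta_1$ meeting two discs of $\Delta_2$ with distinct centres. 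The first observation to record is a crude bound: if $c + r_2\d$ meets $a + r_1\d$, then $|c - a| < r_1 + r_2$, and likewise $|c-b| < r_1 + r_2$, so by the triangle inequality $\sep\Delta_1 \le |a - b| < 2(r_1 + r_2)$.

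First I would combine this with the isolation hypotheses $r_1 < \tfrac14 \sep\Delta_1$ and $r_2 < \tfrac14\sep\Delta_2$ to see that the two separations must be comparable. From $\sep\Delta_1 < 2(r_1 + r_2) < \tfrac12\sep\Delta_1 + 2r_2$ we get $\sep\Delta_1 < 4 r_2 < \sep\Delta_2$. Running the symmetric argument with the roles of $\Delta_1$ and $\Delta_2$ interchanged — using the disc $a' + r_1\d$ that meets two discs of $\Delta_2$ — yields $\sep\Delta_2 < 4 r_1 < \sep\Delta_1$. These two inequalities are contradictory, which completes the proof.

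The only step requiring any care is the bookkeeping in the triangle-inequality estimate and making sure the strict inequalities propagate correctly; this is the ``main obstacle'' only in the weak sense that one must be slightly careful that $r_1 < \tfrac14\sep\Delta_1$ is strict so that the chain $\sep\Delta_1 < 2r_1 + 2r_2 < \tfrac12\sep\Delta_1 + 2r_2$ really forces $\tfrac12\sep\Delta_1 < 2r_2$, i.e.\ $\sep\Delta_1 < 4r_2$. Everything else is purely formal. One should also note at the outset that if $\gamma_1$ or $\gamma_2$ is a singleton then the separation is a minimum over an empty set and the corresponding simple set is vacuously subordinate to the other, so we may assume $k_1, k_2 \ge 2$ and both separations are genuine positive numbers; this edge case is worth a one-line remark so the definition of $\sep$ is not applied to an empty set.
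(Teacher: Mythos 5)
Your proof is correct and follows essentially the same route as the paper's: contradiction via the triangle inequality and the two $\tfrac14$-isolation bounds, producing the incompatible inequalities $\sep\Delta_1 < \sep\Delta_2$ and $\sep\Delta_2 < \sep\Delta_1$. The only cosmetic differences are that the paper introduces intermediate points $\zeta_j$ in the overlaps (rather than your direct $|c-a| < r_1 + r_2$ estimate) and applies both isolation bounds simultaneously instead of one at a time; your added remark about the singleton edge case is a harmless and sensible precaution.
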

\index{subordinate}
\begin{proof}
Suppose not, so that neither $\Delta_j$ is subordinate to the other.  Since $\Delta_1$ is not subordinate to $\Delta_2$ there is a disc $c+r_1\d$ in $\Delta_1$ that meets two discs $d_j+r_2\d$, $ j=1,2$ in $\Delta_2$, with $d_1\neq d_2$.  Pick points $\zeta_1, \zeta_2$ such that
\[
\zeta_j \in (c+r_1\d) \cap (d_j+r_2\d), \qquad j=1,2.
\]
Then
\[
|\zeta_j-c| < r_1\ \text{ and }\  |\zeta_j-d_j| < r_2.
\]
Since $\Delta_1, \Delta_2$ are $\tfrac 14$-isolated,
\[
r_1 < \tfrac 14 \sep \Delta_1\ \text{ and }\  r_2 < \tfrac 14 \sep \Delta_2,
\]
and so
\[
|\zeta_j-c| < \tfrac 14 \sep \Delta_1\ \text{ and }\  |\zeta_j-d_j| < \tfrac 14 \sep \Delta_2.
\]
Therefore,
\[
|c-d_j| < \tfrac 14\sep\Delta_1 +  \tfrac 14\sep\Delta_2, \qquad j=1,2.
\]
In consequence,
\begin{align*}
\sep\Delta_2 &\leq |d_1-d_2| \\
	&\leq |c-d_1| +|c-d_2|\\
	&<\tfrac 12\sep\Delta_1 +\tfrac 12\sep\Delta_2.
\end{align*}
Hence,
\be\label{5.10}
\sep \Delta_2 < \sep \Delta_1.
\ee
Since also $\Delta_2$ is not subordinate to $\Delta_1$, by repeating the above argument with $\Delta_1$ and $\Delta_2$ swapped, we deduce that $\sep\Delta_1 < \sep\Delta_2$, contradicting inequality \eqref{5.10}.
\end{proof}

\subsection{Holomorphic square roots on the sets $\Delta_\ga$} %The sets $\Gamma$ and $\Delta_\gamma$}
Let $\Gamma$ denote the set of finite subsets of $\c \setminus \{0\}$. 
\index{$\Ga$}
For each $\gamma \in \Gamma$ we fix throughout the remainder of the section a simple set
\[
\Delta_\gamma = \bigcup_{c \in \gamma} (c + r_\gamma \d)
\]
\index{$\Delta_\ga$}
where $r_\gamma$ is chosen so that
\[
r_\gamma < \min\ \{\min_{c \in \gamma} |c|,\tfrac 14 \min_{\substack{c,d \in \gamma\\ c \neq d}} |c-d|\ \}.
\]
This choice of $r_\gamma$ guarantees that, for all $\gamma \in \Gamma$,
\be\label{5.20}
0 \notin \Delta_\gamma
\ee
and
\be\label{5.30}
 \Delta_\gamma \text{ is } \tfrac 14 \text{-isolated}.
\ee
Notice that the statement \eqref{5.30} implies that $\Delta_\gamma$ is a finite union of open discs whose closures are pairwise disjoint.

For $\gamma \in \Gamma$ let $|\gamma|$ denote the cardinality of $\gamma$. Define $2^{|\gamma|}$ functions on $\Delta_\gamma$ in the following way. For each $\tau \in \{-1,1\}^\gamma$, define $\iota_{\gamma\tau}:\Delta_\gamma \to \{-1,1\}$ by the formula
\[
\iota_{\gamma\tau}(z)=\tau(c) \text{ if }  z \in c + r_\gamma \d.
\]
\index{$\iota\subgt$}
The functions $\iota_{\gamma \tau}$ are holomorphic on $\Delta_\gamma$, indeed, they precisely consist of the $2^{|\gamma|}$ holomorphic square roots of the constant function $1$ on $\Delta_\gamma$.

For each $\gamma \in \Gamma$, property \eqref{5.20} and the remark following statement \eqref{5.30} imply that there exists a branch of $\sqrt z$ defined on $\Delta_\gamma$. 

\begin{convention}\label{defsgam}
For every $\ga\in\Ga$, fix a holomorphic branch $s_\ga$ of the square root function on $\Delta_\ga$.
\end{convention}
\black
Once this $s_\gamma$ is chosen, the other branches of $\sqrt z$ on $\Delta_\ga$ can be described in terms of $s_\gamma$ with the aid of the functions $\iota\subgt$; we define $s\subgt$ to be the holomorphic square root function $s_\ga\iota\subgt$ on $\Delta_\ga$.

\begin{lem}\label{lem5.20}
Assume $\gamma \in \Gamma$. A function $f$ on $\Delta_\gamma$ is a holomorphic function  satisfying $f(z)^2 = z$ for all $z \in \Delta_\gamma$ if and only if there exists $\tau \in \{-1,1\}^\gamma$ such that $f = s\subgt$.
\end{lem}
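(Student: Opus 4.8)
The plan is to prove both implications, with the forward direction being essentially immediate and the reverse direction being the substantive one. First I would dispose of the ``if'' direction: if $f = s\subgt$ for some $\tau\in\{-1,1\}^\gamma$, then since $s_\ga$ is a branch of the square root on $\Delta_\ga$ (Convention \ref{defsgam}) and $\iota\subgt$ takes only the values $\pm 1$ and is holomorphic on $\Delta_\ga$, the product $s_\ga\iota\subgt$ is holomorphic, and $(s_\ga\iota\subgt)(z)^2 = s_\ga(z)^2\iota\subgt(z)^2 = z\cdot 1 = z$ for all $z\in\Delta_\ga$. So every $s\subgt$ is a holomorphic square root of $z$ on $\Delta_\ga$.

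For the ``only if'' direction, suppose $f$ is holomorphic on $\Delta_\ga$ with $f(z)^2 = z$ for all $z\in\Delta_\ga$. The key structural fact, coming from \eqref{5.20} and the remark after \eqref{5.30}, is that $\Delta_\ga$ is a finite disjoint union of open discs $c+r_\ga\d$, $c\in\ga$, none of which contains $0$, so each disc is a simply connected domain on which $z\mapsto z$ is zero-free. On each such disc, the two holomorphic square roots of $z$ are exactly $s_\ga$ and $-s_\ga$ restricted to that disc — this is the standard fact that on a connected set a continuous (hence holomorphic) square root, if it exists, is unique up to global sign, which follows because the ratio $f/s_\ga$ is a continuous function into $\{1,-1\}$ on a connected set and hence constant. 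Thus on the disc $c+r_\ga\d$ we have $f = \tau(c)\,s_\ga$ for a uniquely determined $\tau(c)\in\{-1,1\}$. Assembling these signs over all $c\in\ga$ defines $\tau\in\{-1,1\}^\gamma$, and by the definition of $\iota\subgt$ (which equals $\tau(c)$ on $c+r_\ga\d$) we get $f = s_\ga\iota\subgt = s\subgt$ on all of $\Delta_\ga$.

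I anticipate no serious obstacle here; the only point requiring a little care is the uniqueness-up-to-sign argument, i.e.\ justifying that a holomorphic square root of $z$ on a connected zero-free domain is determined by its sign at one point. The cleanest route is: if $f_1, f_2$ are both holomorphic with $f_j^2 = z$ and $z$ is never zero on the (connected) disc, then $f_1$ and $f_2$ are never zero, so $f_1/f_2$ is holomorphic, and $(f_1/f_2)^2 \equiv 1$, whence $f_1/f_2$ is a continuous map from a connected set into $\{1,-1\}$, hence constant. Applying this with $f_2 = s_\ga|_{c+r_\ga\d}$ yields the claimed $\tau(c)$. The finiteness of $\ga$ and the disjointness of the discs then make the passage from the local statement to the global one purely bookkeeping, matching the definitions of $\iota\subgt$ and $s\subgt$ given just before the lemma.
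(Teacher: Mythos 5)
Your proof is correct. The paper states Lemma~\ref{lem5.20} without supplying a proof (the text before it already asserts that the $s\subgt$ are "the" holomorphic square roots on $\Delta_\ga$), and your argument is exactly the one the authors are implicitly relying on: on each component disc $c + r_\ga\d$, which is connected and omits $0$ by property~\eqref{5.20}, the ratio $f/s_\ga$ is a holomorphic function whose square is identically $1$, hence a continuous $\{-1,1\}$-valued function on a connected set, hence a constant $\tau(c)$; assembling these over the finitely many components and matching against the definition of $\iota\subgt$ gives $f = s_\ga\iota\subgt = s\subgt$. The forward direction is the immediate computation $(s_\ga\iota\subgt)^2 = z$. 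No gaps.
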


\subsection{Free square roots on the sets $ D_\gamma$} \label{subsec8.3}
For each $\gamma \in \Gamma$ define $D_\gamma \subseteq \m^1$ by
\[
D_\gamma = \set{x \in \m^1}{\sigma(x) \subseteq \Delta_\gamma}.
\]
\index{$D_\gamma$}
\begin{lem}\label{lem5.30}
\[
\bigcup_{\gamma \in \Gamma} D_\gamma = \inv
\]
\end{lem}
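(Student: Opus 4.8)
The claim is the set equality $\bigcup_{\gamma\in\Gamma} D_\gamma = \inv$, where $\inv$ is the set of nonsingular matrices in $\m^1$, $\Gamma$ is the set of finite subsets of $\c\setminus\{0\}$, and for $\gamma\in\Gamma$, $D_\gamma = \{x\in\m^1 : \sigma(x)\subseteq\Delta_\gamma\}$ with $\Delta_\gamma = \bigcup_{c\in\gamma}(c + r_\gamma\d)$, the radius $r_\gamma$ being chosen small enough that $0\notin\Delta_\gamma$ and $\Delta_\gamma$ is $\tfrac14$-isolated.

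\textbf{Plan for the proof.}

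\emph{The inclusion $\bigcup_\gamma D_\gamma \subseteq \inv$ is easy.} First I would show the left-to-right inclusion. Fix $\gamma\in\Gamma$ and $x\in D_\gamma$. Then $\sigma(x)\subseteq\Delta_\gamma$, and by property \eqref{5.20} we have $0\notin\Delta_\gamma$, hence $0\notin\sigma(x)$, so $x$ is nonsingular, i.e. $x\in\inv$. This gives $D_\gamma\subseteq\inv$ for every $\gamma$, and taking the union completes this direction.

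\emph{The inclusion $\inv \subseteq \bigcup_\gamma D_\gamma$ requires choosing $\gamma$ appropriately.} For the reverse inclusion, fix $x\in\inv$, say $x\in\mn$, and let $\sigma(x) = \{c_1,\dots,c_k\}$ be its (finite) set of eigenvalues. Since $x$ is nonsingular, none of the $c_i$ is $0$, so $\sigma(x)$ is a finite subset of $\c\setminus\{0\}$, i.e. $\sigma(x)\in\Gamma$. Setting $\gamma = \sigma(x)$, we have by construction a fixed simple set $\Delta_\gamma = \bigcup_{c\in\gamma}(c+r_\gamma\d)$ with the prescribed radius $r_\gamma$. Since each $c_i\in\gamma$, trivially $c_i\in c_i + r_\gamma\d\subseteq\Delta_\gamma$, so $\sigma(x) = \gamma\subseteq\Delta_\gamma$, which means $x\in D_\gamma\subseteq\bigcup_{\gamma\in\Gamma}D_\gamma$. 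The one subtlety worth a remark is that $\Delta_\gamma$ is a \emph{fixed} choice associated to each $\gamma\in\Gamma$ (via the convention preceding the lemma), but this causes no difficulty: the only requirement is that $r_\gamma>0$, which always holds since $\gamma$ is a finite set of nonzero complex numbers, so the two minima defining the bound on $r_\gamma$ are strictly positive.

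\emph{Main obstacle.} Honestly, there is essentially no obstacle here — this lemma is a bookkeeping statement whose entire content is that ``nonsingular'' $=$ ``spectrum avoids $0$'' $=$ ``spectrum lies in some $\Delta_\gamma$,'' with the last equivalence holding because the spectrum is finite and one can always enclose a finite set of nonzero points in a union of small discs missing the origin. The only thing one must be careful about is to use $\gamma = \sigma(x)$ itself (rather than trying to find some pre-existing $\gamma$ in a list) and to invoke property \eqref{5.20} for the forward direction. I would write the proof in two short displayed inclusions as above.

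\begin{proof}
If $x\in D_\gamma$ for some $\gamma\in\Gamma$, then $\sigma(x)\subseteq\Delta_\gamma$, and since $0\notin\Delta_\gamma$ by \eqref{5.20}, we have $0\notin\sigma(x)$, so $x$ is nonsingular. Hence $\bigcup_{\gamma\in\Gamma}D_\gamma\subseteq\inv$.

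Conversely, let $x\in\inv$ and set $\gamma = \sigma(x)$. Since $x$ is nonsingular, $0\notin\sigma(x)$, so $\gamma$ is a finite subset of $\c\setminus\{0\}$, that is, $\gamma\in\Gamma$. For the corresponding simple set $\Delta_\gamma = \bigcup_{c\in\gamma}(c+r_\gamma\d)$ we have $c\in c+r_\gamma\d\subseteq\Delta_\gamma$ for each $c\in\gamma$, so $\sigma(x) = \gamma\subseteq\Delta_\gamma$, which means $x\in D_\gamma$. Therefore $\inv\subseteq\bigcup_{\gamma\in\Gamma}D_\gamma$.
\end{proof}
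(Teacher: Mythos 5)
Your proof is correct and is essentially identical to the paper's: the forward inclusion uses $0\notin\Delta_\gamma$ from \eqref{5.20}, and the reverse inclusion takes $\gamma=\sigma(x)$ and observes $\sigma(x)\subseteq\Delta_{\sigma(x)}$. Nothing to add.
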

\begin{proof}
If $M\in D_\gamma$, then by definition, $\sigma(M) \subseteq \Delta_\gamma$. Hence property \eqref{5.20} implies that $0 \not\in \sigma(M)$, that is, $M \in \inv$.  Conversely, if $M \in \inv$, then $0 \not\in \sigma(M)$. This implies that $\sigma(M) \in \Gamma$. Hence $M \in D_{\sigma(M)} \subseteq \cup_{\gamma \in \Gamma} D_\gamma$.
\end{proof}
With the help of the functions $s_\gamma$ and $\iota\subgt$ we may define free holomorphic functions on  $D_\gamma$ by means of the Riesz functional calculus. Since $x \in D_\gamma$ implies that $\sigma(x) \subseteq \Delta_\gamma$, we may use the formula \eqref{3.10} to define,  for every $\ga\in\Ga$, every $\tau\in\{-1,1\}^\ga$ and every $x\in D_\ga$,
\begin{align}
S_\gamma (x) &= s_\gamma^\wedge(x),   \label{defSga}\\
I\subgt (x) &= \iota\subgt^\wedge(x), \label{defIgt}\\
S\subgt(x) &=S_\gamma (x)I\subgt (x). \label{defSgt}
\end{align}
\index{$I\subgt$}
\index{$S\subgt$}
Since $s_\ga(z)^2=z$ and $\iota\subgt (z)^2=1$ for $z\in \Delta_\ga$,
\[
S\subgt(x)^2=x \quad \mbox{ and }\quad I\subgt(x)^2=1 \quad \mbox{ for all } \ga\in\Ga \mbox{ and } x \in D_\ga.
\]
Notice that, by Proposition \ref{prop3.3}, $S\subgt$ and $I\subgt$ are free holomorphic functions on the free domain $D_\ga$.
Moreover, {\em every} square root of $x\in D_\ga$ in the algebra generated by $x$ has the form $S\subgt(x)$ for some $\tau\in\{-1,1\}^\ga$.
\begin{comment}
We state the following simple proposition which gives an alternative characterization of the the functions $S_\gamma$ and $I\subgt$ without proof.
\begin{prop}\label{}
$S_\gamma$ and $I\subgt$ are free holomorphic functions on the domain $D_\gamma$. If $f$ is a free holomorphic function on $D_\gamma$ and $f(x)^2 = 1$ for all $x \in D_\gamma$, then there exists $\tau \in \{-1,1\}^\gamma$ such that $f=I\subgt$. If $f$ is a free holomorphic function on $D_\gamma$ and $f(x)^2 = x$ for all $x \in D_\gamma$, then there exists $\tau \in \{-1,1\}^\gamma$ such that $f=I\subgt S_\gamma$.
\end{prop}
\end{comment}
\subsection{The Zariski-free domain $\ugam$}
\index{$\ugam$}
For each $\gamma \in \Gamma$ we shall define a Zariski-free open set in $\m^2$ having a certain genericity property.  Let
\be\label{5.32}
\wgam = \set{(u,x) \in \m^2}{x \in D_\gamma}.
\ee
\index{$\wgam$}
As $D_\gamma$ is a free domain, so is $\wgam$. Furthermore, for fixed $\gamma \in \Gamma$ and $\tau \in \{-1,1\}^\gamma$, as $I\subgt$ is a free holomorphic function,
\[
f\subgt (u,x) = u \ I\subgt (x) - I \subgt (x)\ u, \qquad (u,x) \in \wgam,
\]
defines a free holomorphic function on $\wgam$. It follows that
\[
V\subgt = \set{(u,x) \in \wgam}{u \ I\subgt (x) = I \subgt (x)\ u}
\]
is a free variety in $\wgam$.
\index{$V\subgt$}

In the constant cases, where either $\tau = 1$ (that is, $\tau(c) = 1$ for all $c \in \gamma$) or $\tau = -1$ (that is, $\tau(c) = -1$ for all $c \in \gamma$), we have either $I\subgt = 1$ or $I\subgt =-1$. Thus, in these two cases where $\tau$ is constant, $V\subgt$ is all of $\wgam$. In the sequel, we express the condition that $\tau$ is not constant by writing $\tau \neq \pm 1$.

Since, for each $\tau \in \{-1,1\}^\gamma$, $V\subgt$ is a  free variety in $\wgam$,
\be\label{5.38}
\ugam = \wgam \setminus \bigcup_{\substack{\tau \in \{-1,1\}^\gamma\\ \tau \neq \pm 1}} V\subgt
\ee
defines a set that is open in the Zariski-free topology of $\m^2$, though not, generally, in the free topology, since the varieties $V\subgt$ are typically not freely closed. We shall construct the Zariski-free manifold $\g$ by gluing together sheets in $\m^3$ that lie over the domains $\ugam\subset \m^2$.  We can express the definition of $\ugam$ as follows:
\begin{align} \label{altdefugam}
\ugam &= \{(u,x)\in\m^2:  \ x\in D_\ga \mbox{ and $u$ does not commute with } I\subgt(x) \notag \\
	& \hspace*{1.5cm} \mbox{ for any nonconstant } \tau\in \{-1,1\}^\ga \}.
\end{align}
The following statement is easy to see.
\begin{prop}\label{propUgam}
For any $\ga \in \Gamma$, if $(u_1,x_1) \in \ugam$ and $(u_2,x_2) \in \wgam$ then $(u_1,x_1)\oplus (u_2,x_2) \in \ugam$.
\end{prop}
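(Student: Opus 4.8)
The plan is to verify directly that $(u_1\oplus u_2,\,x_1\oplus x_2)$ satisfies the defining conditions of $\ugam$ recorded in \eqref{altdefugam}. Two things must be checked: that $x_1\oplus x_2\in D_\ga$, and that $u_1\oplus u_2$ fails to commute with $I\subgt(x_1\oplus x_2)$ for every nonconstant $\tau\in\{-1,1\}^\ga$.

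The first point is immediate. Since $(u_i,x_i)\in\wgam$ we have $x_i\in D_\ga$, i.e. $\sigma(x_i)\subseteq\Delta_\ga$ for $i=1,2$; as $\sigma(x_1\oplus x_2)=\sigma(x_1)\cup\sigma(x_2)$, it follows that $\sigma(x_1\oplus x_2)\subseteq\Delta_\ga$, hence $x_1\oplus x_2\in D_\ga$ and $(u_1\oplus u_2,\,x_1\oplus x_2)\in\wgam$.

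For the second point, fix a nonconstant $\tau\in\{-1,1\}^\ga$. By Proposition~\ref{prop3.3} the function $I\subgt$ is free holomorphic on $D_\ga$, hence an nc function, so it preserves direct sums:
\[
I\subgt(x_1\oplus x_2)=I\subgt(x_1)\oplus I\subgt(x_2).
\]
A block-diagonal matrix $u_1\oplus u_2$ commutes with a block-diagonal matrix $b_1\oplus b_2$ of the same block sizes if and only if $u_ib_i=b_iu_i$ for $i=1,2$. Taking $b_i=I\subgt(x_i)$: since $(u_1,x_1)\in\ugam$, the description \eqref{altdefugam} guarantees that $u_1$ does not commute with $I\subgt(x_1)$, and therefore $u_1\oplus u_2$ does not commute with $I\subgt(x_1\oplus x_2)$; equivalently $(u_1\oplus u_2,\,x_1\oplus x_2)\notin V\subgt$. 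As $\tau$ was an arbitrary nonconstant sign pattern, the point $(u_1\oplus u_2,\,x_1\oplus x_2)$ avoids all of the varieties removed in \eqref{5.38}, so it lies in $\ugam$.

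There is no real obstacle here; the only points requiring any care are that $I\subgt$ is genuinely an nc function, so that the direct-sum identity for $I\subgt$ is at our disposal, and the elementary fact about commutation of block-diagonal matrices. Note in particular that nothing is asked of $(u_2,x_2)$ beyond membership in $\wgam$: the genericity present at $(u_1,x_1)$ alone already forces it on the direct sum.
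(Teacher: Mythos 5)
Your proof is correct. The paper states this proposition without proof (``The following statement is easy to see''), and your argument is precisely the natural one: reduce to the description \eqref{altdefugam}, use that $I\subgt$ preserves direct sums because it is given by the functional calculus (Proposition~\ref{prop3.3}), and observe that non-commutation in the first block is already enough to rule out commutation of the block diagonals, so no hypothesis beyond $(u_2,x_2)\in\wgam$ is needed.
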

\subsection{The definition of $\g$}\label{thedefofg}
To motivate the ensuing definition of the set $\g$ we recount some ideas from \cite{ay14}.
It has long been known \cite{wo36} that the algebra $A$ of symmetric complex polynomials in two noncommuting variables $w^1$ and $w^2$ is not finitely generated.  However, it was found in \cite{ay14} that there is a close substitute for a finite basis.  If we define
\[
u=\half(w^1+w^2), \qquad v=\half(w^1-w^2)
\]
then an algebraic basis of $A$ is
\be\label{infbasis}
u, \, v^2, \, vuv, \, vu^2v,\dots,vu^jv,\dots
\ee
Although this is an infinite basis of $A$, the relation
\[
vu^jv=(vuv) (v^2)^{-1} (vuv) (v^2)^{-1}\dots (v^2)^{-1}(vuv)
\]
shows that every symmetric free polynomial in $w^1,w^2$ can be written as a {\em rational}
expression in terms of the first three terms of the basis \eqref{infbasis}.  Accordingly, it appeared that these three basic
polynomials might have the potential to play the role that the elementary symmetric functions 
$w^1+w^2$ and $w^1w^2$ play in the scalar theory, though some extra complications result from the fact that rational expressions are required to represent polynomials.

Thus,  the underlying idea is to study the
image of $\m^2$ under the map $(u,v^2,vuv)$.  Here, we realise this approach.  We write
$x$ for the variable $v^2$ (so that $v$ is a square root of $x$), and then invoke the structure theory for free holomorphic square roots developed in Section \ref{SquareRoot}.  
Our strategy for the construction of a topological nc-manifold with the desired `universal symmetrization' property will be to
 apply the maps $(u,x,\sqrt{x} u\sqrt{x})$ to $\m^2$.  The expectation is that the many branches of the square root lead to co-ordinate patches on the image set that can be pieced together to yield the required topological nc-manifold.  Our first attempt
to execute this strategy and thereby construct a {\em free} nc-manifold failed because of singular behavior on certain subvarieties.  The notion of the Zariski-free topology
enabled us to circumvent this difficulty -- see Remark \ref{whyzf} below.

Let us first establish that the polynomials \eqref{infbasis} do indeed constitute an algebraic basis for the algebra of free polynomials.
We do not know whether this is a new observation.  A closely related result of an analytic flavor is \cite[Theorem 5.1]{ay14}.
\begin{thm}\label{thebasis}
Let $x,y$ be non-commuting indeterminates and let
\[
u=\half(x+y), \qquad v=\half(x-y).
\]
For any positive intger $d$, every free polynomial of total degree $d$ in $x,y$ can be written as a polynomial in the $d$ elements $u, v^2, vuv, \dots, vu^{d-2}v$.
\end{thm}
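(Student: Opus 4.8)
The plan is to replace the indeterminates $x,y$ by $u$ and $v$, reduce the assertion to a combinatorial statement about words in $u$ and $v$, and then settle that statement by grouping the occurrences of $v$ in consecutive pairs.

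First I would record that $x=u+v,\ y=u-v$ is an invertible linear change of free generators, so $\C\langle x,y\rangle=\C\langle u,v\rangle$ with total degree preserved, and that the substitution $x\mapsto y,\ y\mapsto x$ is exactly the algebra automorphism of $\C\langle u,v\rangle$ that fixes $u$ and sends $v$ to $-v$. Hence a free polynomial in $x,y$ is symmetric precisely when, expanded in the free generators $u,v$, it is a linear combination of words each containing an even number of $v$'s. Since every polynomial in the elements $u,v^2,vuv,\dots,vu^{d-2}v$ visibly has this parity property — so that a polynomial in those elements is automatically symmetric — proving the theorem amounts to showing that every symmetric free polynomial of total degree at most $d$ lies in the subalgebra generated by $u,v^2,vuv,\dots,vu^{d-2}v$ (for instance $v$ itself is not such a polynomial, so this is the full strength available). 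A symmetric free polynomial of total degree $\le d$ is a linear combination of words in $u,v$ of length $\le d$ each with an even number of $v$'s, so it suffices to treat one such word $w$ at a time.

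Next I would write $w$ in its normal form
\[
w\;=\;u^{a_0}\,v\,u^{a_1}\,v\,u^{a_2}\,v\cdots v\,u^{a_{2m}},\qquad a_0,\dots,a_{2m}\ge 0,
\]
where $2m$ is the number of occurrences of $v$ and $a_0+a_1+\dots+a_{2m}+2m=|w|\le d$, and I would group the $v$'s in consecutive pairs to obtain the factorisation
\[
w\;=\;u^{a_0}\,\prod_{j=1}^{m}\bigl((v\,u^{a_{2j-1}}\,v)\,u^{a_{2j}}\bigr).
\]
Here each $u^{a_i}$ is a power of the generator $u$, while each block $v\,u^{a_{2j-1}}\,v$ occurs as a contiguous subword of $w$, so $a_{2j-1}+2\le|w|\le d$, i.e.\ $a_{2j-1}\le d-2$, and therefore $v\,u^{a_{2j-1}}\,v$ is one of $v^2,vuv,\dots,vu^{d-2}v$. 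Thus $w$ is a product of the $d$ elements $u,v^2,vuv,\dots,vu^{d-2}v$, in particular a polynomial in them, and reassembling over all the words appearing in the original polynomial completes the proof. (Letting $d\to\infty$ recovers the claim underlying \eqref{infbasis} that $\{u\}\cup\{vu^{k}v:k\ge 0\}$ generates the algebra of symmetric free polynomials, and the bound $d-2$ on the middle exponent is sharp, as the word $vu^{d-2}v$ itself shows.)

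The hard part here is slight. Once the change of variables is in place, the only point that needs genuine attention is the equivalence ``$p$ symmetric $\iff$ every monomial of $p$, written in $u,v$, has an even number of $v$'s''; after that the argument is bookkeeping of the word normal form, where the one thing to be careful about is that a pair of consecutive $v$'s together with the single $u$-block sitting between them forms one admissible block $vu^{k}v$, whereas the $u$-blocks separating successive pairs remain free-standing powers of $u$.
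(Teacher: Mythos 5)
Your proof is correct, and it takes a genuinely different route from the paper's. You were also right to read the statement as being about \emph{symmetric} free polynomials: as you observe, $v$ itself shows the literal wording cannot hold, and the paper's own proof (which only ever produces symmetric polynomials) confirms this is the intended reading. The paper argues by a dimension count: with $P_d$ the homogeneous free polynomials of degree $d$ and $\sym_d$ its symmetric subspace, it notes $\dim \sym_d=2^{d-1}$, decomposes the space $Q_d$ of homogeneous degree-$d$ polynomials in the generators as $uQ_{d-1}\oplus v^2Q_{d-2}\oplus\cdots\oplus vu^{d-2}v\,\C$ according to the leading generator block, and shows inductively that $\dim Q_d=2^{d-1}$, whence $Q_d=\sym_d$. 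You instead exhibit, for each word with an even number of $v$'s, an explicit factorization into generator blocks by pairing consecutive $v$'s, and verify the bound $a_{2j-1}\le d-2$ on the enclosed exponents. Your argument is constructive, producing the representing polynomial rather than just proving existence, and it makes explicit the unique-factorization-of-words fact that the paper uses tacitly to justify that its sum is direct. What the dimension count buys in return is the extra conclusion $\dim Q_d=\dim\sym_d$, i.e.\ that the distinct products of generators of each homogeneous degree are linearly independent --- the content of calling \eqref{infbasis} a \emph{basis} rather than merely a generating set, in line with Wolf's theorem cited in the introduction. Both proofs are complete.
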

\begin{proof}
It suffices to prove the result for {\em homogeneous} free polynomials.  For $d\geq 1$ let $P_d$ be the complex vector space of homogeneous polynomials of total degree $d$ in $x,y$ and let $\sym_d$ be its subspace of symmetric polynomials.  Clearly $P_d$ has dimension $2^d$, and therefore $\sym_d$ has dimension $2^{d-1}$.

Let $Q_d$ be the space of polynomials in   $u, v^2, vuv, \dots, vu^{d-2}v$ that are homogeneous of degree $d$ in $u,v$, and hence also in $x,y$.
Then $Q_1=\c u$ and $Q_d\subseteq \sym_d$.  We claim that $\dim Q_d= 2^{d-1}=\dim \sym_d$, from which it will follow that $Q_d=\sym_d$, as required.

The claim is true when $d=1$ since $\dim Q_1=1$.  Let $d>1$ and suppose the claim holds for $d-1$.  We have
\[
Q_d=uQ_{d-1}\oplus v^2 Q_{d-2}\oplus vuv Q_{d-3} \oplus  \dots \oplus vu^{d-3}v Q_1\oplus vu^{d-2}v\c.
\]
By the inductive hypothesis,
\begin{align*}
\dim Q_d &= \dim Q_{d-1} + \dim Q_{d-2} + \dots +\dim Q_1 + 1\\
	&=2^{d-2}+2^{d-3} + \dots + 1+1\\
	&=2^{d-1}.
\end{align*}
Hence $Q_d=\sym_d$ and the theorem follows.
\end{proof}

\begin{comment}
For $\gamma \in \Gamma$ we adopt the notation $s\subgt$ for the holomorphic function defined on $\Delta_\gamma$ by the formula
\[
s\subgt (z) = \iota\subgt (z) s_\gamma (z), \qquad z \in \Delta_\gamma,
\]
and the notation $S\subgt$ for the free holomorphic function defined on $D_\gamma$ by the formula
\be\label{5.39}
S\subgt (x) = I\subgt (x) S_\gamma (x), \qquad x \in D_\gamma.
\ee
\end{comment}
For each $\gamma \in \Gamma$ and each $\tau \in \{-1,1\}^\gamma$ we define a mapping $\Phi\subgt:\ugam \to \m^3$ by the formula
\be\label{5.40}
\Phi\subgt (u,x) = (u, x,S\subgt(x)\ u\ S\subgt (x)), \qquad (u,x) \in \ugam,
\ee
\index{$\Phi\subgt$}
where $S\subgt$ is the free holomorphic square root on $D_\ga$ defined in equation \eqref{defSgt}.

Let
\be\label{5.41}
\g\subgt = \ran \Phi\subgt.
\ee
\index{$\g\subgt$}
Trivially, $\Phi\subgt$ is a bijection from $\ugam$ to $\g\subgt$.
Define $\g \subseteq \m^3$ by
\be\label{5.42}
\g = \bigcup_{\gamma \in \Gamma}\ \  \bigcup_{\tau \in \{-1,1\}^\gamma} \g\subgt.
\ee
\index{$\g$}
\subsection{A Zariski-free atlas for $\g$ }
The set $\g$ defined in the previous subsection is just that, a set. It carries neither a topology nor an atlas of charts that would endow it with a manifold structure. In this section we shall topologize $\g$ and equip it with a Zariski-free atlas.

\begin{defin}\label{def5.20}
Let
\[
\calb = \set{\Phi\subgt(U)}{\gamma \in \Gamma, \tau\in \{-1,1\}^\gamma, U \text{ is Zariski-freely open in } \ugam}.
\]
\end{defin}
\index{$\calb$}
Proposition \ref{lem5.50} will imply that $\calb$ is a base for a topology on $\g$.  The proof  hinges on the following technical statement.
\begin{lem}\label{whyweneedzariski}
Let $(u,x)\in \ugamsub{1} \cap \ugamsub{2}$ for some $\ga_1,\ga_2\in\Ga$.  If, for some $\tau_1\in \{-1,1\}^{\ga_1}$ and $\tau_2\in \{-1,1\}^{\ga_2}$,
\be\label{SuS}
S\subgtsub{1}(x)uS\subgtsub{1}(x) = S\subgtsub{2}(x)uS\subgtsub{2}(x)
\ee
then
\be\label{eitheror}
 \text{ either } \quad 
 S\subgtsub{1}(x) = S\subgtsub{2}(x)  \quad \text{ or } \quad S\subgtsub{1}(x) = - S\subgtsub{2}(x). 
\ee
\end{lem}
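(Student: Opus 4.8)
Write $v_1 = S\subgtsub{1}(x)$ and $v_2 = S\subgtsub{2}(x)$, the two square roots of $x$ that appear in \eqref{SuS}. By Proposition~\ref{prop3.40} both $v_1$ and $v_2$ lie in the commutative algebra $\mathrm{alg}(x)$, so $v_1 v_2 = v_2 v_1$; and since $x\in D_{\ga_1}\cap D_{\ga_2}$ while $0\notin\Delta_{\ga_1}\cup\Delta_{\ga_2}$ by \eqref{5.20}, $x$ is invertible and hence so are $v_1,v_2$. Set $w = v_1 v_2^{-1}\in\mathrm{alg}(x)$. Then $w^2 = v_1^2 v_2^{-2} = x x^{-1} = 1$. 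Rewriting the hypothesis \eqref{SuS} as $(v_2^{-1}v_1)\,u\,(v_1 v_2^{-1}) = u$ and using commutativity within $\mathrm{alg}(x)$, this becomes $wuw = u$, and since $w^2=1$ it says $wu = uw$; that is, \emph{$w$ commutes with $u$}. The target \eqref{eitheror} is exactly the statement $w=\pm1$, so it remains to prove that the only involutions in $\mathrm{alg}(x)$ of the relevant form that commute with $u$ are $\pm 1$.

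The key step is to recognise $w$ as $I_{\ga_1\tau}(x)$ for a suitable $\tau\in\{-1,1\}^{\ga_1}$. Consider $h = s\subgtsub{1}/s\subgtsub{2}$; since $s\subgtsub{2}$ is non-vanishing on $\Delta_{\ga_2}$ (as $s\subgtsub{2}(z)^2 = z \ne 0$ there) this is a holomorphic function on the open set $\Delta_{\ga_1}\cap\Delta_{\ga_2}$, which contains $\sigma(x)$; and $h^2\equiv 1$ since $s\subgtsub{1}^2 = z = s\subgtsub{2}^2$, so $h$ is $\{-1,1\}$-valued and locally constant. By multiplicativity of the Riesz functional calculus, $h^\wedge(x) = s\subgtsub{1}^\wedge(x)\,\bigl(s\subgtsub{2}^\wedge(x)\bigr)^{-1} = v_1 v_2^{-1} = w$. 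Now invoke Lemma~\ref{lem5.10}: as $\Delta_{\ga_1}$ and $\Delta_{\ga_2}$ are $\tfrac14$-isolated, one is subordinate to the other, and by the symmetry of \eqref{SuS} and \eqref{eitheror} we may assume $\Delta_{\ga_1}$ is subordinate to $\Delta_{\ga_2}$; thus each disc of $\Delta_{\ga_1}$ meets at most one disc of $\Delta_{\ga_2}$, so for every disc $B$ of $\Delta_{\ga_1}$ the set $B\cap\Delta_{\ga_2}$ is connected and $h$ is constant on it. Define $\tau\in\{-1,1\}^{\ga_1}$ so that $\iota_{\ga_1\tau}$ takes on $B$ the constant value of $h$ on $B\cap\Delta_{\ga_2}$ whenever this set is non-empty (and, say, the value $1$ otherwise). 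Then $\iota_{\ga_1\tau}$ agrees with $h$ on the whole neighbourhood $\Delta_{\ga_1}\cap\Delta_{\ga_2}$ of $\sigma(x)$, and since the functional calculus of a matrix depends only on the germ of the function near its spectrum, $I_{\ga_1\tau}(x) = \iota_{\ga_1\tau}^\wedge(x) = h^\wedge(x) = w$.

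To finish, apply the defining property \eqref{altdefugam} of $\ugamsub{1}$: since $(u,x)\in\ugamsub{1}$ and $w = I_{\ga_1\tau}(x)$ commutes with $u$, the exponent $\tau$ cannot be non-constant, so $\tau\equiv 1$ or $\tau\equiv -1$, whence $\iota_{\ga_1\tau}\equiv\pm 1$ and $w = I_{\ga_1\tau}(x) = \pm 1$. Unwinding $w = v_1 v_2^{-1}$ gives $S\subgtsub{1}(x) = \pm S\subgtsub{2}(x)$, which is \eqref{eitheror}.

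The routine parts (invertibility, the algebra identities, and the standard facts that the holomorphic functional calculus is multiplicative and depends only on the germ of the function at the spectrum) are quick. \emph{The main obstacle is the middle step}: a priori the ratio $h$ could take opposite signs on two discs of $\Delta_{\ga_2}$ that both meet a single disc of $\Delta_{\ga_1}$, and then $w$ would not be of the form $I_{\ga_1\tau}(x)$ and the argument would collapse. It is precisely the subordinacy dichotomy of Lemma~\ref{lem5.10} that excludes this, and carefully matching up discs and signs is the crux; this is also the point at which \emph{both} hypotheses $(u,x)\in\ugamsub{1}$ and $(u,x)\in\ugamsub{2}$ are needed, one for each case of the dichotomy.
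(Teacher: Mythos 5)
Your proof is correct and takes essentially the same approach as the paper: both use Lemma~\ref{lem5.10} to arrange subordinacy, identify the $\pm 1$-valued ratio of the two square roots with some $\iota_{\gamma\tau}$ via the functional calculus, and then invoke the defining exclusion in $\ugam$ of nonconstant commuting involutions to force $\tau$ constant. Your formulation differs only superficially, by packaging the key step algebraically as the commuting involution $w = v_1 v_2^{-1} \in \mathrm{alg}(x)$ and running the subordinacy dichotomy with the roles of $\ga_1$ and $\ga_2$ swapped relative to the paper (which is harmless by the symmetry of the statement, as you note); your closing remark that the symmetry of the dichotomy is precisely why both membership hypotheses are needed is a nice observation not made explicit in the paper.
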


\begin{proof}
Since $(u,x)\in \ugamsub{1} \cap \ugamsub{2}$ we have
\[
x \in D_{\gamma_1} \cap D_{\gamma_2},
\]
or equivalently,
\be\label{5.50}
\sigma(x) \subseteq \Delta_{\gamma_1} \cap \Delta_{\gamma_2}.
\ee
In the light of property \eqref{5.30} and Lemma \ref{lem5.10} we may assume that $\Delta_{\gamma_2}$ is subordinate to $\Delta_{\gamma_1}$, that is, that each component of $\Delta_{\gamma_2}$ meets at most one component of $\Delta_{\gamma_1}$. This implies that  each component of $ \Delta_{\gamma_2}$ contains at most one component of $\Delta_{\gamma_1} \cap \Delta_{\gamma_2}$.  
As both of the functions $s_{\gamma_1\tau_1}$ and $s_{\gamma_2\tau_2}$ are branches of $\sqrt z$ on $\Delta_{\gamma_1} \cap \Delta_{\gamma_2}$,  they agree up to a factor $\pm1$ on each component of $\Delta_{\gamma_1} \cap \Delta_{\gamma_2}$.  Hence there exists $\tau \in \{-1,1\}^{\gamma_2}$ such that
\be\label{5.70}
s\subgtsub{1}(z) = \iota_{\gamma_2\tau}(z)s\subgtsub{2}(z), \qquad z \in \Delta_{\gamma_1} \cap \Delta_{\gamma_2}.
\ee

 Equations \eqref{5.50} and \eqref{SuS} imply that
 \be\label{SIS}
 S\subgtsub{1}(x) = I_{\gamma_2\tau}(x)S\subgtsub{2}(x).
 \ee
Substitution  of this formula for $ S\subgtsub{1}(x)$ into equation \eqref{SuS} yields
\[
u = I_{\gamma_2\tau}(x)uI_{\gamma_2\tau}(x).
\]
But $I_{\gamma_2\tau}(x)$ is an involution. Therefore
\[
f_{\gamma_2\tau}(u,x)=uI_{\gamma_2\tau}(x)-I_{\gamma_2\tau}(x)u =0,
\]
that is, $(u,x) \in V_{\gamma_2\tau}$. As $(u,x) \in \mathcal{U}_{\gamma_2}$, this implies that $\tau$ is constant on $\gamma_2$ and either $\tau = 1$ or $\tau = -1$. We deduce that either $\iota_{\gamma_2\tau}\equiv 1$ or $\iota_{\gamma_2\tau}\equiv -1$ on $\Delta_{\gamma_1} \cap \Delta_{\gamma_2}$.  In conjunction with equation \eqref{SIS}, this implies that equation \eqref{eitheror} holds.
\end{proof}

\begin{remark} \label{whyzf} \rm 
The foregoing technical lemma explains the introduction of the Zariski-free topology.  It is only because of the exclusion of the varieties $V\subgt$, where a non-generic commutation relation holds, in the definition of the sets $\ugam$ that the collection $\calb$ of Definition \ref{def5.20} constitutes a base for a topology on $\g$.
\end{remark}
\black
\begin{prop}\label{lem5.50}
If $\Omega_1, \Omega_2 \in \calb$ and $\omega \in \Omega_1 \cap \Omega_2$, then there exists $\Omega_3 \in \calb$ such that $\omega \in \Omega_3 \subseteq \Omega_1 \cap \Omega_2$.
\end{prop}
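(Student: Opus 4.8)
The plan is to verify the standard "intersection of basic open sets is open" criterion for the collection $\calb$, working element by element through the relevant cases. Write $\Omega_1 = \Phi_{\gamma_1\tau_1}(U_1)$ and $\Omega_2 = \Phi_{\gamma_2\tau_2}(U_2)$, where $U_j$ is Zariski-freely open in $\ugamsub{j}$, and let $\omega = (u,x,y) \in \Omega_1 \cap \Omega_2$. By the definition of $\Phi\subgt$ in \eqref{5.40}, the first two coordinates of $\omega$ recover $(u,x)$, so $(u,x)\in U_1\cap U_2 \subseteq \ugamsub{1}\cap\ugamsub{2}$, and moreover $y = S_{\gamma_1\tau_1}(x)\,u\,S_{\gamma_1\tau_1}(x) = S_{\gamma_2\tau_2}(x)\,u\,S_{\gamma_2\tau_2}(x)$. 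Thus the hypothesis \eqref{SuS} of Lemma~\ref{whyweneedzariski} holds at $(u,x)$, and we conclude that either $S_{\gamma_1\tau_1}(x) = S_{\gamma_2\tau_2}(x)$ or $S_{\gamma_1\tau_1}(x) = -S_{\gamma_2\tau_2}(x)$.

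First I would promote this pointwise coincidence to coincidence on a Zariski-free neighborhood. The key is that $\pm S_{\gamma_1\tau_1}(x)$ and $S_{\gamma_2\tau_2}(x)$ arise from branches of $\sqrt z$ on $\Delta_{\gamma_1}$ and $\Delta_{\gamma_2}$; using property \eqref{5.30}, Lemma~\ref{lem5.10}, and the argument of Lemma~\ref{whyweneedzariski} (the passage through equations \eqref{5.70}--\eqref{SIS}), the branches $s_{\gamma_1\tau_1}$ and $s_{\gamma_2\tau_2}$ agree up to a sign $\iota_{\gamma_2\tau}$ on each component of $\Delta_{\gamma_1}\cap\Delta_{\gamma_2}$, and the sign $\iota_{\gamma_2\tau}$ is determined on each component. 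Consequently there is a sign choice $\varepsilon = \pm 1$ and a simple set $\Delta_{\gamma_3} \subseteq \Delta_{\gamma_1}\cap\Delta_{\gamma_2}$ (the $\tfrac14$-isolated simple set built from the finite set $\gamma_3 := \sigma(x) \subseteq \Delta_{\gamma_1}\cap\Delta_{\gamma_2}$, using Convention~\ref{defsgam}) on whose associated domain $D_{\gamma_3}$ one has $S_{\gamma_1\tau_1}(x') = \varepsilon\, S_{\gamma_2\tau_2}(x')$ for \emph{all} $x' \in D_{\gamma_3}$, by uniqueness of the holomorphic-functional-calculus square root on $D_{\gamma_3}$. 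Hence $\Phi_{\gamma_1\tau_1}$ and $\Phi_{\gamma_2\tau_2}$ agree on $\ugamsub{3}$ (the sign drops out of $S\,u\,S$), so they have the same image there.

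Now set $\Omega_3 := \Phi_{\gamma_3\tau_3}(U_3)$ where $\tau_3 \in \{-1,1\}^{\gamma_3}$ is chosen so that $S_{\gamma_3\tau_3}$ matches $S_{\gamma_1\tau_1}$ on $D_{\gamma_3}$ (possible by Lemma~\ref{lem5.20}/equation~\eqref{defSgt}), and $U_3 := \Phi_{\gamma_1\tau_1}^{-1}(\Omega_1 \cap \Omega_2) \cap \ugamsub{3}$. I would check: (a) $U_3$ is Zariski-freely open in $\ugamsub{3}$ — this follows because $\Phi_{\gamma_1\tau_1}$ restricted to $\ugamsub{1}\cap\ugamsub{3}$ is a composition of a polynomial map with inclusion, hence Zariski-freely continuous (via Proposition~\ref{prop2.60} applied to the polynomial pieces and the fact that $\ugamsub{1}\cap\ugamsub{3}$ is carved out by free varieties), so the preimage of the Zariski-free open set $\Omega_1 = \Phi_{\gamma_1\tau_1}(U_1)$ with $U_1$ open is open, and intersecting with $\ugamsub{3}$ keeps it open; (b) $\omega \in \Omega_3$ — clear since $(u,x) \in U_3$ (note $x\in D_{\gamma_3}$ as $\sigma(x) = \gamma_3 \subseteq \Delta_{\gamma_3}$, and $(u,x)\in\ugamsub{1}$ together with agreement of the relevant $I\subgt$'s on $D_{\gamma_3}$ gives $(u,x)\in\ugamsub{3}$) and $\Phi_{\gamma_3\tau_3}(u,x) = \Phi_{\gamma_1\tau_1}(u,x) = \omega$; (c) $\Omega_3 \subseteq \Omega_1\cap\Omega_2$ — since on $U_3$ we have $\Phi_{\gamma_3\tau_3} = \Phi_{\gamma_1\tau_1}$, so $\Omega_3 = \Phi_{\gamma_1\tau_1}(U_3) \subseteq \Phi_{\gamma_1\tau_1}(\Phi_{\gamma_1\tau_1}^{-1}(\Omega_1\cap\Omega_2)) = \Omega_1\cap\Omega_2$.

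The main obstacle I anticipate is step (a) together with the careful bookkeeping of which index set $\gamma_3$ and sign pattern $\tau_3$ to use so that all three of $\ugamsub{3} \subseteq \ugamsub{1}\cap\ugamsub{2}$ (near $x$), the branch-matching $S_{\gamma_3\tau_3}=\varepsilon_1 S_{\gamma_1\tau_1}=\varepsilon_2 S_{\gamma_2\tau_2}$ on $D_{\gamma_3}$, and the openness of $U_3$ hold simultaneously; the genericity built into the definition \eqref{5.38} of $\ugam$ (excluding the $V\subgt$) is exactly what makes the branch-identification propagate from the single point $(u,x)$ to a full neighborhood, which is the heart of Lemma~\ref{whyweneedzariski} and thus the crux here as well.
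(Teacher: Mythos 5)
You correctly locate the crux in Lemma~\ref{whyweneedzariski}, and your first paragraph mirrors the paper's argument exactly: from $\omega = (u,x,y) \in \Omega_1 \cap \Omega_2$ deduce $(u,x) \in U_1 \cap U_2$ together with the identity \eqref{SuS}, then conclude $S_{\gamma_1\tau_1}(x) = \pm S_{\gamma_2\tau_2}(x)$.

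After that, however, you take a detour through a new index $\gamma_3$ that is both unnecessary and flawed. The paper simply sets $\Omega_3 = \Phi_{\gamma_2\tau_2}(U_1 \cap U_2)$. Since $U_1 \cap U_2$ is Zariski-freely open in $\ugamsub{2}$, this lies in $\calb$; it contains $\omega$ because $(u,x) \in U_1\cap U_2$; and $\Omega_3\subseteq\Omega_2$ is trivial. For $\Omega_3\subseteq\Omega_1$, observe that the proof of Lemma~\ref{whyweneedzariski} actually establishes the relation $s_{\gamma_1\tau_1} = \pm s_{\gamma_2\tau_2}$ between the \emph{scalar} branches on all of $\Delta_{\gamma_1}\cap\Delta_{\gamma_2}$, with a single global sign, because $\iota_{\gamma_2\tau}\equiv 1$ or $\iota_{\gamma_2\tau}\equiv -1$ there. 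Hence $S_{\gamma_1\tau_1}(x') = \pm S_{\gamma_2\tau_2}(x')$ for \emph{every} $x'$ with $\sigma(x')\subseteq\Delta_{\gamma_1}\cap\Delta_{\gamma_2}$, and since $(\pm S)u'(\pm S) = Su'S$ the sign is immaterial, so $\Phi_{\gamma_1\tau_1}=\Phi_{\gamma_2\tau_2}$ on all of $U_1\cap U_2$. In particular the openness question you flag as the main obstacle in your step (a) never arises.

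Your construction also contains a genuine gap: you assert $\Delta_{\gamma_3}\subseteq\Delta_{\gamma_1}\cap\Delta_{\gamma_2}$ with $\gamma_3 := \sigma(x)$, but the radius $r_{\gamma_3}$ is not at your disposal. It was fixed once and for all at the start of Section~\ref{UniversalDomain}, subject only to $r_{\gamma_3}<\min\bigl\{\min_{c\in\gamma_3}|c|,\ \tfrac14\min_{c,d\in\gamma_3,\,c\neq d}|c-d|\bigr\}$, a condition that makes no reference to $\gamma_1$ or $\gamma_2$. For instance, if $\sigma(x)$ is a singleton the second term is vacuous and $r_{\gamma_3}$ may be nearly as large as $|\sigma(x)|$, while $r_{\gamma_1}$ and $r_{\gamma_2}$ can be tiny. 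When $D_{\gamma_3}\not\subseteq D_{\gamma_1}\cap D_{\gamma_2}$, the statement ``$S_{\gamma_1\tau_1}(x') = \varepsilon S_{\gamma_2\tau_2}(x')$ for all $x'\in D_{\gamma_3}$'' is not even well-posed, and your choice of $\tau_3$ designed to match $S_{\gamma_1\tau_1}$ on $D_{\gamma_3}$ suffers the same defect. Drop $\gamma_3$ altogether and take $\Omega_3=\Phi_{\gamma_2\tau_2}(U_1\cap U_2)$ as above.
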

\begin{proof}
Fix $\Omega_1 = \Phi\subgtsub{1}(U_1)$, $\Omega_2 = \Phi\subgtsub{2}(U_2)$ and assume that $(u,x,y) \in \Omega_1 \cap \Omega_2$. As
\[
(u,x) \in U_1 \cap U_2 \subseteq \ugamsub{1} \cap \ugamsub{2},
\]
and
\be\label{5.60}
S\subgtsub{1}(x)uS\subgtsub{1}(x) = y=S\subgtsub{2}(x)uS\subgtsub{2}(x)
\ee
we may apply Lemma \ref{whyweneedzariski} to deduce that
\be\label{plusorminus}
\mbox{  either } S\subgtsub{1}(x) = S\subgtsub{2}(x) \mbox{ or } S\subgtsub{1}(x) = - S\subgtsub{2}(x).
\ee
Let
\[
\Omega_3 = \Phi_{\gamma_2\tau_2} (U_1 \cap U_2).
\]
As $(u,x) \in U_1\cap U_2$, equation \eqref{5.60} implies that
\[
(u,x,y) \in \Omega_3.
\]
Clearly
\[
\Omega_3 =\Phi_{\gamma_2\tau_2} (U_1 \cap U_2) \subseteq \Phi_{\gamma_2\tau_2} ( U_2) = \Omega_2.
\]
To see that $\Omega_3 \subseteq \Omega_1$, fix $(u,x) \in U_1 \cap U_2$. Then, as $\sigma(x) \subseteq \Delta_1 \cap \Delta_2$, equation \eqref{plusorminus} implies that
\begin{align*}
\Phi_{\gamma_2\tau_2}(u,x) &=(u,x,S\subgtsub{2}(x)\  u \ S\subgtsub{2}(x))\\
&=(u,x,S\subgtsub{1}(x)\  u\  S\subgtsub{1}(x))\\
&=\Phi_{\gamma_1\tau_1}(u,x) \\
&\ \in \Omega_1.
\end{align*}
\end{proof}

%Lemma \ref{lem5.50} implies that $\g$ can be given the structure of a Zariski-free manifold. First observe that
 Lemma \ref{lem5.50} implies that $\calb$ is a base for a unique topology $\t$ on $\g$.
The topology $\t$ is defined so that the maps
\[
\Phi\subgt ^{-1}:\g\subgt \to \ugam
\]
are homeomorphisms when $\g\subgt$ carries the $\t$ topology and $\ugam$ carries the Zariski-free topology. As $\ugam$ is a Zariski-free domain, it follows that
\be\label{5.90}
\a = \set{\Phi\subgt ^{-1}}{\gamma \in \Gamma, \tau \in \{-1,1\}^\gamma}
\ee
is a collection of topological nc-co-ordinate patches on $\g$ (see condition \eqref{2.110}).

In fact $\a$ is a Zariski-free atlas for $(\g,\t)$. Since Definition \eqref{5.42} implies that the sets $\g\subgt$ cover $\g$,  equation \eqref{2.98} holds. To see that the 
transition functions are Zariski-freely holomorphic, observe that
\[
(\Phi\subgtsub{2}^{-1} \circ \Phi\subgtsub{1})(u,x) = (u,x)
\]
for all $(u,x) \in \Phi\subgtsub{1}^{-1}(\g\subgtsub{1} \cap \g\subgtsub{2})$,
and $\Phi\subgtsub{1}^{-1}(\g\subgtsub{1} \cap \g\subgtsub{2})$ is Zariski-freely open
by Lemma~\ref{lem5.50}.

We summarize the above observations.
\begin{thm}\label{thm5.10}
If $\g$ is the set defined by equation \eqref{5.42}, $\t$ is the topology on $\g$ generated by the set $\calb$ in Definition {\rm \ref{def5.20}} and $\a$ is given by equation \eqref{5.90} then $(\g,\t,\a)$ is a Zariski-free manifold.
\end{thm}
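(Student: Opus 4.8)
The plan is to run through the definition of a Zariski-free manifold — that is, a holomorphic nc-manifold with respect to the Zariski-free topology $\tau$, which is admissible by Proposition~\ref{Zfadmiss} — and to check each ingredient against the facts already assembled in this subsection. Concretely, one must verify: (i) that $\calb$ is a base for a topology $\t$ on $\g$; (ii) that each $\Phi\subgt^{-1}$ is a topological $2$-dimensional nc-co-ordinate patch on $(\g,\t)$ with respect to $\tau$; (iii) that the collection $\a$ of \eqref{5.90} is an nc-atlas for $\g$ in the sense of Definition~\ref{defn2.30}; and (iv) that every transition map $T_{\al\beta}$ is $\tau$-holomorphic.

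First I would dispose of (i) and (ii). For (i): $\calb$ covers $\g$ by \eqref{5.42}, and Proposition~\ref{lem5.50} says exactly that any point in the intersection of two members of $\calb$ has a $\calb$-member as a neighborhood within that intersection, so $\calb$ is a base for a unique topology $\t$. For (ii), fix $\ga\in\Ga$ and $\tau\in\{-1,1\}^\ga$. The map $\Phi\subgt:\ugam\to\g\subgt$ is a bijection by construction, so $\Phi\subgt^{-1}$ is a chart; its domain $\g\subgt=\Phi\subgt(\ugam)$ belongs to $\calb$, hence is $\t$-open; its range $\ugam$ is the basic Zariski-free set \eqref{5.38}, hence is $\tau$-open, and by Proposition~\ref{propUgam} it is an nc-set (a direct sum of members of $\ugam$ stays in $\ugam$, since $\ugam\subseteq\wgam$); finally $\t$ was manufactured precisely so that $\Phi\subgt^{-1}$ carries basic $\t$-open subsets of $\g\subgt$ to Zariski-freely open subsets of $\ugam$ and conversely, so $\Phi\subgt^{-1}$ is a homeomorphism. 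Thus each $\Phi\subgt^{-1}$ is a topological nc-co-ordinate patch.

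The crux of (iii) and (iv) is the observation that the transition maps coincide with the identity. If $(u,x)\in\Phi\subgtsub{1}^{-1}(\g\subgtsub{1}\cap\g\subgtsub{2})$, then $\Phi\subgtsub{1}(u,x)=(u,x,S\subgtsub{1}(x)\,u\,S\subgtsub{1}(x))$ lies in $\g\subgtsub{2}=\ran\Phi\subgtsub{2}$; since $\Phi\subgtsub{2}$ leaves the first two coordinates untouched (clear from \eqref{5.40}), this forces $\Phi\subgtsub{2}^{-1}$ of that point to equal $(u,x)$, so $(\Phi\subgtsub{2}^{-1}\circ\Phi\subgtsub{1})(u,x)=(u,x)$. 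Hence the restriction of $T_{\al\beta}$ to any nc-subset of its domain is a restriction of the identity map of $\m^2$, which is trivially an nc-mapping — this is condition (2) of Definition~\ref{defn2.30} — and $T_{\al\beta}$ is (a restriction of) a polynomial map, hence $\tau$-holomorphic, giving (iv). For condition (1) of Definition~\ref{defn2.30}, I note that $\Phi\subgtsub{1}^{-1}(\g\subgtsub{1}\cap\g\subgtsub{2})$ is Zariski-freely open — exactly what was checked just before the theorem, using Proposition~\ref{lem5.50} — and that the Zariski-free topology admits a base of nc-sets (the sets $B_\delta\setminus T$), so any Zariski-freely open set is a union of nc-sets. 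Together with $\bigcup_{\ga,\tau}\g\subgt=\g$, this shows $\a$ is a Zariski-free nc-atlas, and the theorem follows.

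I do not expect a genuine obstacle here: all the mathematical substance — in particular Lemma~\ref{whyweneedzariski}, hence Proposition~\ref{lem5.50}, which is what makes $\calb$ a base and forces the two branches to agree up to sign on overlaps — has already been established, and the remainder is bookkeeping against the definitions. The only point I would be careful about is the verification that the transition maps are literally the identity, since holomorphy, the nc-compatibility condition (2), and invertibility of the transition maps are all automatic once that is in place.
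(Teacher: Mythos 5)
Your proposal is correct and follows essentially the same route as the paper: you invoke Proposition~\ref{lem5.50} to show $\calb$ is a base, use the fact that $\t$ is manufactured so that each $\Phi\subgt^{-1}$ is a homeomorphism onto the Zariski-free domain $\ugam$, observe that the transition maps are the identity on $\m^2$ (and hence trivially nc and Zariski-freely holomorphic), and cite Proposition~\ref{lem5.50} again for the Zariski-free openness of their domains. The only difference is one of exposition: you spell out explicitly why each $\ugam$ is an nc-set (via Proposition~\ref{propUgam}) and why the transition map is forced to be the identity (because $\Phi\subgt$ fixes the first two coordinates), points the paper treats as immediate.
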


Our original hope was to construct a free nc-manifold $\g$ and a surjective free holomorphic map $\pi:\m^2 \to \g$ with the property that, for every symmetric free holomorphic function $\ph$ on $\m^2$, there exists a free holomorphic function $\Phi$ on $\g$ such that $\ph=\Phi\circ\pi$.  Our construction above falls short of this goal on at least two counts.   Firstly, $\g$ is not a free manifold, but a {\em Zariski}-free manifold.  Secondly, if we make the intended definition
\begin{align}\label{defpi}
\pi(w)&=(u,v^2,vuv)   \notag\\
	&= (\tfrac 12(w^1+w^2), \tfrac 14 (w^1-w^2)^2, \tfrac 18 (w^1-w^2)(w^1+w^2)(w^1-w^2))
\end{align}
\index{$\pi$}
then a necessary condition that $\pi(w) \in\g$ is that $v^2 \in\mathcal{I}$, since we require $(u,v^2)$ to belong to some $\ugam$; 
%\green
%I don't see why $v^2$ is necessarily in $\q$
%\black
thus $\pi$ does not map $\m^2$ to $\g$.  Nonetheless, there is a
 correspondence between $\ph$ and a suitable holomorphic function $\Phi$ on $\g$, but the correspondence is more subtle, as the next  statement shows.
 
\begin{thm}
 \label{thment}
 There is a canonical bijection between  the classes of 
 
 {\rm (i)}
 symmetric nc functions $f$ that are freely holomorphic
 on $\m^2$, and
  
{\rm  (ii)} holomorphic functions $F_{oo}$ defined on the Zariski-free manifold $\g$ that are \anc\ and 
 have the property that, for every $w \in \m^2$, there is a free neighborhood $U$ of $w$ such that
 $F_{oo}$ is bounded on $\pi(U) \cap \g$.
 \end{thm}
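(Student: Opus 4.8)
The plan is to construct the bijection explicitly in both directions and then verify the two maps are mutual inverses. Starting from a symmetric nc function $f$ freely holomorphic on $\m^2$, I would first rewrite it in the variables $u=\half(w^1+w^2)$, $v=\half(w^1-w^2)$. Since $f$ is symmetric, $f(w^1,w^2)=f(w^2,w^1)$ means $f$ is invariant under $v\mapsto -v$, so that when we pull back to the manifold via the charts $\Phi\subgt^{-1}$ — which present a point of $\g$ as $(u,x,S\subgt(x)\,u\,S\subgt(x))$ with $x=v^2$ — the value of $f$ should depend only on $(u,x,vuv)$ and not on which branch $S\subgt$ of the square root $v$ we chose. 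Concretely, for each $\ga\in\Ga$ and nonconstant $\tau$, on $\ugam$ the pair $(u,S\subgt(x))$ is a legitimate substitution for $(u,v)$ (here $x\in D_\ga\subseteq\inv$, so $v^2=x$ makes sense), and I define $F_{oo}$ on the chart image $\g\subgt$ by $F_{oo}\circ\Phi\subgt(u,x) = f(u+S\subgt(x),\,u-S\subgt(x))$. Symmetry of $f$ together with Lemma~\ref{whyweneedzariski} (which says any two branches producing the same value of $S u S$ agree up to sign) guarantees that these chart-wise definitions are consistent on overlaps $\g\subgtsub{1}\cap\g\subgtsub{2}$, so $F_{oo}$ is a well-defined function on $\g$. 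Each $F_{oo}\circ\Phi\subgt$ is a composition of the freely holomorphic $f$ with the freely holomorphic maps $u,x\mapsto u\pm S\subgt(x)$ on the free domain $D_\ga$, hence is Zariski-freely holomorphic on $\ugam$ by Proposition~\ref{veryclose} and Proposition~\ref{prop2.60}; thus $F_{oo}$ is holomorphic on $\g$ by Definition~\ref{holman}.

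For the reverse direction, given $F_{oo}$ as in (ii) I would define $f$ on $\m^2$ by $f(w^1,w^2) = F_{oo}\circ\pi(w^1,w^2)$ wherever $\pi(w)\in\g$, i.e. wherever $v=\half(w^1-w^2)$ is nonsingular, and extend to all of $\m^2$ using the boundedness and conditionally-nc hypotheses. The extension is the heart of the matter: $\pi$ maps $\m^2$ into $\g$ only when $v^2\in\inv$, so $f$ is initially only defined on the Zariski-free-dense (in fact free-dense, by Proposition~\ref{prop2.50} applied componentwise) open set where $v$ is invertible. The hypothesis that $F_{oo}$ is locally bounded along $\pi(U)\cap\g$ says precisely that $f=F_{oo}\circ\pi$ is locally bounded on $\m^2$, and one shows $f$ is nc on its domain of definition (it is a composition of nc maps), so by a removable-singularities argument — the analog of Lemma~\ref{lemlib1}/Lemma~\ref{lemlib3}, exploiting that $f$ is conditionally nc via the block decomposition over singular/nonsingular parts — $f$ extends to a freely holomorphic nc function on all of $\m^2$. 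Symmetry of the extension follows because $F_{oo}$ being a function on $\g$ means it cannot distinguish $v$ from $-v$: both $(u,v)$ and $(u,-v)$ map under $\pi$ to the same point of $\g$, so $f(w^1,w^2)=f(w^2,w^1)$ holds on the dense set where $v\in\inv$ and hence everywhere by continuity.

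Finally I would check the two constructions are mutually inverse. If we start from $f$, form $F_{oo}$, and then form $F_{oo}\circ\pi$, on the locus $v\in\inv$ we get $\sigma(v^2)\in\Ga$, and choosing $\ga=\sigma(v^2)$ the branch $S_{\ga,\tau}(v^2)$ equals $v$ for an appropriate $\tau$ (up to sign, which is absorbed by symmetry of $f$), so $F_{oo}\circ\pi(w)=f(u+v,u-v)=f(w^1,w^2)$; by density this recovers $f$. Conversely, starting from $F_{oo}$, forming $f=F_{oo}\circ\pi$ and pulling back through the charts reproduces $F_{oo}$ on each $\g\subgt$ by the same substitution identity. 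I expect the main obstacle to be the removable-singularities step in the reverse direction: one must show that local boundedness of $F_{oo}$ along $\pi(U)\cap\g$ — a condition phrased on the manifold — really does yield free local boundedness of $f=F_{oo}\circ\pi$ on $\m^2$, including at points $w$ where $v$ is a nilpotent Jordan cell (where $\sqrt{v^2}$ may be empty, cf. Proposition~\ref{sqrtEmpty}), and then that the conditionally-nc structure supplies the holomorphic extension across the singular variety; this is where the hypotheses in (ii) are doing real work and where the arguments of Lemmas~\ref{lemlib1}--\ref{lemlib3} must be adapted to the map $\pi$ rather than to an inclusion $\gdel\setminus T\hookrightarrow\gdel$.
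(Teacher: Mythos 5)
Your forward construction of $F_{oo}$ from $f$ matches the paper's (it is, in effect, Theorem~\ref{thm6.30}), and you are right that the substance lies in the reverse direction; but the repair you sketch cannot close the gap you have correctly located. The function $F_{oo}\circ\pi$ is initially defined only on $\s_{oo}=\{w:\pi(w)\in\g\}$, and $\m^2\setminus\s_{oo}$ has two different pieces: the free varieties $V\subgt$ on which $u$ commutes with some involution $I\subgt(v^2)$, and the larger set $\m^2\setminus\s_o$ on which $v=\half(w^1-w^2)\notin\q$. The adaptation of Lemmas~\ref{lemlib1}--\ref{lemlib3} you suggest extends across a finite union of free varieties by direct-summing with generic points and peeling off blocks via the \anc\ structure; this can cross the first piece but is structurally incapable of crossing the second. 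If $\sigma(v)\cap\sigma(-v)\neq\emptyset$ at $w$, then the same is true at $z\oplus w$ for every $z$, so no direct sum with a point of $\s_{oo}$ ever lands back in $\s_{oo}$ and the hat-function machinery never reaches $w$. Moreover $\m^2\setminus\s_o$ is not a free variety at all (it is cut out by the resultant of $\det(\la-v)$ and $\det(\la+v)$, a scalar polynomial in the entries rather than a free polynomial), so $\s_{oo}$ is not of the form $D\setminus T$ to which Lemma~\ref{lemlib1} applies.

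The paper (through Theorems~\ref{thm9.1} and~\ref{thm9.2}, of which the stated theorem is the case $\s=\m^2$) therefore runs the extension in two phases with two distinct tools. First, $f_{oo}\to f_o$: the direct-sum hat-function argument carries the function from $\s_{oo}$ to $\s_o$, using Proposition~\ref{propUgam} to exhibit, for each $w\in\s_o$, a point $\la$ with $\la\oplus w\in\s_{oo}$. Second, $f_o\to f$ on all of $\m^2$: one checks via Proposition~\ref{prop6.30} and the resultant criterion that $\s\setminus\s_o$ is thin at each level $n$, and then invokes the \emph{classical, commutative} removable-singularity theorem for locally bounded holomorphic functions across thin sets (Subsection~\ref{subsec7.1}), applied level by level in $\mn^2$. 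This is exactly where the local-boundedness-along-$\pi(U)\cap\g$ hypothesis in (ii) is consumed, and it is not an nc argument at all. Symmetry and the similarity-preserving property of the resulting $f$ are then recovered by continuity, and the factorization through $\pi$ (well-definedness of the correspondence) rests on Lemma~\ref{lempu3}, which uses the rational identities $vu^jv = vu^{j-1}v(v^2)^{-1}vuv$ to show that $\pi$ determines every symmetric free polynomial on $\s_o$. These two mechanisms, the thin-set theorem especially, are the concrete ingredients missing from your outline.
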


Theorem~\ref{thment} will be proved in the next section, in greater generality -- see Theorem \ref{thm9.2}.

\section{Symmetric free holomorphic functions} 
\label{SymmetricFunctions}

In Section  \ref{UniversalDomain}  we constructed an nc-manifold $\g$ for the representation of freely {\em entire} symmetric functions on $\m^2$.
In this section we shall do likewise for symmetric free holomorphic functions on an arbitrary symmetric free domain.

Recall that a set $\s \subseteq \m^2$ is \emph{symmetric} if
\[
w=(w^1,w^2) \in \s \implies (w^2,w^1) \in \s.
\]
In the sequel $\s$ is a fixed symmetric free domain in $\m^2$.  We shall construct a Zariski-free manifold $\gg_{oo}(\s)$ which is, roughly speaking, the restriction of $\g$ to $\s$. 

All the notations and constructions in Section \ref{UniversalDomain}   are in effect in this section as well. For $w \in \m^2$ we shall frequently employ the change of variables 
\be\label{uandv}
u = \half(w^1 +w^2),\qquad v = \half(w^1-w^2),
\ee
or, equivalently,
\[
w^1 = u+v, \qquad w^2 = u-v.
\]
The operation of tranposition of components will again  be denoted by $(w^1,w^2)^f=(w^2,w^1)$.

To construct $\g(\s)$ we need to define several sets and mappings, as indicated schematically in the following figure.
\begin{center}
\includegraphics {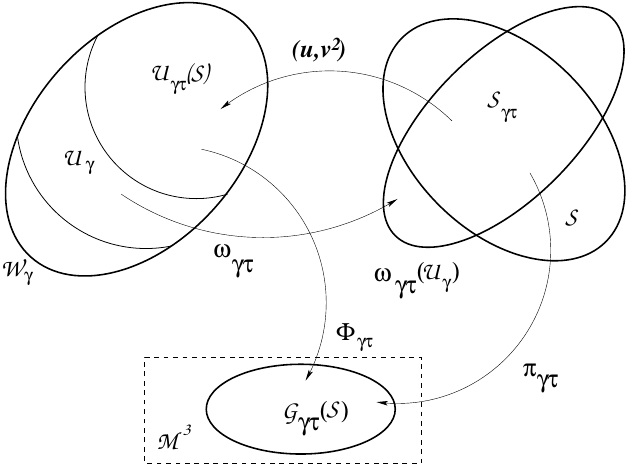}\\
Scheme for the construction of $\g(\s)$
\end{center}

%\begin{center}
%\includegraphics {allmaps.pdf}\\
%Scheme for the construction of $\g(\s)$
%\end{center}
\subsection{Local inverses of $(u,v^2)$} %The mapping $\omega\subgt$}
The purpose of this and the next two subsections is to define a submanifold of the Zariski-free manifold $\g$ corresponding to a symmetric free domain $\s$ in $\m^2$.  First we introduce maps $\omega\subgt$ which are local inverses of the map $w\to (u,v^2)$, in the notation of equations \eqref{uandv}.

Recall that, for each $\gamma \in \Gamma$, the set $\wgam$ defined in equation \eqref{5.32} is  freely open in $\m^2$. For each $\gamma \in \Gamma$ and each $\tau \in \{-1,1\}^\gamma$ we
define a mapping $\omega\subgt:\wgam \to \m^2$ by the formula
\be\label{6.10}
\omega\subgt (u,x) = \Big(u+S\subgt(x)\ ,\ u-S\subgt(x)\Big),\qquad (u,x) \in \wgam
\ee
where $S\subgt$ is the free holomorphic function on $D_\gamma$ defined by equation \eqref{defSgt}.

Let $w = \omega\subgt(u,x)$, so that
\[
u+S\subgt(x) = w^1 \qquad \text{and} \qquad
u-S\subgt(x) = w^2.
\]
Solution of these equations for $u$ and $x$ gives
\[
u = \tfrac12 (w^1+w^2) \qquad \text{and}\qquad  x=S\subgt (x)^2 = \tfrac14 (w^1-w^2)^2.
\]
Thus $\omega\subgt$ is injective and
\begin{align}\label{6.13}
\omega\subgt^{-1}(w) &= \Big(\tfrac12 (w^1+w^2),\tfrac14 (w^1-w^2)^2 \Big)\\
	&=(u,v^2) \notag
\end{align}
for all $w \in \ran \omega\subgt$.
\begin{prop}\label{prop6.10}
For each $\gamma \in \Gamma$ and $\tau \in \{-1,1\}^\gamma$, $\omega\subgt$ is a free biholomorphic mapping from $\wgam$ onto its range.
\end{prop}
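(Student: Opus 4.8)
The plan is to exhibit $\omega\subgt$ as a composition of free holomorphic maps with a free holomorphic inverse, using the structure already built for $S\subgt$. First I would observe that $S\subgt$ is a free holomorphic function on the free domain $D_\gamma$ by Proposition \ref{prop3.3} (since $s\subgt = s_\gamma\iota\subgt$ is holomorphic on $\Delta_\gamma$), and consequently the map
\[
(u,x)\mapsto \big(u+S\subgt(x),\,u-S\subgt(x)\big)
\]
is a free holomorphic map from $\wgam$ to $\m^2$: each component is a sum of the free holomorphic coordinate function $(u,x)\mapsto u$ and $\pm S\subgt(x)$, and sums and compositions of free holomorphic functions are free holomorphic. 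This shows $\omega\subgt$ is free holomorphic on $\wgam$.

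Next I would verify injectivity and compute the inverse explicitly, which the text already does: if $w=\omega\subgt(u,x)$ then $u=\tfrac12(w^1+w^2)$ and, since $S\subgt(x)^2=x$ on $D_\gamma$ (noted after equation \eqref{defSgt}), $x=\tfrac14(w^1-w^2)^2$. Hence $\omega\subgt$ is injective with
\[
\omega\subgt^{-1}(w) = \Big(\tfrac12(w^1+w^2),\,\tfrac14(w^1-w^2)^2\Big),
\]
and this formula visibly does not depend on $\tau$; it is the restriction to $\ran\omega\subgt$ of the globally defined polynomial map $w\mapsto \big(\tfrac12(w^1+w^2),\tfrac14(w^1-w^2)^2\big)$ on $\m^2$. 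A polynomial map is free holomorphic, and the restriction of a free holomorphic map to a free domain (here $\ran\omega\subgt$, which is free open because $\omega\subgt$ is a free homeomorphism onto its range — see below) is again free holomorphic. Therefore both $\omega\subgt$ and $\omega\subgt^{-1}$ are free holomorphic, so $\omega\subgt$ is a free biholomorphism onto its range.

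The one point needing a little care — and the main obstacle — is confirming that $\ran\omega\subgt$ is a \emph{free domain} (free open set), so that "free holomorphic on $\ran\omega\subgt$" is a meaningful statement and $\omega\subgt$ is a homeomorphism in the free topology. For this I would use that $\omega\subgt$ has a free holomorphic left inverse $\rho(w)=\big(\tfrac12(w^1+w^2),\tfrac14(w^1-w^2)^2\big)$ defined on all of $\m^2$ with $\rho\circ\omega\subgt=\ide$, and that $\omega\subgt$ is itself freely continuous (being free holomorphic); then $\ran\omega\subgt = \rho^{-1}(\wgam)\cap(\text{image constraint})$. More simply: since $\rho$ is freely continuous and $\wgam$ is free open, $\rho^{-1}(\wgam)$ is free open; and one checks $\ran\omega\subgt$ equals the free open set $\{w\in\m^2 : \tfrac14(w^1-w^2)^2\in D_\gamma \text{ and } S\subgt(\tfrac14(w^1-w^2)^2)=\tfrac12(w^1-w^2)\}$, the second condition selecting the correct branch. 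That this set is free open follows because $w\mapsto \tfrac12(w^1-w^2) - S\subgt(\tfrac14(w^1-w^2)^2)$ is free holomorphic on $\rho^{-1}(\wgam)$, and the "branch-selection" set, being the locus where this free holomorphic function takes values whose spectrum avoids a neighborhood of a fixed point, is relatively free open — this is the standard argument (as in Lemma \ref{lem4.10} and Proposition \ref{prop4.10}) that the graph of a branch of the square root is free open. Finally, freely continuity of $\omega\subgt$ and $\rho|_{\ran\omega\subgt}$ together give that $\omega\subgt$ is a free homeomorphism, completing the proof that it is a free biholomorphism onto its range.
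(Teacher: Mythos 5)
Your overall strategy mirrors the paper's: verify $\omega\subgt$ is freely holomorphic via $S\subgt$, exhibit the polynomial inverse \eqref{6.13}, and reduce everything to showing $\ran\omega\subgt$ is a free domain. On that last point you are actually \emph{more} careful than the paper, whose proof simply asserts $\ran\omega\subgt = F^{-1}(\wgam)$ (with $F$ the polynomial map \eqref{6.13}) and reads off openness from continuity of $F$. That equality is in fact false: writing $v = \tfrac12(w^1-w^2)$, the set $F^{-1}(\wgam)$ is $\{w : v^2 \in D_\gamma\}$, while $\ran\omega\subgt$ is the strictly smaller set $\{w : v \in S\subgt(D_\gamma)\}$. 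For instance, if $\gamma = \{1\}$ and $v = \mathrm{diag}(1,-1)$, then $v^2 = I_2 \in D_\gamma$ but $v \neq \pm I_2 = \pm S\subgt(I_2)$, so $v \notin S\subgt(D_\gamma)$. Your insistence on a branch-selection constraint is therefore correct and identifies a real gap in the paper's own proof.

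However, your justification for why the branch-selected set is freely open does not hold up. That set is the zero locus in $F^{-1}(\wgam)$ of the freely holomorphic function $g(w) = \tfrac12(w^1-w^2) - S\subgt(\tfrac14(w^1-w^2)^2)$. Zero loci of freely holomorphic functions are free varieties, and, as Section \ref{zariski} discusses at length, free varieties are thin and generally not freely open --- avoiding them is the very motivation for the Zariski-free topology. Your characterization of the set as ``the locus where this free holomorphic function takes values whose spectrum avoids a neighborhood of a fixed point'' does not describe a vanishing locus. What actually saves the argument is the special geometry of square-root branches: because $s\subgt(\Delta_\gamma)$ and $-s\subgt(\Delta_\gamma)$ are disjoint, one shows $S\subgt(D_\gamma) = \{v : \sigma(v) \subseteq s\subgt(\Delta_\gamma)\}$, a spectral-inclusion set which is freely open by the argument of Proposition \ref{prop3.10}. (Equivalently, $S\subgt(D_\gamma)$ is the preimage of the $\t$-open set $\gra(S\subgt, D_\gamma) \subset \calr$ under the free biholomorphism $y\mapsto(y^2,y)$ of Proposition \ref{prop4.30}.) Once $\ran\omega\subgt = \m^1 \times S\subgt(D_\gamma)$, in $(u,v)$-coordinates, is established to be a free domain in this way, the rest of your argument is sound.
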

\begin{proof}
 We first  show that $\ran \omega\subgt$ is open in the free topology.  Equation \eqref{6.13} defines a free polynomial map $F$ on all of $\m^2$. Hence, as this map is continuous in the free topology,  $\ran \omega\subgt = F^{-1}(\wgam)$  
%( \red not $\ran \omega\subgt = \omega\subgt^{-1}(\wgam)$ )  \black  
is open in the free topology.

 Once it is known that both the domain and range of $\omega\subgt$ are freely open sets, that $\omega\subgt$ is a  free   biholomorphic mapping follows immediately from the formulas \eqref{6.10} and \eqref{6.13}.
\end{proof}
\subsection{The sets $\s\subgt$ and $\calu\subgt(\s)$}
Recall that, for each $\gamma \in \Gamma$, the set  $\ugam$ defined in equation \eqref{5.38} is a Zariski-free open subset of $\wgam$. For each $\tau \in \{-1,1\}^\gamma$, we define $\s\subgt \subseteq \s$ by
\be\label{6.15}
\s\subgt = \omega\subgt (\ugam) \cap \s
\ee
and define $\calu\subgt(\s) \subseteq \ugam$ by 
\begin{align}
\calu\subgt(\s)&= \set{(u,x) \in \ugam}{\omega\subgt(u,x) \in \s} \notag \\
&=\omega\subgt^{-1}(\s\subgt). \label{6.16}
\end{align}
\begin{prop}\label{prop6.20}
$\calu\subgt(\s)$ and $\s\subgt$ are Zariski-free  open sets and
$\omega\subgt$ is a  Zariski-free  biholomorphic mapping from $\calu\subgt(\s)$ onto $\s\subgt$.
\end{prop}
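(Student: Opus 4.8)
The plan is to reduce everything to three facts already available: Proposition~\ref{prop6.10} (that $\omega\subgt$ is a free biholomorphism of $\wgam$ onto its range $\ran\omega\subgt$, and that $\ran\omega\subgt = F^{-1}(\wgam)$ is freely open, where $F$ is the free polynomial map $w\mapsto(\tfrac12(w^1+w^2),\tfrac14(w^1-w^2)^2)$ of \eqref{6.13}); the definition \eqref{5.38} of $\ugam$ as a Zariski-free open subset of $\wgam$; and Proposition~\ref{prop2.60} (a free holomorphic map between free domains is Zariski-freely continuous). I will also use the elementary observation that, since the Zariski-free topology is finer than the free topology, the restriction of a freely holomorphic map on a free domain $D$ to a Zariski-free open subset $U\subseteq D$ is Zariski-freely holomorphic on $U$: free local boundedness and the nc property pass at once to the (smaller) Zariski-free neighborhoods inside $U$.

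First I would show $\calu\subgt(\s)$ is Zariski-free open. By \eqref{6.16}, $\calu\subgt(\s)=\ugam\cap\omega\subgt^{-1}(\s)$, the preimage being taken in $\wgam$. Since $\s$ is freely open and $\omega\subgt$ is freely holomorphic, hence freely continuous, on the freely open set $\wgam$, the set $\omega\subgt^{-1}(\s)$ is freely open in $\wgam$ and therefore freely open in $\m^2$; being freely open it is Zariski-freely open. Intersecting with the Zariski-free open set $\ugam$ shows $\calu\subgt(\s)$ is Zariski-free open.

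Next I would show $\s\subgt=\omega\subgt(\ugam)\cap\s$ is Zariski-free open. By Proposition~\ref{prop6.10}, $\omega\subgt$ restricts to a free biholomorphism of $\wgam$ onto the freely open set $\ran\omega\subgt$, so its inverse $\omega\subgt^{-1}:\ran\omega\subgt\to\wgam$ is freely holomorphic and hence, by Proposition~\ref{prop2.60}, Zariski-freely continuous. Consequently $\omega\subgt(\ugam)=(\omega\subgt^{-1})^{-1}(\ugam)$ is Zariski-free open in $\ran\omega\subgt$; and since $\ran\omega\subgt$ is itself freely open, hence Zariski-freely open, in $\m^2$, the set $\omega\subgt(\ugam)$ is Zariski-free open in $\m^2$. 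Intersecting with the freely (hence Zariski-freely) open set $\s$ gives that $\s\subgt$ is Zariski-free open.

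Finally, for the biholomorphism statement: $\omega\subgt$ is injective on $\wgam$ by Proposition~\ref{prop6.10}, and \eqref{6.15}--\eqref{6.16} state exactly that $\calu\subgt(\s)=\omega\subgt^{-1}(\s\subgt)$ with $\s\subgt\subseteq\omega\subgt(\ugam)\subseteq\ran\omega\subgt$, so $\omega\subgt$ carries $\calu\subgt(\s)$ bijectively onto $\s\subgt$. The map $\omega\subgt|_{\calu\subgt(\s)}$ is the restriction of the freely holomorphic map $\omega\subgt$ on $\wgam$ to the Zariski-free open subset $\calu\subgt(\s)$, hence Zariski-freely holomorphic (and Zariski-freely continuous); and by \eqref{6.13} its inverse is the restriction to $\s\subgt$ of the free polynomial map $F$, which is freely holomorphic on all of $\m^2$ and therefore Zariski-freely holomorphic, and Zariski-freely continuous, on the Zariski-free open set $\s\subgt$. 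Thus $\omega\subgt$ is a Zariski-free homeomorphism of $\calu\subgt(\s)$ onto $\s\subgt$ that is Zariski-freely holomorphic with Zariski-freely holomorphic inverse, i.e.\ a Zariski-free biholomorphism. I do not expect a genuine obstacle here; the only point needing care is the topological bookkeeping, in particular the passage from ``Zariski-free open in $\ran\omega\subgt$'' to ``Zariski-free open in $\m^2$'', which rests on $\ran\omega\subgt$ being freely open as established in the proof of Proposition~\ref{prop6.10}, and using the version of ``Zariski-freely holomorphic on a Zariski-free open set'' under which restrictions of freely holomorphic maps remain holomorphic.
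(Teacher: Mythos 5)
Your proof is correct and takes essentially the same route as the paper's: both rest on Proposition~\ref{prop6.10} (that $\omega\subgt$ is a free biholomorphism of $\wgam$ onto the freely open set $\ran\omega\subgt$) together with Proposition~\ref{prop2.60} (free holomorphy implies Zariski-free continuity), and then deduce openness of $\calu\subgt(\s)$ and $\s\subgt$ and holomorphy of $\omega\subgt$ and its inverse by restriction. The paper phrases the argument slightly more compactly --- first upgrading $\omega\subgt$ to a Zariski-free homeomorphism on $\wgam\to\ran\omega\subgt$, then reading off the openness of $\s\subgt$ and $\calu\subgt(\s)$ from equations \eqref{6.15} and \eqref{6.16} --- but the ingredients and the logic are the same.
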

\begin{proof}
By Proposition \ref{prop6.10}, $\omega\subgt:\wgam \to \ran \omega\subgt$ is a free biholomorphic mapping. It follows by Proposition \ref{prop2.60}  that $\omega\subgt$ is a homeomorphism when $\wgam$ and $\ran \omega\subgt$ are equipped with the Zariski-free  topology. Hence equation \eqref{6.15} implies that $\s\subgt$ is Zariski-freely open and equation \eqref{6.16} implies that $\calu\subgt(\s)$ is Zariski-freely open.

That $\omega\subgt$ acting on $\calu\subgt(\s)$ and $\omega\subgt^{-1}$ acting on $\s\subgt$ are Zariski-free  holomorphic mappings (that is, are Zariski-freely locally bounded) follow from the facts that  $\omega\subgt$ acting on $\wgam$ and $\omega\subgt^{-1}$ acting on $\ran \omega\subgt$ are free holomorphic mappings (and hence are  freely locally bounded).
\end{proof}
\subsection{The sets $\g\subgt(\s)$ and $\g(\s)$}
Recall that for each $\gamma \in \Gamma$ and each $\tau \in \{-1,1\}^\gamma$,  $\Phi\subgt$ and $\g\subgt$ are defined by equations \eqref{5.40} and \eqref{5.41}. For each $\gamma$ and $\tau$, we define
\[
\g\subgt (\s) = \Phi\subgt(\calu\subgt(\s)),
\]
\index{$\g\subgt(\s)$}
and then, following the definition \eqref{5.42}, we set
\be\label{6.20}
\gg_{oo}(\s) = \bigcup_{\gamma \in \Gamma}\ \  \bigcup_{\tau \in \{-1,1\}^\gamma} \g\subgt(\s).
\ee
\index{$\gg_{oo}(\s)$}
Recall that the topology $\t$ on $\g$ was chosen so that the maps $\Phi\subgt:\ugam \to \g\subgt$ are homeomorphisms when $\ugam$ carries the Zariski-free  topology. Therefore $\gg_{oo}(\s)$ is an open subset of $\g$ and as such carries the structure of a Zariski-free  manifold. A Zariski-free atlas for $\gg_{oo}(\s)$, $\a(\s)$, can be obtained from the Zariski-free atlas $\a$ for $\g$ defined in equation \eqref{5.90} by simple restriction of the charts in $\a$ to the sets $\g\subgt (\s)$, that is, by the definition
\be\label{6.30}
\a (\s)= \{\Phi\subgt ^{-1} \left| \g\subgt(\s)\right.:\gamma \in \Gamma, \tau \in \{-1,1\}^\gamma\}.
\ee
\index{$\a(\s)$}
We summarize these observations in the following theorem.
\begin{thm}\label{thm6.10}
Let $\s$ be a symmetric free domain in $\m^2$. If $\gg_{oo}(\s)$ is defined by equation \eqref{6.20}, then $\gg_{oo}(\s)$ is an open subset of $\g$. Furthermore, if $\t(\s)$ is the relativization of $\t$ to $\gg_{oo}(\s)$ and $\a(\s)$ is defined by equation \eqref{6.30} then $(\gg_{oo}(\s),\t(\s),\a(\s))$ is a Zariski-free manifold.
\end{thm}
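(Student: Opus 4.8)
The plan is to obtain Theorem~\ref{thm6.10} as a consequence of the construction of the Zariski-free manifold $(\g,\t,\a)$ in Section~\ref{UniversalDomain} together with Propositions~\ref{prop6.10} and~\ref{prop6.20}: I would show that $\gg_{oo}(\s)$ is open in $\g$, and then invoke the general principle that an open sub-object of a topological nc-manifold, equipped with the relativized topology and the charts restricted to it, is again a topological nc-manifold of the same kind. Concretely there are two things to do -- verify openness, and verify that the restricted data $(\t(\s),\a(\s))$ meet the requirements of Definitions~\ref{def2.32} and~\ref{holman} with respect to the Zariski-free topology on $\m^2$.

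First I would show $\gg_{oo}(\s)$ is $\t$-open in $\g$. Fix $\gamma\in\Ga$ and $\tau\in\{-1,1\}^\gamma$. By Proposition~\ref{prop6.20} the set $\calu\subgt(\s)$ is Zariski-freely open in $\ugam$, so by Definition~\ref{def5.20} the set
\[
\g\subgt(\s)=\Phi\subgt\bigl(\calu\subgt(\s)\bigr)
\]
is a member of the base $\calb$ of $\t$, hence lies in $\t$. Since $\gg_{oo}(\s)=\bigcup_{\gamma,\tau}\g\subgt(\s)$ by \eqref{6.20}, it is a union of $\t$-open sets, and therefore $\t$-open. In particular $\t(\s)$ is precisely the subspace topology on the open set $\gg_{oo}(\s)\subseteq\g$, and each $\g\subgt(\s)$ is $\t(\s)$-open.

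It remains to check the atlas axioms for $\a(\s)$ as defined in \eqref{6.30}. \emph{Charts.} The topology $\t$ was defined so that $\Phi\subgt^{-1}:\g\subgt\to\ugam$ is a homeomorphism onto $\ugam$ with its Zariski-free topology; restricting it to the $\t$-open subset $\g\subgt(\s)$ gives a homeomorphism onto the Zariski-freely open subset $\calu\subgt(\s)$ of $\m^2$. Moreover $\calu\subgt(\s)$ is an nc set: $\ugam$ is an nc set (Proposition~\ref{propUgam}), $\omega\subgt$ is an nc map (Proposition~\ref{prop6.10}) and $\s$, being a symmetric free domain, is an nc set, so if two points lie in $\calu\subgt(\s)=\{(u,x)\in\ugam:\omega\subgt(u,x)\in\s\}$ then so does their direct sum. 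Hence each $\Phi\subgt^{-1}|\g\subgt(\s)$ is a topological nc-co-ordinate patch on $(\gg_{oo}(\s),\t(\s))$ with respect to the Zariski-free topology. \emph{Cover.} The $\g\subgt(\s)$ cover $\gg_{oo}(\s)$ by \eqref{6.20}. \emph{Transition maps and nc conditions.} A transition map of $\a(\s)$ is the restriction of the corresponding transition map of $\a$, which by the computation preceding Theorem~\ref{thm5.10} is the identity $(u,x)\mapsto(u,x)$; this is Zariski-freely holomorphic and is an nc map on any nc subset of its domain, so conditions (1) and (2) in the definition of an nc-atlas hold. Its domain $\Phi\subgtsub{1}^{-1}\bigl(\g\subgtsub{1}(\s)\cap\g\subgtsub{2}(\s)\bigr)$ is Zariski-freely open, being the intersection of the Zariski-freely open set $\Phi\subgtsub{1}^{-1}(\g\subgtsub{1}\cap\g\subgtsub{2})$ (Proposition~\ref{lem5.50}) with the Zariski-freely open set $\calu\subgtsub{1}(\s)$ (Proposition~\ref{prop6.20}). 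Assembling these observations shows that $(\gg_{oo}(\s),\t(\s),\a(\s))$ is a Zariski-free manifold.

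The proof is essentially bookkeeping, since every structural fact transfers verbatim from the construction of $(\g,\t,\a)$; the only step calling for a moment's care is confirming that the chart codomains $\calu\subgt(\s)$ are both Zariski-freely open and closed under direct sums, which is exactly what Propositions~\ref{propUgam}, \ref{prop6.10} and~\ref{prop6.20} and the hypothesis that $\s$ be a symmetric free domain supply.
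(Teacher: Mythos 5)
Your approach is the same as the paper's: $\gg_{oo}(\s)$ is open in $(\g,\t)$ because each $\g\subgt(\s)=\Phi\subgt(\calu\subgt(\s))$ lies in the base $\calb$ (Proposition~\ref{prop6.20}), and the atlas $\a(\s)$ inherits all the required properties by restriction from $\a$, with the transition maps being identities. One claim in your argument is unjustified, though: you assert that $\s$, ``being a symmetric free domain, is an nc set'' and use this to conclude that $\calu\subgt(\s)$ is an nc set. A free domain is merely a set open in the free topology; it need not be closed under direct sums, and indeed the paper explicitly drops the nc-set hypothesis on $\s$ in Theorem~\ref{thm9.1} (``In the theorem we do not assume that $\s$ is an nc set''). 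This misstep is not fatal, because the nc-atlas conditions in Definition~\ref{defn2.30} and Remark~\ref{astar}(2) only require chart codomains to be \emph{unions} of nc-sets, which is automatic for Zariski-freely open sets by Proposition~\ref{Zfadmiss} (the Zariski-free topology has a base of basic sets $B_\delta\setminus T$ that are nc-sets). Replacing your nc-set claim with this observation yields a correct proof along exactly the lines of the paper's.
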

\index{$\t(\s)$}
\subsection{A holomorphic cover of $\gg_{oo}(\s)$} %The maps $\pi\subgt$ and $\pi$ and the set  $\s_{oo}$ }
In this subsection we shall construct a 2-to-1 holomorphic covering map from a Zariski-free  subdomain of $\s$ onto $\gg_{oo}(\s)$. For each $\gamma \in \Gamma$ and each $\tau \in \{-1,1\}^\gamma$, define $\pi\subgt :\s\subgt \to \g\subgt({\s})$ by
\[
\pi\subgt = \Phi\subgt \circ \omega\subgt^{-1}.
\]
Thus, for $w\in\s\subgt$,
\[
\pi\subgt(w)=(u,v^2, S\subgt(v^2)uS\subgt(v^2).
\]
\begin{lem}\label{lem6.20}
If $w \in \s\subgtsub{1} \cap \s\subgtsub{2}$, then $\pi\subgtsub{1}(w) = \pi\subgtsub{2} (w)$.
\end{lem}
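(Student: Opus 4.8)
The plan is to show that for $w\in\s\subgt$ the map $\pi\subgt$ does not really depend on $(\gamma,\tau)$ at all: writing $u=\half(w^1+w^2)$ and $v=\half(w^1-w^2)$, I will check that $\pi\subgt(w)=(u,v^2,vuv)$, which is the value of the single polynomial map $\pi$ of \eqref{defpi}. The lemma is then immediate, since the right-hand side does not involve $(\gamma,\tau)$.

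First I would observe that if $w\in\s\subgtsub{j}=\omega\subgtsub{j}(\ugamsub{j})\cap\s$ then $w\in\ran\omega\subgtsub{j}$, so $\omega\subgtsub{j}^{-1}(w)$ is defined; by the inversion formula \eqref{6.13} it equals $(u,v^2)$, regardless of $j$. Consequently, by the definition $\pi\subgt=\Phi\subgt\circ\omega\subgt^{-1}$,
\[
\pi\subgtsub{j}(w)=\Phi\subgtsub{j}(u,v^2)=\bigl(u,\;v^2,\;S\subgtsub{j}(v^2)\,u\,S\subgtsub{j}(v^2)\bigr),\qquad j=1,2.
\]
It remains to identify $S\subgtsub{j}(v^2)$. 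Applying the definition \eqref{6.10} of $\omega\subgtsub{j}$ to the point $(u,v^2)$ gives $w=\omega\subgtsub{j}(u,v^2)=\bigl(u+S\subgtsub{j}(v^2),\,u-S\subgtsub{j}(v^2)\bigr)$; comparing the first components with $w=(u+v,u-v)$ forces $S\subgtsub{j}(v^2)=v$. Substituting, $\pi\subgtsub{j}(w)=(u,v^2,vuv)$ for $j=1,2$, and hence $\pi\subgtsub{1}(w)=\pi\subgtsub{2}(w)$.

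There is essentially no obstacle here. The only points requiring a little care are that $\omega\subgt^{-1}$ is genuinely defined on $\ran\omega\subgt$ (guaranteed by Proposition~\ref{prop6.10}) and that the identity $v=S\subgt(v^2)$ uses only one component of the defining formula for $\omega\subgt$, so it is valid as soon as $w$ lies in the range of $\omega\subgt$. This argument in fact proves the stronger statement that every $\pi\subgt$ is just the restriction of the global map $\pi$ to $\s\subgt$, which is presumably how the lemma will be used in what follows.
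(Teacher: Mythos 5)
Your proof is correct and takes essentially the same approach as the paper's: both arguments boil down to the inversion formula \eqref{6.13}, which gives $\omega\subgt^{-1}(w)=(u,v^2)$ independently of $(\gamma,\tau)$, and the observation that each branch $S\subgt$ must evaluate to $v$ at $v^2$ because $w$ lies in the range of $\omega\subgt$. You organize it slightly differently—you identify a single universal value $(u,v^2,vuv)$ and show both $\pi\subgtsub{j}$ produce it, in effect proving the formula \eqref{6.40} of Theorem~\ref{thm6.20} ahead of time, whereas the paper at this stage just compares the two chart expressions with each other—but the underlying computation is identical.
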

\begin{proof}
As $w \in \s\subgtsub{1}$, there exists $(u_1,x_1) \in \calu\subgtsub{1}$ such that $w = \omega\subgtsub{1}(u_1,x_1)$. Likewise, there exists $(u_2,x_2) \in \calu\subgtsub{2}$ such that $w = \omega\subgtsub{2}(u_2,x_2)$. But
\[
 \omega\subgtsub{1}(u_1,x_1) =  \omega\subgtsub{2}(u_2,x_2)
\]
implies via equation \eqref{6.10} that
\[
 u_1+S\subgtsub{1}(x_1) = u_2+S\subgtsub{2}(x_2)
\]
and
\[
 u_1-S\subgtsub{1}(x_1) =  u_2-S\subgtsub{2}(x_2).
\]
These equations imply that $u_1 =u_2$ and $S\subgtsub{1}(x_1)=S\subgtsub{2}(x_2)$. Noting that both $S\subgtsub{1}$ and $S\subgtsub{2}$ are branches of $\sqrt x$, we also have
\[
x_1 = S\subgtsub{1}(x_1)^2 = S\subgtsub{2}(x_2)^2 = x_2.
\]
Hence
\begin{align*}
\pi\subgtsub{1}(w) &= \pi\subgtsub{1}(\omega\subgtsub{1}(u_1,x_1))\\
&=\Phi\subgtsub{1}(u_1,x_1)\\
&=(u_1,x_1,S\subgtsub{1}(x_1)u_1S\subgtsub{1}(x_1))\\
&=(u_2,x_2,S\subgtsub{2}(x_2)u_2S\subgtsub{2}(x_2))\\
&=\Phi\subgtsub{2}(u_2,x_2)\\
&=\pi\subgtsub{2}(\omega\subgtsub{2}(u_2,x_2))\\
&=\pi\subgtsub{2}(w).
\end{align*}
\end{proof}

 Armed with Lemma \ref{lem6.20} we may define $\s_{oo} \subseteq \s$ by
 \be\label{6.35}
 \s_{oo} = \bigcup_{\gamma \in \Gamma}\bigcup_{\tau \in \{-1,1\}^\gamma} \s\subgt.
 \ee
\index{$\s_{oo}$}
It is immediate that there is a globally well-defined mapping $\pi :\s_{oo} \to \gg_{oo}(\s)$, locally defined by the formula,
 \[
 \pi(w) = \pi\subgt(w) \text{ if } w\in \s\subgt.
 \]
\begin{thm}\label{thm6.20}
$\pi$ is a holomorphic covering map from the Zariski-free  domain $\s_{oo}$  onto the  Zariski-free  manifold $\gg_{oo}(\s)$. Furthermore, $\pi$ is given by the formula
\be\label{6.40}
\pi(w) = (\tfrac12 (w^1 + w^2),\tfrac14 (w^1-w^2)^2,\tfrac12 (w^1-w^2)\tfrac12 (w^1 + w^2)\tfrac12 (w^1 - w^2)).
\ee
Finally, $\pi$ is $2$-to-$1$ and $\pi^{-1}(\pi(w)) = \{w,w^f\}$
\end{thm}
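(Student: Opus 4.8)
The plan is to do everything chart by chart and to lean on one algebraic identity. Since $\iota_{\gamma,-\tau}=-\iota\subgt$, equation \eqref{defSgt} gives $S_{\gamma,-\tau}(x)=-S\subgt(x)$; hence $\Phi_{\gamma,-\tau}=\Phi\subgt$ on $\ugam$, and since $\omega_{\gamma,-\tau}(u,x)=\bigl(\omega\subgt(u,x)\bigr)^f$ and $\s$ is symmetric we get $\calu_{\gamma,-\tau}(\s)=\calu\subgt(\s)$, $\g_{\gamma,-\tau}(\s)=\g\subgt(\s)$ and $\s_{\gamma,-\tau}=\bigl(\s\subgt\bigr)^f$. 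I would also note that for $w\in\s\subgt$, writing $w=\omega\subgt(u,x)$ with $(u,x)\in\calu\subgt(\s)$, we have $v:=\tfrac12(w^1-w^2)=S\subgt(x)$ and $x=v^2$ directly from \eqref{6.10}; combined with \eqref{6.13} this gives $\pi(w)=\pi\subgt(w)=\Phi\subgt(u,v^2)=(u,v^2,S\subgt(v^2)\,u\,S\subgt(v^2))=(u,v^2,vuv)$, i.e.\ formula \eqref{6.40}. Surjectivity is immediate: $\pi(\s\subgt)=\Phi\subgt(\calu\subgt(\s))=\g\subgt(\s)$, and these sets cover $\gg_{oo}(\s)$ by \eqref{6.20}; also $\s_{oo}$ is a Zariski-free domain, being the union of the Zariski-freely open sets $\s\subgt$ (Proposition~\ref{prop6.20}).

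For the covering property I would fix a chart domain $\g\subgt(\s)$ and show
\[
\pi^{-1}\bigl(\g\subgt(\s)\bigr)\;=\;\s\subgt\;\sqcup\;\s_{\gamma,-\tau},
\]
with $\pi$ restricting to a homeomorphism of each summand onto $\g\subgt(\s)$. The inclusion $\supseteq$ is clear once one knows $\g_{\gamma,-\tau}(\s)=\g\subgt(\s)$. For $\subseteq$: if $\pi(w')\in\g\subgt(\s)$, write $w'\in\s\subgtsub{1}$ so that $w'=\omega\subgtsub{1}(u_1,x_1)$ and $\pi(w')=\Phi\subgtsub{1}(u_1,x_1)$; matching the first two coordinates against those of a point of $\g\subgt(\s)=\Phi\subgt(\calu\subgt(\s))$ forces $(u_1,x_1)\in\calu\subgt(\s)\cap\calu\subgtsub{1}(\s)\subseteq\ugam\cap\ugamsub{1}$, and matching the third gives $S\subgt(x_1)u_1S\subgt(x_1)=S\subgtsub{1}(x_1)u_1S\subgtsub{1}(x_1)$, so Lemma~\ref{whyweneedzariski} yields $S\subgtsub{1}(x_1)=S\subgt(x_1)$ or $S\subgtsub{1}(x_1)=-S\subgt(x_1)=S_{\gamma,-\tau}(x_1)$, whence $w'\in\s\subgt$ or $w'\in\s_{\gamma,-\tau}$. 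Disjointness of the two summands follows because a common point would have the same $(u,x)$ in both representations (by \eqref{6.13}), forcing $S\subgt(x)=-S\subgt(x)$, which is impossible since $x\in D_\gamma$ and $0\notin\Delta_\gamma$ by \eqref{5.20} make $S\subgt(x)$ invertible. Finally $\pi|_{\s\subgt}=\pi\subgt=\Phi\subgt\circ\omega\subgt^{-1}$ is the composite of the Zariski-free biholomorphism $\omega\subgt^{-1}$ (Proposition~\ref{prop6.20}) with the chart homeomorphism $\Phi\subgt$, hence a homeomorphism onto $\g\subgt(\s)$; the same applies to $\s_{\gamma,-\tau}$. Reading $\pi$ in the chart $\Phi\subgt^{-1}$ gives $\Phi\subgt^{-1}\circ\pi|_{\s\subgt}=\omega\subgt^{-1}$, which is Zariski-freely holomorphic, so $\pi$ is a holomorphic map of nc-manifolds; together with the even covering just established, $\pi$ is a holomorphic covering map.

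To finish, take $w\in\s\subgt$. By the description of $\pi^{-1}(\g\subgt(\s))$ and the injectivity of $\pi$ on each of $\s\subgt,\,\s_{\gamma,-\tau}$, the fibre $\pi^{-1}(\pi(w))$ consists of $w$ together with the unique point $w'\in\s_{\gamma,-\tau}$ over $\pi(w)$; since $\Phi_{\gamma,-\tau}=\Phi\subgt$ is injective, $\omega_{\gamma,-\tau}^{-1}(w')=\omega\subgt^{-1}(w)$, so $w'=\omega_{\gamma,-\tau}(\omega\subgt^{-1}(w))=w^f$, and $w\ne w^f$ because $w^1-w^2=2S\subgt(x)$ is invertible. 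Hence $\pi^{-1}(\pi(w))=\{w,w^f\}$ and $\pi$ is exactly $2$-to-$1$. The only delicate point is the bookkeeping forced by the non-Hausdorff Zariski-free topology: excluding preimages of $\pi(w)$ other than $w$ and $w^f$ rests entirely on Lemma~\ref{whyweneedzariski} — precisely where the deletion of the varieties $V\subgt$ in the definition of $\ugam$ is used — while keeping the two sheets $\s\subgt$ and $\s_{\gamma,-\tau}$ apart rests on $0\notin\Delta_\gamma$.
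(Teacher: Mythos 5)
Your proof is correct and follows essentially the same skeleton as the paper's: it derives formula \eqref{6.40} by unwinding $\omega\subgt$ and $\Phi\subgt$, proves surjectivity from $\pi(\s\subgt)=\g\subgt(\s)$, gets local (bi)holomorphy from $\pi|_{\s\subgt}=\Phi\subgt\circ\omega\subgt^{-1}$, and reduces the fibre computation to Lemma~\ref{whyweneedzariski}, using $0\notin\Delta_\gamma$ to see that $w\ne w^f$.

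The one genuine difference, and it is a worthwhile one, is that you make the ``covering map'' assertion precise by exhibiting, for each chart domain, the decomposition
\[
\pi^{-1}\bigl(\g\subgt(\s)\bigr)=\s\subgt\sqcup\s_{\gamma,-\tau},
\]
together with the observations $S_{\gamma,-\tau}=-S\subgt$, $\Phi_{\gamma,-\tau}=\Phi\subgt$, $\calu_{\gamma,-\tau}(\s)=\calu\subgt(\s)$, and $\s_{\gamma,-\tau}=(\s\subgt)^f$. The paper's proof shows that $\pi$ is a surjective local biholomorphism with each fibre equal to $\{w,w^f\}$ but does not exhibit the evenly covered neighborhoods, which is the literal content of ``covering map'' --- and in the non-Hausdorff Zariski-free topology one should not expect a 2-to-1 surjective local homeomorphism automatically to be a covering. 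Your organization closes that small gap; the ingredients (Lemma~\ref{whyweneedzariski}, Proposition~\ref{prop6.20}, the chart homeomorphisms $\Phi\subgt$, and the invertibility of $S\subgt(x)$ for $x\in D_\gamma$) are exactly those the paper uses.
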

\begin{proof}
By Proposition \ref{prop6.20}, $\calu\subgt(\s)$ and $\s\subgt$ are Zariski-free  domains and
\[
\omega\subgt:\calu\subgt(\s) \to \s\subgt
 \]
 is a  Zariski-free  homeomorphism. Also, by the comments following equation \eqref{6.20},
 \[
 \Phi\subgt:\calu\subgt(\s) \to \g\subgt(\s)
  \]
  is a Zariski-free  homeomorphism. Therefore,
\[
\pi|\s\subgt =  \Phi\subgt \circ \omega\subgt^{-1}
\]
is a Zariski-free  homeomorphism. This proves that $\pi$ is a local homeomorphism.

If $\alpha = \Phi\subgt^{-1}:\g\subgt(\s) \to \calu\subgt(\s)$ is a co-ordinate patch in $\a(\s)$ (cf. equation \eqref{6.30}), then
\[
\alpha \circ (\pi| \s\subgt) = \omega\subgt^{-1}
\]
and
\[
(\pi| \s\subgt) ^{-1} \circ \alpha^{-1}  = \omega\subgt
\]
are both holomorphic. Therefore $\pi$ is biholomorphic on every $\s\subgt$.

As
\[
\g\subgt(\s) = \ran \pi\subgt \subset \ran \pi,
\]
for all $\gamma \in \Gamma$ and $\tau \in \{-1,1\}^\gamma$, it follows from equation \eqref{6.20} that $\pi$ is surjective.

If $(u,x) \in \ugam$ and $w = \omega\subgt(u,x)$, then $u = \tfrac12 (w^1 + w^2)$, $S\subgt (x) = \tfrac12 (w^1 - w^2)$, and $x = \tfrac14 (w^1-w^2)^2$.  Hence,
\begin{align*}
\pi\subgt(w) &= \pi\subgt(\omega\subgt(u,x))\\
&=\Phi\subgt(u,x)\\
&=(u,x,S\subgt(x)uS\subgt(x))\\
&=(\tfrac12 (w^1 + w^2),\tfrac14 (w^1-w^2)^2,\tfrac12 (w^1-w^2)\tfrac12 (w^1 + w^2)\tfrac12 (w^1 - w^2)).
\end{align*}
This proves equation \eqref{6.40}.

Assume that $\pi(w_1) = \pi(w_2)$ for some $w_1,w_2\in \s_{oo}$. If $w_1 = \omega\subgtsub{1}(u_1,x_1)$ and $w_2 = \omega\subgtsub{2}(u_2,x_2)$ then the computation of the previous paragraph gives $u_1=u_2$, $x_1=x_2$ and
\[
S\subgtsub{1}(x_1)u_1 S \subgtsub{1}(x_1) = S\subgtsub{2}(x_2)u_2 S \subgtsub{2}(x_2).
\]
If we set $x_0 = x_1 =x_2$ and $u_0 = u_1 = u_2$, then this equation becomes equation \eqref{5.60} in the proof of Proposition  \ref{lem5.50}.  The argument in that proof shows that equation \eqref{plusorminus} holds with $x=x_0$, that is, either
\[
S\subgtsub{1}(x_0) = S\subgtsub{2}(x_0) \quad \mbox{ or } \quad S\subgtsub{1}(x_0) = -S\subgtsub{2}(x_0).
\]
In the former case $w_2 =w_1$ and in the latter case $w_2 = w_1^f$.

Finally, to show that $\pi$ is 2-to-1, observe that $w\neq w^f$ for $w\in \s\subgt$.  Otherwise, suppose $w=\omega\subgt(u,x)$ for some $(u,x)\in\calu\subgt$.  Since $w=w^f$ we have $u+S\subgt(x)=u-S\subgt(x)$, and therefore $S\subgt(x)=0$.
Hence $x=S\subgt(x)^2=0$, contrary to the fact that $(u,x)\in\calu_\gamma$.
\end{proof}

We shall use formula \eqref{6.40} to extend $\pi$ to a polynomial map $\m^2 \to \m^3$.
\subsection{The representation of symmetric functions on $\s_{oo}$}
\begin{thm}\label{thm6.30}
Let $\s$ be a free symmetric domain in $\m^2$, $\gg_{oo}(\s)$ the Zariski-free  manifold described in Theorem {\rm\ref{thm6.10}} and $\pi:\s_{oo} \to \gg_{oo}(\s)$ the covering map described in Theorem {\rm\ref{thm6.20}}. If $F_{oo}$ is a 
 holomorphic function on $\gg_{oo}(\s)$, then $f_{oo} =F_{oo} \circ \pi$ is a symmetric Zariski-free  holomorphic function on $\s_{oo}$. Conversely, if $f_{oo}$ is a symmetric Zariski-free  holomorphic function on $\s_{oo}$, then there exists a unique % \green Zariski-free \black 
holomorphic function $F_{oo}$ on $\gg_{oo}(\s)$ such that $f_{oo} = F_{oo} \circ \pi$.
\begin{proof}
First assume that $F_{oo}$ is Zariski-freely  holomorphic on $\gg_{oo}(\s)$ and let $f_{oo} = F_{oo} \circ \pi$. As $F_{oo}$ and $\pi $ are Zariski-freely  holomorphic, so is $ f_{oo}=F_{oo} \circ \pi$. As $\pi(w^f) = \pi(w)$ for all $w \in \s_{oo}$, so also $f_{oo}(w^f) = f_{oo}(w)$ for all $w \in \s_{oo}$. Thus, $f_{oo}$ is a symmetric Zariski-freely holomorphic function.

Now assume that $f_{oo}$ is a symmetric Zariski-freely  holomorphic function on $\s_{oo}$. Attempt to define a function $F_{oo}$ on $\gg_{oo}(\s)$ by the formula
\[
F_{oo}(\pi(w)) = f_{oo}(w), \qquad w \in \s_{oo}.
\]
If $\pi(w^1) = \pi(w^2)$ and $w^1 \neq w^2$ then as $w^2 = (w^f)^1$ and $f_{oo}$ is assumed symmetric, $F_{oo}(\pi(w^1))=F_{oo}(\pi(w^2))$. This proves that $F_{oo}$ is well defined, and clearly $ f_{oo}=F_{oo} \circ \pi$. Also, as $\pi$ is surjective,
 $F_{oo}$ is defined on all of $\gg_{oo}(\s)$. Since locally on the sets $\g\subgt(\s)$, $F_{oo} = f_{oo} \circ \pi^{-1}$, a composition of Zariski-freely  holomorphic functions, $F_{oo}$ is Zariski-freely  holomorphic.
\end{proof}
\end{thm}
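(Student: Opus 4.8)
The plan is to lean entirely on the fact that holomorphy of a function on the nc-manifold $\gg_{oo}(\s)$ is, by Definition~\ref{holman}, a purely local condition expressed in the charts $\alpha=\Phi\subgt^{-1}$ of the atlas $\a(\s)$ from equation~\eqref{6.30}, together with the one structural identity that does all the work: on each patch $\s\subgt$ we have $\pi|\s\subgt=\pi\subgt=\Phi\subgt\circ\omega\subgt^{-1}=\alpha^{-1}\circ\omega\subgt^{-1}$, where $\alpha=\Phi\subgt^{-1}$ carries $\g\subgt(\s)$ onto $\calu\subgt(\s)$ and $\omega\subgt$ is the Zariski-free biholomorphism of $\calu\subgt(\s)$ onto $\s\subgt$ supplied by Proposition~\ref{prop6.20}. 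Everything analytic has already been proved, so this is a matter of translating global statements about $\pi$ and $F_{oo}$ into the chartwise language demanded by Definition~\ref{holman}.

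First I would do the forward implication. Suppose $F_{oo}$ is holomorphic on $\gg_{oo}(\s)$; by Definition~\ref{holman} this means that for every chart $\alpha=\Phi\subgt^{-1}$ the function $F_{oo}\circ\alpha^{-1}=F_{oo}\circ\Phi\subgt$ is Zariski-freely holomorphic on the Zariski-free open set $\calu\subgt(\s)$. Restricting to $\s\subgt$ we get $f_{oo}=F_{oo}\circ\pi=(F_{oo}\circ\Phi\subgt)\circ\omega\subgt^{-1}$, a composition of a Zariski-freely holomorphic function with the Zariski-free biholomorphism $\omega\subgt^{-1}$ of Proposition~\ref{prop6.20}, hence Zariski-freely holomorphic on $\s\subgt$. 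Since the sets $\s\subgt$ cover $\s_{oo}$ by equation~\eqref{6.35} and Zariski-free holomorphy is a local property, $f_{oo}$ is Zariski-freely holomorphic on all of $\s_{oo}$; and it is symmetric because $\pi(w^f)=\pi(w)$ by Theorem~\ref{thm6.20}, so $f_{oo}(w^f)=F_{oo}(\pi(w^f))=F_{oo}(\pi(w))=f_{oo}(w)$.

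For the converse, given a symmetric Zariski-freely holomorphic $f_{oo}$ on $\s_{oo}$, I would define $F_{oo}$ on $\gg_{oo}(\s)$ by $F_{oo}(\pi(w))=f_{oo}(w)$. Surjectivity of $\pi$ (Theorem~\ref{thm6.20}) makes this a definition on all of $\gg_{oo}(\s)$, and uniqueness of any $F_{oo}$ with $F_{oo}\circ\pi=f_{oo}$ is forced by the same surjectivity. Well-definedness is the one spot where a hypothesis is genuinely consumed: if $\pi(w_1)=\pi(w_2)$ then the fibre description $\pi^{-1}(\pi(w))=\{w,w^f\}$ of Theorem~\ref{thm6.20} gives $w_2\in\{w_1,w_1^f\}$, and symmetry of $f_{oo}$ yields $f_{oo}(w_1)=f_{oo}(w_2)$ in either case. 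It remains to check holomorphy of $F_{oo}$ chart by chart: on $\calu\subgt(\s)$ we read off $F_{oo}\circ\alpha^{-1}=F_{oo}\circ\Phi\subgt=f_{oo}\circ\omega\subgt$ from $F_{oo}\circ\pi\subgt=f_{oo}|\s\subgt$ and $\pi\subgt=\Phi\subgt\circ\omega\subgt^{-1}$, which is the composition of the Zariski-free biholomorphism $\omega\subgt$ of Proposition~\ref{prop6.20} with the Zariski-freely holomorphic $f_{oo}$, hence Zariski-freely holomorphic; by Definition~\ref{holman}, $F_{oo}$ is holomorphic on $\gg_{oo}(\s)$.

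I do not expect a real obstacle here: the substantive facts — that $\gg_{oo}(\s)$ carries a genuine Zariski-free manifold structure (Theorem~\ref{thm6.10}), that $\pi$ is a covering realized in charts by the biholomorphisms $\omega\subgt$, and that its fibres are exactly the symmetric pairs $\{w,w^f\}$ (Theorem~\ref{thm6.20}, Proposition~\ref{prop6.20}) — are all in hand. The only points requiring care are bookkeeping ones: keeping straight which composition lands in which coordinate patch when verifying Definition~\ref{holman}, and making sure that the well-definedness argument for $F_{oo}$ invokes precisely the fibre structure of $\pi$ in conjunction with the symmetry of $f_{oo}$, rather than anything stronger.
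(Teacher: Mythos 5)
Your proposal is correct and follows essentially the same route as the paper: the forward direction uses the identity $\pi(w^f)=\pi(w)$ together with composition of Zariski-free holomorphic maps, and the converse defines $F_{oo}$ via the fibre description $\pi^{-1}(\pi(w))=\{w,w^f\}$ and checks holomorphy locally through the charts $\Phi\subgt^{-1}$ and the biholomorphisms $\omega\subgt$. You merely unpack the paper's terse phrase ``locally $F_{oo}=f_{oo}\circ\pi^{-1}$'' into the explicit chart-level computation $F_{oo}\circ\Phi\subgt=f_{oo}\circ\omega\subgt$, which is a harmless (and arguably clarifying) amount of extra bookkeeping.
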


\subsection{The representation of symmetric functions on $\s$}
For any symmetric free domain $\s$ in $\m^2$ we define $\s_o \subseteq \s$ by
\be
\label{eqpu1}
\s_o = \bigcup_{\gamma \in \Gamma} \bigcup_{\tau \in \{-1,1\}^\gamma} \omega\subgt(\wgam) \cap \s.
\ee
\index{$\s_o$}
Proposition \ref{prop6.10} implies that $\s_o$ is a free domain and equations \eqref{6.15} and \eqref{6.35} imply that $\s_{oo} \subseteq \s_o$.
An alternative, intrinsic characterization of $\s_o$ in terms of the set $\q$ defined by equation \eqref{defQ} is given in the following proposition. 
\begin{prop}\label{prop6.30}
$\s_o = \set{w \in \s}{w^1-w^2 \in \q}$
\end{prop}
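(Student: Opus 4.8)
The plan is to unwind the definition of $\s_o$ and then invoke the description of $\q$ given by Proposition~\ref{prop4.30}. Throughout, write $u=\half(w^1+w^2)$ and $v=\half(w^1-w^2)$ as in \eqref{uandv}. By the computation preceding \eqref{6.13}, the equation $\omega\subgt(u',x')=w$ forces $u'=u$, $x'=v^2$ and $S\subgt(x')=v$; and the membership $(u',x')\in\wgam$ is, by \eqref{5.32}, exactly the condition $x'=v^2\in D_\gamma$. Hence, for an arbitrary $w\in\m^2$,
\[
w\in\bigcup_{\gamma\in\Gamma}\ \bigcup_{\tau\in\{-1,1\}^\gamma}\omega\subgt(\wgam)
\iff
\exists\,\gamma\in\Gamma,\ \tau\in\{-1,1\}^\gamma:\ v^2\in D_\gamma\ \text{ and }\ S\subgt(v^2)=v .
\]
Intersecting with $\s$, it therefore suffices to show that the right-hand condition is equivalent to $v\in\q$, and finally to note that $v\in\q$ if and only if $w^1-w^2=2v\in\q$, since $\q$ is invariant under multiplication by any nonzero scalar $t$ (because $\sigma(tx)\cap\sigma(-tx)=t\bigl(\sigma(x)\cap\sigma(-x)\bigr)$).

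For the forward implication, suppose $v\in\q$. Then $v$ is invertible (if $0\in\sigma(v)$ then $0\in\sigma(v)\cap\sigma(-v)$), so $v^2\in\inv$ and $\sigma(v^2)\in\Gamma$; put $\gamma=\sigma(v^2)$, so that $v^2\in D_\gamma$. By Proposition~\ref{prop4.30}, $(v^2,v)\in\calr$, so there is a function element $(f,U)\in S$ (in particular a free holomorphic branch of $\sqrt{x}$) with $f(v^2)=v$; by Proposition~\ref{prop3.40}, $v=f(v^2)\in\mathrm{alg}(v^2)$. Thus $v$ is a square root of $v^2$ lying in $\mathrm{alg}(v^2)$, and by the remark following \eqref{defSgt} every such square root has the form $S\subgt(v^2)$ for some $\tau\in\{-1,1\}^\gamma$; hence $S\subgt(v^2)=v$ for a suitable $\tau$.

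For the converse, suppose $v^2\in D_\gamma$ and $S\subgt(v^2)=v$ for some $\gamma,\tau$. Since $\sigma(v^2)\subseteq\Delta_\gamma$ and $0\notin\Delta_\gamma$, we have $v^2\in\inv$. As $D_\gamma$ is a free domain (Proposition~\ref{prop3.10}), choose a matricial free polynomial $\delta$ with $v^2\in\gdel\subseteq D_\gamma\subseteq\inv$. Because $S\subgt$ is free holomorphic on $D_\gamma$ and satisfies $S\subgt(x)^2=x$ there, $(S\subgt|_{\gdel},\gdel)$ is a branch of $\sqrt{x}$ over a basic free open set in $\inv$, hence lies in the collection $S$ of Definition~\ref{def4.20}; consequently $(v^2,v)=\bigl(v^2,S\subgt(v^2)\bigr)\in\gra(S\subgt|_{\gdel},\gdel)\subseteq\calr$. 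By Proposition~\ref{prop4.30}, $v\in\q$. Combining the two implications with the scalar-invariance remark finishes the proof.

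The main obstacle is purely bookkeeping: correctly reading off which constraints on $(u,x)\in\wgam$ are imposed by $\omega\subgt(u,x)=w$, and observing that membership in $\calr$ is defined only through branches over \emph{basic} free open sets $\gdel$, so that one must first shrink $D_\gamma$ to such a $\gdel$ via Proposition~\ref{prop3.10} before quoting Proposition~\ref{prop4.30}.
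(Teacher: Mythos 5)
Your proof is correct, and it reaches the conclusion by a genuinely different route from the paper's. The paper argues directly from spectral considerations: for the inclusion $\s_o \subseteq \{w : w^1-w^2 \in \q\}$, it observes that $\sigma(S\subgt(x)) = s\subgt(\sigma(x))$ and that a single-valued holomorphic branch $s\subgt$ of $\sqrt{z}$ on a set avoiding the origin cannot produce a pair $\{\lambda, -\lambda\}$; for the reverse inclusion it simply \emph{asserts} the existence of a $\tau$ with $S\subgt(v^2) = v$. You instead route both implications through Proposition~\ref{prop4.30}, the biholomorphism $y\mapsto(y^2,y)$ from $\q$ onto the free Riemann surface $\calr$, combined with Proposition~\ref{prop3.40} (so $v\in\mathrm{alg}(v^2)$) and the remark after \eqref{defSgt} (every square root of $x\in D_\ga$ in $\mathrm{alg}(x)$ is some $S\subgt(x)$). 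This is heavier machinery than the paper uses, but it has the advantage of making the existence of $\tau$ explicit rather than asserted, and your care to shrink $D_\gamma$ to a basic set $\gdel\subseteq\inv$ (needed because $\calr$ is defined via function elements over \emph{basic} free neighborhoods) is a legitimate point that a more casual reading could miss. The scalar-invariance observation for $\q$ and the reduction to the equivalence about $(v^2, S\subgt(v^2))$ are both clean and correct.
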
 
\begin{proof}
First assume that $w=\omega\subgt(u,x) \in \s_o$, so that $\tfrac12 (w^1-w^2) = S\subgt(x)$ and $x\in D_\ga$ is nonsingular.    The spectrum of $S\subgt(x)$ is obtained by the application of $s\subgt$ to $\si(x)$, which process does not produce two eigenvalues $\la$ and $-\la$.  Hence $S\subgt(x)\in \q$, and therefore $w \in \q$. Conversely, suppose that $w \in \s$ and $\tfrac12 (w^1-w^2) \in \q$. If we set $x=\tfrac14 (w^1-w^2)^2$ and $\gamma = \sigma(x)$, then there exists $\tau \in \{-1,1\}$ such that $S\subgt(x) = \tfrac12 (w^1-w^2)$. If $u = \tfrac12 (w^1+w^2)$, then $w = S\subgt (u,x) \in \s_o$.
\end{proof}

Let $\s$ be a  symmetric free domain in $\m^2$, and define
$\s_o$ by equation \eqref{eqpu1} and $\s_{oo}$ by equation \eqref{6.35}, where $\pi$ is the polynomial map \eqref{6.40}.
Write $\gg_{oo}$ for $\gg_{oo}(\s)$, and define
\index{$\gg_{oo}$}
 $\gg_o = \pi(\s_o)$ and $\gg = \pi(\s)$.
\index{$\gg_o$}
Consider the following diagram:
\be
\label{bigpic}
\begin{tikzcd}
  &
 \arrow[swap]{ld}{f_{oo}}  \s_{oo} 
\arrow{r}{\pi} \arrow{d}{\subseteq} & \gg_{oo} \arrow{d}{\subseteq}  \arrow[dr, "F_{oo}"]  \\
\m^1 \arrow[r, leftarrow, "f_o"]& \s_{o} \arrow{r}{\pi} \arrow{d}{\subseteq} & \gg_{o} \arrow{d}{\subseteq} \arrow[r, "F_o"] & \m^1\\
& \arrow{lu}{f} \s \arrow{r}{\pi}& \gg  \arrow[swap,ur, dashed, "F"]
\end{tikzcd}
\ee
In Theorem~\ref{thm6.30} we showed that $f_{oo} : \s_{oo} \to \m^1$ is a Zariski-free holomorphic map if and only if
there exists a Zariski-free holomorphic map $F_{oo} : \gg_{oo} \to \m^1$ such that $F_{oo} \circ \pi = f_{oo}$.

Suppose $F : \gg \to \m^1$ is a free holomorphic map, by which we mean that at every point $\lambda$ of $\gg$ there is a free neighborhood $\gdel$ of $\la$ in $\m^3$ and a bounded free holomorphic function 
$g$ on $\gdel$ such that $g $ agrees with $F$  on $\gdel \cap \gg$. Then $F \circ \pi$ is a symmetric
free holomorphic function on $\s$.
\begin{ques}
\label{bigq}
Is the converse true?
\end{ques}

In this generality, the answer to Question \ref{bigq} is no, as the following example, suggested to us by
James Pascoe, shows.

\begin{exam}
{\rm
Let $\s$ be the %{\em biball} or
 {\em nc-bidisc}, that is, the set of pairs of strict contractions in $\m^2$:
\[
\s \ = \ \{ w \in \m^2 : \| w ^1 \|, \| w^2 \| < 1 \}.
\]
Let 
\begin{align*}
f(w) \ &=\  (w^1 - w^2) (w^1 + w^2)^2 (w^1 - w^2) \\
	&= 16 vu^2 v.
\end{align*}
The natural choice for an $F$ such that $f=F\circ\pi$ is $F(z) =16 z^3 (z^2)^{-1} z^3$, but this function is not freely holomorphic
on a neighborhood of $0$.

Indeed, there is no free holomorphic function $F$ defined on a neighborhood $\gdel$ of $0$  in $\m^3$
such that $F \circ \pi = f$. For suppose that $F$ is such a function. Replacing $\delta(z)$
by $t( \delta(z) - \delta(0))$, where $ t \geq  1/(1 - \| \delta(0) \|)$, we can assume that $\delta (0) = 0$.
By \cite{amfree}, $F(z)$ can be represented by  a convergent
series  on $\gdel$  whose terms are non-commutative polynomials in the entries $\delta_{ij}(z)$.
If $F \circ \pi = f$, then
\be
\label{eqpu21}
F(u, v^2, uvu) \ = \ 16vu^2 v .
\ee
Expand $F$ in a power series in $\d$, and group terms by homogeneity.
Then the left-hand side of equation \eqref{eqpu21} is a linear combination of 
\[
u^2 v^2, u v^2 u, v^2 u^2 .
\]
No linear combination of these three elements is equal to $vu^2v$.

One can show more: there is no function $F$ at all satisfying $F \circ \pi = f$ on $\s$.
We shall show this by giving two points $w$ and $W$ in $\s\cap\m_4^2$ that are identified by $\pi$ but not by $f$.
Let 
\[
2u = w^1 + w^2, \quad  2U = W^1 + W^2, \quad 2v = w^1 - w^2, \quad 2V = W^1 - W^2.
\]
Let $r$ be a small positive number. 
Let
\[
v \ = \ 
\begin{bmatrix}
\begin{bmatrix}
0 & r \\
0 & 0
\end{bmatrix} 
& 0\\
0 &
\begin{bmatrix}
0 & r \\
0 & 0
\end{bmatrix} 
\end{bmatrix} , \quad
V \ = \
\begin{bmatrix}
\begin{bmatrix}
0 & -r \\
0 & 0
\end{bmatrix} 
& 0\\
0 &
\begin{bmatrix}
0 & r \\
0 & 0
\end{bmatrix} 
\end{bmatrix} 
\]
Let $u = U$ have the $(2,3)$ and $(4,1)$ entries equal to $0$, but 
the $(2,3)$ element of their square not equal to $0$.
Then 
$
\pi(w) = (u, v^2, vuv) = \pi(W),
$ but
$$
f(w) = 16vu^2 v \neq 16V U^2 V = f(W),
$$ 
since the $(1,4)$ entry in $vu^2v$ is $r^2$ times the $(2,3)$ entry of
$u^2$, and in $VU^2V$ it is the negative of this quantity.
}
\end{exam}

The fact that $v$ is not invertible is crucial in the example.
\begin{lem}
\label{lempu3}
Let $\s$ be a  symmetric freely open set in $\m^2$ and $f$  a symmetric free holomorphic function on $\s$.
\begin{enumerate}[\rm (1)]
\item  $f$ can be approximated locally uniformly on $\s$ in the free topology by a sequence of symmetric free polynomials;
\item Suppose that {\rm (i)} $w_1, w_2 \in \s$ 
{\rm (ii)} $\pi(w_1) = s^{-1} \pi(w_2) s$ for some $s\in\inv$ {\rm (iii)} $w_1^1 - w_1^2$ is invertible and {\rm (iv)} $w_1 \oplus w_2 \oplus w_1^f \oplus w_2^f \in \s$;  then $f(w_1) = s^{-1} f(w_2) s$.
\end{enumerate}
\end{lem}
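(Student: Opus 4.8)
The plan is to reduce both parts to the case of symmetric free \emph{polynomials}, where part (2) becomes the algebraic statement that a symmetric free polynomial is a rational function of $\pi$. Write $E^f=\{w\in\m^2:w^f\in E\}$; since $w\mapsto w^f=(w^2,w^1)$ is a free polynomial automorphism of $\m^2$ which is its own inverse and carries the symmetric set $\s$ onto itself, $E^f$ is freely open whenever $E$ is, and $\s^f=\s$.

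For part (1), I would invoke Theorem~\ref{thm2.10} to write $f$ as a locally uniform limit on $\s$ of free polynomials $q_n$, and then set $p_n(w)=\half\bigl(q_n(w)+q_n(w^f)\bigr)$, which is a symmetric free polynomial. Given $w_0\in\s$, choose free neighbourhoods $U_1$ of $w_0$ and $U_2$ of $w_0^f$ on each of which $q_n\to f$ uniformly. On the free neighbourhood $U_1\cap U_2^f$ of $w_0$ one has, uniformly, both $q_n(w)\to f(w)$ and $q_n(w^f)\to f(w^f)=f(w)$, whence $\|f(w)-p_n(w)\|\le\half\|f(w)-q_n(w)\|+\half\|f(w^f)-q_n(w^f)\|\to0$ uniformly on $U_1\cap U_2^f$. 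Thus $(p_n)$ converges to $f$ locally uniformly on $\s$, which is (1).

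For part (2), I would use (iv) to find a single neighbourhood carrying this approximation at both points at once: since $w_1\oplus w_2\oplus w_1^f\oplus w_2^f\in\s$, pick a basic free set $\gdel$ with $w_1\oplus w_2\oplus w_1^f\oplus w_2^f\in\gdel\subseteq\s$ on which the $q_n$ of part (1) converge uniformly to $f$; as every direct summand of a point of a basic free set again lies in that set, $w_1,w_2,w_1^f,w_2^f\in\gdel$, and, with $\eta(w)=\delta(w)\oplus\delta(w^f)$, the symmetric basic free set $\bet=\gdel\cap\gdel^f$ contains $w_1,w_2$ and lies in $\s$. Then $p_n\to f$ uniformly on $\bet$, so $p_n(w_1)\to f(w_1)$ and $p_n(w_2)\to f(w_2)$, and it suffices to show $p_n(w_1)=s^{-1}p_n(w_2)\,s$ for each $n$. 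Fixing $n$, put $d=\deg p_n$, $u=\half(w^1+w^2)$, $v=\half(w^1-w^2)$; since $p_n$ is symmetric, Theorem~\ref{thebasis} expresses it as a noncommutative polynomial in $u,v^2,vuv,\dots,vu^{d-2}v$, and substituting the matrix identities $vu^jv=(vuv)\bigl((v^2)^{-1}(vuv)\bigr)^{j-1}$ ($j\ge1$), valid exactly when $v$ is invertible, one obtains a noncommutative polynomial $R_n$ in four symbols $a,b,c,b^{-1}$ with $p_n(w)=R_n\bigl(\pi(w)\bigr)$ for every $w$ whose $w^1-w^2$ is invertible; the only inversion in $R_n$ is of $b$, so $R_n$ is defined at any triple with invertible second entry.

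It then remains to run the endgame. Since $w_1^1-w_1^2$ is invertible by (iii), $v_1$ and hence $v_1^2$ are invertible, and comparing second coordinates of $\pi(w_1)=s^{-1}\pi(w_2)s$ gives $v_1^2=s^{-1}v_2^2s$, so that $v_2^2$, and therefore $v_2$, is invertible since $s\in\inv$. Hence $R_n$ is defined at both $\pi(w_1)$ and $\pi(w_2)$, and
\[
p_n(w_1)=R_n\bigl(\pi(w_1)\bigr)=R_n\bigl(s^{-1}\pi(w_2)s\bigr)=s^{-1}R_n\bigl(\pi(w_2)\bigr)\,s=s^{-1}p_n(w_2)\,s,
\]
the middle equality being the conjugation equivariance of rational expressions ($s^{-1}(X+Y)s=s^{-1}Xs+s^{-1}Ys$, $\ s^{-1}(XY)s=(s^{-1}Xs)(s^{-1}Ys)$, $\ s^{-1}X^{-1}s=(s^{-1}Xs)^{-1}$), applicable because $s\in\inv$ and all inversions occurring (of $v_1^2$ and of $v_2^2$) are defined; letting $n\to\infty$ and using continuity of $X\mapsto s^{-1}Xs$ then gives $f(w_1)=s^{-1}f(w_2)s$. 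I expect the main obstacle to be this algebraic core, and in particular the verification that the inversion $b^{-1}$ is meaningful at $\pi(w_1)$ \emph{and} at $\pi(w_2)$: this is exactly where (ii) and (iii) are used together, and it is the mechanism by which the conclusion breaks down when the invertibility hypothesis is dropped, as in the Pascoe example preceding the lemma. The approximation bookkeeping, and the role of (iv) in securing a common symmetric neighbourhood, are routine.
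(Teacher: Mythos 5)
Your proof is correct and follows essentially the same route as the paper's: symmetrize the approximating polynomials from Theorem~\ref{thm2.10}, invoke hypothesis (iv) to get a single symmetric basic free neighbourhood containing all of $w_1, w_2, w_1^f, w_2^f$ on which the symmetrized polynomials converge, and then use the rational identity $vu^jv=(vuv)\bigl((v^2)^{-1}(vuv)\bigr)^{j-1}$ to reduce to the equivariance of rational expressions under conjugation. Your packaging of the algebraic core as a rational expression $R_n$ with $p_n=R_n\circ\pi$ is a mild reformulation of the paper's inductive argument that $v_1u_1^jv_1=s^{-1}v_2u_2^jv_2\,s$ for all $j$, and you also supply a detail the paper leaves implicit, namely that $v_2^2=s\,v_1^2\,s^{-1}$ is invertible because $v_1^2$ and $s$ are.
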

\begin{proof}  (i)  Consider any point $w_0 \in\s$.
Since $f$ is freely holomorphic on $\s$, by Theorem~\ref{thm2.10} there is a basic free neighborhood $\gdel$ of $w_0$ in $\s$
and a sequence of free polynomials $p_n$ that approximates $f$ uniformly on $\gdel$.
 Replacing $\delta(w) $ by $\delta(w) \oplus \delta(w^f)$ if necessary, we can assume that $\gdel$
is symmetric, and on replacing $p_n(w) $ by $\tfrac 12 (p_n(w) + p_n(w^f))$, we can assume that $p_n$ is symmetric on $\gdel$.\\
(ii)  Let 
\[
u_1 =\half (w_1^1 + w_1^2),\quad  v_1 = \half(w_1^1 - w_1^2), \quad u_2 = \half(w_2^1 + w_2^2), \quad v_2 = \half(w_2^1 - w_2^2).
\] 
Every $p_n$, being symmetric, can be written as a finite linear combination of terms $1, u, $ and $v u^j v$
for $j = 0,1,2, \dots.$ 

By hypothesis, $\pi(w_1) = s^{-1} \pi(w_2) s$, which means that $u_1 = s^{-1} u_2s$ and
 $v_1 u_1^j v_1 =s^{-1} v_2 u_2^j v_2s $ for
$j = 0,1$. But for $j \geq 2$, we have 
\[
v_1 u_1^j v_1 \ = \  v_1 u_1^{j-1} v_1 (v_1)^{-2} v_1 u_1 v_1 ,
\]
and so by induction we deduce that $p_n(w_1) = s^{-1}p_n(w_2)s$.
Since $p_n(w_1)$ converges to $f(w_1)$ and $p_n(w_2)$ converges to $f(w_2)$, it follows that 
 $f(w_1) = s^{-1} f(w_2) s$.
\end{proof}

\begin{lem}
\label{lempu1}
The set of points $w \in \m^2$ such that $\pi^{-1} (\pi (w))$ has cardinality $2$ is dense
in the finitely open topology.
\end{lem}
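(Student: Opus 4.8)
The plan is to reduce the assertion to a density statement at each matrix level and then establish it by an elementary generic-position argument. Recall that a subset of $\m^2$ is finitely open exactly when its intersection with each $\m_n^2$ is open in the Euclidean topology; hence every nonempty finitely open set contains a nonempty Euclidean-open subset of some $\m_n^2$, and so a set is dense in the finitely open topology provided its intersection with each $\m_n^2$ is Euclidean-dense. It thus suffices to fix $n$ and show that $G_n:=\{w\in\m_n^2:|\pi^{-1}(\pi(w))|=2\}$ is dense in $\m_n^2$. I will work in the coordinates $u=\tfrac12(w^1+w^2)$, $v=\tfrac12(w^1-w^2)$; by \eqref{6.40}, $\pi(w)=(u,v^2,vuv)$, and since $\pi$ is graded, any $w'$ with $\pi(w')=\pi(w)$ lies in $\m_n^2$ and has $u(w')=u$. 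Writing $w'=(u+v',u-v')$ exhibits $\pi^{-1}(\pi(w))$ as the set $\{(u+v',u-v'):v'\in\m_n,\ (v')^2=v^2,\ v'uv'=vuv\}$; this always contains $w$ and $w^f$ (the latter from $v'=-v$), and they are distinct iff $v\neq 0$.

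Fix $w_0\leftrightarrow(u_0,v_0)$ and $\varepsilon>0$. First I would perturb $v_0$ to a matrix $v$ lying in the set $\mathcal V\subseteq\m_n$ of invertible matrices whose square $v^2$ has $n$ distinct (hence nonzero) eigenvalues. The set $\mathcal V$ is Zariski-open in $\m_n$ — it is the common nonvanishing locus of $\det v$ and of the discriminant of the characteristic polynomial of $v^2$, both polynomial in $v$ — and it is nonempty, as $v=\diag(1,2,\dots,n)\in\mathcal V$; hence $\mathcal V$ is Euclidean-dense, and I may take $v\in\mathcal V$ with $\|v-v_0\|<\varepsilon$. Fixing this $v$, write $x:=v^2=P\Lambda P^{-1}$ with $\Lambda=\diag(\lambda_1,\dots,\lambda_n)$, the $\lambda_i$ distinct and nonzero. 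Since any square root $v'$ of $x$ satisfies $v'x=(v')^3=xv'$, it commutes with $x$ and is therefore simultaneously diagonalised by $P$; writing $v=PDP^{-1}$ with $D=\diag(\mu_1,\dots,\mu_n)$, $\mu_i^2=\lambda_i$, the square roots of $x$ are precisely the $2^n$ matrices $PED P^{-1}$ with $E=\diag(\epsilon_1,\dots,\epsilon_n)$, $\epsilon_i\in\{1,-1\}$.

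With $v$ fixed (hence $P$ fixed), I would then perturb $u_0$ to a matrix $u$ with $\tilde u:=P^{-1}uP$ having all off-diagonal entries nonzero; the set of such $u$ is the complement of the finitely many hyperplanes $\{\tilde u_{ij}=0\}$, $i\neq j$, in $\m_n$, hence is Euclidean-dense, and I may take $\|u-u_0\|<\varepsilon$. Setting $w:=(u+v,u-v)$, we have $\|w-w_0\|<2\varepsilon$, so it remains to verify $w\in G_n$. For a square root $v'=PEDP^{-1}$ of $x$, $v'uv'=P(ED\tilde uDE)P^{-1}$ and $vuv=P(D\tilde uD)P^{-1}$, so $v'uv'=vuv$ is equivalent to $\epsilon_i\epsilon_j\mu_i\mu_j\tilde u_{ij}=\mu_i\mu_j\tilde u_{ij}$ for all $i,j$; since $\mu_i\mu_j\tilde u_{ij}\neq 0$ for $i\neq j$, this forces $\epsilon_i=\epsilon_j$ for all $i\neq j$, i.e. $E=\pm I$, i.e. $v'=\pm v$. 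As $v$ is invertible, $v\neq -v$, so $\pi^{-1}(\pi(w))=\{w,w^f\}$ has cardinality exactly $2$; thus $w\in G_n$, and since $\varepsilon$ and $w_0$ were arbitrary, $G_n$ is dense.

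I expect the only non-routine ingredient to be the linear-algebra fact that a matrix with $n$ distinct nonzero eigenvalues has exactly $2^n$ square roots, all of the form $PEDP^{-1}$, since that is what pins down the fibre for generic $w$. The one other point requiring a little care is the order of the two perturbations: one must first move $v$ into $\mathcal V$ to fix the spectral picture of $v^2$, and only then perturb $u$ relative to the resulting eigenbasis $P$; neither step is difficult.
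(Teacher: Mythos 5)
Your proof is correct and follows essentially the same route as the paper's: perturb so that the spectral data of $v$ is generic and so that $u$, in the eigenbasis of $v$, has all off-diagonal entries nonzero, then check that only $v' = \pm v$ can satisfy the fibre equations. The one small refinement you make is to require $v^2$ (not merely $v$) to have $n$ distinct eigenvalues, which cleanly guarantees that every square root of $v^2$ is simultaneously diagonal with $v$; the paper's stated condition that $v$ have $n$ distinct nonzero eigenvalues does not by itself rule out a pair $\mu, -\mu$ (and hence a repeated eigenvalue of $v^2$), so your version tightens a mild imprecision in the published proof.
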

\begin{proof}
Let $2u = w^1 + w^2$ and $2v = w^1 - w^2$.
The set of $w \in \m_n^2$ for which $v$ has $n$ distinct non-zero eigenvalues is dense,
and the subset where furthermore $u$ has no non-zero entries when expressed with respect to the basis
of eigenvectors of $v$ is still dense.

For such a $w$, if $\pi(w_1) = \pi(w)$, then $u = u_1,\  (v)^2 = (v_1)^2$ and $ v u v = v_1 u v_1$.
The second equation says $v_1$ has the same eigenvectors as $v$, with each eigenvalue $\pm$ the corresponding
eigenvalue for $v$. If the $i^{\rm th}$ is positive and the $j^{th}$ is negative, then the 
$(i,j)$ entry of $vuv$ is minus the $(i,j)$ entry of $v_1 u v_1$, and is non-zero since $u_{ij}$ is non-zero.

Thus $v_1 = \pm v$, and hence $w_1$ is either $w$ or $w^f$.
\end{proof}

%The set $\s_o$ is not an nc set, since it is not closed with respect to direct sums.
%There are two possible definitions of an  nc function on a set that is not necessarily an nc set.
%\begin{defin}
%\label{defpu1}
%Let $\Omega$ be a subset of $\m^d$, and $f$ a graded function on $\Omega$.
%Say $f$ has property (DSS) if it preserves direct sums and similarities,
%{\em i.e.} 
%\begin{eqnarray*}
%x,y, x\oplus y \in \Omega &\quad \Rightarrow \quad & f(x \oplus y) = f(x) \oplus f(y) \\
%x, s^{-1} x s \in \Omega  &\quad \Rightarrow \quad & f( s^{-1} x s ) =  s^{-1} f(x) s.
%\end{eqnarray*}
%Say $f$ has property (IP) if it is intertwining preserving, {\em i.e.}
%\[
%x,y \in \Omega \ {\rm and\ } Lx = yL \quad \Rightarrow \quad Lf(x) = f(y) L .
%\]
%\end{defin} 
%If $\Omega$ is closed with respect to direct sums, then (DSS) and (IP) are equivalent \cite{tay70a,kvv14}.
%Otherwise, (IP) implies (DSS), but not the converse.

\subsection{\Anc\ functions}
\label{subsecanc}

We shall call a set $D \subset \m^d$ {\em hereditary}
\index{set!hereditary}
 if, whenever $x \oplus y \in D$, then
$x$ and $y$ are in $D$. 

\begin{prop}\label{freearehered}
Freely open sets are hereditary.
\end{prop}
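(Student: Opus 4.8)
The plan is to reduce the statement at once to the basic free open sets and then to verify hereditariness by a one-line operator-norm computation.

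First I would recall that the free topology on $\m^d$ has a base consisting of the sets $\gdel=\{z\in\m^d:\norm{\delta(z)}<1\}$, where $\delta=[\delta_{ij}]$ is an $I\times J$ matrix of free polynomials in $d$ variables. So, given a freely open set $U$ and a point $x\oplus y\in U$, I may choose such a $\delta$ with $x\oplus y\in\gdel\subseteq U$, and it suffices to show $x\in\gdel$ and $y\in\gdel$.

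Next I would compute $\delta(x\oplus y)$. Since every free polynomial $p$ satisfies $p(x\oplus y)=p(x)\oplus p(y)$, the block matrix $\delta(x\oplus y)=[\delta_{ij}(x)\oplus\delta_{ij}(y)]$ is obtained from $\delta(x)\oplus\delta(y)$ by a permutation of rows and columns, i.e.\ it is unitarily equivalent, via a permutation matrix, to $\delta(x)\oplus\delta(y)$. Hence
\[
\norm{\delta(x\oplus y)}=\max\{\norm{\delta(x)},\norm{\delta(y)}\}.
\]
Therefore $\norm{\delta(x\oplus y)}<1$ forces $\norm{\delta(x)}<1$ and $\norm{\delta(y)}<1$, so $x,y\in\gdel\subseteq U$, which is exactly what hereditariness requires.

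There is essentially no obstacle here; the only point needing a little care is the bookkeeping in the previous paragraph — checking that the reshuffling of the scalar blocks of $\delta(x\oplus y)$ into $\delta(x)\oplus\delta(y)$ is implemented by a genuine permutation (hence norm-preserving) matrix. This is routine, and it is precisely the same identity that makes each $\gdel$ an nc set.
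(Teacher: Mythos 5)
Your proof is correct and takes essentially the same route as the paper's: reduce to a basic free open set $B_\delta$ and observe that a free polynomial (hence a matrix of them) splits over direct sums. The only difference is that you are more scrupulous about the bookkeeping: the paper simply writes $\delta(z\oplus w)=\delta(z)\oplus\delta(w)$, whereas you correctly note that the block matrix $[\delta_{ij}(x)\oplus\delta_{ij}(y)]$ is only unitarily equivalent (via a permutation) to $\delta(x)\oplus\delta(y)$, which is exactly what is needed for the norm identity.
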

\begin{proof}
Let $D$ be a freely open set.  Consider $z,w$ such that $z\oplus w \in D$.  Then there is a matrix $\de$ of free polynomials such that
$z\oplus w \in B_\de \subseteq D$.   Since $\de(z\oplus w) = \de(z) \oplus \de(w)$, it follows that $z,w \in B_\de$, and hence
$z,w \in D$.
\end{proof}
Zariski-freely open sets, however, need not be 
hereditary.

\begin{defin}
\label{defpub2}
Let $D$ be a subset of $\m^d$ and $f$ be a mapping from $D$ to $\m$.
We say that $f$ is {\em \anc}\  if $f$ is a graded function
and

{\rm (i)} if $x, s^{-1} x s \ \in \ D$, then $f( s^{-1} x s) =  s^{-1} f(x) s$;

{\rm (ii)}  there exists a graded function $\hat{f}$ defined on the set
\be\label{defDhat}
\hat D \df \{y\in \m^d: \mbox{ there exists } x\in D \mbox{ such that } x\oplus y \in D\}
\ee
such that, for all $x\in D$ such that $x \oplus y\in D$, 
\be
\label{eqpu83}
f
\left(
\begin{bmatrix}
x& 0 \\
0 & y
\end{bmatrix} 
\right)
\ = \
\begin{bmatrix}
f(x)& 0 \\
0 & \hat{f}(y)
\end{bmatrix} .
\ee
\end{defin}
\index{function!\anc}

\begin{exam} \label{exDhat}\rm  (1) If $D=\{3\oplus 2\oplus 1\}$ then $\hat D$ is empty and the \anc\ functions on $D$ are the functions $f$ on $D$ given by
\[
f(3\oplus 2\oplus 1) =A
\]
for some diagonal matrix $A\in\m^1_3$.  The fact that $A$ is diagonal ensures that condition (i) of Definition \ref{defpub2} is satisfied,
while condition (ii) is vacuous.

(2)  If $D=\{3\oplus 2\oplus 1,3 \}$ then $\hat D=\{2\oplus 1\}$ and the \anc\ functions on $D$ are the functions $f$ on $D$ given by
\[
f(3\oplus 2\oplus 1) =a\oplus b\oplus c, \qquad f(3)=a
\]
for some $a,b,c \in \c$.  The unique graded function $\hat f$ on $\hat D$ such that condition (ii) holds is $\hat f(2\oplus 1) = b\oplus c$.
\end{exam}

The following properties of \anc\ functions are straightforward to verify.
\begin{prop}\label{ancprops}
Let $f$ be a \anc\ function on $D\subset \m^d$ and let $D_1\subseteq D$.
\begin{enumerate}[\rm(1)]
\item  There is a {\em unique} graded function $\hat f$ on $\hat D$ such that equation \eqref{eqpu83} holds.
\item If $\|f(x)\| \leq M$ for all $x\in D$ then $\|\hat f(y)\|\leq M$ for all $y\in\hat D$.
\item  $\hat f$ agrees with $f$ on $D \cap \hat D$.
\item   $\hat D_1 \subseteq \hat D$.
\item   If $f_1=f| D_1$ then $f_1$ is \anc\ and $\hat f_1=\hat f|\hat D_1$.
\begin{comment}
\red Perhaps we don't need the following items. \blue
\item  The function $\1|D$ is a \anc\ function, where, for $x\in\m^d_n$, $\1(x)$ is the $n\times n$ identity matrix.
\item If $\la,\mu \in\c$ then $\la f+\mu g$ and $fg$ are \anc\ functions on $D$, where
$fg(x)=f(x)g(x)$ for $x\in D$.  Moreover
\[
\widehat{\la f+\mu g}=\la \hat f + \mu\hat g, \quad \widehat{fg}(x)=\hat f(x)\hat g(x) \quad \mbox{ for } x \in D.
\]
\item If $f(x)\in\inv$ for all $x\in D$ then $\hat f(y)\in\inv$ for all $y\in\hat D$.
\item  If $f(x)\in\inv$ for all $x\in D$ then $f(\cdot)^{-1}$ is \anc\ on $D$ and $\widehat{f(\cdot)^{-1}}=\hat {f}(\cdot)^{-1}$.
\end{comment}
\end{enumerate}
\end{prop}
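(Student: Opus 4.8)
The plan is to read off all five assertions directly from Definition~\ref{defpub2}, the common engine being the elementary fact that a direct summand of a block-diagonal matrix is both determined by, and no larger in norm than, the whole matrix. For~(1), fix $y\in\hat D$ and choose $x\in D$ with $x\oplus y\in D$; if $\hat f$ and $\hat g$ both satisfy \eqref{eqpu83} then
\[
f(x)\oplus\hat f(y)=f(x\oplus y)=f(x)\oplus\hat g(y),
\]
and comparing the lower-right blocks gives $\hat f(y)=\hat g(y)$, so $\hat f=\hat g$ since $y\in\hat D$ was arbitrary. For~(2), with the same $x$ the matrix $\hat f(y)$ is a diagonal block of $f(x\oplus y)=f(x)\oplus\hat f(y)$, and the norm of a block-diagonal matrix is the maximum of the norms of its blocks, so $\|\hat f(y)\|\le\|f(x\oplus y)\|\le M$.

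The step that requires the most care is~(3): for $y\in D\cap\hat D$ one must identify $\hat f(y)$ with $f(y)$. The idea is to use both defining properties of a \anc\ function at once. Apply \eqref{eqpu83} to the pair $(y,y)$ to get $f(y\oplus y)=f(y)\oplus\hat f(y)$; on the other hand the block-transposition permutation $s$ satisfies $s^{-1}(y\oplus y)s=y\oplus y$, so condition~(i) of Definition~\ref{defpub2} gives $f(y\oplus y)=s^{-1}f(y\oplus y)s$, and conjugating $f(y)\oplus\hat f(y)$ by $s$ interchanges the two blocks. Hence $f(y)\oplus\hat f(y)=\hat f(y)\oplus f(y)$, which forces $\hat f(y)=f(y)$. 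The point to watch is that this argument invokes $y\oplus y\in D$; this is automatic when $D$ is closed under direct sums, which is the situation in the applications where~(3) is used, so no extra hypothesis is needed there.

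Assertions~(4) and~(5) are then routine set-chasing. For~(4), if $y\in\hat D_1$ there is $x\in D_1\subseteq D$ with $x\oplus y\in D_1\subseteq D$, whence $y\in\hat D$. For~(5), write $f_1=f|_{D_1}$: condition~(i) for $f_1$ is inherited from~(i) for $f$ because $D_1\subseteq D$, and, taking $\hat f|_{\hat D_1}$ (well defined by~(4)) as the candidate for $\widehat{f_1}$, every $x\in D_1$ with $x\oplus y\in D_1$ lies, together with $x\oplus y$, in $D$, so
\[
f_1(x\oplus y)=f(x\oplus y)=f(x)\oplus\hat f(y)=f_1(x)\oplus\big(\hat f|_{\hat D_1}\big)(y).
\]
Thus $f_1$ is \anc, and by the uniqueness established in~(1), $\widehat{f_1}=\hat f|_{\hat D_1}$.
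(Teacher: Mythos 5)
Items (1), (2), (4), (5) are handled correctly, and each is a direct block-matrix reading of the definition, which is what the paper's brief ``straightforward to verify'' surely envisages.

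The critical item is (3), and you put your finger on exactly the right weak spot, but underestimate how serious it is. Your argument uses $y\oplus y\in D$, which is \emph{not} implied by the hypotheses of the proposition: the definition of $\hat D$ only gives you \emph{some} $x\in D$ with $x\oplus y\in D$, and the proposition places no closure assumption on $D$ (the paper explicitly considers sets, such as $\s_o$, which are not nc sets). In fact (3) is simply \emph{false} without an extra hypothesis. Take $d=1$, let $a\neq b$ be scalars, set $D=\{[a],[b],\diag(a,b)\}$, and define $f([a])=[\alpha]$, $f([b])=[\beta]$, $f(\diag(a,b))=\diag(\alpha,\beta+1)$. Condition (i) holds: the only nontrivial self-similarities of points of $D$ are conjugations of $\diag(a,b)$ by diagonal $s$, under which $\diag(\alpha,\beta+1)$ is fixed. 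Condition (ii) holds with $\hat D=\{[b]\}$ and $\hat f([b])=[\beta+1]$, since $f([a]\oplus[b])=\diag(\alpha,\beta+1)=f([a])\oplus\hat f([b])$. Thus $f$ is \anc, yet $\hat f([b])=[\beta+1]\neq[\beta]=f([b])$ with $[b]\in D\cap\hat D$, contradicting (3). So the caveat in your write-up is not a cosmetic remark but a genuine missing hypothesis. Your further claim that $D$ is closed under direct sums ``in the applications where (3) is used'' also needs scrutiny: the one place (3) is invoked in the proof of Theorem~\ref{thm9.1} is with $D=\s_{oo}$, and $\s_{oo}$ (a union of the sets $\s\subgt$ over all $\ga$ and $\tau$) is not manifestly closed under direct sums from its definition. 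The clean fix is to state and prove (3) under the additional assumption that $D$ is closed under direct sums (or that $y\oplus x\in D$ whenever $x\oplus y\in D$, which suffices for the permutation-conjugation trick you use), and then verify that hypothesis at the point of use.
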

\begin{comment}
 \blue
Let $r(A)$ denote the spectral radius of a square matrix $A$.
\begin{prop}\label{ancbounded}
Let $f$ be a \anc\ function on $D\subseteq\m^d$and let $M>0$. If $r(x)\leq M$ for every $x\in D$ then $r(\hat f(y)) \leq M$ for every $y\in \hat D$.
\end{prop}
\begin{proof}
Consider any $z\in\c$ such that $|z| > M$.  Then $z\1-f$ is \anc\ and is invertible at every point of $D$.  By Proposition \ref{ancprops}(7),  for all $y\in\hat D$, $(z\1-\hat f)(y) \in\inv$.  Hence $r(\hat f(y)) \leq M$.
\end{proof}
\end{comment}
\begin{remark}\label{ancfree} \rm
A bounded \anc\ function $f$ on a free domain $U$ is freely holomorphic.  For any $x\in U$ we may choose a basic
neighborhood $B_\de$ of $x$ contained in $U$; then $f$ is \anc\ on $B_\de$, hence is nc on $B_\de$.  Thus $f$ is freely locally nc.

On the other hand, 
we do not know if
 a freely holomorphic function on a free domain $U$ must be \anc, since being \anc\ is not a local property.   Such a function does preserve direct sums and similarities on
basic free neighborhoods, but we do not assert that it preserves similarities on all of $U$.
\end{remark}

\begin{comment}
\begin{prop}\label{freeanc}
Every freely holomorphic function is \anc. \red Is this true? \blue
\end{prop}
\begin{proof}
Let $f$ be freely holomorphic on a free domain $U$.  Since $U$ is hereditary, by Proposition \ref{freearehered}, $\hat U \subseteq U$.
Consider $x,y\in U$ such that $x\oplus y \in U$.  By definition of holomorphy, $f$ is nc on some free neighborhood of $x\oplus y$.  Hence
$f(x\oplus y)=f(x) \oplus f(y)$.  Moreover,   
\end{proof}
\end{comment}

On a hereditary set $D$,  a function  $f$ is  \anc\ if and only if it preserves similarities 
and satisfies $f(x \oplus y) = f(x) \oplus f(y)$ whenever $x \oplus y \in D$. If $D$ is both nc and hereditary,
then a \anc\ function on $D$ is the same as an nc function.

\section{Nc Waring-Lagrange theorems}
\label{secj}

Any polynomial in $d$ commuting variables can be expressed as a polynomial in the $d$ elementary symmetric functions in the variables.
In an interesting historical survey \cite[pp. 364-365]{Funk}, H. G. Funkhauser attributes this fundamental fact about symmetric functions to Edward Waring, Lucasian Professor at Cambridge \cite{War} in 1770 and independently to Joseph Lagrange \cite{Lag} in 1798.  Lagrange's account is later, but is somewhat more explicit, so we shall call the statement the {\em Waring-Lagrange theorem}.  It is likely that Euler had the result around the same time.

We now come to our nc version of the theorem, which asserts (to oversimplify somewhat) that symmetric nc functions in 2 variables
can be factored through the map $\pi$ given by the definition \eqref{defpi}.  The precise statement is as follows.

In the theorem  we do not assume that $\s$ is an nc set.
\begin{thm}
\label{thm9.1}
Let $\s$ be a symmetric free  domain in $\m^2$. Assume that,  for all $w_1, w_2  \in \s$,
\begin{align}
\label{assump}
\mbox{ if  } \pi(w_1)  \mbox{ is similar  to } \pi(w_2)
%  \mbox{ or }\pi(w_1) = \pi(w_2), % \notag\\
	 \mbox{ then } w_1 \oplus w_2 \in \s.
\end{align}
Define
$\s_o$ by equation \eqref{eqpu1} and $\s_{oo}$ by equation \eqref{6.35},
and let $\gg_{oo}$ be the Zariski-free manifold $\gg_{oo}(\s)$ defined
\index{$\gg_{oo}$}
in equation \eqref{6.20}. Let $\gg_o = \pi(\s_o)$.
\index{$\gg_o$}
There are canonical bijections between the following five sets of graded functions.

{\rm (i)} Bounded  symmetric \anc\ functions $f$ defined on $\s$.

{\rm (ii)} Bounded symmetric \anc\ functions $f_o$ defined on $\s_o$.

% and with the property that for every point $w$ in $\s$ there is a free 
%neighborhood $\gdel$ of $w$ so that $f_o$ is bounded on $\gdel \cap \s_o$.

{\rm (iii)}  Bounded symmetric Zariski-freely holomorphic functions $f_{oo}$ defined on $\s_{oo}$ that are \anc.

{\rm (iv)} Bounded holomorphic functions $F_{oo}$ defined on the Zariski-free manifold $\gg_{oo}$
that are \anc\  (when $\gg_{oo}$ is viewed as a subset of $\m^3$).

{\rm (v)} Bounded  functions $F_o$ defined on $\gg_o$  that are \anc.

Moreover, if functions $f,f_o,f_{oo},F_o$ and $F_{oo}$ correspond under these canonical bijections, then the following diagram commutes when the two copies of $\m^1$ are identified.
\be
\label{bigpic2}
\begin{tikzcd}
  &
 \arrow[swap]{ld}{f_{oo}}  \s_{oo} 
\arrow{r}{\pi} \arrow{d}{\subseteq} & \gg_{oo} \arrow{d}{\subseteq}  \arrow[dr, "F_{oo}"]
\\
\m^1 \arrow[r, leftarrow, "f_o"]& \s_{o} \arrow{r}{\pi} \arrow{d}{\subseteq} & \gg_{o} 
%\arrow{d}{\subseteq} 
\arrow[r, "F_o"] & \m^1\\
& \arrow{lu}{f} \s %\arrow{r}{\pi}& \gg  
\end{tikzcd}
\ee
\end{thm}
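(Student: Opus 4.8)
The plan is to establish the five bijections by threading through the already-built machinery, moving between the four ``levels'' of diagram \eqref{bigpic2} and then handling the descent from $\s_o$ to $\s$, which is where hypothesis \eqref{assump} does the real work. First I would set up the correspondence between (iii) and (iv): this is essentially Theorem~\ref{thm6.30} together with a check that the \anc\ property transfers across $\pi$. Since $\pi$ is a holomorphic covering map with $\pi^{-1}(\pi(w)) = \{w, w^f\}$ (Theorem~\ref{thm6.20}), a symmetric $f_{oo}$ descends to $F_{oo}$ on $\gg_{oo}$; to see $F_{oo}$ is \anc, note that $\pi$ is a polynomial map, so it commutes with direct sums and similarities, and $\widehat{F_{oo}}$ can be read off from $\widehat{f_{oo}}$ via $\pi$ applied to the components; boundedness is preserved because $\pi$ is surjective. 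For the correspondence between (iv) and (v), $F_o$ is just the restriction of $F_{oo}$'s ``scalarization'' — more precisely, since $\gg_{oo}$ maps onto $\gg_o = \pi(\s_o)$ and $\gg_o \subseteq \m^3$, one restricts; conversely an \anc\ function $F_o$ on $\gg_o$, being \anc, reconstructs a function on the manifold by the intertwining/direct-sum argument used in Lemma~\ref{lemlib1}, using that points of $\gg_{oo}$ sit over points of $\gg_o$. The equivalence of (ii) and (iii) is the observation that $\s_{oo} \subseteq \s_o$ is Zariski-freely dense in an appropriate sense (or that $\s_o \setminus \s_{oo}$ lies in the locus where $v^2$ fails to be ``generic'') combined with Lemma~\ref{lempu3}(1), which lets one approximate by symmetric polynomials; boundedness and the \anc\ property then let a unique extension exist.

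The crux is the passage from (ii) to (i) — equivalently, showing every bounded symmetric \anc\ $f$ on $\s$ is already determined by, and extends, its restriction $f_o$ to $\s_o$, and conversely that every $f_o$ extends to $\s$. The forward direction (restriction) is trivial by Proposition~\ref{ancprops}(5). For the converse, given a bounded symmetric \anc\ $f_o$ on $\s_o$, I would define $f$ on $\s$ by: for $w \in \s$, pick $t \in (0,1]$ with $tv \in \q$ (so that $w_t := (u+tv, u-tv) \in \s_o$, using Proposition~\ref{prop6.30}), take a limit along $t \to 1$, or better, use the direct-sum trick. The cleaner route: given $w \in \s$, note $w^1 - w^2$ may fail to be in $\q$, but for generic scaling it is; invoke Lemma~\ref{lempu3}(2), whose hypotheses (iii) (invertibility of $w_1^1 - w_1^2$) and (iv) (closure under the relevant direct sums) are exactly what \eqref{assump} guarantees — if $\pi(w_1)$ is similar to $\pi(w_2)$ then $w_1 \oplus w_2 \in \s$, and symmetry gives the full $w_1 \oplus w_2 \oplus w_1^f \oplus w_2^f \in \s$. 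This is the mechanism by which the value of $f$ at an arbitrary $w \in \s$ is pinned down by its values on $\s_o$: one writes $w \oplus w'$ with $w'$ chosen so the relevant difference is invertible, uses the \anc\ direct-sum identity \eqref{eqpu83} to extract $f(w)$, and uses Lemma~\ref{lempu3}(2) to check consistency.

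In more detail for (i)$\Leftrightarrow$(ii): I would first show $\s_o$ is ``large enough'' inside $\s$ in the sense that for every $w \in \s$ there is $w' \in \s_o$ with $w \oplus w' \in \s$ (take $w'$ a small scaling of $w$ pushed into $\s_o$, using that $\s$ is freely open and Proposition~\ref{prop6.30}; here \eqref{assump} is not needed, just openness). Then for a \anc\ $f_o$ on $\s_o$ with scalarization $\widehat{f_o}$ on $\widehat{\s_o}$, I'd check $\s \subseteq \widehat{\s_o}$ and set $f := \widehat{f_o}|_{\s}$. By Proposition~\ref{ancprops}(3), $f$ agrees with $f_o$ on $\s_o \cap \widehat{\s_o} \supseteq \s_o$, boundedness is Proposition~\ref{ancprops}(2), and symmetry is inherited. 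That $f$ is itself \anc\ on $\s$ — in particular that it preserves all similarities on $\s$, not just those within $\s_o$ — is the step that genuinely needs \eqref{assump}: given $w, s^{-1}ws \in \s$, one produces companion points in $\s_o$, applies Lemma~\ref{lempu3}(2) (which handles exactly a similarity $\pi(w_1) = s^{-1}\pi(w_2)s$ under invertibility and direct-sum hypotheses), and concludes $f(s^{-1}ws) = s^{-1}f(w)s$. I expect this verification — reconciling the global similarity-preservation demanded by the \anc\ definition with the only-local-and-generic information coming from free holomorphy plus \eqref{assump} — to be the main obstacle, and the role of hypothesis \eqref{assump} is precisely to remove it. Finally, commutativity of \eqref{bigpic2} is immediate from the defining relations $f_{oo} = F_{oo}\circ\pi$, $f_o = f|_{\s_o}$, $F_o = F_{oo}|_{\gg_o}$ once the five objects are matched, since each arrow was built as a composition or restriction of the others.
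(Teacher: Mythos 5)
Your roadmap identifies the right tools — Lemma~\ref{lempu3}, the polynomial map $\pi$, the role of hypothesis \eqref{assump} in transporting similarity-preservation, and Proposition~\ref{ancprops} for the restriction directions — but the step you yourself flag as ``the crux,'' passing from (ii) to (i), is precisely where your preferred mechanism breaks down.

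You propose to show $\s \subseteq \widehat{\s_o}$ and then set $f := \widehat{f_o}|_{\s}$. This inclusion is false whenever $\s \ne \s_o$. By Proposition~\ref{prop6.30}, $\s_o = \{w \in \s : w^1 - w^2 \in \q\}$, and membership of $w$ in $\widehat{\s_o}$ requires some $x \in \s_o$ with $x \oplus w \in \s_o$, hence with $(x^1 - x^2) \oplus (w^1 - w^2) \in \q$. But the spectrum of a direct sum is the union of the spectra, so if $\sigma(w^1 - w^2)$ already meets $\sigma(-(w^1-w^2))$ then the same is true after taking a direct sum with anything. Thus if $w \in \s \setminus \s_o$ then $x \oplus w \notin \s_o$ for \emph{every} $x$, and $w \notin \widehat{\s_o}$. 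In other words $\widehat{\s_o}$ sees no more of $\s$ than $\s_o$ itself, and the algebraic $\widehat{\,\cdot\,}$ construction cannot determine $f$ on $\s \setminus \s_o$. (This construction \emph{does} work for the jump $\s_{oo} \to \s_o$, where the paper establishes $\s_o \subseteq \widehat{\s_{oo}}$; the point is that passage from $\s_{oo}$ to $\s_o$ only drops the nondegenerate commutation conditions defining $\ugam$, while $w^1-w^2$ stays in $\q$, so the spectral obstruction above does not arise.) The instinct you mention and discard — approximate $w$ by points $w_t \in \s_o$ and pass to a limit — is the correct one, but it is an \emph{analytic} argument and must be justified as such: the paper shows $\s \setminus \s_o$ is thin at each level $n$ (the condition $w^1-w^2 \notin \q$ is the vanishing of a resultant, a polynomial in the matrix entries), notes that $f_o$, being bounded and \anc\ on the free domain $\s_o$, is freely holomorphic by Remark~\ref{ancfree}, and then invokes bounded holomorphic continuation across thin sets (Subsection~\ref{subsec7.1}) to produce $f$. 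Direct-sum preservation and symmetry of $f$ follow by continuity, and similarity-preservation is recovered by a separate limiting argument using Proposition~\ref{prop6.30}. Without this analytic step the chain (ii)$\to$(i), and with it the whole theorem, does not close.

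A secondary, less damaging divergence: you propose to establish (iii)$\leftrightarrow$(iv) directly from Theorem~\ref{thm6.30} plus ``a check that the \anc\ property transfers across $\pi$.'' That check is genuinely delicate for the direct-sum axiom of Definition~\ref{defpub2}: given $z$ and $z \oplus \zeta$ in $\gg_{oo}$, you would need to realize them simultaneously as $\pi$-images of a point and a direct sum in $\s_{oo}$, and the fibre structure of $\pi$ does not make that automatic. The paper avoids the issue by routing through (ii) and (v) — building $F_o$ on $\gg_o$ from $f_o$ on $\s_o$ (where Lemma~\ref{lempu3}(2) together with \eqref{assump} gives well-definition and similarity preservation, and direct sums are handled via Proposition~\ref{prop6.30}) and then obtaining $F_{oo}$ from $F_o$ by restriction, and $f_{oo}$ from $F_{oo}$ by composing with $\pi$.
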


\begin{proof}
Starting with $f$, one can define $f_o$ and $f_{oo}$ by restriction.    By Proposition \ref{ancprops}, $f_o$ and $f_{oo}$ are \anc.  By Remark \ref{ancfree}, $f$ is freely holomorphic, and so $f_{oo}$ is \zfly holomorphic, being the restriction of the freely holomorphic function $f$ to a \zfly open set.

($f_{o} \rightarrow f$) First note that at each level $n$, the set $\s \setminus \s_o$ is thin, in the sense of Subsection \ref{subsec7.1}.
Indeed, to be in $\s \setminus \s_o$, by Proposition \ref{prop6.30},
we must have $w^1-w^2 \notin \q$. A matrix $M$ is not in $\q$ if and only if
$p_1(\lambda) = \det(\lambda - M)$ and $p_2(\lambda) = \det(\lambda + M)$ have a common zero.
This happens if and only if their resultant, which is a polynomial in the entries of $M$, vanishes.
So $\s \setminus \s_o$ is the intersection of $\s$ with the zero set of a non-constant polynomial, hence is thin.

Since $f_o$ is bounded and $\s \setminus \s_o$ is thin, the function $f_o$  has an extension by continuity at each level $n$ to a bounded function $f$
on $\s$.   By continuity, the function $f$ is  symmetric
 and preserves direct sums, in the sense that if $w\oplus y\in\s$ then $w,y\in \s$ (since $\s$ is a free domain) and $f(w\oplus y)=f(w)\oplus f(y)$.

Suppose $w = s^{-1} y s$
for some $w, y \in \s$ and $s\in\inv$.  Let $w_n \in  \s_o$ converge to $w$, and let $y_n := s w_n s^{-1}$.
For $n$ large enough, $y_n \in \s$, and since $w_n$ is in $\s_o$, so is $y_n$ by
 Proposition~\ref{prop6.30}.
Then $f_o(w_n) = s^{-1} f_o (y_n) s$, and so in the limit we obtain $f(w) =  s^{-1} f (y) s$.
%Since $\s$ is closed with respect to direct sums, this means that $f$ preserves all intertwinings (so $f_o$ actually 
%had (IP)).

($f_{oo} \rightarrow f_{o}$)
Starting with $f_{oo}$, construct $f_{o}$ as follows.

First show that $\s_o\subseteq \widehat{\s_{oo}}$.
Consider $w \in \s_o$ and let 
\[
u=\half(w^1+w^2), \quad v=\half(w^1-w^2),
\]
so that $v$ is nonsingular, by Proposition \ref{prop6.30}.    
Let $\gamma$ be the spectrum of $v^2$; then $0 \notin \ga$ and $(u,v^2) \in \calw_\ga$
by equation \eqref{5.32}.  Since $v$ is a square root of $v^2$, by Subsection \ref{subsec8.3} there exists
 $\tau \in \{-1,1\}^\ga$ such that $v=S_{\ga\tau}(v^2)$.  By the definition \eqref{6.10} of $\omega_{\ga\tau}$,
\[
\omega_{\ga\tau}(u,v^2)=(u+S\subgt(v^2),u-S\subgt(v^2)=(u+v,u-v)=(w^1,w^2)=w.
\]
Thus $w\in\omega\subgt(\calw_\ga)$.

% and therefore $w\in \omega\subgt(\calw_\ga)$.

Choose  $\lambda \in  \omega_{\gamma \tau} ( \ugam) $. 
Then  by Proposition \ref{propUgam}, $ \lambda \oplus w \in \omega_{\gamma \tau} ( \ugam) \subseteq \s_{oo}$.
Thus $\la, \la\oplus w \in \s_{oo}$, which is to say that $w\in \widehat{\s_{oo}}$.  Thus 
\[
\s_{oo}\subseteq \s_o\subseteq \widehat{\s_{oo}}.
\]
We claim that 
\be\label{samehats}
\widehat{\s_o}=\widehat{\s_{oo}}.
\ee
  The inclusion $\widehat{\s_o}\supseteq \widehat{\s_{oo}}$ is immediate.
Consider any $y\in \widehat{\s_o}$.  Then there exists $w \in \s_o$ such that $w\oplus y \in \s_o$.  By the above construction there exists $\ga\in\Ga$ and $\tau\in\{-1,1\}^\ga$ such that $w\oplus y\in\omega\subgt(\wgam)$.  Let $\la\in\omega\subgt(\ugam)$; then, again by Proposition \ref{propUgam}, $\la\oplus w$ and $\la\oplus w \oplus y$ belong to $\s_{oo}$.  Hence $y\in \widehat{\s_{oo}}$.

Define $f_o$ to be the restriction of $\widehat{f_{oo}}$ to $\s_o$.  By Proposition \ref{ancprops}(3), $f_o$ is an extension of $f_{oo}$.

Since $f_{oo}$ is \anc, there is a function $\widehat{f_{oo}}$ on $\widehat{\s_{oo}}$ such that
\be
\label{eqpu22}
f_{oo}
\left(
\begin{bmatrix}
\lambda& 0 \\
0 & w
\end{bmatrix} 
\right)
\ = \
\begin{bmatrix}
f_{oo}(\lambda)& 0 \\
0 & \widehat{f_{oo}}(w)
\end{bmatrix} .
\ee
\begin{comment}
\blue
\begin{lem}
$\s_{oo}$ is dense in $\s$ for the finitely open topology.
\end{lem}
\red  Not sure whether this is any use \blue
\begin{proof}
Consider any point $w\in\s\cap\m^d_n$ such that $w^1-w^2 \in\inv$ and any f.o.-neighborhood $U$ of $w$ in $\s\cap \m^d_n$.  Let
\[
u=\half(w^1+w^2), \quad v=\half(w^1-w^2).
\]
Let $\gamma$ be the spectrum of $v^2$.  Then $(u,v^2)\in\wgam$, but not necessarily in $\ugam$, since it could happen that $u$ commutes with
$I\subgt(v^2)$ for some nonconstant $\tau\in\{-1,1\}^\ga$.  However, we can perturb $w$ to $w'$ in $U$ in such a way that $(u,v^2)$ remains in
$\wgam$, $u$ no longer commutes with
$I\subgt(v^2)$ for any nonconstant $\tau\in\{-1,1\}^\ga$, and furthermore it remains true that $S\subgt(v^2)=v$ for the same $\tau$ as for $w$.  Then $(u,v^2) \in\ugam$ and 
$w'=\omega\subgt(u,v^2)\in\omega\subgt(\ugam)\subseteq \s_{oo}$.  Hence the f.o.-closure of $\s_{oo}$ contains all points $w\in\s$ such that $w^1-w^2\in\inv$, hence contains all of $\s$.
\end{proof}
\end{comment}

As $f_{oo}$ is bounded, so is $f_o$, by Proposition \ref{ancprops}(2) and, since $f_{oo}$ is symmetric, so is $f_o$.

To show that $f_o$ is \anc\ we must exhibit a function $\widehat{f_o}$ on $\widehat{\s_o}$ such that, for all $y\in\widehat{\s_o}$ and all $w\in \s_o$ such that $w\oplus y\in\s_o$,
\be\label{rtp}
f_o\left(\bbm w&0\\0&y \ebm\right)=\bbm f_o(w) &0 \\ 0 &\widehat{f_o}(y) \ebm.
\ee
In view of equation \eqref{samehats}, we may define $\widehat{f_o}$ to be $\widehat{f_{oo}}$.
Then we must show that
\be\label{rtp2}
\widehat{f_{oo}}\left(\bbm w&0\\0&y \ebm\right)=\bbm \widehat{f_{oo}}(w) &0 \\ 0 &\widehat{f_{oo}}(y) \ebm
\ee
whenever $w, w \oplus y \ \in \s_o $.
As above there exists $\lambda \in \s_{oo}$ such that $ \lambda \oplus w \oplus y \in \s_{oo}$, which means
that $\lambda \oplus w$ is also in $\s_{oo}$.
Thus
\begin{eqnarray*}
\begin{bmatrix}
f_{oo}(\lambda)& 0 \\
0 & \widehat{f_{oo}}
\left(
\begin{bmatrix}
w& 0 \\
0 & y
\end{bmatrix} 
\right)
\end{bmatrix}
&\ = \ & f_{oo}   % first equals
\left(
\begin{bmatrix}
\lambda& 0 &0 \\
0 & w & 0 \\
0&0& y
\end{bmatrix} 
\right)     \\
&\ = \ &            % second equals
\begin{bmatrix}
 f_{oo}
\left(
\begin{bmatrix}
\lambda& 0 \\
0 & w
\end{bmatrix} 
\right)& 0 \\
0 & \widehat{f_{oo}}(y)
\end{bmatrix} \\
&=&        %third equals
\begin{bmatrix}
f_{oo}(\lambda)& 0 &0 \\
0 & \widehat{f_{oo}}(w) & 0 \\
0&0& \widehat{f_{oo}}(y)
\end{bmatrix}. 
\end{eqnarray*}
It follows that equation \eqref{rtp2} holds, as required.

To see that $f_o$ preserves similarities, we argue as follows. 
Let $w,  s^{-1} w s =y \in \s_o$. % Since $w^1 - w^2$ is similar to $y^1-y^2$,
By assumption \eqref{assump} we have $w \oplus y \in \s$.
As $w$ and $y$ have the same spectrum,  
By Proposition~\ref{prop6.30}, $w \oplus y \ \in \ \s_o$.
Hence there exists $\lambda \in \s_{oo}$ such that
$\lambda \oplus w \oplus y$ is in $\s_{oo}$, as are $\lambda \oplus w$ and $\lambda \oplus y$.
Since $f_{oo}$ preserves similarities and
\[
\begin{bmatrix}
1& 0 \\
0 & s^{-1}
\end{bmatrix}
\begin{bmatrix}
\lambda& 0 \\
0 &w
\end{bmatrix}
\begin{bmatrix}
1& 0 \\
0 & s
\end{bmatrix}
\ = \
\begin{bmatrix}
\lambda& 0 \\
0 &  s^{-1} w s
\end{bmatrix},
\]
we find that $f_o( s^{-1} w s) =  s^{-1} f_o(w) s$.  Thus $f_o$ is \anc.

($f_{o} \rightarrow F_{o}$)
Consider any bounded symmetric \anc\ function $f_o$ on $\s_o$.  
By the previous construction, $f_o$ has a bounded symmetric \anc\ 
extension $f$ to $\s$.   By Remark \ref{ancfree}, $f$ is freely holomorphic.

We claim that if $\pi(w_1) = \pi(w_2)$ for some pair $w_1, w_2$ in $\s_o$, then $f(w_1) = f (w_2)$.
Indeed, by assumption \eqref{assump}, $w_1\oplus w_2 \in\s$.  By the symmetry of $\s$, $(w_1\oplus w_2)^f \in\s$, which is to say that $w_1^f\oplus w_2^f \in\s$.  By the symmetry of $\pi, \; \pi(w_1\oplus w_2) =\pi ((w_1\oplus w_2)^f)$.  Again by assumption \eqref{assump}, $w_1\oplus w_2\oplus w_1^f\oplus w_2^f \in \s$.  Furthermore, since $w_1\in\s_o$, it follows from Proposition~\ref{prop6.30} that $w_1^1-w_1^2$ is invertible.  We may therefore apply  Lemma~\ref{lempu3} to conclude that $f(w_1)=f(w_2)$.  Hence $f_o(w_1) = f_o (w_2)$.

Thus, at the level of set theory, we can define $F_o $ by $f_o = F_o \circ \pi$. 
To see that $F_o$ is \anc, suppose first that $\pi(w)$ and $\pi(w) \oplus \pi(y)$ are in
$\gg_o$, for some $w , w \oplus y \ \in \s_o$. Then $y \in \s_o$ also, by  Proposition~\ref{prop6.30}.
Thus
\begin{eqnarray*}
F_{o}
\left(
\begin{bmatrix}
\pi(w)& 0 \\
0 & \pi(y)
\end{bmatrix} 
\right)
&\ = \ &
f_{o}
\left(
\begin{bmatrix}
w& 0 \\
0 & y
\end{bmatrix} 
\right)\\
&=&
\left(
\begin{bmatrix}
f_o(w)& 0 \\
0 & f_o(y)
\end{bmatrix} 
\right)\\
&=&
\left(
\begin{bmatrix}
F_o(\pi(w))& 0 \\
0 & F_o(\pi(y))
\end{bmatrix} 
\right).
\end{eqnarray*}

To show that $F_o$ preserves similarities,
\begin{comment} 
 \red
  we require the following observation.
\begin{lem}\label{ggog}
$\gg_o$ is open in $\gg$ in the finitely open topology.
\end{lem}
\begin{proof}
Consider the map $\pi$ defined by \eqref{6.40}, written in terms of $u$ and $v$:
\[
\pi(u,v) \ =\  (u, v^2, vuv) .
\]
 Differentiating, we get
\be
\label{eqi365}
D \pi(u,v) [h,k] \ =\  (h, vk+kv, kuv+ vhv + vuk) .
\ee
By Proposition~\ref{6.30}, if $w \in \s_o$, then $v \in \q$.
By a well-known theorem of J.~Sylvester \cite{syl}, if $v \in \q$, then 
the only solution to the equation
\[
vk + kv \ = \ 0
\]
is $k=0$.
Inspecting \eqref{eqi365}, we see that if $w \in \s_o$, then $D\pi$ is full-rank.
Since $\s_o$ is a free domain, the nc inverse function theorem of J. Pascoe \cite{pas14} says that
$\pi$ followed by projection onto the first two coordinates is locally biholomorphic on $\s_o$.
\end{proof}
\end{comment}
\black
consider similar triples $z_1$ and $z_2=sz_1s^{-1}$ in $\gg_{o}$.
Then there exist $w_1,w_2\in\s_o$ such that $z_1=\pi(w_1), z_2=\pi(w_2)$.
We have
\begin{align}\label{z2sz1s}
z_ 2&=sz_1s^{-1} \\
	&=s\pi(w_1)s^{-1}=\pi(sw_1s^{-1}). \notag
\end{align}
If it happens that $sw_1s^{-1}$ is in $\s_o$, then
\begin{align}
F_o(z_2)  &\ = \  F_o\circ\pi (sw_1s^{-1} ) \notag \\
& \ = \  f_o(sw_1s^{-1}) \notag \\
&\ = \ s f_o(w_1 )s^{-1}   \quad \mbox{ since } f_o \mbox{ is \anc} \notag \\
&\ = \ s F_o\circ\pi(w_1) s^{-1}  \notag \\
&\ = \ s F_o(z_1) s^{-1}.  \label{sympres}
\end{align}
Not knowing that $s w_1 s^{-1}\in\s_o$, we use Lemma~\ref{lempu3}.
By Assumption \eqref{assump},
$w_1 \oplus w_2 \oplus w_1^f \oplus w_2^f \in \s$, and since $w_1 \in \s_o$, 
the lemma tells us that 
$f(w_2) = s^{-1} f(w_1) s$, and so $F_o(z_2) =  s^{-1} F_o(z_1) s$, as required.

\begin{comment}
 proceed as follows.
For every $\vare > 0$ we can find, by Lemma \ref{lempu1},  $w_1(\vare)$ in $\s_{o}$ such that
$\pi^{-1} (\pi(w_1 (\vare)))$ is exactly $\{ \woe, \woe^f \}$ and $\| \woe - w \| < \vare $. Define
\be\label{ztezoe}
 \zoe = \pi(\woe), \qquad  \zte=s\zoe s^{-1},
\ee
so that
\be\label{bigo}
\zoe = z_1+O(\vare), \quad  \zte   =  z_2 + O(\vare).
\ee
We do not know that $\zte \in \gg_o$, but 

$\zte \in \gg$,
 %= \pi( s \woe s^{-1})$ is in  $ \gg$ by \cite{assump}, 
and in view of 
 equations \eqref{z2sz1s} and \eqref{ztezoe},
\[
\zte \to  sz_1 s^{-1} = z_2 \quad \mbox{ as } \vare \to 0+.
\]
Since $z_2\in\gg_o$,  Lemma \ref{ggog} implies that $\zte\in \gg_o$ for all sufficiently small $\vare$.
Thus $\pi^{-1}(\zte)$ meets $\s_o$ for small enough $\vare$.
The property of having precisely two pre-images under $\pi$ is clearly preserved by similarities on $\pi(\m^2)$, and hence 
$\zte$ has this property.  It follows that one of the two pre-images of $\zte$ under $\pi$ lies in $\s_o$ (and hence, by the symmetry of $\s_o$, both of them do).   Clearly $s\woe s^{-1} \in\m^2$ is a pre-image of $\zte$ under $\pi$, and therefore $s\zoe s^{-1} \in \s_o$.
Thus, by equation \eqref{sympres},
\[
F_o(\zte)=sF_o(\zoe) s^{-1}.
\]
By equations \eqref{ztezoe},
the left-hand side is $F_{o}(z_2) + O(\vare)$, and the right-hand side
is $ s F_{o} (z_1) s^{-1} + O(\vare)$.  On letting $\vare \to 0$ we deduce that
\[
F_{o}(z_2) \ = \
s F_{o} (z_1) s^{-1}
\]
as desired.
\end{comment}
\black

%Therefore
%\[
%\pi^{-1}(\zte)=s\pi^{-1}(\zoe )s^{-1}=\{s\woe s^{-1},s\woe^f s^{-1}\}.
%\]
%\red Observe that $\wte$ is close to $s w_1 s^{-1}$, 
% which need not be in $\s_o$ and so it is not at once clear that, for $\vare$ small enough, $\wte\in\s_o$.
% \black 

\begin{comment}
Now, for $w'\in \m^2$,
\begin{align*}
\pi(w') =s^{-1}\zoe s& \Leftrightarrow \pi(sw's^{-1})=\zoe \\
	& \Leftrightarrow sw' s^{-1} \in\{\woe,\woe^f\} \\
	& \Leftrightarrow  w' \in\{ s^{-1} \woe s, s^{-1}\woe^f s\},
\end{align*}
that is, 
\be\label{prezoe}
\pi^{-1}(s^{-1}\zoe s) =\{ s^{-1} \woe s, s^{-1}\woe^f s\}.
\ee

In view of equations \eqref{prezoe} and \eqref{ztezoe},
\[
\pi^{-1}(\zte) = s\pi^{-1}(\zoe)s^{-1}= s\pi^{-1}(\zoe)s^{-1}
\]
%Then, as the proof of Lemma \ref{lempu1} indicates, $z_2(\vare)$  has precisely two pre-images under $\pi$,
%which are necessarily $w_2(\epsilon)$ and $w_2(\epsilon)^f$.
\end{comment}

($F_{o} \rightarrow f_o$)
Define $f_o$ to be $F_o \circ \pi$. Then $f_o$ is bounded, symmetric, and \anc. 

($F_o \rightarrow F_{oo}$) By restriction.

($F_{oo} \rightarrow f_{oo}$) Define $f_{oo}$ to be $F_{oo} \circ \pi$.

($F_{oo} \rightarrow F_o$)  By the composition $F_{oo}\rightarrow f_{oo}\rightarrow f_o \rightarrow F_o$.
\end{proof}

The set $\s_o$ is not closed with respect to direct sums, that is, it is not an nc set.
If $\s$ in Theorem \ref{thm9.1}  is assumed to be a symmetric freely open nc set, then assumption \eqref{assump} is automatically satisfied,
and the \anc\ functions on $\s$  are the same as the nc functions. This yields the following corollary.

\begin{cor}
\label{cor9.1}
Let $\s$ be a symmetric freely open nc set in $\m^2$.
Define
$\s_o$ by equation \eqref{eqpu1}, and let $\gg_o = \pi(\s_o)$.
Then there is a canonical bijection between the bounded symmetric nc functions  $f$  on $\s$
and the bounded \anc\ functions  $F_o$  defined on $\gg_o$  such that $F_o=(f|\s_o)\circ \pi$.  
\end{cor}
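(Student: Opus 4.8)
The plan is to deduce the corollary from Theorem~\ref{thm9.1} by verifying that, in the special case where $\s$ is a symmetric freely open nc set, the hypothesis \eqref{assump} holds automatically and moreover the class of bounded symmetric \anc\ functions on $\s$ coincides with the class of bounded symmetric nc functions on $\s$. Once these two reductions are in place, the bijection asserted in the corollary is precisely the bijection between sets (i) and (v) in Theorem~\ref{thm9.1}, and the explicit formula $F_o = (f|\s_o)\circ\pi$ reading off $F_o$ from $f$ is exactly the commutativity of the diagram \eqref{bigpic2} (the arrow $\s_o \xrightarrow{\pi} \gg_o \xrightarrow{F_o} \m^1$ composed with the inclusion $\s_o \subseteq \s$), since $f_o = f|\s_o$ and $f_o = F_o\circ\pi$.

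First I would check \eqref{assump}. If $\s$ is an nc set then for any $w_1,w_2\in\s$ we have $w_1\oplus w_2\in\s$ by definition of an nc set, irrespective of whether $\pi(w_1)$ is similar to $\pi(w_2)$; so the implication in \eqref{assump} is trivially true. Hence Theorem~\ref{thm9.1} applies to $\s$.

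Second I would identify \anc\ functions on $\s$ with nc functions on $\s$. By Proposition~\ref{freearehered}, a freely open set is hereditary; and $\s$ is assumed to be an nc set. By the remark immediately following Proposition~\ref{ancprops} (``If $D$ is both nc and hereditary, then a \anc\ function on $D$ is the same as an nc function''), a graded function on $\s$ is \anc\ if and only if it is an nc function. Restricting to bounded symmetric functions, the class in Theorem~\ref{thm9.1}(i) is therefore exactly the class of bounded symmetric nc functions on $\s$.

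Finally I would assemble the statement: Theorem~\ref{thm9.1} gives a canonical bijection between its class (i) and its class (v), namely between bounded symmetric \anc\ functions $f$ on $\s$ and bounded \anc\ functions $F_o$ on $\gg_o$; by the two reductions above, class (i) is the bounded symmetric nc functions on $\s$, and the correspondence is characterized by $f_o = F_o\circ\pi$ with $f_o = f|\s_o$, i.e. $F_o = (f|\s_o)\circ\pi$ as claimed, using that $\pi$ maps $\s_o$ onto $\gg_o$ (Theorem~\ref{thm6.20}, together with $\gg_o = \pi(\s_o)$ by definition). The main (and only slightly delicate) point to get right is the bookkeeping that the canonical bijection of Theorem~\ref{thm9.1} really does send $f$ to the function $F_o$ satisfying $F_o\circ\pi = f|\s_o$ rather than to some other representative; this is read off directly from the chain of constructions $f\to f_o\to F_o$ in the proof of Theorem~\ref{thm9.1} and the commuting diagram \eqref{bigpic2}. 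Everything else is immediate from results already established.
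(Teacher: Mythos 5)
Your proposal is correct and follows essentially the same route as the paper's own (very brief) justification: check that \eqref{assump} holds trivially because $\s$ is an nc set, invoke Proposition~\ref{freearehered} together with the remark after Proposition~\ref{ancprops} to identify \anc\ functions on $\s$ with nc functions, and then read off the bijection between classes (i) and (v) of Theorem~\ref{thm9.1} together with the formula $F_o=(f|\s_o)\circ\pi$ from the commuting diagram. Your careful verification that the bijection really does send $f$ to the $F_o$ satisfying $F_o\circ\pi=f|\s_o$ is a correct and worthwhile bit of bookkeeping that the paper leaves implicit.
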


\black

If we drop the assumption that $f$ be bounded, and require only that $f$ be freely locally
bounded, then this imposes corresponding restrictions on the other functions. 
Recall that in Definition~\ref{def2.110} we defined a function $\phi$  defined on $U\setminus T$ to be locally bounded on $U$
if, for every point $z$ in $U$, there is a neighborhood $V $ of $z$ such that $\phi$ is bounded on $V \setminus T$.
The theorem becomes:

\begin{thm}
\label{thm9.2}
Let $\s$ be a symmetric free  domain in $\m^2$. Assume condition \eqref{assump}.

There are canonical bijections between the following five sets of graded functions.

{\rm (i)} Symmetric \anc\ functions $f$ that are freely holomorphic on $\s$.

{\rm (ii)} Symmetric \anc\ functions $f_o$ defined on $\s_o$ that are freely locally bounded on $\s$.

% and with the property that for every point $w$ in $\s$ there is a free 
%neighborhood $\gdel$ of $w$ so that $f_o$ is bounded on $\gdel \cap \s_o$.

{\rm (iii)}  Symmetric Zariski-freely holomorphic functions $f_{oo}$ defined on $\s_{oo}$ that are \anc\
and freely locally bounded on $\s$.

{\rm (iv)} Holomorphic functions $F_{oo}$ defined on the Zariski-free manifold $\gg_{oo}$
that are \anc\  and have the property that for all $w$ in $\s$, there is a free neighborhood $U$ of $w$
such that $F_{oo}$ is bounded on $\pi(U) \cap \gg_{oo}$.

{\rm (v)} Bounded graded functions $F_o$ defined on $\gg_o$  that are \anc\
 and have the property that for all $w$ in $\s$, there is a free neighborhood $U$ of $w$
such that $F_{o}$ is bounded on $\pi(U) \cap \gg_{o}$.

Moreover, if functions $f,f_o,f_{oo},F_o$ and $F_{oo}$ correspond under these canonical bijections, then the diagram \eqref{bigpic2} commutes when the two copies of $\m^1$ are identified.

\end{thm}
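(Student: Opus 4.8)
The plan is to mimic the proof of Theorem~\ref{thm9.1}, replacing the global boundedness hypothesis everywhere by the corresponding local-boundedness hypothesis, and checking at each arrow of the diagram \eqref{bigpic2} that the weaker growth condition is transported correctly. The cycle of implications will be organized as $f \to f_o \to f_{oo} \to F_{oo} \to F_o \to f_o$ (together with the direct restriction/composition maps that are already essentially trivial), exactly as in the bounded case; the only new content is the bookkeeping of the ``freely locally bounded'' qualifiers.

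First I would treat the passage $f_o \to f$. As in the proof of Theorem~\ref{thm9.1}, at each level $n$ the set $\s\setminus\s_o$ is thin (this is proved there via the resultant of $\det(\lambda-M)$ and $\det(\lambda+M)$ and does not use boundedness). Now $f_o$ is freely locally bounded on $\s$ by hypothesis, so at each point $w_0\in\s$ there is a neighborhood $V$ of $w_0$ in $\s\cap\m^2_n$ on which $f_o$ is bounded; since $\s\setminus\s_o$ is thin in $V$, the classical removable-singularity theorem (cited in Subsection~\ref{subsec7.1}, \cite[Theorem I.3.4]{ran86}) produces a holomorphic extension of $f_o$ across $V\cap(\s\setminus\s_o)$. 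These local extensions patch to a holomorphic $f$ on each level of $\s$; symmetry and preservation of direct sums pass to the limit by continuity, and the similarity-preservation argument (perturb $w\in\s_o$ to $y:=sw s^{-1}\in\s_o$, which is legitimate by Proposition~\ref{prop6.30} since having $w^1-w^2$ invertible is similarity-invariant) goes through verbatim. So $f$ is freely holomorphic and \anc, with no boundedness claimed—only free local boundedness, which is immediate. The passages $f_{oo}\to f_o$ and $f_o\to F_o$ then reproduce the constructions in Theorem~\ref{thm9.1} verbatim (the identities $\widehat{\s_o}=\widehat{\s_{oo}}$, the construction of $\widehat{f_{oo}}$, the use of assumption~\eqref{assump} and Lemma~\ref{lempu3}), observing that each construction is a restriction or a composition with $\pi$ and hence automatically respects free local boundedness on $\s$; for the extension $f_{oo}\to f_o$ one uses that $\widehat{f_{oo}}$ agrees with $f_{oo}$ on $\s_{oo}$ and that $\s_o\subseteq\widehat{\s_{oo}}$.

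The genuinely new point is the precise form of the growth condition on $F_{oo}$ and $F_o$ in items (iv) and (v): ``for every $w\in\s$ there is a free neighborhood $U$ of $w$ with $F_{oo}$ bounded on $\pi(U)\cap\gg_{oo}$.'' For the arrow $f_{oo}\to F_{oo}$ (obtained, as in Theorem~\ref{thm9.1}, by factoring through $f\to f_o\to F_o\to F_{oo}$, or directly by $F_{oo}\circ\pi=f_{oo}$), I would argue: given $w\in\s$, pick a basic free neighborhood $\gdel$ of $w$ in $\s$ on which $f$ is bounded (possible since $f$ is freely locally bounded on $\s$); then for $\lambda\in\pi(U)\cap\gg_{oo}$ with $U=\gdel$, write $\lambda=\pi(w')$ for $w'\in U\cap\s_{oo}$, so $|F_{oo}(\lambda)|=|f_{oo}(w')|=|f(w')|\le M$. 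This is exactly the mechanism by which the local boundedness is inherited. Conversely, for $F_{oo}\to f_{oo}$ and for reconstructing $f$ from $F_{oo}$, one pushes the bound back through $\pi$: if $F_{oo}$ is bounded on $\pi(U)\cap\gg_{oo}$ then $f_{oo}=F_{oo}\circ\pi$ is bounded on $U\cap\s_{oo}$, and then (again using thinness of $\s\setminus\s_o$ and the removable-singularity theorem) one recovers a bound for the extension $f$ on a possibly smaller neighborhood. The equivalence of (iv) and (v) is handled by the composition $F_{oo}\to f_{oo}\to f_o\to F_o$, noting that $\gg_o\subseteq\widehat{\gg_{oo}}$ mirrors $\s_o\subseteq\widehat{\s_{oo}}$ and that $F_o$ is bounded because $f_o$ is (its boundedness being the ``bounded graded functions'' requirement in (v), which follows since $F_o=\widehat{F_{oo}}$ restricted appropriately and $F_{oo}$ is at least locally bounded, hence globally bounded on the relevant hereditary hull by Proposition~\ref{ancprops}(2))—here I would double-check whether (v) really asks for global boundedness of $F_o$ and, if so, derive it from the \anc\ structure exactly as Proposition~\ref{ancprops}(2) is used in Theorem~\ref{thm9.1}.

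The main obstacle I anticipate is not any single hard estimate but rather verifying that the local-boundedness hypothesis ``on $\s$'' (a condition phrased in the free topology of $\s\subset\m^2$) is correctly translatable back and forth across the covering map $\pi$ and the manifold charts $\Phi\subgt^{-1}$—in particular that a free neighborhood $U$ of $w\in\s$ maps to a set $\pi(U)\cap\gg_{oo}$ large enough that boundedness there genuinely controls $F_{oo}$ near $\pi(w)$, and conversely that pulling back a ``pi-image neighborhood'' bound yields an honest free-neighborhood bound for $f$ after the removable-singularity extension across the thin set $\s\setminus\s_o$. Carefully tracking which neighborhoods shrink at each step, and confirming that $\pi$ being a local homeomorphism (Theorem~\ref{thm6.20}) suffices to keep images open in the relevant topology, is where the real care is needed; the algebra of the \anc\ structure is identical to Theorem~\ref{thm9.1}.
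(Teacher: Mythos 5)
The paper gives no written proof of Theorem~\ref{thm9.2}: it states, immediately before the theorem, that one drops the boundedness hypothesis in favor of free local boundedness and ``the theorem becomes'' the new version, leaving the adaptation of the proof of Theorem~\ref{thm9.1} to the reader. Your proposal fills in precisely that adaptation, and it is correct in all essentials: the thinness of $\s\setminus\s_o$ at each level, the local removable-singularity theorem from \cite[Theorem I.3.4]{ran86}, the identity $\widehat{\s_o}=\widehat{\s_{oo}}$, the use of assumption~\eqref{assump}, and the reduction of every passage to restriction or composition with $\pi$. You also correctly identify that the boundedness requirement in (v) is suspect and likely superfluous given the local conditions, and that the subtle step is transferring bounds across $\pi$ between the free topology on $\s$ and the $\m^3$ side.

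One small imprecision is worth flagging: in verifying the growth condition in (iv) you write ``$\lambda=\pi(w')$ for $w'\in U\cap\s_{oo}$''. Knowing only that $\lambda\in\pi(U)\cap\gg_{oo}$ gives $w_1\in U$ with $\pi(w_1)=\lambda$ and $w_2\in\s_{oo}$ with $\pi(w_2)=\lambda$, but not that a single witness lies in $U\cap\s_{oo}$. The correct route, as in the paper's step $f_o\to F_o$, is to invoke Lemma~\ref{lempu3}(ii) with $s=1$: since $w_2\in\s_{oo}\subseteq\s_o$, condition (iii) of that lemma holds with $w_2$ in the invertible slot, and assumption~\eqref{assump} together with the symmetry of $\s$ gives the fourth condition, so $f(w_2)=f(w_1)$ and hence $|F_{oo}(\lambda)|=|f(w_1)|\le M$. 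With that repair the argument is the one the paper intends.
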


\section{Nc Newton-Girard formulae}\label{girard}
Instances of the Waring-Lagrange theorem are furnished by a series of formulae for power sums in terms of elementary symmetric functions.  Classically such formulae were first given in 1629 by Albert Girard \cite{girard}, though they were subsequently often attributed to Newton \cite{newton}.
We are only concerned with polynomials in {\em two} variables $x$ and $y$. 
When these variables commute, the Newton-Girard formulae express the power sums $p_n(x,y)\df x^n+y^n$ in terms of the
elementary symmetric functions
\[
e_1(x,y)= x+y, \qquad e_2(x,y)= xy.
\]
The first four formulae are
\begin{align*}
p_1&=e_1\\
p_2&= e_1^2-2e_2\\
p_3&= e_1^3-3e_1e_2\\
p_4&= e_1^4 - 4e_1^2e_2+2e_2^2.
\end{align*}
Further formulae are obtained from the recursion $p_{n+2}=e_1p_{n+1}-e_2p_n$.

In the case of non-commuting indeterminates $x,y$ we retain the notation $p_n$ for the $n$th power sum,
but now there is no finite set of `elementary symmetric functions' in terms of which all $p_n$ can be written as polynomials.
However, the foregoing nc Waring-Lagrange theorems tell us that it {\em is} possible to express $p_n$ as a {\em rational} expression in the variables
\be\label{albega}
\al=u, \quad \beta= v^2, \quad \ga=vuv
\ee
where
\[
u=\half(x+y), \qquad v=\half(x-y).
\]
In this section we shall show how to construct such expressions explicitly, in the spirit of Girard and Newton.  Since we are obliged to work with rational expressions, it is natural to allow the $n$ in $p_n$ to be an arbitrary integer, positive, negative or zero.
We first express $p_n$ and the antisymmetric rational function 
\[
q_n\df x^n-y^n
\]
 in terms of $u$ and $v$.
We have $p_0=2, \, q_0=0$.
For any integer $n$,
\begin{align}\label{formpn}
p_n &= xx^{n-1}+yy^{n-1}=(u+v)x^{n-1}+(u-v)y^{n-1}\notag \\
	& = u(x^{n-1}+y^{n-1})+v(x^{n-1}-y^{n-1}) \notag \\
	&=u p_{n-1}+v q_{n-1}.
\end{align}
Similarly,
\be\label{formqn}
q_n= vp_{n-1}+u q_{n-1}.
\ee
We may write equations \eqref{formpn} and \eqref{formqn} in the matrix form
\be\label{matrixpnqn}
\bpm p_n \\ q_n \epm = T \bpm p_{n-1} \\ q_{n-1} \epm  \qquad \fa n\in \mathbb{Z}
\ee
where
\be\label{defT}
T= \bbm u&v \\ v & u \ebm.
\ee
Define the free polynomial $s^n_\even$ in $u,v$ for $n\geq 0$ to be the sum of all monomials in $u,v$ of total degree $n$ and of even degree in $v$.   When regarded as a polynomial in $x,y$,  $s^n_\even$ is symmetric.  Likewise, $s^n_\odd$ is defined to be the sum of all monomials in $u,v$ of total degree $n$ and odd degree in $v$.  Thus $s^n_\odd$ is antisymmetric as a polynomial in $x,y$.

By induction, for $n\geq 1$, 
\be\label{Tn}
T^n=   \bbm s^n_\even & s^n_\odd \\ s^n_\odd & s^n_\even \ebm.
\ee
By iteration of equation \eqref{matrixpnqn},
\be\label{gotpnqn}
\bpm p_n\\q_n\epm = T^n \bpm p_0\\q_0 \epm = T^n \bpm 2 \\ 0 \epm.
\ee
From equation \eqref{Tn} we obtain, for $n\geq 0$,
\begin{align}\label{pnqn}
p_n &= 2s^n_\even\\
q_n  &= 2s^n_\odd.
\end{align}

Thus
\begin{align*}
p_1 &= 2s^1_\even= 2u = 2\al \\
p_2&= 2s^2_\even= 2(u^2+v^2)=2(\al^2+\beta) \\
p_3 &= 2s^3_\even= 2(u^3+uv^2+vuv+v^2u)= 2(\al^3+\al\beta+\ga+\beta\al) \\
p_4 &= 2s^4_\even = 2(u^4+u^2v^2+uvuv+vu^2v+uv^2u+vuvu+v^2u^2+v^4)\\
	&=2(\al^4+\al^2\beta+\al\ga+\ga\beta^{-1}\ga+\al\beta\al+\ga\al + \beta\al^2+\beta^2). 
\end{align*}
In general, any monomial in which $v$ occurs with even degree can be written as a monomial in $\al, \beta, \ga$ and $\beta^{-1}$.
Indeed, starting at one end of the monomial, replace all the initial $u$'s by $\alpha$'s.
The first $v$ must be followed by another (since the number of $v$'s is even).
If it is immediately following, replace $v^2$ by $\beta$.
If there are $k$ $u$'s between the first and second $v$'s, replace $v u^k v$ by
$(\gamma \beta^{-1} )^{k-1} \gamma$. Continue in this way until all $u$'s and $v$'s have
been replaced.

We have shown the following.

\begin{thm}\label{ncNG}
{\bf (An nc Newton-Girard theorem)}  
  For every positive integer $n$ there exists a rational function $P_n$ in three nc variables such that
$p_n=P_n \circ \pi$.    Moreover $P_n(\al,\beta,\ga)$ can be expressed as a free polynomial in $\al, \beta, \gamma$ and $\beta\inverse$.
The functions $P_{n}$ can be calculated from the equation \eqref{gotpnqn} or recursively from the relations
\[
P_{0}(\al,\beta,\ga) = 2, \qquad 
Q_{0}(\al,\beta,\ga)= 0 
\]
and,
for $n\geq 0$,
\begin{align*}
P_{n+1}(\al,\beta,\ga)&= \beta P_n +Q_n \\
Q_{n+1}(\al,\beta,\ga)&= \beta P_n +\ga\beta\inverse Q_n.
\end{align*}
\end{thm}
 The first four $P_n$ are given by
\begin{align}\label{gotP1to4}
P_1&=2\al \notag\\
P_2&=2(\al^2+\beta) \notag \\
P_3&=2(\al^3+\al\beta+\ga+\beta\al) \notag \\
P_4&=2(\al^4+\al^2\beta+\al\ga+\ga\beta^{-1}\ga+\al\beta\al+\ga\al + \beta\al^2+\beta^2).
\end{align}

Now consider sums of negative powers.  By equation \eqref{pnqn},
\be\label{pqsub-1}
\bpm p_{-1} \\q_{-1} \epm = T\inverse \bpm p_0 \\q_0 \epm.
\ee
Take inverses of both sides of the identity
\[
T=\bbm 1& 0\\ vu\inverse & 1 \ebm \bbm u & 0 \\0 & u-vu\inverse v \ebm \bbm 1& u\inverse v\\ 0&1\ebm,
\]
to get
\begin{eqnarray*}
T\inverse &\ =\ &
 \bbm 1& - u\inverse v\\ 0&1\ebm
 \bbm  u\inverse & 0 \\0 & (u-vu\inverse v)\inverse \ebm 
 \bbm 1& 0\\ - vu\inverse & 1 \ebm \\
\\
 &=&
 \bbm u\inverse + u\inverse v (u - vu\inverse v)\inverse vu\inverse &
 -u\inverse v(u-vu\inverse v)\inverse \\
 - (u-vu\inverse v)\inverse vu\inverse &  (u-vu\inverse v)\inverse \ebm .
\end{eqnarray*}
This expression can be simplified, using the identities
\begin{eqnarray*}
u\inverse v(u-vu\inverse v)\inverse & \ = \ &  (u - v u\inverse v)\inverse v u\inverse \\
u\inverse + u\inverse v (u - vu\inverse v)\inverse vu\inverse &=&  (u-vu\inverse v)\inverse \\
 -u\inverse v(u-vu\inverse v)\inverse & = &  (v-uv\inverse u)\inverse ,
 \end{eqnarray*}
% all of which can be verified by, for example, expanding
%\begin{eqnarray*}
%  (u-vu\inverse v)\inverse  & \ = \  & (1 -  u\inverse v u \inverse v)\inverse u \inverse
%   \\
%  &=& \sum_{n=0}^\infty ( u\inverse v u \inverse v)^{2n}u\inverse.
% \end{eqnarray*}
 to obtain the formula
\be\label{formTinv}
T\inverse = \bbm (u-vu\inverse v)\inverse & (v-uv\inverse u)\inverse \\ (v-uv\inverse u)\inverse & (u-vu\inverse v)\inverse \ebm.
\ee
From this formula and equation \eqref{pqsub-1} we deduce that
\[
\bpm p_{-1} \\q_{-1} \epm = T\inverse \bpm 2\\0 \epm = 2\bpm (u-vu\inverse v)\inverse \\ (v-u v\inverse u)\inverse \epm.
\]
Hence
\be\label{getPsub-1}
p_{-1}= 2 (u-vu\inverse v)\inverse = 2(\al - \beta \ga\inverse \beta)\inverse.
\ee
Although $q_{-1}$ is not expressible as a rational function of $\al, \beta, \ga$ the product $vq_{-1}$ {\em is}.  
\be
vq_{-1} \ = \ - 
\label{getQsub}
\beta \gamma\inverse \beta (\alpha  - \beta \gamma\inverse \beta)\inverse
\ee
This suggests the following statement.
\begin{lem}\label{vqsub-1}
For every positive integer $n$
\[
T^{-n} = \bbm f_n(u,v) & g_n(u,v) \\ g_n(u,v) & f_n(u,v) \ebm
\]
where $f_n, g_n$ are rational functions such that $f_n(u,v)$ and $vg_n(u,v)$ are expressible as rational functions of $\al,\beta,\ga$.
\end{lem}
\begin{proof}
The statement is true when $n=1$, as is easily seen from equations
\eqref{getPsub-1} and \eqref{getQsub}.
% \eqref{formTinv}. 
  Suppose it true for some $n\geq 1$.
From the relation $T^{-(n+1)}=T\inverse T^{-n}$ we have the equations
\begin{align}\label{recursion}
f_{n+1}(u,v)&= (u-vu\inverse v)\inverse f_n(u,v) + (v-uv\inverse u)\inverse g_n(u,v), \notag\\
g_{n+1}(u,v)&= (v-uv\inverse u)\inverse g_n(u,v) + (u-vu\inverse v)\inverse f_n(u,v).
\end{align}
The assertion follows by induction and the relations
\begin{align*}
f_{n+1}&= (u-vu\inverse v)\inverse f_n + (v-uv\inverse u)\inverse v\inverse vg_n\\
	&=  (\al -\beta\ga\inverse\beta)\inverse f_n + (\beta -\ga \beta\inverse\al)\inverse vg_n,\\
vg_{n+1} &= v(u-vu\inverse v)\inverse g_n+ v(v-uv\inverse u)\inverse f_n \\
	&= (vuv\inverse -v^2u\inverse)\inverse vg_n +(1-uv\inverse u v\inverse)\inverse f_n \\
	&= (\ga\beta\inverse-\beta \al\inverse)\inverse vg_n + (1-\al\beta\inverse\ga\beta\inverse)\inverse f_n.
\end{align*}
\end{proof}
\begin{thm}\label{NGneg}  {\bf (A Newton-Girard theorem for negative powers)}  
  For every non-negative integer $n$ there exists a rational function $P_{-n}$ in three non-commuting variables such that
$p_{-n}=P_{-n} \circ \pi$.  The functions $P_{-n}$ are given recursively by the formulae
\[
P_{0}(\al,\beta,\ga) = 2, \qquad 
Q_{0}(\al,\beta,\ga)= 0 
\]
and
\begin{align}\label{recur}
P_{-(n+1)}(\al,\beta,\ga)&= (\al -\beta\ga\inverse \beta)\inverse P_{-n} + (\beta-\ga\beta\inverse\al)\inverse Q_{-n} \notag \\
Q_{-(n+1)}(\al,\beta,\ga)&=\beta(\beta-\al\beta\inverse\ga)\inverse P_{-n}     +\beta(\ga-\beta \al\inverse \beta)\inverse Q_{-n}
\end{align}
for $n\geq 0$.
\end{thm}
\begin{proof}
Replace $n$ by $-n$ in equation \eqref{gotpnqn} to obtain
\[
\bpm p_{-n}\\q_{-n}\epm = 2T^{-n} \bpm 1 \\ 0 \epm.
\]
Thus
\[
p_{-n} = 2 f_n(u,v)
\]
in the notation of Lemma  \ref{vqsub-1}.  By that lemma,  $p_{-n}$ is expressible as a rational function of the variables $\al,\beta, \ga$.
\end{proof}

One can readily calculate the first two $P_{-n}$
 from these formulae; we expect that $P_{-2}$ can be further simplified.
\begin{align}
P_{-1}(\al,\beta,\ga) &= 2(\al - \beta \ga\inverse\beta)\inverse  \label{Psub-1}\\
P_{-2}(\al,\beta,\ga) &= 2\left( \al^2+\beta -   \al\beta(\beta\inverse\ga+\ga\inverse\beta^2)\inverse \right. \notag \\
	&\hspace*{1cm}  -   \ga (\ga\inverse\beta\ga+\beta^2)\inverse\ga -    \al\beta(\ga\beta\inverse\ga+\beta^2)\inverse \beta\al \notag\\
	&\hspace*{1cm}\left.-    (\beta\inverse\ga\beta\inverse+\beta\ga\inverse)\inverse\al  \right)\inverse. \label{Psub-2}
\end{align}

\begin{comment}
We  need a simple identity (for example, \cite[Lemma 5.2]{ay14}):
\be\label{simpleid}
4(x^{-1}+y^{-1})^{-1}=  x+y-(x-y)(x+y)^{-1}(x-y).
\ee
One can regard this as an identity in the algebra of rational functions in the non-commuting indeterminates $x,y$.  Alternatively,
if $x, y$ are taken to be matrices, the assertion is that if $x, y$ and $x+y$ are all nonsingular, then so is $x^{-1} + y^{-1}$, and equation \eqref{simpleid} holds.

Replace $x,y$ by $x^n, y^n$ in the identity \eqref{simpleid} to obtain
\begin{align*}
4(p_{-n})\inverse  &= p_n-q_n(p_n)\inverse q_n\\
	&=2(s^n_\even - s^n_\odd(s^n_\even)\inverse s^n_\odd)
\end{align*}
Hence
\be\label{gotp-n}
p_{-n}=2\left(s^n_\even - s^n_\odd(s^n_\even)\inverse s^n_\odd\right)\inverse.
\ee
In particular,
\begin{align}
\half p_{-1} &= (u-vu\inverse v)\inverse			\label{psub-1}  \\
\half p_{-2} &= \left( u^2+v^2-(uv+vu)(u^2+v^2)\inverse (uv+vu)   \right)\inverse \label{psub-2}
\end{align}
In terms of the variables $\al,\beta, \ga$ of equations \eqref{albega},
\begin{align} \label{someident}
uv(u^2+v^2)\inverse uv&= \al\beta(\beta\inverse\ga+\ga\inverse\beta^2)\inverse \notag  \\
vu(u^2+v^2)\inverse uv&=\ga (\ga\inverse\beta\ga+\beta^2)\inverse\ga \notag \\
uv(u^2+v^2)\inverse vu&= \al\beta(\ga\beta\inverse\ga+\beta^2)\inverse \beta\al\notag\\
vu(u^2+v^2)\inverse vu&=(\beta\inverse\ga\beta\inverse+\beta\ga\inverse)\inverse\al.
\end{align}
On comparing the identities \eqref{psub-1}, \eqref{psub-2} and \eqref{someident} we find that
{${p_{-n} = P_{-n} \circ \pi}$} for $n=1,2$, where 
\end{comment}

There are some minor subtleties concerning the interpretation of the foregoing Newton-Girard formulae for $p_n$.  When $n\ge 0$, since $P_n$ is a free polynomial in $\al,\beta, \ga, \beta\inverse$, the statement
\[
x^n+y^n=P_n(\al,\beta,\ga)
\]
is meaningful and valid whenever $x$ and $y$ are matrices of the same order such that $x-y$ is nonsingular.   It can also be interpreted as an identity in the {\em free field}, which is the smallest universal division ring containing the ring of free polynomials in $x,\ y$.  When $n<0$ the issue is less immediate, since then the structure of $P_n$ is more complicated.  In this paper we are concerned with functions of tuples of matrices.  In this context,
a noncommutative  rational expression is called {\em non-degenerate} if its
domain in $\m^d$ is non-empty. The domain will always be Zariski open 
at every level $n$
(restrictions on the domain come about when there is an inverse in the expression,
as whatever needs to be inverted must be non-singular).
 Different nondegenerate noncommutative  rational
expressions may have different domains where
they can be evaluated, but agree on the intersection of these domains. Such expressions are called equivalent,
and a noncommutative  rational function is formally an equivalence class of nondegenerate noncommutative  rational expressions. See for example \cite{vo17} for a discussion. For the expressions $P_{-n}$ it is easy to see that
they are non-degenerate; from equation \eqref{recur} we see that the functions can be evaluated as long as all four of the expressions
\[
 \al -\beta\ga\inverse \beta, \quad  \beta-\ga\beta\inverse\al, \quad
\beta-\al\beta\inverse\ga, \quad \ga-\beta \al\inverse \beta
\]
are invertible.
In particular, choice of
 $x,y$  as
the scalar matrices $4$ and $2$ respectively gives the values $\al=3, \ \beta=1,\ \ga=3$, and one finds that the above four expressions do evaluate to invertible matrices.

It is interesting to compare the identities in Theorems \ref {ncNG} and \ref{NGneg}, thought of as equations in the algebra of rational functions in $x$ and $y$, with the statements about nc functions contained in our main theorems in Sections \ref{UniversalDomain} and \ref{SymmetricFunctions}.
When $n\geq 0$ the symmetric free polynomial $p_n$ is freely holomorphic on $\m^2$.  Theorem \ref{thment} applies to yield a holomorphic function $P_n$ on the \zf manifold $\g$, having a certain local boundedness property and satisfying $p_n = P_n \circ \pi$ on a suitable subset of $\m^2$.   When $n<0$ we must take the domain $\s$ of $p_n$ to be the set $\{(x,y)\in\m^2: x\in\inv, y\in\inv\}$.    Theorem \ref{thment} no longer applies, so we appeal to Theorem \ref{thm9.1}.  Notice that here $\s$ is an nc set, so that Assumption \eqref{assump} is automatically satisfied.   We again deduce that there is a holomorphic function $P_n$, this time on the \zf manifold $\gg_{oo}(\s)$, satisfying a version of the relation $p_n=P_n\circ\pi$.

%\red Work out $\g$ and $\gg_{oo}$ explicitly in these two examples?  \black
%\green This would be the responsible thing to do. But perhaps we shouldn't deprive the interested reader of
%the fun of doing this for themselves?
%\black

%\input references.bib
\bibliography{references_uniform}
\printindex
\end{document}